\documentclass{amsart}
\usepackage{metalogo}
\usepackage{indentfirst}

\usepackage{verbatim}

\usepackage{amsmath}
\usepackage{amsmath,amscd}
\usepackage{amsthm}
\usepackage{amsfonts}
\usepackage{amssymb}
\usepackage{mathrsfs}
\usepackage{fancyhdr}
\usepackage[all]{xy}
\usepackage{amsaddr}

\usepackage{amssymb, paralist, xspace, graphicx, url, amscd, euscript, mathrsfs, stmaryrd}
\usepackage[all]{xy}

\numberwithin{equation}{section}
\setcounter{tocdepth}2
\numberwithin{subsection}{section}
\allowdisplaybreaks[1]

\hbadness=99999
\hfuzz=11910pt

\newenvironment{enumerate1} {\begin{enumerate}[\upshape (1)]} {\end{enumerate}}

\theoremstyle{definition}
\newtheorem{definition}[subsubsection]{Definition}
\newtheorem{theorem}[subsubsection]{Theorem}
\newtheorem{lemma}[subsubsection]{Lemma}

\newtheorem{corollary}[subsubsection]{Corollary}

\newtheorem{proposition}[subsubsection]{Proposition}

\newtheorem*{Assumption}{Assumption}

\theoremstyle{remark}
\newtheorem{rem}[subsubsection]{Remark}
\newtheorem{remark}{Remark}

\newcommand{\hig}{\mathcal{H}iggs_{G}}
\newcommand{\higc}{\mathcal{H}iggs_{\check{G}}}
\newcommand{\jtg}{\mathcal{T}ors(J_{G})}
\newcommand{\jtgcam}{\mathcal{T}ors(J_{G,\widetilde{X}})}
\newcommand{\jtgc}{\mathcal{T}ors(J_{\check{G}})}
\newcommand{\jtgccam}{\mathcal{T}ors(J_{\check{G},\widetilde{X}})}
\newcommand{\mjtgcam}{\textrm{Tors}(J_{G,\widetilde{X}})}
\newcommand{\mjtgccam}{\textrm{Tors}(J_{\check{G},\widetilde{X}})}
\newcommand{\jtgcamn}{\mathcal{T}ors(J_{G,\widetilde{X}^{n}})}
\newcommand{\jtgccamn}{\mathcal{T}ors(J_{\check{G},\widetilde{X}^{n}})}
\newcommand{\mjtgcamn}{\textrm{Tors}(J_{G,\widetilde{X}^{n}})}
\newcommand{\mjtgccamn}{\textrm{Tors}(J_{\check{G},\widetilde{X}^{n}})}
\newcommand{\cam}{\widetilde{X}}
\newcommand{\bunt}{\textrm{Bun}_{T}}
\newcommand{\buntc}{\textrm{Bun}_{\check{T}}}

\newcommand{\bung}{\textrm{Bun}_{G}}
\newcommand{\loc}{\textrm{Locsys}_{\check{G}}}
\newcommand{\quasicoh}{\textrm{QCoh}}
\newcommand{\mhig}{\textrm{Higgs}_{G}}
\newcommand{\mhigc}{\textrm{Higgs}_{\check{G}}}
\newcommand{\mhigccam}{\textrm{Higgs}_{\check{G}}(\widetilde{X})}
\newcommand{\mjtg}{\textrm{Tors}(J_{G})}
\newcommand{\mjtgc}{\textrm{Tors}(J_{\check{G}})}
\newcommand{\mjtgn}{\textrm{Tors}(J^{n}_{G})}
\newcommand{\de}{\textrm{det}}
\newcommand{\pic}{\textrm{Pic}}
\newcommand{\jtgsccam}{\mathcal{T}ors(J_{G_{sc},\widetilde{X}})}
\newcommand{\jtgadcam}{\mathcal{T}ors(J_{G_{ad},\widetilde{X}})}
\newcommand{\higsccam}{\mathcal{H}iggs_{G_{sc}}(\widetilde{X})}
\newcommand{\jtgsc}{\mathcal{T}ors(J_{G_{sc}})}
\newcommand{\jtgad}{\mathcal{T}ors(J_{G_{ad}})}
\newcommand{\dimension}{\textrm{dim}}

\newcommand{\supp}{\textrm{Supp}}
\newcommand{\affgras}{\textrm{Gr}}
\newcommand{\spec}{\textrm{Spec}}
\newcommand{\disc}{\textrm{Disc}}
\newcommand{\higsc}{\mathcal{H}iggs_{G_{sc}}}
\newcommand{\stackypic}{\mathcal{P}ic}

\begin{document}
\title{The Poincar\'e line bundle and autoduality of Hitchin fibers}

\maketitle

\vspace*{6pt}\tableofcontents

\newpage

\section{Introduction}
\subsection{The classical limit of the geometric Langlands program}
Let $k$ be an algebraically closed field of characteristic zero. Let $G$ be a reductive group over $k$ and let $\check{G}$ be its Langlands dual group. Let $X$ be a smooth projective curve over $k$. Let $\bung$ be the stack of $G$ bundles on $X$ and $\loc$ be the stack of $\check{G}$ local systems on $X$. The geometric Langlands program proposes an equivalence of categories between the category of D-modules on $\bung$ and certain appropriately defined category of quasi-coherent sheaves on $\loc$. For a precise formulation of the conjecture, see \cite{singsupp}. It has also been proposed that the geometric Langlands conjecture has a classical limit. To explain this, we fix a line bundle $L$ on $X$. Let $\hig$ be the stack of $G$-Higgs bundles on $X$ with value in $L$ and $\higc$ be the stack of $\check{G}$-Higgs bundles with value in $L$. Then conjectually, the classical limit of the geometric Langlands correspondence should provide an equivalence of categories:
\begin{equation}\label{conjectural equivalence}
\quasicoh({\hig})\simeq\quasicoh({\higc}) .
\end{equation}
Both sides in ~\ref{conjectural equivalence} are categories over the Hitchin base $H$ and this equivalence should be an equivalence of categories over $H$. In particular, if one fix a cameral cover $\cam$, one should get an equivalence:
\begin{equation}\label{conjecture when fix cameral cover}
\quasicoh(\hig(\cam))\simeq\quasicoh(\higc(\cam)) .
\end{equation}

When $G=GL_{n}$ and the spectral cover is integral, \ref{conjecture when fix cameral cover} follows from the autoduality result for the compactified Jacobians of planar curves in \cite{Autoduality}. For general reductive groups, this has been proved over the locus of $H$ where the cameral cover is smooth, see \cite{Langlands Hitchin} and \cite{Geometric Langlands}. The conjecture is still open for cameral covers that are not smooth. 

In this paper we establish a partial result for Hitchin fibers corresponding to singular cameral covers. More precisely, let $\cam$ be a cameral over of $X$ which we assume to be integral. Let $\hig(\cam)$ be the stack of $G$-Higgs bundles with cameral cover $\cam$, let $\higc(\cam)$ be the stack of $\check{G}$-Higgs bundles with cameral cover $\cam$. Let $\hig(\cam)^{reg}$ be the open substack of $\hig(\cam)$ corresponding to Higgs bundles with everywhere regular Higgs field. We will construct a line bundle $\mathcal{P}$ on $\hig(\cam)^{reg}\times \higc(\cam)$, which we will call the Poincar\'e line bundle. Moreover, we will prove that $\mathcal{P}$ induces a fully faithful functor:
$$\quasicoh(\hig(\cam)^{reg})\hookrightarrow\quasicoh(\higc(\cam)) .$$
For $G=GL_{n}$, this has already been established in \cite{Poincare line bundle}. 

\subsection{Summary of the article}
In this subsection we will give a brief summary of the main ideas of the construction. Let us fix an integral cameral cover $\cam$ and let $\cam^{un}$ be the open locus where $\cam$ is an unramified $W$ cover over $X$. Denote the corresponding regular centralizer groups scheme of $G$ over $X$ by $J_{G,\cam}$. Denote the stack of $J_{G,\cam}$ torsors on $X$ by $\jtgcam$. Using a Kostant section one can identify $\hig(\cam)^{reg}$ with $\jtgcam$.

Our first task is to construct the Poincar\'e line bundle on $\jtgcam\times\higc(\cam)$. This occupies Subsection $4.1$. Our first observation is that it is not hard to write down the formula for the pullback of the Poincar\'e line bundle on 
$$(\cam^{un})^{(n)}\times X_{*}(T)\times\higc(\cam)$$
under the Abel-Jacobi map, here $(\cam^{un})^{(n)}$ denotes the $n$th symmetric power. For the details, see Subsection $4.1$. One can then use the results of \cite{Geometric Langlands} and \cite{Langlands Hitchin} to show it descends. This is done in Corollary~\ref{finish the descend for adjoint}.

In Subsection $4.2$ we will compute the cohomology of the Poincar\'e line bundle and use it to prove the fully faithfulness of the Fourier-Mukai transform. Our strategy is similar to those in \cite{Poincare line bundle}. We first establish a codimension estimate about the support of the pushforward of the Poincar\'e line bundle, see Lemma~\ref{codim estimate}. To prove the main result one still needs to analyze the structure of $\hig(\cam)$ when $\cam$ has nodal singularities. This is done in Appendix $B$. The first theorem is Theorem~\ref{main theorem on coho}, which takes care of the case when $G$ is semisimple. The case for general reductive groups will be deduced from this in Theorem~\ref{2nd main theorem of cohomology of poincare line bundle}. The final result is the following:
\begin{theorem}
Let $G$ be reductive. Let $z$ be the dimension of the center of $G$ and $d$ be the dimension of $\mathcal{H}iggs_{(G/Z(G))}(\cam)$. Let $\mathcal{P}_{G}$ be the Poincar\'e line bundle on $\jtgcam\times\higc(\cam)$ and $p_{1}$ be the projection to $\jtgcam$. Then we have:
$$Rp_{1*}\mathcal{P}_{G}\simeq e_{*}(k)[-d-zg]$$
where $g$ is the genus of $X$. 
\end{theorem}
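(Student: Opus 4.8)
The plan is to deduce the reductive case from the semisimple case recorded in Theorem~\ref{main theorem on coho}, isolating the contribution of the connected center as a classical Fourier--Mukai transform on a product of Jacobians. Write $S = Z(G)^{\circ}$ for the radical torus, a torus of dimension $z$, and set $\bar{G} = G/S$, a semisimple group; since $\bar{G}\to G/Z(G)$ is an isogeny one has $\dim\mathcal{H}iggs_{\bar{G}}(\cam) = \dim\mathcal{H}iggs_{G_{ad}}(\cam) = d$. First I would record the exact sequence $1\to S\to G\to\bar{G}\to 1$ together with the Langlands-dual presentation of $\check{G}$ in terms of its radical torus $Z(\check{G})^{\circ}$ (again of dimension $z$) and the dual semisimple group $\check{\bar{G}}$. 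Because the center centralizes every regular element, $S$ is a subgroup scheme of $J_{G,\cam}$ with $J_{G,\cam}/S \cong J_{\bar{G},\cam}$, so these sequences induce morphisms of Picard stacks $\jtgcam\to\mathcal{T}ors(J_{\bar{G},\cam})$ and $\mathcal{H}iggs_{\check{\bar{G}}}(\cam)\to\higc(\cam)$ whose fibers are governed by the torus factors $\textrm{Bun}_{S}$ and $\textrm{Bun}_{Z(\check{G})^{\circ}}$.

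Next I would show that, after passing to the relevant finite covers, the source $\jtgcam\times\higc(\cam)$ decomposes up to a finite \'etale quotient as a product of a semisimple factor $\mathcal{T}ors(J_{\bar{G},\cam})\times\mathcal{H}iggs_{\check{\bar{G}}}(\cam)$ and a toric factor built from $\textrm{Bun}_{S}\times\textrm{Bun}_{Z(\check{G})^{\circ}}$, and that under this decomposition $\mathcal{P}_{G}$ becomes the external tensor product $\mathcal{P}_{\bar{G}}\boxtimes\mathcal{P}_{S}$ with $\mathcal{P}_{S}$ the classical Poincar\'e bundle. The cleanest way to see this is on the cover where $\mathcal{P}_{G}$ was built: by the explicit Abel--Jacobi formula of Subsection $4.1$ on $(\cam^{un})^{(n)}\times X_{*}(T)\times\higc(\cam)$, the cocharacter lattice contains the saturated central sublattice $X_{*}(S) = X_{*}(T)^{W}$, and the defining cocycle of $\mathcal{P}_{G}$ is bilinear in $X_{*}(T)$. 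The cross terms between the central part $X_{*}(S)$ and the coroot part vanish, precisely because central cocharacters annihilate every root, so the formula factors: on the central directions it is the pullback of the Poincar\'e bundle of $\textrm{Pic}^{0}(X)\otimes X_{*}(S)$ paired against its dual abelian variety $\textrm{Pic}^{0}(X)\otimes X_{*}(Z(\check{G})^{\circ})$, and on the remaining directions it is $\mathcal{P}_{\bar{G}}$; compatibility of the descent data is supplied by Corollary~\ref{finish the descend for adjoint}.

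With the decomposition in hand the computation splits into two independent pieces. For the semisimple factor, Theorem~\ref{main theorem on coho} applied to $\bar{G}$ gives $Rp_{1*}\mathcal{P}_{\bar{G}}\simeq e_{*}(k)[-d]$. For the toric factor, the abelian parts of $\textrm{Bun}_{S}$ and $\textrm{Bun}_{Z(\check{G})^{\circ}}$ are the dual abelian varieties $\textrm{Pic}^{0}(X)\otimes X_{*}(S)$ and $\textrm{Pic}^{0}(X)\otimes X_{*}(Z(\check{G})^{\circ})$, each of dimension $zg$, and $\mathcal{P}_{S}$ is their Poincar\'e bundle, so Mukai's theorem gives that its relative pushforward is the skyscraper at the origin shifted by $[-zg]$. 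Combining the two computations by flat base change and the K\"unneth formula, and using that the product of the two unit sections is the unit section $e$ of $\jtgcam$, yields $Rp_{1*}\mathcal{P}_{G}\simeq e_{*}(k)[-d-zg]$, as claimed.

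The main obstacle is the phrase ``up to isogeny'' in the second step: $G$ is only an extension, not a product, of $\bar{G}$ by $S$, and the inclusion $X_{*}(S)\oplus X_{*}(\bar{T})\hookrightarrow X_{*}(T)$ has finite, generally nontrivial, cokernel governed by $\pi_{0}(Z(G))$ and the fundamental group of $\bar{G}$. Consequently the product decomposition of the stacks holds only after a finite \'etale base change, and one must verify that the resulting finite gerbes contribute trivially to the pushforward and that no residual line-bundle twist survives descent -- in particular that the two unit sections are genuinely identified. Making the external-product identification of $\mathcal{P}_{G}$ exact rather than merely generic, so that the two shifted skyscrapers assemble to a single $e_{*}(k)[-d-zg]$, is where the real work lies.
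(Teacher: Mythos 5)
Your overall route is the one the paper takes: present $G$ up to central isogeny as a semisimple group times a torus (the paper uses $0\to K\to G\to G'_{ad}\times S\to 0$ with dual cover $0\to \check{K}\to G'_{sc}\times\check{S}\to\check{G}\to 0$, where $S=G/[G,G]$, rather than your $S=Z(G)^{\circ}$, $\bar{G}=G/S$, but that difference is cosmetic), split the Poincar\'e bundle as an external product of the semisimple one with the classical one on $\textrm{Bun}_{S}(X)\times\textrm{Bun}_{\check{S}}(X)$, and combine Theorem~\ref{main theorem on coho} with the Mukai-type statement of Appendix A. Your observation that cross terms vanish because central cocharacters annihilate all roots is exactly why the splitting works; note that on central directions the norm map degenerates to the norm $N_{\cam/X}$ of line bundles, so the toric factor indeed lives over the base curve and contributes $[-zg]$. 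The step you defer as ``where the real work lies'' is carried out in the paper by Proposition~\ref{compatibility for reductive groups} and Lemma~\ref{connections for different reductive groups}: pushing forward along the finite cover gives $\oplus_{\chi}\mathcal{P}_{G}\otimes\mathcal{M}_{\chi}$, the perfect pairing $H^{1}(X,K)\times H^{1}(X,\check{K})\to\mathbb{G}_{m}$ together with the biextension property identifies $\mathcal{P}_{G}\otimes\mathcal{M}_{\chi}\simeq(t_{\chi}\times\textrm{id})^{*}(\mathcal{P}_{G})$, and a support argument then isolates the summand at the unit. So your instinct about the crux is correct, but that step is only flagged, not executed.

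There is, however, a genuine gap you do not flag at all: the claimed decomposition of $\jtgcam\times\higc(\cam)$ ``up to a finite \'etale quotient'' fails on $\pi_{0}$ of the Higgs side. A $\check{G}$-Higgs bundle lifts along the central isogeny $G'_{sc}\times\check{S}\to\check{G}$ (or your $\check{\bar{G}}\times Z(\check{G})^{\circ}\to\check{G}$) only when its class $\gamma\in\pi_{1}(\check{G})$ lies in the image of $\pi_{1}$ of the cover; the obstruction lives in $H^{2}(X,\check{K})\neq 0$. For instance, with $\check{G}=\textrm{GL}_{2}$ the cover $\textrm{SL}_{2}\times\mathbb{G}_{m}$ reaches only the even-degree components. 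Consequently your K\"unneth computation on the product cover determines $Rp_{1*}\mathcal{P}_{G}$ only over $\higc^{0}(\cam)$ (this is exactly Lemma~\ref{almost there for reductive groups}), and the remaining components need the paper's separate final argument: choose $\xi$ the Abel--Jacobi image of $(\widetilde{x},\phi)$ with $\phi\mapsto\gamma$, use part $(3)$ of Corollary~\ref{additional compatibility} to get $(\textrm{id}\times t_{\xi})^{*}(\mathcal{P}^{\gamma}_{G})\simeq\mathcal{P}^{0}_{G}\otimes p_{1}^{*}(\mathcal{P}_{G,\xi})$, and compute that $a^{*}(\mathcal{P}_{G,\xi})$ is the character line bundle $\mathcal{L}_{\gamma}$ on $BZ(G)$. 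This is not mere bookkeeping about unit sections: it is precisely the sum $\oplus_{\gamma}a_{*}\mathcal{L}_{\gamma}\simeq e_{*}(k)$ over \emph{all} components of $\higc(\cam)$ that produces $e_{*}(k)$ (the regular representation of the automorphism group $Z(G)$ at the unit) rather than the invariants $a_{*}(O_{BZ(G)})$; restricting to the components your product cover actually sees would yield the wrong answer.
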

When $G=\textrm{GL}_{n}$ this recovers the result in \cite{Poincare line bundle}. For the Fourier-Mukai transform, we have the following (see Corollary~\ref{fully faithful fourier-mukai}):
\begin{theorem}
Consider the diagram:
$$\xymatrix{
\jtgcam\times\higc(\cam) \ar[r]^-{p_{2}} \ar[d]_{p_{1}} & \higc(\cam) \\
\jtgcam .
}
$$
Then the Fourier-Mukai functor induced by $\mathcal{P}_{G}$:
\begin{gather}
\quasicoh(\jtgcam)\mapsto\quasicoh(\higc(\cam)) \notag \\
\mathcal{G}\mapsto p_{2*}(p^{*}_{1}(\mathcal{G})\otimes\mathcal{P}_{G}) 
\end{gather}
is fully faithful. Here all functors in the formula $1.3$ are derived.
\end{theorem}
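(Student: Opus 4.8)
The plan is to follow Mukai's convolution method, reducing full faithfulness to the cohomological computation of the first Theorem. Write $\Phi$ for the Fourier--Mukai functor with kernel $\mathcal{P}_{G}$ on $\jtgcam\times\higc(\cam)$, and set $N:=d+zg$, the common dimension of the dual Hitchin fibers $\jtgcam$ and $\higc(\cam)$ (so that $\dim\jtgcam=N$, using $\jtgcam\cong\hig(\cam)^{reg}$). Since $\higc(\cam)$ is proper, $\Phi$ admits a right adjoint $\Phi^{R}$ which is again an integral transform, and $\Phi$ is fully faithful exactly when the unit $\mathrm{id}\to\Phi^{R}\circ\Phi$ is an isomorphism. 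The composite $\Phi^{R}\circ\Phi$ is itself an integral transform $\quasicoh(\jtgcam)\to\quasicoh(\jtgcam)$ with a convolution kernel $Q$ on $\jtgcam\times\jtgcam$, so the whole problem becomes: show that the canonical map $\mathcal{O}_{\Delta}\to Q$ from the structure sheaf of the diagonal is an isomorphism.

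First I would identify the kernel of $\Phi^{R}$. Grothendieck--Serre duality for the proper projection $p_{2}$ gives $\Phi^{R}(\mathcal{F})\simeq Rp_{1*}(\mathcal{P}_{G}^{\vee}\otimes p_{2}^{*}\mathcal{F})\otimes\omega^{\bullet}$, where $\omega^{\bullet}$ is the dualizing complex of $\jtgcam$, appearing through the relative dualizing complex of $p_{2}$. The essential geometric input here is that $\jtgcam$ is Gorenstein with trivial dualizing sheaf, coming from the integrality of $\cam$ (exactly as the compactified Jacobian of an integral planar curve is Gorenstein with trivial canonical), together with the local analysis of $\hig(\cam)$ at nodal $\cam$ from Appendix $B$. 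This gives $\omega^{\bullet}\simeq\mathcal{O}[N]$, so the adjoint kernel is simply $\mathcal{P}_{G}^{\vee}[N]$, a shift of the inverse Poincar\'e bundle with no twist; the absence of a stray $\omega$-twist is what will let the shifts cancel.

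Next I would compute $Q$ using the bimultiplicativity of $\mathcal{P}_{G}$ in the $\jtgcam$-variable, which is built into the construction of Subsection $4.1$: each restriction $\mathcal{P}_{G}|_{\{a\}\times\higc(\cam)}$ is a character of the group stack $\jtgcam$, so $\mathcal{P}_{G}|_{a}\otimes\mathcal{P}_{G}^{\vee}|_{a'}\simeq\mathcal{P}_{G}|_{a-a'}$. Letting $\delta\colon\jtgcam\times\jtgcam\to\jtgcam$ be the difference map, bimultiplicativity identifies the tensor product of the two pulled-back kernels on $\jtgcam\times\higc(\cam)\times\jtgcam$ with the pullback of $\mathcal{P}_{G}$ along $(a,b,a')\mapsto(a-a',b)$. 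Pushing forward over the proper factor $\higc(\cam)$ and applying the projection formula together with proper base change, the convolution collapses to $\delta^{*}(Rp_{1*}\mathcal{P}_{G})[N]$. By the first Theorem $Rp_{1*}\mathcal{P}_{G}\simeq e_{*}(k)[-N]$, and since $\delta$ is flat (it differs from a projection by an automorphism of $\jtgcam\times\jtgcam$) with $\delta^{-1}(e)=\Delta$, we get $\delta^{*}e_{*}(k)\simeq\mathcal{O}_{\Delta}$, whence $Q\simeq\mathcal{O}_{\Delta}[-N][N]\simeq\mathcal{O}_{\Delta}$. Checking that this isomorphism is induced by the unit (via the trace) then yields $\Phi^{R}\circ\Phi\simeq\mathrm{id}$, so $\Phi$ is fully faithful.

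The main obstacle is not the formal convolution bookkeeping but the geometric inputs it rests on in the singular case: establishing that $\jtgcam$ has trivial dualizing complex when $\cam$ is only integral, and verifying the bimultiplicativity of $\mathcal{P}_{G}$ at the derived level on these possibly singular stacks. One must also handle the fact that $\jtgcam\cong\hig(\cam)^{reg}$ is only an open, non-proper substack of $\hig(\cam)$: the adjunction and base-change steps must all be run over the proper object $\higc(\cam)$ being integrated out, and the functors must be set in a framework (for instance ind-coherent sheaves) where the right adjoint exists and the projection and base-change formulas hold. Finally, the exact cancellation forces the identity $\dim\jtgcam=d+zg$, so the dimension count matching the shift in the first Theorem must be pinned down precisely.
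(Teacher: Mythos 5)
Your proposal follows the paper's proof in all essentials: both reduce full faithfulness to showing that the convolution kernel $Rp_{13*}(p^{*}_{12}(\mathcal{P}_{G})\otimes p^{*}_{23}(\check{\mathcal{P}}_{G}))[d+zg]$ on $\jtgcam\times\jtgcam$ is $\Delta_{*}(O_{\jtgcam})$, and both do so by the same two moves --- rewriting the tensor product of the two pulled-back kernels as $(m'\times\textrm{id})^{*}(\mathcal{P}_{G})$ for the difference map $m'$, using the biextension identities of parts $(2)$ and $(4)$ of Corollary~\ref{additional compatibility}, and then applying $Rp_{1*}\mathcal{P}_{G}\simeq e_{*}(k)[-d-zg]$ from Theorem~\ref{2nd main theorem of cohomology of poincare line bundle} together with base change along the difference map. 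The only genuine difference is packaging: the paper never invokes Grothendieck duality or the dualizing complex of $\jtgcam$; it simply \emph{defines} the backward transform $G_{\check{\mathcal{P}}}$ with the explicitly shifted kernel $\check{\mathcal{P}}_{G}[d+zg]$ and proves $G_{\check{\mathcal{P}}}\circ F_{\mathcal{P}}\simeq\textrm{id}$. Two slips in your framing, neither of which damages the final computation: first, it is $p_{1}$, not $p_{2}$, that is proper --- the fibers of $p_{1}$ are $\higc(\cam)$, a gerbe over a projective moduli space, while the fibers of $p_{2}$ are the non-proper $\jtgcam$ --- so ``Grothendieck--Serre duality for the proper projection $p_{2}$'' is not available as stated; the properness actually used, in the paper as in your convolution step, is that of $p_{13}$, which integrates out the proper factor $\higc(\cam)$. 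Second, $\jtgcam\simeq\hig(\cam)^{reg}$ is a \emph{smooth} group stack, so triviality of its dualizing complex (up to shift) is immediate from translation invariance; no Gorenstein or planar-curve input and no appeal to Appendix $B$ is needed at this stage --- Appendix $B$ enters only upstream, inside the proof of the cohomology theorem you are quoting. Your closing caveat that the isomorphism $G\circ F\simeq\textrm{id}$ must be identified with the unit of an adjunction to formally yield full faithfulness is a legitimate point of care, and in fact one the paper itself glosses over.
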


\subsection{Organization of the paper}
Let us now describe the organization of the paper. 

In Section $2$ we will give an overview of some properties of the stack of Higgs bundles that will be used in this paper. We will mostly follow \cite{Geometric Langlands}, \cite{Gerbe of Higgs}, \cite{Langlands Hitchin}, \cite{Higgs Faltings} and \cite{Fundamental lemma}. 

In Section $3$ we will construct the Abel-Jacobi map as well as the norm map for $\jtgcam$. These will be the main tools in the construction of the Poincar\'e line bundle. 

In Section $4$ we will construct the Poincar\'e line bundle and compute its cohomology. The construction of the Poincar\'e line bundle is in Subsection $4.1$, the main result is Corollary~\ref{finish the descend for adjoint}. In $4.2$ we determine the cohomology of the Poincar\'e line bundle, the main theorems are Theorem~\ref{main theorem on coho} and Theorem~\ref{2nd main theorem of cohomology of poincare line bundle}. 

In Appendix $A$ we gather some well-known facts about the Poincar\'e line bundle on $\bunt(C)\times\buntc(C)$.

In Appendix $B$ we will determine the structure of $\hig(\cam)$ when $\cam$ has nodal singularities. The main result is Theorem~\ref{main theorem of app B}. This will be needed in Subsection $4.2$. 

In Appendix $C$ we identify an open subscheme of the Hitchin base $H$ and estimate the codimension of its complement. This is a technical result that will be needed in Subsection $4.2$.

\subsection{Notations}
In this article we always work over an algebraically closed field $k$ of characteristic zero. Let $G$ be a reductive group over $k$. We denote the Langlands dual group by $\check{G}$. We denote by $\mathfrak{g}$ (respectively by $\mathfrak{\check{g}}$) the Lie algebra of $G$ (respectively $\check{G}$). We will denote the abstract Cartan of $G$ by $T$ and its Lie algebra by $\mathfrak{t}$. The counterparts on the Langlands dual side will be denoted by $\check{T}$ and $\mathfrak{\check{t}}$. The character group of $T$ will be denoted by $X^{*}(T)$ and the cocharacter group will be denoted by $X_{*}(T)$. We will freely identify $X_{*}(T)$ with $X^{*}(\check{T})$. The set of roots in $X^{*}(T)$ will be denoted by $R$. The Weyl group will be denoted by $W$. We will also fix a non-degenerate $W$ invariant bilinear form on $\mathfrak{t}$ which induces an isomorphism $\mathfrak{t}\simeq\mathfrak{\check{t}}$. It also induces a $G$-equivariant isomorphism $\mathfrak{g}\simeq\mathfrak{g}^{*}$. We will denote the Chevalley base by $\mathfrak{c}$. Recall that one has $\mathfrak{c}=k[\mathfrak{t}]^{W}\simeq k[\mathfrak{g}]^{G}$. We will denote regular elements in $\mathfrak{g}$ by $\mathfrak{g}_{reg}$

Let $X$ be a curve over $k$ and $L$ be a line bundle on $X$. Let $L^{\times}$ be the $\mathbb{G}_{m}$ torsor corresponding to $L$. We will denote by $\mathfrak{c}^{L}=\mathfrak{c}\times^{\mathbb{G}_{m}}L^{\times}$ the $\mathbb{G}_{m}$ twist of $\mathfrak{c}$. Similarly for $\mathfrak{t}^{L}$. If $E_{G}$ is a $G$ bundle on $X$, we will denote $\mathfrak{g}_{E_{G}}=\mathfrak{g}\times^{G} E_{G}$ the adjoint bundle. Cameral covers over $X$ will be denoted by $\cam$. 

\subsection{Acknowledgments}
I would like to thank Professor David Kazhdan and Professor Dima Arinkin for continuous interest in this work as well as many helpful suggestions. This work is supported by grant AdG 669655.

\section{Review about Higgs bundles}
In this section we collect some basic geometric properties about the stack of Higgs bundles. The main references are \cite{Geometric Langlands},\cite{Gerbe of Higgs},\cite{Langlands Hitchin},\cite{Higgs Faltings} and \cite{Fundamental lemma}.
\subsection{The Hitchin fibration}
Let $X$ be a smooth projective curve over $k$ with genus $g\geq 2$. Let $L$ be a line bundle on $X$ with degree $l$. We assume $l> 2g$. 
\begin{definition}
A $G$-Higgs bundle on $X$ with value in $L$ is a pair $(E_{G},\phi)$ such that $E_{G}$ is a $G$-bundle on $X$ and $\phi$ is a global section of $\mathfrak{g}_{E_{G}}\otimes L$. 
\end{definition}

It is well-known that $\hig$ is an algebraic stack locally of finite type. The connected components of $\hig$ are parameterized by $\pi_{1}(G)$. 

Let us consider $\mathfrak{c}^{L}$ as an affine space over $X$. We denote by $H$ the space of sections of $\mathfrak{c}^{L}$ over $X$. We will call $H$ the Hitchin base. The $W$ invariant bilinear form on $\mathfrak{t}$ induces an isomorphism between the Hitchin base of $G$ and the Hitchin base of $\check{G}$. For any point $\sigma$ in $H$, the corresponding cameral cover over $X$ is defined to be the fiber product:
$$\xymatrix{
\widetilde{X} \ar[r] \ar[d]^{\pi} & \mathfrak{t}^{L} \ar[d]\\
X \ar[r]^{\sigma} & \mathfrak{c}^{L} .
}
$$
One also has the universal cameral cover over $X\times H$. By Proposition $4.7.1$ of \cite{Fundamental lemma}, there exists an open subscheme $H^{0}$ of $H$ such that for any point in $H^{0}$, the corresponding cameral cover is smooth. 

It is well known that the Chevalley map $\mathfrak{g}\rightarrow\mathfrak{c}$ induces the Hitchin fibration $\hig\xrightarrow{h}H$. For any point $\sigma$ in $H$ with the corresponding cameral cover $\cam$, we will call the fiber $h^{-1}(\sigma)$ the stack of Higgs bundles with cameral cover $\widetilde{X}$. We will denote it by $\hig(\cam)$. The following proposition summarizes geometric properties of the Hitchin fibration, see \cite{Opers}, \cite{Geometric Langlands} and \cite{Higgs Faltings}:
\begin{proposition}\label{geo of hitchin fibration}
\begin{enumerate1}
\item $h$ is a relative complete intersection morphism. In particular, $\hig(\cam)$ is Gorenstein.
\item $\pi_{0}(\hig)\simeq \pi_{0}(G)$. Moreover, for any point $\sigma\in H$, the natural morphism $\hig(\cam)\rightarrow\hig$ induces an isomorphism on $\pi_{0}$.
\item A choice of a square root of $L$ induces a section of the Hitchin fibration: $H\rightarrow\hig$. 
\item $\hig$ admits a good moduli space, which will be denoted by $\mhig$. $\mhig$ is projective over $H$. 
\end{enumerate1}
\end{proposition}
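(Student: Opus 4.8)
The plan is to prove the four assertions in turn, each from the deformation theory of Higgs bundles or the Kostant section. For (1), I would first identify the tangent complex of $\hig$ at a point $(E_{G},\phi)$ with the hypercohomology of the two-term complex $C_{\phi}=[\mathfrak{g}_{E_{G}}\xrightarrow{\mathrm{ad}\,\phi}\mathfrak{g}_{E_{G}}\otimes L]$ placed in degrees $0$ and $1$, whose hypercohomologies $\mathbb{H}^{0},\mathbb{H}^{1},\mathbb{H}^{2}$ record infinitesimal automorphisms, first-order deformations, and obstructions. This two-term presentation exhibits a perfect obstruction theory of amplitude $[-1,1]$, so $\hig$ is quasi-smooth; a dimension count showing that each component has the expected virtual dimension then upgrades this to the statement that $\hig$ is a local complete intersection, in particular Cohen--Macaulay. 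Since the Hitchin base $H$ is a smooth affine space, it remains to prove that $h$ is flat, and by the miracle-flatness criterion (a Cohen--Macaulay source over a regular base with equidimensional fibres) this reduces to showing that every fibre $\hig(\cam)$ is equidimensional of the expected relative dimension $\dimension\,\hig-\dimension\,H$. Once flatness is in hand, $h$ is a relative complete intersection over a regular base, so its fibres $\hig(\cam)$ are lci, hence Gorenstein.

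For (2), I would use the forgetful morphism $\hig\to\bung$ that discards the Higgs field: its fibre over $E_{G}$ is the affine space $H^{0}(X,\mathfrak{g}_{E_{G}}\otimes L)$, which is connected, and the zero section $\phi=0$ shows the morphism is surjective, so $\pi_{0}(\hig)\cong\pi_{0}(\bung)$, the component group of $\bung$ (which is $\pi_{1}(G)$). For the refinement that $\hig(\cam)\to\hig$ is an isomorphism on $\pi_{0}$, I would pass to the locus where $\cam$ is unramified, where the fibre is a torsor under the Picard stack of $J_{G,\cam}$-torsors; this group stack already accounts for every component, and the connectedness of each of its components follows from the structure of the regular centralizer, so the inclusion of a fibre meets every component exactly once. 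For (3), I would globalize the Kostant slice $\kappa\colon\mathfrak{c}\to\mathfrak{g}_{reg}$. The slice is equivariant for the $\mathbb{G}_{m}$-action twisted through $\rho^{\vee}$, and evaluating the half-integral cocharacter $\rho^{\vee}=\tfrac{1}{2}\sum_{\alpha>0}\alpha^{\vee}$ on the torsor $L^{\times}$ requires a square root $L^{1/2}$; given such a choice, $\kappa$ globalizes to a map $\mathfrak{c}^{L}\to\mathfrak{g}^{L}$ of affine bundles over $X$, sending each $\sigma\in H=\Gamma(X,\mathfrak{c}^{L})$ to a canonical regular Higgs bundle and thereby producing a section $H\to\hig$ of $h$.

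For (4), I would first construct the good moduli space $\mhig$ as the moduli of semistable Higgs bundles, by GIT or by a Simpson/Faltings-type argument, and then show that $\mhig\to H$ is proper; projectivity over the affine base $H$ follows from properness together with a relatively ample line bundle. The step I expect to be the main obstacle is the equidimensionality of the Hitchin fibres needed in (1) over all of $H$, not only over the smooth-cameral locus $H^{0}$: upper semicontinuity bounds the fibre dimension below by the generic value, but ruling out a jump over the singular cameral covers requires the codimension estimates for the discriminant together with control of the $\delta$-invariant of $\cam$, and it is precisely this input that makes the relative complete intersection and the Gorenstein property hold on the singular fibres that the rest of the paper studies. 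Properness in (4) is the other delicate point, handled classically by a Langton-type semistable reduction, completing a family over a punctured disc to a unique semistable limit with the same image in $H$.
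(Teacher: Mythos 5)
The paper does not prove this proposition: it is stated as a summary of known facts, with the proof delegated to \cite{Opers}, \cite{Geometric Langlands} and \cite{Higgs Faltings}. So your proposal can only be measured against the standard arguments in those references, and for parts $(1)$, $(3)$ and $(4)$ it reproduces them faithfully: the deformation complex $[\mathfrak{g}_{E_{G}}\xrightarrow{\mathrm{ad}\,\phi}\mathfrak{g}_{E_{G}}\otimes L]$, lci via the dimension count, miracle flatness over the smooth affine base $H$, the Kostant slice twisted through $\rho^{\vee}$ evaluated on $L^{1/2}$ (which is exactly why the square root enters), and GIT plus Langton-type semistable reduction for $(4)$. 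You also correctly identify equidimensionality of all Hitchin fibres as the one serious input for $(1)$, and you correctly read the statement ``$\pi_{0}(\hig)\simeq\pi_{0}(G)$'' as $\pi_{1}(G)$, consistent with the paper's own introduction. (For the first half of $(2)$, note that ``connected fibres plus surjective'' does not by itself give a bijection on $\pi_{0}$; but the $\mathbb{G}_{m}$-scaling $t\mapsto(E_{G},t\phi)$ contracts $\hig$ onto the zero section, which repairs this immediately.)

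The genuine gap is in the second assertion of $(2)$, which you argue by passing to the regular locus and invoking the torsor structure under $\jtgcam$. This fails for two reasons. First, $\hig^{reg}(\cam)$ is dense in $\hig(\cam)$ only when $\cam$ is \emph{reduced} (this hypothesis appears explicitly in part $(2)$ of Proposition~\ref{geo of hig}); the statement you must prove is for \emph{every} $\sigma\in H$, and over $\sigma=0$, say, the fibre is the global nilpotent cone, in which the regular locus is just one of several equidimensional irreducible components, so no information about $\pi_{0}$ of the whole fibre can be extracted from the torsor. Second, even on the regular locus, the claim that ``each component of the group stack is connected by the structure of the regular centralizer'' is not automatic: what is true for free is only the surjection $X_{*}(T)\twoheadrightarrow X_{*}(T)/W\twoheadrightarrow\pi_{0}(\jtgcam)$ of Lemma~\ref{surjection on pio} (Ng\^o $4.10.2$--$4.10.3$), and the coinvariants $X_{*}(T)/W$ are in general strictly larger than $\pi_{1}(G)$ — for $G=\mathrm{SL}_{2}$ they are $\mathbb{Z}/2$ while $\pi_{1}(G)=0$. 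Cutting $\pi_{0}(\jtgcam)$ down to $\pi_{1}(G)$ uses the nonemptiness of every ramification divisor $D_{\alpha}$, which comes from the assumption $l>2g$; your sketch never invokes this. The argument that actually covers all $\sigma$ runs through part $(4)$ rather than the torsor: one checks connectedness per component over the dense open locus $H^{0}$ of smooth cameral covers, then uses properness of $\mhig\rightarrow H$, normality of $H$, and Stein factorization to propagate connectedness of the moduli-space fibres to all of $H$, and finally transfers $\pi_{0}$ from the moduli space back to the stack along the good moduli space map (which has connected fibres). So the logical order in your plan should be reversed: $(4)$, together with the generic computation, is what proves the second half of $(2)$, not a fibrewise analysis of $\jtgcam$.
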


\subsection{Regular centralizer group scheme}
For any cameral cover $\widetilde{X}$ over $X$, one can associate to it a smooth group scheme $J_{G,\cam}$ over $X$, called the regular centralizer group scheme (See Section $4$ of \cite{Gerbe of Higgs} and Section $2$ of \cite{Fundamental lemma}). Denote the Picard stack of $J_{G,\cam}$ torsor by $\jtgcam$. One also has the universal group scheme $J_{G}$ over $X\times H$. The assignment $\cam\rightarrow\jtgcam$ defines a Picard stack over $H$, denote it by $\jtg$. The importance of $J_{G}$ comes from the fact that one can twist Higgs bundles by $J_{G}$ torsors, see Subsection $4.3$ of \cite{Fundamental lemma} and Subsection $2.6$ of \cite{Geometric Langlands}. Hence one has a natural action of $\jtg$ on $\hig$ over $H$:
$$\jtg\times_{H}\hig\rightarrow\hig .$$
 
Let $\hig^{reg}$ be the open substack of $\hig$ parameterizing $(E_{G},\phi)$ such that for any point $x\in X$, the restriction of $\phi$ to $x$ is an element in $\mathfrak{g}_{reg}$ once we trivialize $E_{G}$ and $L$ at $x$. Then it is well-known that $\hig^{reg}$ is a torsor over $\jtg$. To summarize, we have the following proposition (See \cite{Geometric Langlands}, \cite{Gerbe of Higgs}, \cite{Langlands Hitchin} and \cite{Fundamental lemma}):
\begin{proposition}\label{geo of hig}
\begin{enumerate1}
\item $\hig^{reg}$ is a torsor over $\jtg$. Hence once we choose a section of the Hitchin fibration, we get an isomorphism: $\hig^{reg}\simeq\jtg$.
\item If $\cam$ is reduced, then $\hig^{reg}(\cam)$ is dense in $\hig(\cam)$.
\item Over the open subscheme $H^{0}$ of $H$, one has: $\hig\times_{H}H^{0}\simeq\hig^{reg}\times_{H}H^{0}$.
\item Over $H^{\circ}$, there exists a Poincar\'e line bundle on $\jtg\times_{H^{\circ}}\jtgc$ such that it induces an isomorphism of Picard stacks over $H^{\circ}$: $\jtg\simeq (\jtgc)^{\check{}}$ where $(\jtgc)^{\check{}}$ denotes the dual Picard stack (See \cite{Geometric Langlands} and \cite{Langlands Hitchin}). 
\item Under our assumptions, we have $H^{0}(X,J_{G,\cam})\simeq Z(G)$. 
\end{enumerate1}
\end{proposition}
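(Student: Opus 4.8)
The plan is to assemble the five assertions from the structural theory of the regular centralizer group scheme, treating the abelian-scheme geometry of the Hitchin fibration as the organizing principle, so that each statement is either a dimension count or a citation. The common engine is the Kostant section $\kappa\colon\mathfrak{c}\to\mathfrak{g}_{reg}$ together with Ng\^o's observation that the universal centralizer $I=\{(g,x):\mathrm{Ad}(g)x=x\}$, abelian over $\mathfrak{g}_{reg}$, descends along $\kappa$ to a smooth commutative group scheme $J$ over $\mathfrak{c}$; twisting by $L$ and pulling back along the cameral datum produces $J_{G,\cam}$ over $X$. For (1) I would show that the action of $\jtg$ on $\hig^{reg}$ is simply transitive: \'etale-locally on $X$ a regular Higgs field is $\mathrm{Ad}(E_{G})$-conjugate to the image of $\kappa$, so its automorphism group scheme is exactly $J_{G,\cam}$, and hence two regular Higgs bundles with the same Hitchin image differ by a $J_{G,\cam}$-torsor while every torsor yields a new regular Higgs bundle by twisting. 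Descent gives the torsor structure, and the isomorphism $\hig^{reg}\simeq\jtg$ follows once one checks, using part (3) of Proposition~\ref{geo of hitchin fibration}, that the section attached to a square root of $L$ lands in the regular locus.

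Parts (2) and (3) are dimension-theoretic. For (3), over $H^{0}$ the cameral cover is smooth; a non-regular value of $\phi$ at a point would force the cameral cover to acquire a singularity incompatible with the reduced discriminant permitted there, so every Higgs bundle is regular and $\hig\times_{H}H^{0}=\hig^{reg}\times_{H}H^{0}$. For (2) I would bound the codimension of the complement $\hig(\cam)\setminus\hig^{reg}(\cam)$: since $h$ is a complete intersection by part (1) of Proposition~\ref{geo of hitchin fibration}, $\hig(\cam)$ is equidimensional of the expected dimension, and reducedness of $\cam$ guarantees that the Higgs field is regular away from a locus of positive codimension, whence density.

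Part (4) is the autoduality of the Hitchin system over the locus $H^{\circ}$ of smooth cameral covers, which is the main theorem of \cite{Geometric Langlands} and \cite{Langlands Hitchin}, so I would simply cite it: there the fibers of $\jtg$ are torsors under abelian schemes, and the Poincar\'e bundle refines their self-duality to the group-scheme level $J_{G}\leftrightarrow J_{\check{G}}$ via the $W$-invariant form identifying $\mathfrak{t}\simeq\mathfrak{\check{t}}$. For (5) I would compute $H^{0}(X,J_{G,\cam})$ from the presentation relating $J_{G,\cam}$ to the Weil restriction $(\pi_{*}T_{\cam})^{W}$: over the unramified locus a section is a $W$-equivariant map $\cam\to T$, which by integrality of $\cam$ is constant and $W$-invariant, and the regularity conditions imposed at the ramification points further constrain it to lie in $Z(G)=\bigcap_{\alpha}\ker\alpha$; the hypotheses $g\geq 2$ and $\deg L>2g$ ensure that no positive-degree sections contribute.

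The main obstacle is (1): making the torsor statement precise requires the full local theory of regular centralizers, namely that $I|_{\mathfrak{g}_{reg}}$ is abelian and descends along $\kappa$, that the descended $J$ is flat, and that its formation is compatible with the base changes used to form $J_{G,\cam}$ and to twist Higgs bundles. Once this input is granted, (2) and (3) are clean dimension counts, while (4) and (5) are quoted from, respectively, the autoduality literature and the structure theory of $J_{G,\cam}$.
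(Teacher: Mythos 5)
The paper offers no proof of this proposition: it is stated explicitly as a summary of known results, with pointers to \cite{Geometric Langlands}, \cite{Gerbe of Higgs}, \cite{Langlands Hitchin} and \cite{Fundamental lemma}, so there is no internal argument to compare against. Your reconstruction follows the same route as those references --- descent of the universal centralizer along the Kostant section for (1), citation of the autoduality theorem of Donagi--Pantev and Chen--Zhu for (4) --- and in those parts it is sound.

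Two steps, however, are asserted rather than proved, and one of them misattributes the role of a hypothesis. In (2), the sentence ``reducedness of $\cam$ guarantees that the Higgs field is regular away from a locus of positive codimension'' is the entire content of the statement, not a consequence of the complete-intersection property; equidimensionality of $\hig(\cam)$ reduces density to a codimension estimate on the irregular locus, but that estimate is exactly what must be supplied. The mechanism in \cite{Fundamental lemma} is the product formula (the same Proposition $4.15.1$ the paper invokes in Appendix $B$) together with dimension bounds for the non-regular part of affine Springer fibers; for $G=\textrm{GL}_{n}$ this is the classical density of the Jacobian in the compactified Jacobian of a reduced planar curve. As written, your dimension count is circular. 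In (5), constancy of a $W$-equivariant map $\cam\rightarrow T$ is automatic ($\cam$ is projective and integral, $T$ is affine), so ruling out ``positive-degree sections'' is not where the hypotheses on $L$ enter. The hypothesis $\deg L>2g$ is used to guarantee that \emph{every} ramification divisor $D_{\alpha}$ is nonempty, so that the condition $\alpha(t)=1$ is genuinely imposed for each root --- precisely how the paper uses it in Lemma~\ref{more geometry}. If some $D_{\alpha}$ were empty one would only obtain a subgroup of $T^{W}$ cut out by the remaining roots, and $T^{W}$ can strictly contain $Z(G)$ (for $\textrm{PGL}_{2}$, $T^{W}=\mu_{2}$ while $Z(G)=1$). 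Finally, in (3) note that the defining property of $H^{0}$ is smoothness of the cameral cover, not reducedness of the discriminant; the implication ``smooth cameral cover $\Rightarrow$ everywhere regular'' is proved by reduction to rank-one Levi subgroups and a transversality criterion, which is exactly the machinery the paper itself develops later in Lemma~\ref{regularity criterion} and part (2) of Lemma~\ref{an open set of H}, so your sketch is consistent with it but conflates the two conditions.
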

Another important property about the group scheme $J_{G,\cam}$ is that when we pullback it to $\cam$, it has a natural morphism to $T\times\cam$, that is, one has a natural morphism of group schemes over $\cam$:
$$J_{G,\cam}\times_{X}\cam\rightarrow T\times\cam .$$
Moreover, let $X^{un}$ be an open subscheme of $X$ where the cameral cover is unramified and let $\cam^{un}$ be its preimage in $\cam$. Then the above morphism is an isomorphism over $\cam^{un}$. The following proposition summarizes what we need (See Section $3$ of \cite{Geometric Langlands}):
\begin{proposition}
\begin{enumerate1}\label{prop of jtors}
\item There exist natural morphisms of group stacks: 
$$\jtgcam\rightarrow\bunt^{W}(\cam)\rightarrow\bunt(\cam) ,$$
where $\bunt^{W}(\cam)$ stands for the stack of $W$-equivariant $T$-torsors on $\cam$.
\item The category of $J_{G,\cam}$-torsors on $X^{un}$ is equivalent to the category of $W$ equivariant $T$-torsors on $\cam^{un}$. So once we fix a Kostant section, the category of Higgs bundles on $X^{un}$ is equivalent to the category of $W$ equivariant $T$-torsors on $\cam^{un}$.
\item Once we fix a Kostant section, we have a morphism $\hig^{reg}(\cam)\rightarrow\bunt^{W}(\cam)$ via the identification $\hig^{reg}(\cam)\simeq\jtgcam$ induced by the Kostant section.
\end{enumerate1}
\end{proposition}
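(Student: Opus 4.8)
The plan is to handle the three parts in turn, building everything from the natural morphism of group schemes $J_{G,\cam}\times_{X}\cam\rightarrow T\times\cam$ recorded just above the statement, together with the $W$-action on $\cam$ over $X$ (recall $X=\cam/W$).

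For part (1), I would construct the first arrow by pullback followed by extension of structure group. Given a $J_{G,\cam}$-torsor $P$ on $X$, form $\pi^{*}P=P\times_{X}\cam$, a torsor under $\pi^{*}J_{G,\cam}=J_{G,\cam}\times_{X}\cam$ on $\cam$. Since $P$ is pulled back from the quotient $X=\cam/W$ along $\pi$, the torsor $\pi^{*}P$ carries a canonical $W$-equivariant structure. Extending the structure group along the morphism $\pi^{*}J_{G,\cam}\rightarrow T\times\cam$ then yields a $T$-torsor on $\cam$, and because that morphism is $W$-equivariant for the diagonal action (part of the construction of $J_{G,\cam}$ in \cite{Gerbe of Higgs} and \cite{Fundamental lemma}), the resulting $T$-torsor inherits a $W$-equivariant structure. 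This assignment is functorial and monoidal for tensor products of torsors, hence defines a morphism of group stacks $\jtgcam\rightarrow\bunt^{W}(\cam)$. The second arrow $\bunt^{W}(\cam)\rightarrow\bunt(\cam)$ is simply the forgetful functor dropping the $W$-equivariant structure.

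For part (2), I would restrict to the unramified locus, where $\pi\colon\cam^{un}\rightarrow X^{un}$ is finite \'etale and a $W$-torsor, and where the morphism $J_{G,\cam}\times_{X}\cam\rightarrow T\times\cam$ is an isomorphism by the property recorded above. Faithfully flat (Galois) descent along the $W$-torsor $\cam^{un}\rightarrow X^{un}$ gives an equivalence between $(J_{G,\cam})|_{X^{un}}$-torsors and $W$-equivariant $(\pi^{*}J_{G,\cam})|_{\cam^{un}}$-torsors, whose quasi-inverse is the pullback functor of part (1); composing with the isomorphism $(\pi^{*}J_{G,\cam})|_{\cam^{un}}\simeq T\times\cam^{un}$ identifies the target with $W$-equivariant $T$-torsors on $\cam^{un}$. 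This produces the first equivalence. For the Higgs-theoretic reformulation I would invoke Proposition~\ref{geo of hig}(1): a choice of Kostant section identifies $\hig^{reg}(\cam)\simeq\jtgcam$, and over $X^{un}$ every Higgs bundle is automatically regular, so the equivalence just obtained transports to an equivalence between Higgs bundles on $X^{un}$ and $W$-equivariant $T$-torsors on $\cam^{un}$. Part (3) is then immediate: compose the identification $\hig^{reg}(\cam)\simeq\jtgcam$ of Proposition~\ref{geo of hig}(1) with the morphism $\jtgcam\rightarrow\bunt^{W}(\cam)$ of part (1).

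The only genuinely delicate point is the $W$-equivariance underlying part (1): one must verify that $\pi^{*}J_{G,\cam}\rightarrow T\times\cam$ intertwines the $W$-action on $\cam$ with the $W$-action on the abstract Cartan $T$, so that extension of structure group lands in $\bunt^{W}(\cam)$ rather than merely in $\bunt(\cam)$. This is exactly the content of the Donagi--Gaitsgory/Ng\^o description of $J_{G,\cam}$ as a form of the $W$-invariants in the Weil restriction of $T\times\cam$, and its verification requires care with the fixed-point behavior of $W$ over the branch divisor; away from ramification the issue disappears, since part (2) shows the construction is even an equivalence there. Thus the whole subtlety is concentrated at the branch points, and once the equivariance of that single morphism is granted the rest is formal.
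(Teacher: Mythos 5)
Your proposal is correct, but note that the paper itself offers no proof of this proposition: it is stated as a summary of known facts with a citation to Section~$3$ of \cite{Geometric Langlands} (and the construction of $J_{G,\cam}$ in \cite{Gerbe of Higgs}). Your argument is essentially the standard one from those references: the first arrow comes from the adjunction counit $\pi^{*}\pi_{*}(T\times\cam)\rightarrow T\times\cam$ restricted along $J_{G,\cam}\subseteq J^{1}_{G,\cam}=\pi_{*}(T\times\cam)^{W}$, whose $W$-equivariance is formal from the adjunction rather than requiring a fixed-point analysis at the branch divisor; the equivalence in part (2) is Galois descent along the finite \'etale $W$-torsor $\cam^{un}\rightarrow X^{un}$ combined with the isomorphism $J_{G,\cam}\times_{X}\cam\simeq T\times\cam$ over $\cam^{un}$ recorded just before the statement; and parts (2)--(3) then follow from the Kostant-section identification $\hig^{reg}(\cam)\simeq\jtgcam$ of Proposition~\ref{geo of hig} together with the observation that Higgs bundles are automatically regular over $X^{un}$. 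The one caveat worth flagging: the genuine Donagi--Gaitsgory subtlety at ramification (the condition $\alpha(t)=1$ along $D^{\alpha}$ cutting $J_{G,\cam}$ out of $J^{1}_{G,\cam}$) affects the description of the \emph{image} of $\jtgcam\rightarrow\bunt^{W}(\cam)$, not the existence of the morphism or the unramified equivalence, so your concentration of the difficulty at the branch points is apt but does not leave a gap in what the proposition actually asserts.
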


\section{The Abel-Jacobi map and the norm map}
In this section we study the norm map and the Abel-Jacobi map for the stack of $J_{G,\cam}$ torsors. Since we will mostly work with a fixed cameral cover, we will just write $J_{G}$ instead of $J_{G,\cam}$ in this section to simplify the notation. 

Let $\widetilde{X}\xrightarrow{\pi} X$ be a cameral cover over $X$ which we will assume to be integral and let $T$ be the corresponding torus. Let us recall the construction of the regular centralizer group scheme. In fact, one can associate to $\widetilde{X}$ three closely related sheaf of groups over $X$. First, we set $J^{1}_{G}=\pi_{*}(T\times\widetilde{X})^{W}$. Let $D^{\alpha}$ be the ramification divisor in $\widetilde{X}$ corresponding to the root $\alpha$. Then for any open subset $U\subseteq X$, we set (See Section $4$ of \cite{Gerbe of Higgs}):
$$J_{G}(U)=\{t\in J^{1}_{G}(U)\mid \alpha(t)=1  \  \mbox{for each root} \
\alpha \} .$$
Let $J^{0}_{G}$ be the neutral component of $J_{G}$. Then one has natural inclusions: 
$$J^{0}_{G}\subseteq J_{G}\subseteq J^{1}_{G} $$
and the quotients are sheaves supported on the ramification locus. Moreover, when $G$ is adjoint, then $J^{0}_{G}=J_{G}$ (See Proposition $2.3.1$ of \cite{Fundamental lemma}). When $G$ is simply connected, $J_{G}=J^{1}_{G}$ (See Section $4$ of \cite{Gerbe of Higgs}). 

We shall make use of the following lemma, which is a direct consequence of Tsen's theorem (See Chapter 10, Section $7$ of \cite{Local fields}):
\begin{lemma}\label{Tsen}
Let $\mathcal{G}$ be a sheaf of abelian groups on $X$ such that $\mathcal{G}$ is generically a torus. Then $H^{2}(X,\mathcal{G})=0$. If $\eta$ is the generic point of $X$, then $H^{i}(\eta,\mathcal{G}_{\eta})=0$ for $i\geqslant 1$.
\end{lemma}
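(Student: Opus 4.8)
The plan is to deduce both assertions from Tsen's theorem in the sharp form that $K=k(X)$, and more generally the fraction field of every strict henselization $\mathcal{O}_{X,\bar{x}}^{sh}$, has cohomological dimension at most $1$; the statement about the generic point is the heart of the matter, and the global vanishing on $X$ will be bootstrapped from it. For the \emph{generic point}, the stalk $\mathcal{G}_{\eta}$ is a torus over $K$, split by a finite Galois extension $L/K$ (separable since $\mathrm{char}\,k=0$) with group $\Gamma$. I would first record that $H^{i}(L,\mathbb{G}_{m})=0$ for every $i\geq 1$: Hilbert 90 gives $i=1$, Tsen's theorem gives $\mathrm{Br}(L)=H^{2}(L,\mathbb{G}_{m})=0$ (as $L$ is again the function field of a curve over $k$), and for $i\geq 3$ the Kummer sequence together with $\mathrm{cd}(L)\leq 1$ forces $H^{i}(L,\mathbb{G}_{m})$ to be uniquely divisible, while positive-degree Galois cohomology is always torsion, so it vanishes. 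Since $\mathcal{G}_{\eta}$ becomes $\mathbb{G}_{m}^{r}$ over $L$, this gives $H^{\geq 1}(L,\mathcal{G}_{\eta})=0$, so the Hochschild--Serre spectral sequence for $L/K$ collapses to $H^{n}(K,\mathcal{G}_{\eta})\simeq H^{n}(\Gamma,\mathcal{G}_{\eta}(L))$. For $n\geq 2$ these vanish by the same argument using $\mathrm{cd}\leq 1$ and divisibility; for $n=1$ I would invoke Steinberg's theorem on the triviality of $H^{1}$ of a connected linear group over a perfect field of cohomological dimension $\leq 1$ (equivalently, the vanishing of the relevant relative Brauer groups), which applies as tori are connected.

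For the \emph{global vanishing}, the adjunction morphism $\mathcal{G}\hookrightarrow j_{*}\mathcal{G}_{\eta}$ is injective, since a section of the separated group scheme $\mathcal{G}$ that dies generically is zero, and its cokernel $\mathcal{Q}$ is a skyscraper supported on the finite locus where $\mathcal{G}$ degenerates. As the residue fields there equal $k=\bar{k}$, one has $H^{i}(X,\mathcal{Q})=0$ for $i\geq 1$, whence the long exact sequence gives $H^{2}(X,\mathcal{G})\simeq H^{2}(X,j_{*}\mathcal{G}_{\eta})$. Running the Leray spectral sequence for $j\colon\eta\to X$, the sheaves $R^{q}j_{*}\mathcal{G}_{\eta}$ are skyscrapers computing the Galois cohomology of the local fields $K_{x}^{sh}$; by the local form of Tsen these have cohomological dimension $\leq 1$, so $R^{\geq 2}j_{*}\mathcal{G}_{\eta}=0$ and only $R^{1}j_{*}\mathcal{G}_{\eta}$, a skyscraper, survives in positive degree and then only through $H^{0}$. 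Reducing to the split torus by transfer along the cover trivializing $\mathcal{G}_{\eta}$, the group $H^{2}(X,j_{*}\mathcal{G}_{\eta})$ is pinned to $H^{2}(\mathbb{G}_{m})$ of a smooth projective curve over $k$, i.e. to a Brauer group, which vanishes.

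The step I expect to be the main obstacle is exactly this passage from the generic point to the whole \emph{projective} curve: controlling the local contributions $R^{1}j_{*}\mathcal{G}_{\eta}$ at the ramified points and the $d_{2}$-differential of the Leray sequence, so that they do not leak into degree $2$. For a split torus this is transparent via the divisor sequence $0\to\mathbb{G}_{m}\to j_{*}\mathbb{G}_{m,\eta}\to\mathrm{Div}\to 0$ combined with $\mathrm{Br}(X)=0$, and I would reduce the general isotrivial torus to this case by descent along the finite cover on which it splits, via Shapiro's lemma. The remaining ingredients---Hilbert 90, the torsion-versus-divisibility dichotomy, and Steinberg's theorem for $H^{1}$---are routine once $\mathrm{cd}(K)\leq 1$ is available.
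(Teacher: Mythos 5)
Your treatment of the generic point is correct and complete modulo standard citations: $\mathrm{cd}(k(X))\leq 1$ by Tsen, the vanishing $H^{i}(L,\mathbb{G}_{m})=0$ for $i\geq 1$ via Hilbert 90, Tsen, and the torsion-versus-unique-divisibility dichotomy, then Hochschild--Serre, the Kummer d\'evissage through the finite group scheme $T[m]$ for $n\geq 2$, and Steinberg's theorem for $n=1$. Be aware, though, that the paper offers no argument at all here --- it treats the lemma as a direct consequence of Serre, \emph{Local Fields}, Ch.\ X, \S 7 --- and that Serre's mechanism is lighter than yours: since Hilbert 90 and Tsen hold over every intermediate field of $L/K$, the module $L^{\times}$ is cohomologically trivial over $\mathrm{Gal}(L/K)$ (Ch.\ IX), hence so is $T(L)=X_{*}(T)\otimes L^{\times}$, and all $H^{n}(\Gamma,T(L))$ vanish at once; in particular $n=1$ needs no appeal to Steinberg.

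In the global part there are two issues. First, a factual misstatement: the cokernel $\mathcal{Q}$ of $\mathcal{G}\hookrightarrow j_{*}\mathcal{G}_{\eta}$ is \emph{not} a skyscraper on the degeneration locus --- already for $\mathcal{G}=\mathbb{G}_{m}$ it is the divisor sheaf, supported at every closed point. This is harmless: $\mathcal{Q}$ is still a direct sum of skyscrapers at points with algebraically closed residue fields, so $H^{\geq 1}(X,\mathcal{Q})=0$ and your injection $H^{2}(X,\mathcal{G})\hookrightarrow H^{2}(X,j_{*}\mathcal{G}_{\eta})$ survives (note also that injectivity of $\mathcal{G}\to j_{*}\mathcal{G}_{\eta}$ uses that $\mathcal{G}$ is a separated group scheme, which covers the paper's applications but is stronger than the stated hypothesis ``sheaf of abelian groups''). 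Second, the step you yourself flag as the main obstacle is a genuine gap as sketched: the Shapiro/transfer reduction only handles induced (quasi-trivial) tori, a general torus split by $L$ is merely a subgroup or quotient of one, and since $j_{*}$ is only left exact the d\'evissage reintroduces precisely the $R^{1}j_{*}$-terms and $d_{2}$-differentials you are trying to avoid. The repair is within the toolkit you already used: the stalk of $R^{1}j_{*}\mathcal{G}_{\eta}$ at a closed geometric point is $H^{1}(K_{x}^{sh},\mathcal{G}_{\eta})$, where $K_{x}^{sh}=\mathrm{Frac}(\mathcal{O}_{X,\bar{x}}^{sh})$ is perfect of cohomological dimension $\leq 1$ (its absolute Galois group is $\widehat{\mathbb{Z}}$ in characteristic zero), so the same Steinberg theorem you invoked at $\eta$ gives $H^{1}(K_{x}^{sh},\mathcal{G}_{\eta})=0$. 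Hence $R^{1}j_{*}\mathcal{G}_{\eta}=0$ outright, the Leray spectral sequence for $j$ degenerates, $H^{2}(X,j_{*}\mathcal{G}_{\eta})$ injects into $H^{2}(\eta,\mathcal{G}_{\eta})=0$, and there is no $d_{2}$ left to control --- no transfer argument needed.
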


Let us now construct the norm map. First, we claim that there exists a natural morphism of group schemes $\pi_{*}(T\times\widetilde{X})\rightarrow J^{1}_{G}$. In fact, by construction, for any $S$ over $X$, let $\widetilde{S}$ be the fiber product:
$$\xymatrix{
\widetilde{S}\ar[r] \ar[d] & \widetilde{X} \ar[d]\\
S \ar[r] & X ,
}$$
we have that $\pi_{*}(T\times\widetilde{X})(S)=X_{*}(T)\otimes O_{\widetilde{S}}^{\times}$ where $O_{\widetilde{S}}^{\times}$ stands for the group of units in $O_{\widetilde{S}}$. One consider the morphism:
\begin{gather}
\pi_{*}(T\times\widetilde{X})(S)\rightarrow \pi_{*}(T\times\widetilde{X})(S) \notag \\
\phi\otimes f\rightarrow\sum_{w\in W}w(\phi)\otimes w(f) \notag
\end{gather}
It is easy to see that the image lies in the $W$-invariant part, hence one gets a group homomorphism: $\pi_{*}(T\times\widetilde{X})\rightarrow J^{1}_{G}$. Moreover, since $\pi_{*}(T\times\widetilde{X})$ has geometrically connected fibers as group schemes over $X$, it actually factors through $J^{0}_{G}\subseteq J_{G}$. By looking at the morphism at the lie algebra level, one concludes that $\pi_{*}(T\times\widetilde{X})\rightarrow J^{0}_{G}$ is smooth and surjective. To summarize, we have the following:
\begin{proposition}\label{norm map}
There exists a natural morphism of groups schemes over $X$: 
$$\pi_{*}(T\times\widetilde{X})\xrightarrow{Nm}J_{G} .$$
We shall call it the norm map. The image of the norm map is equal to $J^{0}_{G}$. 
\end{proposition}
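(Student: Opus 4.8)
The plan is to start from the symmetrization homomorphism just written down and verify the three assertions in turn: that it is a well-defined morphism of $X$-group schemes into $J^{1}_{G}$, that its image lies in $J^{0}_{G}$, and that it surjects onto $J^{0}_{G}$. For the first point I would note that on $S$-points the assignment $\phi\otimes f\mapsto\sum_{w\in W}w(\phi)\otimes w(f)$ is $\mathbb{Z}$-bilinear in the two tensor factors and hence extends to an endomorphism of the abelian group $\pi_{*}(T\times\widetilde{X})(S)=X_{*}(T)\otimes O_{\widetilde{S}}^{\times}$; applying any $w'\in W$ merely reindexes the sum, so the output is genuinely $W$-invariant and lands in $J^{1}_{G}=\pi_{*}(T\times\widetilde{X})^{W}$. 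Naturality in $S$ is immediate, so this gives a morphism of group schemes over $X$, which is our candidate $Nm$.

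Second, I would use connectedness to pin the image inside $J^{0}_{G}$. The group scheme $\pi_{*}(T\times\widetilde{X})$ is the Weil restriction of the torus $T$ along the finite flat map $\pi$, hence is smooth with geometrically connected fibers; the scheme-theoretic image of such a group scheme under a homomorphism is fiberwise connected and contains the identity section, so it necessarily lies in the neutral component $J^{0}_{G}$. This is precisely the step that enforces the root conditions $\alpha(t)=1$ rather than the sign-ambiguous conditions visible at ramification: the only content of $J^{0}_{G}\subseteq J_{G}$ is at the finitely many ramification points, where $W$-invariance forces $\alpha(t)=\pm1$, and connectedness of the source selects the $+1$ branch.

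Third, and this is the crux, I would prove surjectivity onto $J^{0}_{G}$ by reducing to the Lie algebra. Since both $\pi_{*}(T\times\widetilde{X})$ and $J^{0}_{G}$ are smooth $X$-group schemes with connected fibers, it suffices to check that the differential of $Nm$ is surjective onto $\mathrm{Lie}(J^{0}_{G})$ fiber by fiber: a homomorphism of smooth group schemes that is fiberwise submersive at the identity is smooth, hence has fiberwise-open image, which must exhaust the connected target. On Lie algebras the map becomes the same symmetrization $\xi\otimes f\mapsto\sum_{w}w(\xi)\otimes w(f)$ from $\mathfrak{t}\otimes\pi_{*}O_{\widetilde{X}}$ to $(\mathfrak{t}\otimes\pi_{*}O_{\widetilde{X}})^{W}=\mathrm{Lie}(J^{1}_{G})$, and in characteristic zero this is $|W|$ times the canonical projector onto the $W$-invariants; being a split $O_{X}$-linear surjection it remains surjective after restriction to any fiber. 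What then remains is to identify $\mathrm{Lie}(J^{1}_{G})$ with $\mathrm{Lie}(J^{0}_{G})$.

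The decisive and slightly delicate point is exactly this last identification, which is where the ramification analysis sits, and which I expect to be the only real obstacle. I anticipate that the differences among $J^{0}_{G}\subseteq J_{G}\subseteq J^{1}_{G}$ are concentrated in the component groups over the ramification divisors and have no effect on the Lie algebra. Concretely, over the unramified locus $\pi$ is \'etale and everything is formal; at a ramification point a local computation should show that $W$-invariant tangent vectors are forced to vanish along the fixed divisor $D^{\alpha}$, so that the linearization $d\alpha(\xi)=0$ of the defining condition $\alpha(t)=1$ is automatic and $\mathrm{Lie}(J_{G})=\mathrm{Lie}(J^{1}_{G})$; since $J^{0}_{G}$ is open in $J_{G}$ it shares the same Lie algebra. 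Granting this, the fiberwise Lie-algebra surjectivity above yields smoothness and surjectivity of $Nm$ onto $J^{0}_{G}$, and combined with the inclusion from the second step this gives $\mathrm{Im}(Nm)=J^{0}_{G}$, completing the argument.
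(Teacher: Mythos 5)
Your proposal is correct and follows essentially the same route as the paper: define the symmetrization map on $S$-points landing in $J^{1}_{G}=\pi_{*}(T\times\widetilde{X})^{W}$, use geometric connectedness of the fibers of the Weil restriction to factor through $J^{0}_{G}$, and deduce smoothness and surjectivity onto $J^{0}_{G}$ by checking the map at the level of Lie algebras. The one step the paper leaves implicit --- that $\mathrm{Lie}(J^{0}_{G})=\mathrm{Lie}(J_{G})=\mathrm{Lie}(J^{1}_{G})$ because the successive quotients are supported on the ramification locus and the linearized condition $d\alpha(\xi)=0$ along $D^{\alpha}$ is forced by $W$-invariance (in characteristic zero, $s_{\alpha}$-invariance gives $\langle\alpha,\xi\rangle\check{\alpha}=0$) --- is exactly the local computation you supply, and it is correct.
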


As a byproduct, let us prove the following:
\begin{lemma}\label{zariski local trivial}
$J^{0}_{G}$ torsors are Zariski locally trivial on $X$ (Hence same is true for $J_{G}$ or $J^{1}_{G}$ torsors).
\end{lemma}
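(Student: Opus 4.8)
The plan is to transport Zariski local triviality across the norm map $Nm\colon \pi_{*}(T\times\widetilde{X})\to J_{G}$ of Proposition~\ref{norm map}, whose image is $J^{0}_{G}$. Write $\widetilde{J}=\pi_{*}(T\times\widetilde{X})$ and let $K=\ker(Nm)$; since $Nm$ is smooth and surjects onto $J^{0}_{G}$, the kernel $K$ is a smooth commutative group scheme over $X$, and over the unramified locus it is the kernel of a surjection of tori, hence generically a torus. I would organize the argument in three steps.

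First I would show that $\widetilde{J}$-torsors are Zariski locally trivial on $X$. Fixing a splitting $T\simeq\mathbb{G}_{m}^{r}$ (available since $k$ is algebraically closed) identifies $\widetilde{J}$ with $(\pi_{*}\mathbb{G}_{m})^{r}$. Because $\pi$ is finite, the higher direct images of $\mathbb{G}_{m}$ vanish and $H^{1}(X,\pi_{*}\mathbb{G}_{m})\simeq\mathrm{Pic}(\widetilde{X})$, so a $\pi_{*}\mathbb{G}_{m}$-torsor is a line bundle $\mathcal{L}$ on $\widetilde{X}$. Given $x\in X$, choose a Zariski open $V\subseteq\widetilde{X}$ containing the finite set $\pi^{-1}(x)$ on which $\mathcal{L}$ is trivial; since $\pi$ is finite and hence closed, $U=X\setminus\pi(\widetilde{X}\setminus V)$ is an open neighborhood of $x$ with $\pi^{-1}(U)\subseteq V$, so $\mathcal{L}$ is trivial over $\pi^{-1}(U)$ and the corresponding torsor is trivial over $U$.

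Second, I would lift an arbitrary $J^{0}_{G}$-torsor to a $\widetilde{J}$-torsor. The exact sequence $1\to K\to\widetilde{J}\to J^{0}_{G}\to 1$ yields a long exact cohomology sequence in which the obstruction to lifting lives in $H^{2}(X,K)$. By Lemma~\ref{Tsen}, this group vanishes because $K$ is generically a torus, so the map $Nm_{*}\colon H^{1}(X,\widetilde{J})\to H^{1}(X,J^{0}_{G})$ is surjective and every $J^{0}_{G}$-torsor $P$ is the contracted product $Q\times^{\widetilde{J},Nm}J^{0}_{G}$ of some $\widetilde{J}$-torsor $Q$.

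Finally, a Zariski-local trivialization of $Q$ on $U$ pushes forward along $Nm$ to a section of $P$ over $U$, so $P$ is Zariski locally trivial; the same then follows for $J_{G}$ and $J^{1}_{G}$ torsors, which receive $J^{0}_{G}$. The step I expect to be the main obstacle is the input that $K$ is generically a torus, i.e. that the kernel of the norm map is connected at the generic point. I would verify this on cocharacter lattices, using the explicit description of $J^{0}_{G}$ over the unramified locus as the $W$-invariants in $\pi_{*}(T\times\widetilde{X})$; once this connectedness is established, Lemma~\ref{Tsen} supplies the vanishing $H^{2}(X,K)=0$ and the rest of the argument is formal.
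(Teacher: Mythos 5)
Your proof is essentially correct, but it takes a genuinely different route from the paper's. The paper argues purely locally: it fixes $x\in X$ and proves $H^{1}(\spec(O_{x}),J^{0}_{G})=0$ directly, by comparing $J^{0}_{G}$ with $j_{*}(J^{0}_{G,\eta})$ via a diagram whose rows have skyscraper quotients $\widetilde{Q}$ and $Q$, using Lemma~\ref{Tsen} at the generic point (both for $H^{1}(\eta,J^{0}_{G,\eta})=0$, giving $j_{*}\simeq Rj_{*}$, and for surjectivity of the norm on $j_{*}$-sheaves) together with the vanishing $H^{1}(\spec(\widetilde{O}_{x}),T)=0$. You instead argue globally: you first establish Zariski-local triviality of $\pi_{*}(T\times\widetilde{X})$-torsors by identifying them with $T$-bundles on $\cam$ (triviality of line bundles on the semilocal ring at $\pi^{-1}(x)$, pushed down by closedness of $\pi$), then lift an arbitrary $J^{0}_{G}$-torsor globally using the exactness of $H^{1}(X,\pi_{*}(T\times\widetilde{X}))\rightarrow H^{1}(X,J^{0}_{G})\rightarrow H^{2}(X,K)$ and $H^{2}(X,K)=0$ from Lemma~\ref{Tsen}. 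This lifting step is precisely the content of the proposition the paper proves immediately \emph{after} the lemma (surjectivity of $H^{1}(\cam,T)\rightarrow H^{1}(X,J_{G})$), so your route in effect reorders the paper's material and derives the lemma as a corollary; since that proposition's proof is independent of the lemma, there is no circularity. What each buys: the paper's local computation works entirely over $\spec(O_{x})$ and never needs a global lifting, while your argument is shorter and makes the mechanism transparent, at the cost of invoking global $H^{2}$-vanishing on the projective curve. Two soft spots, both repairable: first, your parenthetical inference ``kernel of a surjection of tori, hence generically a torus'' is false in general (e.g.\ the $n$-th power map on $\mathbb{G}_{m}$ has kernel $\mu_{n}$); you correctly flag this yourself, and your proposed fix works --- over $\bar{\eta}$ the norm becomes $\textrm{Maps}(W,T)\rightarrow T$, which on cocharacter lattices is $\mathbb{Z}[W]\otimes X_{*}(T)\rightarrow X_{*}(T)$, $w\otimes\phi\mapsto w(\phi)$, visibly surjective, so the kernel is connected. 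Note the paper's own proof of the subsequent proposition quietly relies on the same unproved assertion (``since its kernel is a torus over $\eta$''), so here you are, if anything, more careful. Second, your closing remark that $J_{G}$- and $J^{1}_{G}$-torsors inherit local triviality ``because they receive $J^{0}_{G}$'' gives the wrong reason: receiving a homomorphism transfers nothing (the trivial group receives maps from everything). The correct mechanism, implicit in the paper's parenthetical as well, is that $J_{G}/J^{0}_{G}$ and $J^{1}_{G}/J_{G}$ are skyscrapers over algebraically closed residue fields, so over $\spec(O_{x})$ both their $H^{1}$ vanishes and the coboundary analysis gives $H^{1}(\spec(O_{x}),J^{0}_{G})\twoheadrightarrow H^{1}(\spec(O_{x}),J_{G})$, whence triviality spreads to a Zariski neighborhood.
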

\begin{proof}
Let $x\in X$, $O_{x}$ be the local ring at $x$ and $k(\eta)$ be the field of fractions. One needs to show that any $J^{0}_{G}$ torsor over $\spec(O_{x})$ is trivial. Let $\spec(\widetilde{O}_{x})$ be the induced cameral cover over $\spec(O_{x})$ and $k(\widetilde{\eta})$ be its field of fractions. One has:
\[\xymatrixcolsep{4pc}\xymatrix{
\widetilde{\eta} \ar[r]^{\widetilde{j}} \ar[d] & \spec(\widetilde{O}_{x}) \ar[d]_{\pi} \\
\eta \ar[r]^{j} & \spec(O_{x}) .
}
\]
Lemma~\ref{Tsen} implies that $H^{1}(\eta,J^{0}_{G,\eta})=0$ and that we have isomorphisms $j_{*}(J^{0}_{G,\eta})\simeq Rj_{*}(J^{0}_{G,\eta})$ and $\widetilde{j}_{*}(T_{\widetilde{\eta}})\simeq R\widetilde{j}_{*}(T_{\widetilde{\eta}})$. These implies that we have exact sequences of sheaves on $\spec(O_{x})$:
$$\xymatrix{
0 \ar[r] & \pi_{*}(T\times\spec(\widetilde{O}_{x})) \ar[r] \ar[d]_{Nm} & \pi_{*}(\widetilde{j}_{*}(T_{\widetilde{\eta}})) \ar[r] \ar[d]_{Nm} & \widetilde{Q} \ar[r] \ar[d] & 0\\
0 \ar[r] & J^{0}_{G} \ar[r] & j_{*}(J^{0}_{G,\eta}) \ar[r] & Q \ar[r] & 0
} 
$$
where $\widetilde{Q}$ and $Q$ are supported on the closed point. Proposition~\ref{norm map} implies that $\pi_{*}(T_{\widetilde{\eta}})\xrightarrow{Nm}J^{0}_{G,\eta}$ is surjective. Since its kernel is a torus over $\eta$, Lemma~\ref{Tsen} implies that $\pi_{*}(\widetilde{j}_{*}(T_{\widetilde{\eta}}))\rightarrow j_{*}(J^{0}_{G,\eta})$ is surjective, which implies that $\widetilde{Q}\rightarrow Q$ is surjective. Using the fact that $H^{1}(\spec(\widetilde{O}_{x}),T)=0$ we conclude that $H^{0}(\pi_{*}(\widetilde{j}_{*}(T_{\widetilde{\eta}})))\rightarrow H^{0}(\widetilde{Q})\rightarrow H^{0}(Q)$ are surjective. This implies $H^{0}(j_{*}(J^{0}_{G,\eta}))\rightarrow H^{0}(Q)$ is surjective. Recall that we have $j_{*}(J^{0}_{G,\eta})\simeq Rj_{*}(J^{0}_{G,\eta})$, so we get:
$$H^{1}(j_{*}(J^{0}_{G,\eta}))\simeq H^{1}(Rj_{*}(J^{0}_{G,\eta}))\simeq H^{1}(\eta,J^{0}_{G})=0 .$$
One conclude from the long exact sequence that $H^{1}(\spec(O_{x}),J^{0}_{G})=0$. 

\end{proof}

Lemma~\ref{Tsen} implies the following:
\begin{proposition}
The norm map induces a surjection: $H^{1}(\widetilde{X}, T)\rightarrow H^{1}(X, J_{G})$. Hence $\bunt(\cam)$ is a smooth cover of $\jtg$.
\end{proposition}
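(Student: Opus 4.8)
The plan is to deduce both assertions from the two short exact sequences of commutative group schemes on $X$ that were already produced above, and then to feed them into Tsen's theorem. First I would reduce the source: since $\pi$ is finite, the higher direct images of $T$ vanish, so Leray gives $H^{1}(\widetilde{X},T)\simeq H^{1}(X,\pi_{*}(T\times\widetilde{X}))$, and under this identification the norm map on torsors is exactly the map induced on $H^{1}$ by the homomorphism $\pi_{*}(T\times\widetilde{X})\xrightarrow{Nm}J_{G}$ of Proposition~\ref{norm map}. Thus it suffices to show that $Nm$ induces a surjection $H^{1}(X,\pi_{*}(T\times\widetilde{X}))\to H^{1}(X,J_{G})$.

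Next I would factor $Nm$ through its image $J^{0}_{G}$, as in Proposition~\ref{norm map}. Writing $K=\ker(Nm)$, which is generically a torus (its generic fiber is the kernel of the surjection of tori $\pi_{*}(T_{\widetilde{\eta}})\to J^{0}_{G,\eta}$), the sequence $0\to K\to\pi_{*}(T\times\widetilde{X})\to J^{0}_{G}\to 0$ is exact as fppf sheaves because $Nm$ is smooth and surjective onto $J^{0}_{G}$. Its long exact sequence contains $H^{1}(X,\pi_{*}(T\times\widetilde{X}))\to H^{1}(X,J^{0}_{G})\to H^{2}(X,K)$, and Lemma~\ref{Tsen} gives $H^{2}(X,K)=0$, so the first arrow is onto. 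On the other hand the quotient $J_{G}/J^{0}_{G}$ is supported on the finite ramification locus, hence a skyscraper with $H^{1}(X,J_{G}/J^{0}_{G})=0$; so the sequence $0\to J^{0}_{G}\to J_{G}\to J_{G}/J^{0}_{G}\to 0$ yields a surjection $H^{1}(X,J^{0}_{G})\to H^{1}(X,J_{G})$. Composing the two surjections gives surjectivity of the norm map on $H^{1}$.

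For the second assertion I would upgrade this pointwise statement to one about the morphism of Picard stacks $\phi\colon\bunt(\cam)\to\jtg$ induced by $Nm$. Surjectivity on isomorphism classes of $k$-points is precisely the $H^{1}$-surjectivity just proved, using Lemma~\ref{zariski local trivial} to know that all torsors involved are Zariski-locally trivial so that the various topologies agree. For smoothness I would compute the relative tangent complex: the map of Lie algebras $\mathrm{Lie}(\pi_{*}(T\times\widetilde{X}))\to\mathrm{Lie}(J_{G})$ is surjective with kernel $\mathrm{Lie}(K)$ (note $J^{0}_{G}$ and $J_{G}$ share the same Lie algebra), so the relative tangent complex of $\phi$ at any point is $R\Gamma(X,\mathrm{Lie}(K))[1]$. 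Since $X$ is a curve this is concentrated in degrees $[-1,0]$ and has no $H^{2}$, i.e.\ there are no obstructions, so $\phi$ is smooth; being smooth and surjective, it is a smooth cover.

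The main obstacle is the passage from cohomological surjectivity to smoothness of $\phi$: the surjectivity is an essentially formal consequence of the exact sequences together with Tsen's theorem, but to justify the word ``cover'' one must control the deformation theory uniformly. The crucial inputs are that $Nm$ is smooth with smooth kernel $K$ (so that $K$ contributes a two-term tangent complex rather than an obstruction) and that the open immersion $J^{0}_{G}\hookrightarrow J_{G}$ induces an \'etale map on torsor stacks because it is an isomorphism on Lie algebras. One should also check that the short exact sequences are genuinely exact in the fppf topology and that $K$ is really generically a torus so that Lemma~\ref{Tsen} applies; these are exactly the places where integrality of $\cam$ is used.
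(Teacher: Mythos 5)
Your proposal is correct and follows essentially the same route as the paper: the identification $H^{1}(\widetilde{X},T)\simeq H^{1}(X,\pi_{*}(T\times\widetilde{X}))$ via finiteness of $\pi$, the two exact sequences $0\to K\to\pi_{*}(T\times\widetilde{X})\to J^{0}_{G}\to 0$ and $0\to J^{0}_{G}\to J_{G}\to Q\to 0$, Lemma~\ref{Tsen} applied to the generically-toric kernel $K$ to kill $H^{2}$, the finite support of $Q$ to pass from $J^{0}_{G}$ to $J_{G}$, and a tangent-space (deformation-theoretic) computation for smoothness. Your tangent-complex elaboration merely makes explicit what the paper compresses into ``a tangent space calculation combined with the first claim.''
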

\begin{proof}
Let $K$ be the kernel of the norm map and let $Q$ be the quotient $J_{G}/J^{0}_{G}$. We have:
\begin{gather}
0\rightarrow K\rightarrow \pi_{*}(T\times\widetilde{X})\rightarrow J^{0}_{G}\rightarrow 0 \notag\\
0\rightarrow J^{0}_{G}\rightarrow J_{G}\rightarrow Q\rightarrow 0 \notag.
\end{gather}
Both $\pi_{*}(T\times\widetilde{X})$ and $J^{0}_{G}$ are generically torus on $X$, hence $K$ satisfies the condition of Lemma~\ref{Tsen}. Moreover, since $\cam$ is a finite cover of $X$, one has $H^{1}(\cam, T)\simeq H^{1}(X,\pi_{*}(T\times\widetilde{X}))$. Hence we get a surjection: $H^{1}(\widetilde{X}, T)\twoheadrightarrow H^{1}(X, J^{0}_{G})$. Since the support of $Q$ is finite, we also have a surjection $H^{1}(X,J^{0}_{G})\twoheadrightarrow H^{1}(X,J_{G})$. So the first claim follows from this. A tangent space calculation combined with the first claim shows that $\bunt(\cam)$ is a smooth cover of $\jtg$.
\end{proof}

We also need the following lemma, see Lemma $4.10.2$ and Proposition $4.10.3$ of \cite{Fundamental lemma}:
\begin{lemma}\label{surjection on pio}
The morphism $\bunt(\cam)\rightarrow\jtgcam$ induces surjective morphisms:
$$\pi_{0}(\bunt(\cam))\simeq X_{*}(T)\twoheadrightarrow X_{*}(T)/W\twoheadrightarrow\pi_{0}(\jtgcam)$$
\end{lemma}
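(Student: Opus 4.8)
The plan is to factor the claimed composite as the two indicated maps and to verify each separately, treating $\pi_{0}(\jtgcam)$ as a black box and proving only surjectivity onto it. First I would identify $\pi_{0}(\bunt(\cam))$. Writing $T\simeq X_{*}(T)\otimes_{\mathbb{Z}}\mathbb{G}_{m}$ gives an equivalence of Picard stacks $\bunt(\cam)\simeq X_{*}(T)\otimes_{\mathbb{Z}}\pic(\cam)$. Since $\cam$ is assumed integral, and hence connected, the degree map yields $\pi_{0}(\pic(\cam))\simeq\mathbb{Z}$, so that $\pi_{0}(\bunt(\cam))\simeq X_{*}(T)\otimes_{\mathbb{Z}}\mathbb{Z}\simeq X_{*}(T)$, realized concretely by the multidegree (first Chern class) of a $T$-torsor on $\cam$. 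This is the leftmost term; note that integrality of $\cam$ is exactly what is used here.

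Next I would show that the norm map $\bunt(\cam)\to\jtgcam$ is invariant under the natural $W$-action on $\bunt(\cam)$, in which $w\in W$ acts by combining the deck transformation of $\cam\to X$ with the Weyl action on $T$. The sheaf $\pi_{*}(T\times\cam)$ carries exactly this $W$-action, and the explicit formula $\phi\otimes f\mapsto\sum_{w\in W}w(\phi)\otimes w(f)$ for $Nm$ is manifestly $W$-invariant after reindexing the sum; hence so is the induced map $\bunt(\cam)\to\jtgcam$. On $\pi_{0}$ this action is the standard action of $W$ on $X_{*}(T)$: a deck transformation preserves degrees on the connected curve $\cam$, while the twist by $w$ on $T$ induces $\lambda\mapsto w(\lambda)$ on cocharacters. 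Therefore the induced homomorphism $X_{*}(T)\to\pi_{0}(\jtgcam)$ kills each $\lambda-w(\lambda)$ and factors through the coinvariants, the group denoted $X_{*}(T)/W$ in the statement; this produces the middle arrow, the first arrow being the tautological quotient.

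It remains to prove surjectivity of $X_{*}(T)/W\to\pi_{0}(\jtgcam)$, and this is where I would invoke the preceding proposition: there the norm map is shown to induce a surjection $H^{1}(\cam,T)\twoheadrightarrow H^{1}(X,J_{G})$, so that $\bunt(\cam)$ is a smooth, in particular surjective, cover of $\jtg$. A surjective morphism of Picard stacks is surjective on connected components, so $X_{*}(T)=\pi_{0}(\bunt(\cam))\to\pi_{0}(\jtgcam)$ is surjective; since it factors through $X_{*}(T)/W$, the induced map is surjective as well. I expect no serious obstacle here: the genuine content, namely surjectivity, is already packaged in the preceding proposition, and the only point requiring care is verifying that the $W$-action on $\pi_{0}(\bunt(\cam))\simeq X_{*}(T)$ is the standard reflection action, so that the factorization is exactly through $X_{*}(T)/W$ rather than a coarser quotient. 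Alternatively one may simply cite Lemma $4.10.2$ and Proposition $4.10.3$ of \cite{Fundamental lemma}.
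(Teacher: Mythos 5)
Your argument is correct, but it is worth noting that the paper itself offers no proof of this lemma at all: it is stated as a citation to Lemma $4.10.2$ and Proposition $4.10.3$ of \cite{Fundamental lemma}, which is exactly your fallback option in the last sentence. What you have written is therefore a genuinely different (self-contained) route, and it checks out: the identification $\pi_{0}(\bunt(\cam))\simeq X_{*}(T)$ via multidegree uses integrality of $\cam$ correctly (for an integral projective curve $\pic^{0}$ is connected, and the $B\mathbb{G}_{m}$-gerbe direction does not affect $\pi_{0}$); the $W$-invariance $Nm\circ w=Nm$ follows from reindexing $\sum_{v}v(w(\phi))\otimes v(w(f))=\sum_{u}u(\phi)\otimes u(f)$ with $u=vw$, exactly as you say, and your verification that the $W$-action on $\pi_{0}(\bunt(\cam))$ is the standard action on $X_{*}(T)$ (deck transformations preserve degree, twisting by $w:T\to T$ sends $\lambda$ to $w(\lambda)$) is the one point requiring care and you address it; finally, surjectivity is correctly extracted from the paper's preceding proposition, since the Tsen-based surjection $H^{1}(\cam,T)\twoheadrightarrow H^{1}(X,J_{G})$ hits every isomorphism class, hence every component. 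Your reading of $X_{*}(T)/W$ as the coinvariants (the quotient by the subgroup generated by $\lambda-w(\lambda)$) is the right group-theoretic interpretation, since a $W$-invariant homomorphism to the group $\pi_{0}(\jtgcam)$ factors universally through coinvariants. The trade-off between the two approaches: your argument makes the lemma logically independent of \cite{Fundamental lemma}, resting only on Lemma~\ref{Tsen} and Proposition~\ref{norm map} already established in Section $3$; the citation to Ng\^o buys more than surjectivity, namely a finer description of $\pi_{0}$ of the Picard stack (used elsewhere, e.g.\ in part $(1)$ of Lemma~\ref{more geometry}, where $\pi_{0}(\jtgcam)\simeq X_{*}(T)/W$ exactly, via Proposition $4.10.13$ of \cite{Fundamental lemma}), which your soft argument does not recover.
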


The Abel-Jacobi map is defined as follows. Let $\cam^{un}$ be the open subscheme of $\cam$ defined right before Proposition~\ref{prop of jtors}. One has the Abel-Jacobi map for $T$-torsors on $\cam$:
\begin{gather}
\cam^{un}\times X_{*}(T)\rightarrow \bunt(\cam) \notag\\
(\widetilde{x},\phi)\rightarrow \phi_{*}(O(\widetilde{x})):=O(\widetilde{x})\times^{\mathbb{G}_{m},\phi}T .\notag
\end{gather}
We compose it with the norm map: $\bunt(\cam)\xrightarrow{Nm}\jtgcam$ and call it the Abel-Jacobi map for $\jtgcam$. More generally, for symmetric powers of $\cam^{un}$, one also has the Abel-Jacobi map:
\begin{gather}
(\cam^{un})^{(n)}\times X_{*}(T)\rightarrow \bunt(\cam)\xrightarrow{Nm} \jtgcam \notag\\
(\widetilde{D},\phi)\rightarrow \phi_{*}(O(\widetilde{D})):=O(\widetilde{D})\times^{\mathbb{G}_{m},\phi}T\rightarrow Nm(\phi_{*}(O(\widetilde{D}))) \notag
\end{gather}
where we have used the fact that points in $(\cam^{un})^{(n)}$ corresponds to degree $n$ divisors on $\cam$ that are contained in $\cam^{un}$. In this way we get for each $\phi\in X_{*}(T)$ a morphism:
$$(\cam^{un})^{(n)}\phi\rightarrow\bunt(\cam)\xrightarrow{Nm}\jtgcam .$$
For any finite set of cocharacters $\phi_{1}, \cdots, \phi_{k}$ and integers $n_{1}, \cdots, n_{k}$, one can generalize the construction above further and get a morphism:
$$(\cam^{un})^{(n_{1})}\phi_{1}\times\cdots\times(\cam^{un})^{(n_{k})}\phi_{k}\rightarrow\bunt(\cam)\xrightarrow{Nm}\jtgcam .$$

\section{The Poincar\'e line bundle}
In this section we will construct the Poincar\'e line bundle in $4.1$ and determine the cohomology of the Poincar\'e line bundle in $4.2$. We will need some technical results about the structure of $\hig(\cam)$ when $\cam$ has nodal singularities, those will be given in Appendix $B$. 

\subsection{The construction of the Poincar\'e line bundle}
In this subsection we construct the Poincar\'e line bundle. The main result is Corollary~\ref{finish the descend for adjoint}. After that we establish some additional compatibilities. We will always work with Higgs bundles with integral cameral covers. Let us denote the open subscheme of $H$ parameterizing integral cameral covers by $H_{int}$. To simplify the notation, let us denote the restriction of the stacks $\hig$ and $\jtg$ to $H_{int}$ still by $\hig$ and $\jtg$. We will construct the Poincar\'e line bundle on $\jtg\times_{H_{int}}\higc$. We shall also fix a Kostant section for $\higc$ throughout this section. 

Now let us start to construct the Poincar\'e line bundle. We first construct it on a smooth cover of $\jtgcam\times\higc(\cam)$, then we will show it descend. Namely, let us start with the following setup. By our discussions about the Abel-Jacobi map in Section $3$, we have a natural morphism:
$$(\cam^{un})^{(n)}\times X_{*}(T)\rightarrow\jtgcam .$$
We shall now construct the Poincar\'e line bundle on $(\cam^{un})^{(n)}\times X_{*}(T)\times\higc(\cam)$. First, for any $\check{G}$-Higgs bundle $(E_{\check{G}},\varphi)$ on $X$, one gets a $\check{T}$-torsor on $\cam^{un}$ using part $(2)$ of Proposition~\ref{prop of jtors}. Let us denote the $\check{T}$-torsor on $\cam^{un}$ by $(E_{\check{G}},\varphi)'$. Then we declare that the fiber of the Poincar\'e line bundle at the point
$$(\widetilde{D},\phi,(E_{\check{G}},\varphi))\in(\cam^{un})^{(n)}\times X_{*}(T)\times\higc(\cam)$$ 
is given by:
$$\de(\phi((E_{\check{G}},\varphi)')_{\widetilde{D}})\otimes\de(O_{\widetilde{D}})^{-1} .$$
Here $\phi((E_{\check{G}},\varphi)')$ stands for the line bundle on $\cam^{un}$ induced by the $\check{T}$-torsor $(E_{\check{G}},\varphi)'$ and $\phi\in X^{*}(\check{T})=X_{*}(T)$. And $\phi((E_{\check{G}},\varphi)')_{\widetilde{D}}$ stands for the restriction of the line bundle $\phi((E_{\check{G}},\varphi)')$ to the divisor $\widetilde{D}$. This defines a line bundle on 
$$(\cam^{un})^{(n)}\times X_{*}(T)\times\higc(\cam).$$

When we fix a cocharacter $\phi$, we get a line bundle on $(\cam^{un})^{(n)}\phi\times\higc(\cam)$. More generally, for any finite set of cocharacters $\phi_{1}, \cdots, \phi_{k}$ and integers $n_{1}, \cdots, n_{k}$, one can construct a line bundle on 
$$(\cam^{un})^{(n_{1})}\phi_{1}\times\cdots\times(\cam^{un})^{(n_{k})}\phi_{k}\times\higc(\cam)$$
by pulling back the line bundle on each component $(\cam^{un})^{(n_{i})}\times\higc(\cam)$ and then take the tensor product. 
\begin{rem}
If we take $n=k=1$, then the construction above implies that the fiber of the line bundle at the point $(\widetilde{x},\phi,(E_{G},\varphi))\in\cam^{un}\times X_{*}(T)\times\higc(\cam)$ is given by $\phi((E_{G},\varphi)')\mid_{\widetilde{x}}$.
\end{rem}

We have the following lemma:
\begin{lemma}\label{descend to bunt}
Let $\phi_{1}, \cdots, \phi_{r}$ be a basis of $X_{*}(T)$ and $n_{i}$'s be sufficiently large integers. Then the line bundle on
$$(\cam^{un})^{(n_{1})}\phi_{1}\times\cdots\times(\cam^{un})^{(n_{r})}\phi_{r}\times\higc(\cam)$$
descends to $\bunt(\cam)\times\higc(\cam)$. Moreover, if we denote this line bundle by $\mathcal{P}'_{G}$, then the restriction of $\mathcal{P}'_{G}$ to $\bunt(\cam)\times\higc^{reg}(\cam)\simeq \bunt(\cam)\times\jtgccam$ is naturally a biextension of $\bunt(\cam)\times\jtgccam$ by $\mathbb{G}_{m}$ (For the notion of biextension, see Subsection $1.3$ of \cite{Biext} and Subsection $10.3$ of \cite{AF}).
\end{lemma}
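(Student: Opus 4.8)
The plan is to reduce the statement, via the basis $\phi_{1},\dots,\phi_{r}$, to the classical descent of a determinant line bundle along the ordinary Abel--Jacobi map of the curve $\cam$, and then to read off the biextension structure from the torus Poincar\'e bundle of Appendix A. The basis induces an isomorphism $\bunt(\cam)\simeq\pic(\cam)^{r}$ under which the Abel--Jacobi morphism $\alpha\colon\prod_{i}(\cam^{un})^{(n_{i})}\phi_{i}\to\bunt(\cam)$ becomes the product of the maps $\widetilde{D}_{i}\mapsto O(\widetilde{D}_{i})$. Since $\cam^{un}$ is \'etale over $X$ it lies in the smooth locus of $\cam$, so these divisors sit in the smooth part of $\cam$; and for $n_{i}$ large (say $n_{i}\geq 2g_{\cam}-1$, so that $H^{1}(\cam,O(\widetilde{D}_{i}))=0$) each factor is smooth and surjective onto a component of $\pic(\cam)$, with geometric fibres the dense open subsets of the complete linear systems $|O(\widetilde{D}_{i})|\simeq\mathbb{P}^{n_{i}-g_{\cam}}$ obtained by requiring the divisor to avoid the ramification. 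The factor $\higc(\cam)$ is carried along unchanged, so it suffices to descend in the $\bunt(\cam)$ direction.

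First I would establish the descent. For a fixed line bundle $M$ on the smooth locus of $\cam$, the determinant line bundle on $(\cam^{un})^{(n)}$ with fibre $\de(M_{\widetilde{D}})\otimes\de(O_{\widetilde{D}})^{-1}$ descends along Abel--Jacobi; this is the classical fact recorded in Appendix A. The descent datum is the usual one: if $\widetilde{D}-\widetilde{D}'=\mathrm{div}(f)$ is principal and supported in $\cam^{un}$, then $f$ is a unit near $\cam\setminus\cam^{un}$ and multiplication by $f$ gives a canonical isomorphism $\de(M_{\widetilde{D}})\otimes\de(O_{\widetilde{D}})^{-1}\simeq\de(M_{\widetilde{D}'})\otimes\de(O_{\widetilde{D}'})^{-1}$, an identification that only involves $M=\phi_{i}((E_{\check{G}},\varphi)')$ in a neighbourhood of the ramification-avoiding divisors and is therefore well defined from the $\check{T}$-torsor on $\cam^{un}$ alone. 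Tensoring over the basis and verifying the cocycle condition produces a descent datum for the line bundle along the smooth surjection $\alpha\times\mathrm{id}$, and effectivity of smooth descent then yields $\mathcal{P}'_{G}$ on $\bunt(\cam)\times\higc(\cam)$ (working one component of $\bunt(\cam)$ at a time as $(n_{1},\dots,n_{r})$ varies, the pieces being compatible because on the regular locus they all agree with the pullback of the Appendix A Poincar\'e bundle). Over the non-regular locus, where only the $\cam^{un}$-torsor is available, I would extend the descent datum from the dense regular locus by continuity, using that $\cam$ is integral so that $\higc^{reg}(\cam)$ is dense in $\higc(\cam)$ (Proposition~\ref{geo of hig}) and that the relevant stacks are reduced and separated.

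For the biextension statement I restrict to $\bunt(\cam)\times\higc^{reg}(\cam)$ and use the identification $\higc^{reg}(\cam)\simeq\jtgccam$ coming from the Kostant section (Proposition~\ref{geo of hig}). On this locus the associated $\check{T}$-torsor extends from $\cam^{un}$ to all of $\cam$ through the morphism $\jtgccam\to\buntc(\cam)$ of Proposition~\ref{prop of jtors}, and the determinant computation identifies $\mathcal{P}'_{G}$ there with the pullback of the torus Poincar\'e bundle on $\bunt(\cam)\times\buntc(\cam)$ along $\mathrm{id}\times(\jtgccam\to\buntc(\cam))$. That torus Poincar\'e bundle is a biextension of $\bunt(\cam)\times\buntc(\cam)$ by $\mathbb{G}_{m}$ (Appendix A); the morphism $\jtgccam\to\buntc(\cam)$ is a homomorphism of Picard stacks (Proposition~\ref{prop of jtors}); and the pullback of a biextension along a homomorphism in one variable is again a biextension, with bilinearity in the $\bunt(\cam)$ variable inherited directly. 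This gives the asserted biextension structure on $\bunt(\cam)\times\jtgccam$.

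The step I expect to be the main obstacle is the descent over the non-regular locus of $\higc(\cam)$. There the associated torsor lives only on $\cam^{un}$, so $\mathcal{P}'_{G}$ is not literally the pullback of a globally defined pairing and the descent datum must be produced by hand and then shown to propagate across the non-regular Higgs bundles; making this precise requires checking the cocycle condition for the multiplication-by-$f$ identifications and controlling their extension from the dense regular locus. A secondary technical point is that $\cam$ may be singular (nodal), so one must confirm that the linear-system fibration and Riemann--Roch on $\cam$ behave as in the smooth case --- which they do because the divisors are confined to the smooth locus $\cam^{un}$ and because integrality of $\cam$ supplies the reducedness and separatedness needed for the descent to be effective and for the extension by density to be unique.
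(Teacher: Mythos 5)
Your construction on the regular locus and your biextension argument coincide with the paper's: there too, the key observation is that over $\bunt(\cam)\times\higc^{reg}(\cam)\simeq\bunt(\cam)\times\jtgccam$ the line bundle is identified with the pullback of the torus Poincar\'e bundle on $\bunt(\cam)\times\buntc(\cam)$ via the map $\higc^{reg}(\cam)\rightarrow\buntc(\cam)$ of Proposition~\ref{prop of jtors}, which simultaneously supplies the descent datum there and the biextension structure. The gap is in the step you yourself flag as the main obstacle, the extension of the descent datum across the non-regular locus. You propose to extend it ``by continuity,'' citing density of $\higc^{reg}(\cam)$ in $\higc(\cam)$ together with reducedness and separatedness. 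This is not enough: separatedness and density give at most \emph{uniqueness} of an extension of an isomorphism of line bundles, never existence, and existence genuinely fails here fiberwise. For a fixed nodal cameral cover the complement of $\higc^{reg}(\cam)$ has codimension \emph{one} --- by Theorem~\ref{main theorem of app B} the irregular locus is a copy of $\mjtgccamn$ inside $\mhigccam$ --- so a rational isomorphism between the two pullbacks on the fiber product can a priori acquire a zero or pole along that divisor, and no Hartogs-type argument is available on a fixed Hitchin fiber. The paper's essential move, which is missing from your proposal, is to pass to the universal family over $H_{int}$: by parts $(2)$ and $(3)$ of Proposition~\ref{geo of hig} the complement of $\higc^{reg}$ in $\higc$ over $H_{int}$ has codimension greater than or equal to two (over the dense open $H^{0}$ every Higgs bundle is regular), and there the descent datum defined on the regular locus extends, then restricts back to each fiber.

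A secondary remark: your first paragraph sketches an alternative that, if completed, would make the extension step unnecessary --- the explicit multiplication-by-$f$ identification for principal divisors $\widetilde{D}-\widetilde{D}'=\mathrm{div}(f)$ supported in $\cam^{un}$, with the scalar ambiguity in $f$ cancelling between $\de(M_{\widetilde{D}})$ and $\de(O_{\widetilde{D}})^{-1}$, makes sense for $M$ defined only on $\cam^{un}$ and hence directly over irregular Higgs bundles. But you do not carry this out: the cocycle condition and, more seriously, the algebraicity of these identifications in families over the fiber product of the Abel--Jacobi cover and over $\higc(\cam)$ (where $f$ is only a local section of a $\mathbb{G}_{m}$-torsor of trivializations) are exactly the content one would have to verify, and instead the proposal falls back on the invalid continuity argument. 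As written, the proof is incomplete at the one point where the statement has real content beyond Appendix A.
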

\begin{proof}
First of all, $\cam$ is a Gorenstein integral curve, the morphism
$$(\cam^{un})^{(n_{1})}\phi_{1}\times\cdots\times(\cam^{un})^{(n_{r})}\phi_{r}\rightarrow\bunt(\cam)$$
is smooth since we assume $n_{i}$'s to be large. Let us look at the restriction of the line bundle to 
$$(\cam^{un})^{(n_{1})}\phi_{1}\times\cdots\times(\cam^{un})^{(n_{r})}\phi_{r}\times\higc^{reg}(\cam) .$$
By part $(3)$ of Proposition~\ref{prop of jtors}, we have a morphism $\higc^{reg}(\cam)\rightarrow\buntc(\cam)$ (Recall that we have fixed a Kostant section for $\higc$). Using our discussions about the Poincar\'e line bundle on $\bunt(\cam)\times\buntc(\cam)$ in Proposition~\ref{propositions of poincare line bundle on bunt} of Appendix $A$, one checks directly that the line bundle constructed above is isomorphic to the pullback of the Poincar\'e line bundle on $\bunt(\cam)\times\buntc(\cam)$ via:
\begin{gather}
(\cam^{un})^{(n_{1})}\phi_{1}\times\cdots\times(\cam^{un})^{(n_{r})}\phi_{r}\times\higc^{reg}(\cam)\rightarrow 
\bunt(\cam)\times\higc^{reg}(\cam)\rightarrow \notag \\
\bunt(\cam)\times\buntc(\cam) \notag
\end{gather}
So we get descend data on $(\cam^{un})^{(n_{1})}\phi_{1}\times\cdots\times(\cam^{un})^{(n_{r})}\phi_{r}\times\higc^{reg}(\cam)$. Now we look at the universal case, i.e, we work over $H_{int}$ (Recall that $H_{int}$ stands for the open subscheme of $H$ parameterizing integral cameral covers) and look at the line bundle on: 
$$(\cam^{un})^{(n_{1})}\phi_{1}\times_{H_{int}}\cdots\times_{H_{int}}(\cam^{un})^{(n_{r})}\phi_{r}\times_{H_{int}}\higc .$$
It is equipped with descend data on the open set:
$$(\cam^{un})^{(n_{1})}\phi_{1}\times_{H_{int}}\cdots\times_{H_{int}}(\cam^{un})^{(n_{r})}\phi_{r}\times_{H_{int}}\higc^{reg} .$$
By part $(2)$ and $(3)$ of Proposition~\ref{geo of hig}, we see that the complement of $\higc^{reg}$ in $\higc$ has codimension greater than or equals to two. Hence the descend data extends. This shows the line bundle descends to $\bunt(\cam)\times\higc(\cam)$. 

The claim that $\mathcal{P}'_{G}$ is a biextension follows directly from the observation that the restriction of $\mathcal{P}'_{G}$ to $\bunt(\cam)\times\jtgccam$ is naturally isomorphic to the pullback of the Poincar\'e line bundle on $\bunt(\cam)\times\buntc(\cam)$ via $\bunt(\cam)\times\jtgccam\rightarrow\bunt(\cam)\times\bunt(\cam)$. 

\end{proof}

Next we will show the following:
\begin{lemma}\label{new descend to bunt}
The line bundle $\mathcal{P}'_{G}$ on $\bunt(\cam)\times\higc(\cam)$ constructed in the previous lemma descend to $\jtgcam\times\higc(\cam)$. 
\end{lemma}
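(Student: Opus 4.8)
The plan is to prove that the biextension line bundle $\mathcal{P}'_{G}$ on $\bunt(\cam)\times\higc(\cam)$ descends along the smooth cover $\bunt(\cam)\to\jtgcam$ by exhibiting canonical descent data, and the key will be to identify precisely what trivialization is required along the fibers of the norm map. Recall from Proposition~\ref{norm map} that the norm map realizes $J^{0}_{G}$ as the image of $\pi_{*}(T\times\cam)$, and $\bunt(\cam)$ is a smooth cover of $\jtgcam$ via the composition of the Abel-Jacobi map with $Nm$. To descend $\mathcal{P}'_{G}$ it suffices to produce a compatible trivialization of the pullback of $\mathcal{P}'_{G}$ to the fiber product $\bunt(\cam)\times_{\jtgcam}\bunt(\cam)$, satisfying the cocycle condition over the triple fiber product. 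Equivalently, since $\bunt(\cam)\to\jtgcam$ is a torsor-like cover whose fibers are governed by the kernel of the norm map, I would show that $\mathcal{P}'_{G}$ is canonically trivial when restricted to the subgroup (or substack) by which $\bunt(\cam)$ differs from $\jtgcam$.

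First I would make the descent criterion explicit. By Lemma~\ref{descend to bunt} the restriction of $\mathcal{P}'_{G}$ to $\bunt(\cam)\times\jtgccam$ is the pullback of the Poincar\'e line bundle on $\bunt(\cam)\times\buntc(\cam)$ under the map induced by $\jtgccam\to\buntc(\cam)$. The biextension structure means $\mathcal{P}'_{G}$ is bimultiplicative: it is compatible with the group structure on $\bunt(\cam)$ in the first variable. Thus descent along the group homomorphism $\bunt(\cam)\xrightarrow{Nm}\jtgcam$ reduces to checking that the restriction of $\mathcal{P}'_{G}$ to $\ker(Nm)\times\higc(\cam)$ is canonically trivial, compatibly with the biextension structure, where $\ker(Nm)$ denotes the relevant kernel of $\bunt(\cam)\to\jtgcam$ at the level of Picard stacks. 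This is the natural biextension-theoretic reformulation: a biextension of $A\times B$ descends along a quotient $A\to A'$ precisely when it is trivialized along the kernel in a way respecting the partial group laws.

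The heart of the argument, then, is to compute this restriction using the duality already established. On $\higc^{reg}(\cam)\simeq\jtgccam$ I can use part $(4)$ of Proposition~\ref{geo of hig} and the identification of $\mathcal{P}'_{G}$ with the pullback of the Poincar\'e line bundle on $\bunt(\cam)\times\buntc(\cam)$. The kernel of $Nm\colon\bunt(\cam)\to\jtgcam$ corresponds dually, under the self-duality $\jtg\simeq(\jtgc)^{\check{}}$, to the relation between $\buntc(\cam)$ and $\jtgccam$ on the dual side. Concretely, the fact that the $\check{T}$-torsor $(E_{\check{G}},\varphi)'$ attached to a $\check{G}$-Higgs bundle is $W$-equivariant (Proposition~\ref{prop of jtors}) forces the Poincar\'e pairing to vanish on $W$-related twists, which is exactly the kernel direction of the norm map. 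So I would verify that a $T$-torsor lying in $\ker(Nm)$ — roughly, one becoming trivial after the $W$-symmetrization defining $J_{G}$ — pairs trivially with every $W$-equivariant $\check{T}$-torsor, giving the required canonical trivialization. I would then check the cocycle/compatibility condition over $\higc^{reg}(\cam)$ and extend across the codimension $\geq 2$ complement of $\higc^{reg}(\cam)$ in $\higc(\cam)$, exactly as in the proof of Lemma~\ref{descend to bunt}, using parts $(2)$ and $(3)$ of Proposition~\ref{geo of hig}.

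The main obstacle I anticipate is not the existence of the fiberwise trivialization but its \emph{canonicity and compatibility}: I must produce the trivialization over $\ker(Nm)$ in a way that is natural in the $\higc(\cam)$ variable and satisfies the cocycle condition, rather than merely showing the restricted line bundle is abstractly trivial. The biextension structure is what makes this tractable, since it reduces a potentially delicate descent of a line bundle to a statement about a homomorphism of Picard stacks being trivial, but verifying that the trivialization interacts correctly with both partial group laws of the biextension will require careful bookkeeping with the $W$-action and the three sheaves $J^{0}_{G}\subseteq J_{G}\subseteq J^{1}_{G}$. I expect the finite-length quotient sheaves supported on the ramification locus to be the source of any subtlety, so I would isolate that contribution and handle it using the explicit description of the norm map near the ramification divisors $D^{\alpha}$.
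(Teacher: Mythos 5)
Your reduction steps are exactly the paper's: you reduce to $\bunt(\cam)\times\higc^{reg}(\cam)\simeq\bunt(\cam)\times\jtgccam$ using the codimension $\geq 2$ bound on the complement of $\higc^{reg}$, and you use the biextension structure to convert descent along the homomorphism $\bunt(\cam)\rightarrow\jtgcam$ into the existence of a trivialization of $\mathcal{P}'_{G}$ on $\mathcal{K}\times\jtgccam$ compatible with the group structure on the kernel $\mathcal{K}$. The gap is in how you produce that trivialization. Your direct computation --- ``a $T$-torsor becoming trivial after $W$-symmetrization pairs trivially with every $W$-equivariant $\check{T}$-torsor'' --- only sees the part of $\mathcal{K}$ visible over the unramified locus, namely the relations $w(\phi)\otimes(w^{-1})^{*}(\mathcal{M})-\phi\otimes\mathcal{M}$. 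But $\mathcal{K}$ also has contributions of finite support on the ramification divisors: the norm map surjects onto $J^{0}_{G}$ rather than $J_{G}$, the sheaf-level kernel of $\pi_{*}(T\times\cam)\rightarrow J^{0}_{G}$ differs from the $W$-symmetrization relations by a sheaf supported at the ramification points, and points of $\jtgccam$ carry extra structure at ramification beyond the $W$-equivariant $\check{T}$-torsor (cf. the conditions $\alpha(t)=1$ cutting $J_{G}$ out of $J^{1}_{G}$). Your $W$-equivariance argument says nothing about the pairing against these pieces, and you only flag this issue rather than resolve it; note also that the paper's identification of the kernel (Lemma~\ref{describe kernel}) is proved only for adjoint $G$ and only at the level of moduli spaces, whereas the present lemma is needed for all reductive $G$. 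A second problem: you invoke part $(4)$ of Proposition~\ref{geo of hig} at a fixed cameral cover, but that duality statement holds only over $H^{\circ}$, i.e.\ for smooth cameral covers, so it is not available at the singular $\cam$ you care about.

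The paper's proof (part $(2)$ of Lemma~\ref{a key lemma}) avoids the pointwise computation entirely by working in the universal family over $H_{int}$: over $H^{\circ}$ the group-compatible trivialization of $\mathcal{P}'_{G}$ on $\mathcal{K}_{H^{\circ}}\times_{H^{\circ}}\jtgc$ exists by the known duality of \cite{Geometric Langlands}, and it then extends over all of $H_{int}$ because, by the symmetry isomorphism $(\textrm{id}\times Nm_{\check{G}})^{*}(\mathcal{P}'_{G})\simeq(Nm_{G}\times\textrm{id})^{*}(\mathcal{P}'_{\check{G}})$ of part $(1)$ of Lemma~\ref{a key lemma}, the pullback of $\mathcal{P}'_{G}$ to $\mathcal{K}_{H_{int}}\times_{H_{int}}\buntc(\cam/H_{int})$ is trivial, so the a priori meromorphic trivialization becomes regular on this smooth cover and hence is regular everywhere; compatibility with the group structure (your cocycle condition) also propagates from the dense open $H^{\circ}$. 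This degeneration-and-extension mechanism --- and the symmetry statement that feeds it --- is the idea missing from your proposal; to repair your direct approach you would have to compute the pairing honestly on the ramification-supported part of $\mathcal{K}$ and verify homomorphic compatibility by hand, which is precisely the bookkeeping the paper is structured to avoid.
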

The following lemma shows that it is enough to prove the descend statement over the open part $\bunt(\cam)\times\higc^{reg}(\cam)\simeq\bunt(\cam)\times\jtgccam$:
\begin{lemma}\label{a first reduction}
Consider the universal case $\bunt(\cam/H_{int})\times_{H_{int}}\higc$. If the restriction of $\mathcal{P}'_{G}$ to $\bunt(\cam/H_{int})\times_{H_{int}}\higc^{reg}$ descend to $\jtg\times_{H_{int}}\higc^{reg}$, then the descend data extends to $\bunt(\cam/H_{int})\times_{H_{int}}\higc$.

\end{lemma}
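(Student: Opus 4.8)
The plan is to treat the asserted extension as a Hartogs-type statement about sections of a line bundle, in exactly the spirit of the codimension argument that closes the proof of Lemma~\ref{descend to bunt}. Write $q\colon\bunt(\cam/H_{int})\to\jtg$ for the smooth cover produced after Lemma~\ref{zariski local trivial}, and base change everything by $\higc$ over $H_{int}$. Descent of $\mathcal{P}'_{G}$ along $q\times\mathrm{id}$ is governed by a descent datum on the double fiber product, that is, by a nowhere vanishing section $s$ of the line bundle
$$\mathcal{L}:=pr_{2}^{*}\mathcal{P}'_{G}\otimes(pr_{1}^{*}\mathcal{P}'_{G})^{-1}$$
on $V:=(\bunt(\cam/H_{int})\times_{\jtg}\bunt(\cam/H_{int}))\times_{H_{int}}\higc$, subject to the cocycle condition on the triple fiber product. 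The hypothesis furnishes such an $s$ over the open locus $U:=(\bunt(\cam/H_{int})\times_{\jtg}\bunt(\cam/H_{int}))\times_{H_{int}}\higc^{reg}$, and the task is to extend it across $V\setminus U$.

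First I would record the codimension estimate. By parts $(2)$ and $(3)$ of Proposition~\ref{geo of hig}, the complement of $\higc^{reg}$ in $\higc$ has codimension $\geq 2$. Since $q$ is smooth, both projections of the double product onto $\bunt(\cam/H_{int})$ are smooth, so $\bunt(\cam/H_{int})\times_{\jtg}\bunt(\cam/H_{int})\to H_{int}$ is smooth; consequently the projection $pr\colon V\to\higc$ is flat, being a base change of this smooth morphism, and $V\setminus U=pr^{-1}(\higc\setminus\higc^{reg})$ retains codimension $\geq 2$ in $V$.

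Next I would verify that $V$ is deep enough for sections to extend. The structural projection $V\to\bunt(\cam/H_{int})\times_{\jtg}\bunt(\cam/H_{int})$ is the base change of the Hitchin fibration $\higc\to H_{int}$, which by part $(1)$ of Proposition~\ref{geo of hitchin fibration} is flat with Gorenstein, hence Cohen–Macaulay, fibers. As the base $\bunt(\cam/H_{int})\times_{\jtg}\bunt(\cam/H_{int})$ is smooth — it is smooth over the smooth stack $\bunt(\cam)$, which is smooth since $H^{2}$ of a curve vanishes — the total space $V$ is Cohen–Macaulay, in particular $S_{2}$. Therefore the section $s$ of $\mathcal{L}$ defined on $U$ extends uniquely to a section $\bar{s}$ over $V$. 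Moreover $\bar{s}$ is nowhere vanishing: its vanishing locus is either empty or pure of codimension one, yet it is contained in $V\setminus U$, which has codimension $\geq 2$, and so must be empty. Thus $\bar{s}$ is an isomorphism, giving the sought extension of the descent datum.

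Finally I would check the cocycle condition for $\bar{s}$. This is an equality of two sections of a line bundle on the triple fiber product, which holds over the dense open locus lying above $\higc^{reg}$ by assumption; since that triple product is again Cohen–Macaulay with its regular locus dense (same codimension-$\geq 2$ argument), the equality propagates to the whole triple product. The step I expect to be the main obstacle is precisely this depth-and-codimension bookkeeping: one must simultaneously arrange, from the flatness of the Hitchin fibration and the smoothness of $q$, both that $V$ is $S_{2}$ and that the non-regular locus stays codimension $\geq 2$ after base change, since it is exactly this combination that powers the Hartogs extension and the nowhere-vanishing of $\bar{s}$.
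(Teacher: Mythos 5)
Your proposal is correct and follows essentially the same route as the paper: the paper's proof is a one-line appeal to the fact that the complement of $\higc^{reg}$ has codimension $\geq 2$, and your argument is exactly the Hartogs-type extension that this appeal implicitly invokes. The additional bookkeeping you supply — formulating the descent datum as a nowhere-vanishing section on the double fiber product, checking that the total space is Cohen--Macaulay (hence $S_{2}$) via flatness of the Hitchin fibration over a smooth base, and propagating the cocycle condition by density — is precisely the content the paper leaves tacit, so nothing in your writeup diverges from its intended argument.
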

\begin{proof}
Again, this follows from the fact that the codimension of the complement of $\higc^{reg}$ is greater than or equals to two.
\end{proof}

So from now on we will look at $\jtgcam\times\jtgccam\simeq\jtgcam\times\higc^{reg}(\cam)$. We have the following observation:
\begin{lemma}\label{a key lemma}
\begin{enumerate1}
\item Consider the diagram:
\[\xymatrixcolsep{4pc}\xymatrix{
\bunt(\cam)\times\buntc(\cam) \ar[r]^{Nm_{G}\times\textrm{id}} \ar[d]^{\textrm{id}\times Nm_{\check{G}}} &\jtgcam\times\buntc(\cam)\\
\bunt(\cam)\times\jtgccam .
}
\]
Consider $\mathcal{P}'_{G}$ on $\bunt(\cam)\times\jtgccam$ constructed in Lemma~\ref{descend to bunt} and its counterpart $\mathcal{P}'_{\check{G}}$ on $\jtgcam\times\buntc(\cam)$. Then one has an isomorphism of biextensions: $$(\textrm{id}\times Nm_{\check{G}})^{*}(\mathcal{P}'_{G})\simeq (Nm_{G}\times\textrm{id})^{*}(\mathcal{P}'_{\check{G}}) .$$
\item Let $\mathcal{K}$ be the kernel of $\bunt(\cam)\rightarrow\jtgcam$. Then there exists a trivialization of the restriction of $\mathcal{P}'_{G}$ to $\mathcal{K}\times\jtgccam$ that is compatible with the group structure on $\mathcal{K}$. 
\end{enumerate1}
\end{lemma}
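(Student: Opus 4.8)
The plan is to deduce both statements from the explicit description of $\mathcal{P}'_{G}$ supplied by Lemma~\ref{descend to bunt}: its restriction to $\bunt(\cam)\times\jtgccam$ is the pullback of the Poincar\'e line bundle $\mathcal{P}_{\mathrm{Bun}}$ on $\bunt(\cam)\times\buntc(\cam)$ (Appendix A) along $\textrm{id}\times\iota_{\check{G}}$, where $\iota_{\check{G}}:\jtgccam\to\buntc(\cam)$ is the natural map of Proposition~\ref{prop of jtors}(1); symmetrically $\mathcal{P}'_{\check{G}}=(\iota_{G}\times\textrm{id})^{*}\mathcal{P}_{\mathrm{Bun}}$ on $\jtgcam\times\buntc(\cam)$, with $\iota_{G}:\jtgcam\to\bunt(\cam)$.

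For part (1) I would first identify the two composites $\iota_{G}\circ Nm_{G}$ and $\iota_{\check{G}}\circ Nm_{\check{G}}$. On the level of sheaves of groups these are induced by $\pi_{*}(T\times\cam)\xrightarrow{Nm}J^{0}_{G}\hookrightarrow J^{1}_{G}=\pi_{*}(T\times\cam)^{W}\hookrightarrow\pi_{*}(T\times\cam)$ and its dual analogue, which by Proposition~\ref{norm map} is exactly the $W$-symmetrization $N_{W}=\sum_{w\in W}w$ acting on $\pi_{*}(T\times\cam)=X_{*}(T)\otimes\pi_{*}\mathbb{G}_{m}$, and likewise on the $\check{G}$ side. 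Substituting the formulas above, part (1) reduces to the assertion $(\textrm{id}\times N_{W})^{*}\mathcal{P}_{\mathrm{Bun}}\simeq(N_{W}\times\textrm{id})^{*}\mathcal{P}_{\mathrm{Bun}}$. I would prove this from the self-adjointness of $N_{W}$: the biextension $\mathcal{P}_{\mathrm{Bun}}$ is built from the $W$-invariant pairing $X_{*}(T)\times X^{*}(T)\to\mathbb{Z}$ together with the symmetry and cup product on the single curve $\cam$ carrying one $W$-action, and $\langle N_{W}\phi,\mu\rangle=\sum_{w}\langle\phi,w^{-1}\mu\rangle=\langle\phi,N_{W}\mu\rangle$. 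Concretely I would check this after pulling back along the Abel--Jacobi maps to $(\cam^{un})^{(n_{i})}\phi_{i}\times\buntc(\cam)$, where the fibres are given by the explicit $\de$-formula of Subsection $4.1$ and the identity becomes a term-by-term matching of line bundles indexed by $w\in W$; the symmetry of $\mathcal{P}_{\mathrm{Bun}}$ from Proposition~\ref{propositions of poincare line bundle on bunt} is what guarantees the resulting isomorphism respects both partial group laws, i.e. is one of biextensions.

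For part (2) I would argue formally from part (1). Restrict the isomorphism of part (1) to $\mathcal{K}\times\buntc(\cam)$. Since $\mathcal{K}=\ker(Nm_{G})$, the map $Nm_{G}\times\textrm{id}$ carries $\mathcal{K}\times\buntc(\cam)$ into $\{e\}\times\buntc(\cam)$, and a biextension is canonically and group-compatibly trivial along the identity section of a factor; hence $(Nm_{G}\times\textrm{id})^{*}\mathcal{P}'_{\check{G}}$, and therefore $(\textrm{id}\times Nm_{\check{G}})^{*}\mathcal{P}'_{G}$, is canonically trivialized over $\mathcal{K}\times\buntc(\cam)$ compatibly with the group law of $\mathcal{K}$. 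It remains to descend this trivialization along the smooth surjection $\textrm{id}\times Nm_{\check{G}}:\mathcal{K}\times\buntc(\cam)\to\mathcal{K}\times\jtgccam$. I would phrase this through the classifying homomorphism $\beta:\bunt(\cam)\to(\jtgccam)^{\vee}$ attached to the biextension $\mathcal{P}'_{G}|_{\bunt(\cam)\times\jtgccam}$: part (1) reads $(Nm_{\check{G}})^{\vee}\circ\beta=\alpha\circ Nm_{G}$, with $\alpha$ classifying $\mathcal{P}'_{\check{G}}$; restricting to $\mathcal{K}$ kills the right-hand side, and since $Nm_{\check{G}}$ is a smooth surjection (Section $3$), dualizing its kernel sequence shows $(Nm_{\check{G}})^{\vee}$ is a monomorphism, forcing $\beta|_{\mathcal{K}}=0$. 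This vanishing is precisely the asserted group-compatible trivialization.

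I expect the main obstacle to be the upgrade from line bundles to biextensions in part (1): the self-adjointness of $N_{W}$ must be promoted to an isomorphism compatible with both partial group structures, which is exactly where the symmetry of $\mathcal{P}_{\mathrm{Bun}}$ in Appendix A is indispensable, and one must verify that the symmetrization composites genuinely land in $J^{0}_{G}$ and recover $N_{W}$ rather than an operator on a proper sub- or super-lattice. In part (2) the only delicate point is the descent along the norm cover; I resolve it via the left-exactness of Cartier duality, though one may instead check directly that $\ker(Nm_{\check{G}})$ is connected so that the canonical section descends.
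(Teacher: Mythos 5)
Your part (1) is essentially the paper's own argument: the paper likewise observes that both composites factor through the $W$-symmetrization $Nm_{T}$ (resp.\ $Nm_{\check{T}}$) on $\bunt(\cam)$ (resp.\ $\buntc(\cam)$), reducing the claim to $(\textrm{id}\times Nm_{\check{T}})^{*}(\mathcal{Q})\simeq (Nm_{T}\times\textrm{id})^{*}(\mathcal{Q})$ for the Poincar\'e bundle $\mathcal{Q}$ of Appendix A, which is then checked from the explicit formula --- your self-adjointness of $N_{W}$ is the same computation.

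Part (2), however, has a genuine gap at the descent step, and it is exactly the step where the paper brings in an external input that you do not. From part (1) you correctly obtain a canonical group-compatible trivialization of $(\textrm{id}\times Nm_{\check{G}})^{*}(\mathcal{P}'_{G})$ on $\mathcal{K}\times\buntc(\cam)$, but triviality after pullback along $Nm_{\check{G}}$ does not imply triviality on $\mathcal{K}\times\jtgccam$, and both of your proposed fixes fail. The claim that $(Nm_{\check{G}})^{\vee}$ is a monomorphism is false: just as for an isogeny of abelian varieties $f\colon A\to B$ with kernel $K$, where $\ker(\hat{f})\simeq\hat{K}$, the homomorphisms to $B\mathbb{G}_{m}$ (and line bundles) on $\jtgccam$ killed by pullback along the norm map are governed by characters of $\ker(Nm_{\check{G}})$, and these are plentiful. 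Your fallback --- that $\ker(Nm_{\check{G}})$ is connected --- is also false: by Lemma~\ref{surjection on pio} the map $\pi_{0}(\buntc(\cam))\simeq X_{*}(\check{T})\to\pi_{0}(\jtgccam)$ factors through $X_{*}(\check{T})/W$, and since the norm map is smooth and surjective the kernel of $X_{*}(\check{T})\to\pi_{0}(\jtgccam)$ --- which contains the nonzero sublattice generated by the elements $w(\phi)-\phi$ (compare Lemma~\ref{describe kernel}) --- is hit by $\pi_{0}(\mathcal{K}_{\check{G}})$, so the kernel has infinitely many components. Consequently $\beta|_{\mathcal{K}}=0$ does not follow formally from part (1). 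Indeed it could not: a group-compatible trivialization of $\mathcal{P}'_{G}$ on $\mathcal{K}\times\jtgccam$ is precisely the descent datum that makes the Poincar\'e bundle live on $\jtgcam\times\jtgccam$, i.e.\ it is the substantive content of autoduality here; if it followed formally, the duality results over the smooth-cameral locus would never be needed.

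The paper's proof supplies the missing input as follows: by part (4) of Proposition~\ref{geo of hig} (the Donagi--Pantev/Chen--Zhu duality over $H^{\circ}$) there is a group-compatible trivialization of $\mathcal{P}'_{G}$ on $\mathcal{K}_{H^{\circ}}\times_{H^{\circ}}\jtgc$; the pullback-triviality from part (1) is then used only for the extension step --- the trivialization, a priori meromorphic on $\mathcal{K}_{H_{int}}\times_{H_{int}}\jtgc$, becomes regular after pullback to the smooth cover $\mathcal{K}_{H_{int}}\times_{H_{int}}\buntc(\cam/H_{int})$, hence extends from $H^{\circ}$ to all of $H_{int}$. You have, in effect, reproduced the extension half of the argument but omitted the half that produces the trivialization in the first place; to repair your proof you must import the $H^{\circ}$ statement (or an equivalent) rather than argue by Cartier duality.
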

\begin{proof}
For part $(1)$, recall that one has a $W$-action on $\bunt(\cam)$ given by (See Subsection $3.1$ of \cite{Geometric Langlands} and Section $5$ of \cite{Gerbe of Higgs}):
$$w(\mathcal{F}_{T})=T\times^{T,w}(w^{-1})^{*}(\mathcal{F}_{T}) .$$
One set 
\begin{gather}
Nm_{T}: \bunt(\cam)\rightarrow\bunt(\cam) \notag\\
\mathcal{F}_{T}\rightarrow\otimes_{w\in W}w(\mathcal{F}_{T}) \notag
\end{gather}
 and similarly for $Nm_{\check{T}}$.
Let us denote the Poincar\'e line bundle on $\bunt(\cam)\times\buntc(\cam)$ by $\mathcal{Q}$. From the construction in Lemma~\ref{descend to bunt} we see that $$(\textrm{id}\times Nm_{\check{G}})^{*}(\mathcal{P}'_{G})\simeq (\textrm{id}\times Nm_{\check{T}})^{*}(\mathcal{Q}) .$$ 
While
$$(Nm_{G}\times\textrm{id})^{*}(\mathcal{P}'_{\check{G}})\simeq (Nm_{T}\times\textrm{id})^{*}(\mathcal{Q}) .$$
Using the formula for $\mathcal{Q}$ in Appendix $A$ one checks that we have a natural isomorphism
$$(\textrm{id}\times Nm_{\check{T}})^{*}(\mathcal{Q})\simeq (Nm_{T}\times\textrm{id})^{*}(\mathcal{Q}) .$$
This finishes part $(1)$.

For part $(2)$, we look at the universal case first. Let $\mathcal{K}_{H_{int}}$ be the kernel of $\bunt(\cam/H_{int})\rightarrow\jtg$. Part $(4)$ of Proposition~\ref{geo of hig} and the constructions in Section $3$ of \cite{Geometric Langlands} implies that when we restrict to the open
set $H^{\circ}$, there exists a trivialization of $\mathcal{P}'_{G}$ on $\mathcal{K}_{H^{\circ}}\times_{H^{\circ}}\jtgc$ that is compatible with the group structure on $\mathcal{K}_{H^{\circ}}$. So it remains to show this trivialization extends. By part $(1)$ of the lemma, we see that the pullback of $\mathcal{P}'_{G}$ to $\mathcal{K}_{H_{int}}\times_{H_{int}}\buntc(\cam/H_{int})$ is trivial. So we have a meromorphic trivialization of $\mathcal{P}'_{G}$ over $\mathcal{K}_{H_{int}}\times_{H_{int}}\jtgc$ which becomes regular when we pullback to the smooth cover $\mathcal{K}_{H_{int}}\times_{H_{int}}\buntc(\cam/H_{int})$. Hence the trivialization on $\mathcal{K}_{H^{\circ}}\times_{H^{\circ}}\jtgc$ extends.

\end{proof}

With the previous lemma at hand, we can now prove Lemma~\ref{new descend to bunt}:
\begin{proof}
By Lemma~\ref{a first reduction} we only need to prove the restriction of $\mathcal{P}'_{G}$ to $\bunt(\cam)\times\jtgccam$ descends. Let $\mathcal{K}$ be the kernel of $\bunt(\cam)\rightarrow\jtgcam$. Since $\mathcal{P}'_{G}$ is a biextension, it is enough to show that the restriction of $\mathcal{P}'_{G}$ to $\mathcal{K}\times\jtgccam$ is equipped with a  trivialization that is compatible with the group structure of $\mathcal{K}$. So part $(2)$ of Lemma~\ref{a key lemma} finishes the proof. 
\end{proof}

\begin{corollary}\label{finish the descend for adjoint}
The line bundle $\mathcal{P}'_{G}$ on $\bunt(\cam)\times\higc(\cam)$ descends to $\jtgcam\times\higc(\cam)$. Denote it by $\mathcal{P}_{G}$. We will call it the Poincar\'e line bundle on $\jtgcam\times\higc(\cam)$. Furthermore, if we restrict the Poincar\'e line bundle $\mathcal{P}$ to $$\jtgcam\times\higc^{reg}(\cam)\simeq\jtgcam\times\jtgccam$$ and view it as a line bundle on $\jtgcam\times\jtgccam$. Then $\mathcal{P}_{G}$ is naturally a biextension of $\jtgcam\times\jtgccam$ by $\mathbb{G}_{m}$ 
\end{corollary}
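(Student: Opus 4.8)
The plan is to read the Corollary off from the Lemmas just established, so that essentially no new computation is required. The descent statement---that $\mathcal{P}'_{G}$ on $\bunt(\cam)\times\higc(\cam)$ descends to $\jtgcam\times\higc(\cam)$---is exactly the content of Lemma~\ref{new descend to bunt}, which is already proved; I will simply name the resulting line bundle $\mathcal{P}_{G}$. Thus the only genuinely new assertion is that, after restricting to the regular locus $\jtgcam\times\higc^{reg}(\cam)\simeq\jtgcam\times\jtgccam$, the descended bundle $\mathcal{P}_{G}$ carries the structure of a biextension by $\mathbb{G}_{m}$.

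For the biextension structure I would descend the one already available upstairs. By Lemma~\ref{descend to bunt}, $\mathcal{P}'_{G}$ on $\bunt(\cam)\times\jtgccam$ is a biextension, hence comes with two partial group laws: one along the $\jtgccam$-factor and one along the $\bunt(\cam)$-factor, subject to the usual compatibilities. The morphism $\bunt(\cam)\to\jtgcam$ is a surjective homomorphism of Picard stacks with kernel $\mathcal{K}$, and it only affects the first factor. The partial group law in the $\jtgccam$-direction is therefore untouched by the descent and passes to $\jtgcam\times\jtgccam$ along with the line bundle. For the partial group law in the $\bunt(\cam)$-direction, descent of a biextension along the quotient $\bunt(\cam)\to\jtgcam=\bunt(\cam)/\mathcal{K}$ requires precisely a trivialization of $\mathcal{P}'_{G}|_{\mathcal{K}\times\jtgccam}$ compatible with the group structure on $\mathcal{K}$, and this is exactly part $(2)$ of Lemma~\ref{a key lemma}. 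Hence both partial group laws descend, endowing $\mathcal{P}_{G}$ with a biextension structure on $\jtgcam\times\jtgccam$.

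The point that needs care---and the main obstacle in the argument---is that the trivialization furnished by Lemma~\ref{a key lemma}$(2)$ must be compatible with the \emph{entire} biextension structure, in particular also with the partial group law in the $\jtgccam$-direction, so that it descends the biextension and not merely the underlying $\mathbb{G}_{m}$-torsor. I would dispose of this exactly as in the proof of Lemma~\ref{a key lemma}$(2)$. Over the smooth locus $H^{\circ}$ the relevant trivialization is the one coming from the autoduality of Proposition~\ref{geo of hig}$(4)$, where it is by construction a trivialization of biextensions, hence compatible with both partial group laws. Its extension to $H_{int}$ is then controlled by passing to the smooth cover $\buntc(\cam)$: by part $(1)$ of Lemma~\ref{a key lemma} the pullback of $\mathcal{P}'_{G}$ to $\mathcal{K}\times\buntc(\cam)$ is the trivial biextension, so the extended trivialization is regular there and remains a biextension trivialization, and this property descends back along the smooth cover. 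Consequently the $\jtgccam$-compatibility is automatic, which completes the verification.
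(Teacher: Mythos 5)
Your proposal is correct and takes essentially the same route as the paper: the descent of the underlying line bundle is quoted from Lemma~\ref{new descend to bunt}, and the biextension structure is descended by reducing to the universal case, invoking Proposition~\ref{geo of hig}(4) over the dense open $H^{\circ}$ where the biextension structure is known to descend, and extending from there, with Lemma~\ref{a key lemma} supplying both the multiplicative trivialization on $\mathcal{K}\times\jtgccam$ and the smooth-cover regularity argument. Your explicit decomposition into the two partial group laws and the careful check that the trivialization is compatible with the $\jtgccam$-direction law is simply a more detailed rendering of the paper's one-line assertion that ``compatibility holds over a dense open set, hence everywhere.''
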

\begin{proof}
By Lemma~\ref{new descend to bunt} we know $\mathcal{P}'_{G}$ descends. Since $\mathcal{P}'_{G}$ is naturally a biextension on $\bunt(\cam)\times\jtgccam$, to show the biextension structure descends we need to show the biextension structure is compatible with descend data. To prove this one looks at the universal case, i.e. the line bundle $\mathcal{P}'_{G}$ on $\bunt(\cam/H_{int})\times_{H_{int}}\jtgc$. By part $(4)$ of Proposition~\ref{geo of hig}, over the open set $H^{\circ}\subseteq H_{int}$, the biextension structure on $\bunt(\cam/H^{\circ})\times_{H^{\circ}}\jtgc$ descends to $\jtg\times_{H^{\circ}}\jtgc$. So compatibility holds over a dense open set, hence everywhere. 
\end{proof}

Let us also state some additional compatibilities:
\begin{corollary}\label{additional compatibility}
\begin{enumerate1}
\item One has an isomorphism of biextensions:
$$\mathcal{P}_{G}\mid_{\jtgcam\times\jtgccam}\simeq\mathcal{P}_{\check{G}}\mid_{\jtgcam\times\jtgccam} .$$
That is, the line bundles $\mathcal{P}_{G}$ and $\mathcal{P}_{\check{G}}$ defines isomorphic biextensions when restricted to $\jtgcam\times\jtgccam$.
\item Consider:
$$\jtgcam\times\jtgcam\times\higc(\cam)\xrightarrow{m_{G}\times\textrm{id}}\jtgcam\times\higc(\cam). $$
Then we have $(m_{G}\times\textrm{id})^{*}(\mathcal{P}_{G})\simeq p_{13}^{*}(\mathcal{P}_{G})\otimes p_{23}^{*}(\mathcal{P}_{G})$.
\item Consider 
$$\jtgcam\times\jtgccam\times\higc(\cam)\xrightarrow{\textrm{id}\times m_{\check{G}}}\jtgcam\times\higc(\cam) .$$
We have $(\textrm{id}\times m_{\check{G}})^{*}(\mathcal{P_{G}})\simeq p_{12}^{*}(\mathcal{P}_{G})\otimes p_{13}^{*}(\mathcal{P}_{G})$.
\item Let $(-1)$ be the involution on $\jtgcam$ given by $x\rightarrow -x$. Then we have $((-1)\times\textrm{id})^{*}\mathcal{P}_{G}\simeq \check{\mathcal{P}}_{G}$.
\end{enumerate1}
\end{corollary}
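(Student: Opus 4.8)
The plan is to prove all four statements by first establishing each over a dense open locus where $\mathcal{P}_G$ is controlled by the results already available, and then extending the resulting isomorphism by a Hartogs-type argument. The common technical input is that, by parts $(2)$ and $(3)$ of Proposition~\ref{geo of hig}, the complement of $\higc^{reg}(\cam)$ in $\higc(\cam)$ has codimension $\geq 2$, while by part $(1)$ of Proposition~\ref{geo of hitchin fibration} the stack $\higc(\cam)$ is Gorenstein, hence satisfies Serre's condition $S2$. Consequently any isomorphism of line bundles defined over $\higc^{reg}(\cam)$ extends uniquely to $\higc(\cam)$, and the extension is again an isomorphism (extend the inverse as well and compare on the dense open set). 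This reduces the compatibilities to the regular locus $\higc^{reg}(\cam)\simeq\jtgccam$, where $\mathcal{P}_G$ restricts to the biextension furnished by Corollary~\ref{finish the descend for adjoint}.

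For parts $(2)$ and $(3)$ I would observe that over $\higc^{reg}(\cam)\simeq\jtgccam$ the action maps $m_{\check G}$ and $m_G$ become the Picard group laws on $\jtgccam$ and $\jtgcam$, so that the two displayed isomorphisms are precisely the partial-linearity axioms satisfied by the biextension $\mathcal{P}_G\mid_{\jtgcam\times\jtgccam}$. Since $\higc^{reg}(\cam)$ is a $\jtgccam$-torsor, both $m_G\times\textrm{id}$ and $\textrm{id}\times m_{\check G}$ carry the regular locus into the regular locus, so the biextension isomorphisms are defined there; the $S2$ extension principle then produces the isomorphisms over all of $\higc(\cam)$, and uniqueness of extensions across codimension $\geq 2$ guarantees automatically that they remain compatible with the descent from $\bunt(\cam)$.

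For part $(1)$ I would pull back along the smooth surjective cover $Nm_G\times Nm_{\check G}\colon \bunt(\cam)\times\buntc(\cam)\to\jtgcam\times\jtgccam$. Using that $\mathcal{P}'_G=(Nm_G\times\textrm{id})^{*}\mathcal{P}_G$ and the dual identity for $\mathcal{P}'_{\check G}$, the two pullbacks become $(\textrm{id}\times Nm_{\check G})^{*}\mathcal{P}'_G$ and $(Nm_G\times\textrm{id})^{*}\mathcal{P}'_{\check G}$, which are identified by part $(1)$ of Lemma~\ref{a key lemma} with the two pullbacks of the Poincar\'e bundle $\mathcal{Q}$ on $\bunt(\cam)\times\buntc(\cam)$; the required symmetry of $\mathcal{Q}$ is read off directly from its explicit formula in Appendix~A. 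To descend this isomorphism one must check it is simultaneously compatible with the kernel trivializations of part $(2)$ of Lemma~\ref{a key lemma} and its $\check G$-counterpart, so that it is equivariant for the descent data along $Nm_G\times Nm_{\check G}$; alternatively, one verifies the statement over $H^{\circ}$ via the autoduality of part $(4)$ of Proposition~\ref{geo of hig} and extends to $H_{int}$ by density. This descent is the step I expect to be the main obstacle, since it genuinely uses the interplay between the two norm maps set up in Lemma~\ref{a key lemma}, whereas the $S2$/codimension extensions elsewhere are routine once the statement is known on the regular locus.

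Finally, part $(4)$ follows formally from part $(2)$. Pulling back the isomorphism of part $(2)$ along $\jtgcam\times\higc(\cam)\to\jtgcam\times\jtgcam\times\higc(\cam)$, $(x,e)\mapsto(x,-x,e)$, the left-hand side becomes the restriction of $\mathcal{P}_G$ to the identity section $\{0\}\times\higc(\cam)$, which is trivial because $\mathcal{P}_G$ is a biextension (hence trivialized along $\{0\}\times\jtgccam$, then extended over $\higc(\cam)$ by the $S2$ argument), while the right-hand side becomes $\mathcal{P}_G\otimes((-1)\times\textrm{id})^{*}\mathcal{P}_G$. Comparing the two yields $((-1)\times\textrm{id})^{*}\mathcal{P}_G\simeq\check{\mathcal{P}}_G$.
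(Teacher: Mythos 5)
There is a genuine gap, and it sits at the foundation of your whole scheme: you claim that for a \emph{fixed} cameral cover $\cam$ the complement of $\higc^{reg}(\cam)$ in $\higc(\cam)$ has codimension $\geq 2$, citing parts $(2)$ and $(3)$ of Proposition~\ref{geo of hig}. Part $(2)$ only gives density (codimension $\geq 1$) of the regular locus in a fixed reduced fiber, and part $(3)$ concerns the locus $H^{0}$ of smooth cameral covers, so it says nothing about a fixed singular $\cam$. In fact the fiberwise claim is false: for the nodal cameral covers of Appendix $B$ the irregular Higgs bundles form a torsor over $\jtgcamn(k)$ (Lemma~\ref{description of irregular higgs bundles}), and by Theorem~\ref{main theorem of app B} this locus is the image of $\mjtgcamn$; since $\jtgcam$ is a $\mathbb{G}_{m}$-torsor over $\jtgcamn$ (Lemma~\ref{structure of jtors}), it has codimension exactly \emph{one} in $\mhig(\cam)$. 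So your Hartogs-type extension from the regular locus to $\higc(\cam)$ fails fiberwise, and this breaks the extension step in parts $(2)$ and $(3)$, as well as the trivialization of $\mathcal{P}_{G}$ along the unit that you use in part $(4)$. The paper avoids this by running every extension argument in the universal family over $H_{int}$: there the complement of $\higc^{reg}$ in $\higc$ really does have codimension $\geq 2$ (codimension $\geq 1$ in the base direction away from $H^{0}$, plus fiberwise codimension $\geq 1$ from density), the isomorphisms are constructed universally, and the statement for a fixed $\cam$ is then obtained by restriction to the fiber.

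Modulo this repair, your skeleton is close to the paper's, with two points worth noting. First, for parts $(2)$ and $(3)$ you read the identities on the regular locus directly off the partial group-law axioms of the biextension of Corollary~\ref{finish the descend for adjoint}; this is a legitimate shortcut, whereas the paper re-derives the identities over $H^{\circ}$ from part $(4)$ of Proposition~\ref{geo of hig} and then extends. Second, in part $(1)$ the phrase ``extends to $H_{int}$ by density'' is not by itself an argument: a meromorphic identification defined over $H^{\circ}$ need not be regular, and the actual mechanism — both in the paper and implicit in your first alternative — is that its pullback to the smooth cover $\bunt(\cam/H_{int})\times_{H_{int}}\buntc(\cam/H_{int})$ is regular by part $(1)$ of Lemma~\ref{a key lemma} together with the explicit formula for $\mathcal{Q}$ in Appendix $A$, and regularity descends along a smooth cover; the same smooth-cover regularity trick (with $\bunt\times\bunt\times\jtgc$) is what replaces your failed $S2$ step in parts $(2)$--$(4)$. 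Your formal deduction of $(4)$ from $(2)$ via $(x,e)\mapsto(x,-x,e)$ is fine once the canonical triviality of $\mathcal{P}_{G}$ along the unit section (asserted in Lemma~\ref{a morphism}) is established universally rather than by the fiberwise codimension argument.
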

\begin{proof}
For part $(1)$, we will work with the universal case. Consider $\jtg\times_{H_{int}}\times\jtgc$. Part $(4)$ of Proposition~\ref{geo of hig} implies that the claim is true over the open subscheme $H^{\circ}$. So one needs to prove this meromorphic identification extends. To see that we take the pullbacks of $\mathcal{P}_{G}$ and $\mathcal{P}_{\check{G}}$ to $\bunt(\cam/H_{int})\times_{H_{int}}\buntc(\cam/H_{int})$. Part $(1)$ of Lemma~\ref{a key lemma} implies that the pullback of the meromorphic identification to the smooth cover $\bunt(\cam/H_{int})\times_{H_{int}}\buntc(\cam/H_{int})$ is actually regular. Hence it extends.

For part $(2)$, we will again consider the universal case. Using the fact that the complement of $\higc^{reg}\simeq\jtgc$ in $\higc$ has codimension greater than or equals to two, we see that we only need to prove the claim over the open set $\jtg\times_{H_{int}}\jtg_{H_{int}}\times_{H_{int}}\jtgc$. By part $(4)$ of Proposition~\ref{geo of hig} and the construction in Section $3$ of \cite{Geometric Langlands}, the statement holds over the open set $H^{\circ}$. So one needs to show this isomorphism extends. It follows from the construction of $\mathcal{P}_{G}$ that if we pullback to $\bunt(\cam/H_{int})\times_{H_{int}}\bunt(\cam/H_{int})\times_{H_{int}}\jtgc$, the claim holds. So the isomorphism on $H^{\circ}$ extends.

The proof for part $(3)$ and part $(4)$ are similar to the proof of part $(2)$. 

\end{proof}

For later use we shall make some additional analysis in the case when $G$ is adjoint. Let us first make some general observations about $\jtg$ and $\higc$ in this case:
\begin{lemma}\label{more geometry}
\begin{enumerate1}
\item $\jtg$ is a quasi-projective scheme over $H_{int}$. If we fix a point in $H_{int}$ and consider the corresponding fiber $\jtgcam$, then we have $\pi_{0}(\jtgcam)\simeq X_{*}(T)/W\simeq Z(\check{G})^{\check{}}$.
\item $\higc$ is fiberwise connected over $H_{int}$. It admits a moduli space which we will denote by $\mhigc$. $\mhigc$ is a projective scheme over $H_{int}$ and $\higc$ is a $Z(\check{G})$ gerbe over $\mhigc$.
\item Let $\mjtgc$ be the moduli space of $\jtgc$ and let $\mathcal{D}$ be the line bundle on $H_{int}$ given by the dual of the determinant of the lie algebra of $\mjtgc$. Then the relative dualizing sheaf of $\mhigc$ over $H_{int}$ is isomorphic to $f^{*}(\mathcal{D})$ where $f$ is the morphism $\mhigc\rightarrow H_{int}$.
\end{enumerate1}
\end{lemma}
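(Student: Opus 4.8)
The plan is to treat the three parts in turn, using three recurring facts: that $G$ adjoint forces $Z(G)=1$, so $J_{G,\cam}$-torsors are rigid; that $\check{G}$ is then simply connected, so $\higc$ has connected fibers; and that $\higc^{reg}$ is a dense open substack, a torsor under $\jtgc$, whose complement has codimension $\geq 2$ in each fiber (as already used in Lemma~\ref{descend to bunt}). For part $(1)$, Proposition~\ref{geo of hig}$(5)$ gives $H^{0}(X,J_{G,\cam})\simeq Z(G)=1$, so every $J_{G,\cam}$-torsor has trivial automorphisms and $\jtg$ is an algebraic space over $H_{int}$. To upgrade this to a quasi-projective scheme I would fix a square root of $L$, use the resulting Kostant section to identify $\hig^{reg}\simeq\jtg$ (Proposition~\ref{geo of hig}$(1)$), and note that, the stabilizers on $\hig^{reg}$ being trivial, the good moduli space morphism $\hig\rightarrow\mhig$ of Proposition~\ref{geo of hitchin fibration}$(4)$ realizes $\hig^{reg}\simeq\jtg$ as an open subscheme of the fiberwise projective $\mhig$; hence $\jtg$ is quasi-projective over $H_{int}$. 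For the component group, Lemma~\ref{surjection on pio} furnishes a surjection $X_{*}(T)/W\twoheadrightarrow\pi_{0}(\jtgcam)$, where $X_{*}(T)/W$ denotes the quotient of $X_{*}(T)$ by the subgroup generated by the elements $\lambda-w\lambda$; this subgroup is the coroot lattice $\mathbb{Z}R^{\vee}$, so $X_{*}(T)/W\simeq\pi_{1}(G)\simeq Z(\check{G})^{\check{}}$. To see the surjection is injective I would compose it with $\pi_{0}(\jtgcam)=\pi_{0}(\hig^{reg}(\cam))\rightarrow\pi_{0}(\hig(\cam))\simeq\pi_{1}(G)$, the latter coming from the dense open torsor $\hig^{reg}(\cam)\subseteq\hig(\cam)$ and Proposition~\ref{geo of hitchin fibration}$(2)$; the composite is the standard isomorphism $X_{*}(T)/W\simeq\pi_{1}(G)$, which forces injectivity.

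For part $(2)$, since $\check{G}$ is simply connected the $\check{G}$-analogue of Proposition~\ref{geo of hitchin fibration}$(2)$ gives $\pi_{0}(\higc(\cam))\simeq\pi_{0}(\higc)\simeq\pi_{1}(\check{G})=1$, so $\higc$ is fiberwise connected, and the good moduli space $\mhigc$, projective over $H_{int}$, exists by Proposition~\ref{geo of hitchin fibration}$(4)$. To see $\higc\rightarrow\mhigc$ is a $Z(\check{G})$-gerbe I would show that every $\check{G}$-Higgs bundle $(E,\varphi)$ with integral cameral cover has automorphism group exactly $Z(\check{G})$: the center always acts, giving $Z(\check{G})\hookrightarrow\mathrm{Aut}(E,\varphi)$, while conversely $\mathrm{Aut}(E,\varphi)$ is the group of global sections of the centralizer group scheme of $\varphi$, which at the generic point of $X$ is a maximal torus because the integral cameral cover is generically \'etale and hence $\varphi$ is generically regular semisimple. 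Restricting to the generic point and invoking $H^{0}(X,J_{\check{G},\cam})\simeq Z(\check{G})$ (Proposition~\ref{geo of hig}$(5)$ for $\check{G}$) bounds $\mathrm{Aut}(E,\varphi)$ above by $Z(\check{G})$. The resulting constant inertia $Z(\check{G})$, together with the existence of the moduli space, yields the gerbe structure.

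For part $(3)$ I would first compute the relative dualizing sheaf over the regular locus and then extend across the rest. On $\higc^{reg}$, a torsor under $\jtgc$ over $H_{int}$, the relative tangent sheaf is the pullback of the relative Lie algebra of $\jtgc$; passing to moduli spaces, and using that rigidifying by the finite group $Z(\check{G})$ leaves the Lie algebra unchanged so that $\mathrm{Lie}(\jtgc)\simeq\mathrm{Lie}(\mjtgc)$, the relative tangent sheaf over the regular locus of $\mhigc$ is $f^{*}\mathrm{Lie}(\mjtgc)$. Hence its top exterior power is $f^{*}\det\mathrm{Lie}(\mjtgc)$ and the relative dualizing sheaf over the regular locus is the dual $f^{*}\mathcal{D}$. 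Since $h$ is a relative complete intersection (Proposition~\ref{geo of hitchin fibration}$(1)$), $\mhigc$ is Gorenstein, hence $S_{2}$, and its relative dualizing sheaf is invertible, in particular reflexive; as the complement of the regular locus has codimension $\geq 2$ in each fiber, the isomorphism with $f^{*}\mathcal{D}$ established over the regular locus extends uniquely to all of $\mhigc$.

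The hardest part will be $(3)$: the crux is to make precise that the relative tangent sheaf of the $\jtgc$-torsor $\higc^{reg}$ is the pullback of $\mathrm{Lie}(\jtgc)$ and that this descends to the corresponding statement for $\mjtgc$, and then to extend the identification across the non-regular locus via reflexivity and the codimension estimate. The $\pi_{0}$ computation in $(1)$ and the automorphism bound in $(2)$ are the other delicate points, but both reduce to generic regular-semisimple considerations already available from Sections $2$ and $3$.
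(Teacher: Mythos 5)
Your treatments of parts $(1)$ and $(3)$ are sound. Part $(3)$ is in fact the paper's own argument: identify the dualizing sheaf over the dense torsor locus $\mhigc^{reg}\simeq\mjtgc$, then extend across the complement using that $\mhigc$ is Gorenstein and the complement has codimension $\geq 2$. For part $(1)$ you take a mildly different route from the paper, which simply quotes Section $2$ of \cite{Higgs Faltings} for quasi-projectivity (stability plus trivial automorphisms) and Proposition $4.10.13$ of \cite{Fundamental lemma} for $\pi_{0}$; your alternative — embedding $\jtg\simeq\hig^{reg}$ as an open subscheme of the projective $\mhig$ via a Kostant section, and deducing injectivity of $X_{*}(T)/W\twoheadrightarrow\pi_{0}(\jtgcam)$ by composing with the surjection $\pi_{0}(\jtgcam)\twoheadrightarrow\pi_{0}(\hig(\cam))\simeq\pi_{1}(G)$ coming from density of the regular locus — is correct, and in fact the counting works without identifying the composite with the standard map, since a surjection between finite groups of equal order is bijective.

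There is, however, a genuine gap in part $(2)$, in your upper bound on $\mathrm{Aut}(E,\varphi)$. An automorphism of $(E,\varphi)$ is a global section of the centralizer group scheme $\mathcal{A}$ of $\varphi$ over $X$, and $\mathcal{A}$ agrees with (the twist of, hence with) $J_{\check{G},\cam}$ only over the open locus $X^{reg}$ where $\varphi$ is regular; at the finitely many irregular points $\mathcal{A}$ is strictly larger. So a global automorphism is a priori only an element of $J_{\check{G},\cam}(X^{reg})$ admitting an extension as a section of the \emph{bigger} sheaf $\mathcal{A}$ at the bad points, and $H^{0}(X^{reg},J_{\check{G},\cam})$ is not controlled by $H^{0}(X,J_{\check{G},\cam})\simeq Z(\check{G})$: already for $\mathbb{G}_{m}$ the sections over $X$ minus a point are all units of an affine curve, not just constants. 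Hence ``restricting to the generic point and invoking $H^{0}(X,J_{\check{G},\cam})\simeq Z(\check{G})$'' does not bound $\mathrm{Aut}(E,\varphi)$; this is exactly the input the paper imports instead from stability together with Proposition $4.11.2$ of \cite{Fundamental lemma}: since all Higgs bundles with integral cameral cover are stable, $\mathrm{Aut}(E,\varphi)$ is finite, and a finite subgroup of the generic centralizer torus consists of torsion, hence constant, $W$-invariant elements of $\check{T}$, so lies in $\check{T}^{W}\simeq Z(\check{G})$ (simple connectedness of $\check{G}$ entering here). That irregular Higgs bundles really do require a separate analysis is visible in the paper's Appendix $B$ (Lemma~\ref{description of irregular higgs bundles}), where in the nodal case the isomorphism sheaf is computed to be a torsor under the larger group scheme $J_{\check{G},\cam^{n}}$ — the honest computation that your generic-point shortcut elides (there the conclusion survives because $H^{0}(X,J_{\check{G},\cam^{n}})$ is again the center, but that must be proved, not assumed).
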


\begin{proof}
We start with part $(1)$. By our assumptions on $L$, for each root $\alpha$, the ramification divisor $\widetilde{D}_{\alpha}$ is nonempty. Hence we see from the formula of $J_{G,\cam}$ that $H^{0}(X,J_{G,\cam})\simeq Z(G)=0$. By our assumptions on the cameral over, we have that all Higgs bundles with cameral cover $\cam$ are stable. We conclude from Section $2$ of \cite{Higgs Faltings} that the stack $\jtgcam$ is isomorphic to its moduli space, which is quasi-projective. The claim on $\pi_{0}$ follows from Proposition $4.10.13$ of \cite{Fundamental lemma} and the fact that $J^{0}_{G,\cam}=J_{G,\cam}$ since $G$ is adjoint.

For part $(2)$, since $\check{G}$ is simply connected, the claim that $\higc(\cam)$ is connected follows from part $(2)$ of Proposition~\ref{geo of hitchin fibration}. By \cite{Higgs Faltings} the moduli space $\mhigc(\cam)$ is projective. Since we assume $\cam$ is integral, all Higgs bundles with cameral cover $\cam$ are stable. And because $\check{G}$ is simply connected, we have $\check{T}^{W}\simeq Z(\check{G})$. Now Proposition $4.11.2$ of \cite{Fundamental lemma} implies that the automorphism group of every Higgs bundle with cameral cover $\cam$ is equal to $Z(\check{G})$. In this case Section $2$ of \cite{Higgs Faltings} implies that $\higc(\cam)$ is a $Z(\check{G})$ gerbe over $\mhigc(\cam)$. 

For part $(3)$, the expression for the dualizing sheaf is true over the group scheme $\mjtg$. Since $\mhig$ is Gorenstein and $\mjtg$ is dense in $\mhigc$ with the codimension of the complement is greater than or equals to two, this implies the claim.
\end{proof}

Let us consider the functor $\pic(\higc(\cam))$, which parameterizes line bundles on $\higc(\cam)$ together with a trivialization at the point of $\higc(\cam)$ corresponds to the Kostant section. We have:
\begin{lemma}\label{additional data adjoint}
\begin{enumerate1}
\item $\pic(\higc(\cam))$ is a scheme. It fits into an exact sequence:
$$0\rightarrow\pic(\mhigc(\cam))\rightarrow\pic(\higc(\cam))\rightarrow Z(\check{G})^{\check{}}\rightarrow 0$$
where $Z(\check{G})^{\check{}}$ is the Cartier dual group of $Z(\check{G})$.
\item The group homomorphism $\jtgcam\rightarrow\pic(\higc(\cam))$ induces a morphism between exact sequences:
$$\xymatrix{
0\ar[r] & \jtgcam^{0} \ar[r] \ar[d] & \jtgcam \ar[r] \ar[d] & \pi_{0}(\jtgcam) \ar[r] \ar[d] & 0\\
0\ar[r] & \pic(\mhigc(\cam)) \ar[r] & \pic(\higc(\cam)) \ar[r] & Z(\check{G})^{\check{}} \ar[r] & 0
}
$$
where $\jtgcam^{0}$ denotes the neutral component of $\jtgcam$. The arrow $\pi_{0}(\jtgcam)\rightarrow Z(\check{G})^{\check{}}$ is given by part $(1)$ of Lemma~\ref{more geometry}.
\end{enumerate1}
\end{lemma}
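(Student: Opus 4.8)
Since $G$ is adjoint, $\check G$ is simply connected and semisimple, so $Z(\check G)$ is a finite abelian group, and by part $(2)$ of Lemma~\ref{more geometry} the morphism $\higc(\cam)\to\mhigc(\cam)$ to the projective coarse space is a gerbe banded by $Z(\check G)$. Every line bundle on such a gerbe carries an intrinsic \emph{weight}: the inertia $Z(\check G)$ acts on its fibres through a character, which gives a homomorphism $w\colon\pic(\higc(\cam))\to\mathrm{Hom}(Z(\check G),\mathbb{G}_{m})=Z(\check G)^{\check{}}$. The strategy is to show that $w$ is the right-hand arrow of the desired sequence: its kernel is precisely the group of weight-zero bundles, which are exactly those descending to $\mhigc(\cam)$, and it is surjective because all weights are already realized by the Poincar\'e line bundle of Corollary~\ref{finish the descend for adjoint}.

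For the formal content of part $(1)$ I would argue as follows. The rigidification at the Kostant point $e$ kills the scalar automorphisms of line bundles, so $\pic(\higc(\cam))$ is an honest functor; the Picard functor of the projective scheme $\mhigc(\cam)$ is representable by a scheme locally of finite type, and since $\pic(\higc(\cam))$ will be exhibited as an extension of the finite group $Z(\check G)^{\check{}}$ by $\pic(\mhigc(\cam))$, it too is a scheme. Pullback along the fppf gerbe morphism $\higc(\cam)\to\mhigc(\cam)$ is injective on rigidified Picard by faithfully flat descent, yielding $\pic(\mhigc(\cam))\hookrightarrow\pic(\higc(\cam))$. A line bundle has weight zero exactly when $Z(\check G)$ acts trivially on its fibres, and then gerbe descent produces a descent to the coarse space; conversely pullbacks have weight zero. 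Hence $\ker(w)=\pic(\mhigc(\cam))$, which gives exactness of the bottom row apart from surjectivity of $w$.

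Surjectivity of $w$ I would deduce together with part $(2)$. The Poincar\'e bundle $\mathcal{P}_{G}$, restricted slicewise, defines the homomorphism $\jtgcam\to\pic(\higc(\cam))$, the biextension trivialization along the identity supplying the rigidification at $e$. The top sequence $0\to\jtgcam^{0}\to\jtgcam\to\pi_{0}(\jtgcam)\to 0$ is the connected-component sequence of the group scheme $\jtgcam$. Because $\jtgcam^{0}$ is connected while $Z(\check G)^{\check{}}$ is discrete, $w$ annihilates the image of $\jtgcam^{0}$, so that image lands in $\ker(w)=\pic(\mhigc(\cam))$; this is the left square and forces the composite $\jtgcam\to Z(\check G)^{\check{}}$ to factor through $\pi_{0}(\jtgcam)$. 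It then remains to identify the induced map $\pi_{0}(\jtgcam)\to Z(\check G)^{\check{}}$ with the isomorphism $\pi_{0}(\jtgcam)\simeq X_{*}(T)/W\simeq Z(\check G)^{\check{}}$ of part $(1)$ of Lemma~\ref{more geometry}, which in particular makes it surjective and thereby establishes surjectivity of $w$.

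The hard part is exactly this weight computation. For $j=\mathrm{Nm}(\mathrm{AJ}(\widetilde{D},\phi))$ the image of an Abel--Jacobi class attached to a degree-$n$ divisor and a cocharacter $\phi\in X_{*}(T)=X^{*}(\check T)$, the explicit fibre formula $\de(\phi((E_{\check G},\varphi)')_{\widetilde{D}})\otimes\de(O_{\widetilde{D}})^{-1}$ for $\mathcal{P}_{G}$ shows that an automorphism $z\in Z(\check G)\subseteq\check T$ of the Kostant object acts on the fibre of $\mathcal{P}_{G}\mid_{\{j\}\times\higc(\cam)}$ by the scalar $\phi(z)^{n}$, i.e. through the character $(n\phi)\mid_{Z(\check G)}$; on the other hand the component of $j$ is $\overline{n\phi}\in X_{*}(T)/W$ by Lemma~\ref{surjection on pio}, so under the identification $X_{*}(T)/W\simeq Z(\check G)^{\check{}}$ given by restriction of characters $X^{*}(\check T)\to X^{*}(Z(\check G))$ the weight of $j$ coincides with its class. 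I expect this inertia-action computation to be the main obstacle: one must track how the central automorphisms propagate through the Abel--Jacobi and norm constructions, match the outcome with the abstract pairing $X_{*}(T)/W\simeq Z(\check G)^{\check{}}$, and verify that the resulting weight depends only on $j$ and not on the chosen presentation $(\widetilde{D},\phi,n)$.
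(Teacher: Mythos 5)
Your proposal matches the paper's proof in all essentials: your character map is the weight of the $Z(\check{G})$-inertia action (the paper packages it as restriction to $BZ(\check{G})$ along the Kostant point), the kernel is identified with $\pic(\mhigc(\cam))$ by gerbe descent plus connectedness of $\higc(\cam)$, and both surjectivity and the identification of $\pi_{0}(\jtgcam)\rightarrow Z(\check{G})^{\check{}}$ come from the explicit fiber formula of the Poincar\'e bundle at Abel--Jacobi points combined with Lemma~\ref{surjection on pio} --- exactly as in the paper. The only cosmetic differences are that the paper takes a single point $\widetilde{x}$ (your case $n=1$, which already suffices since it realizes every class in $X_{*}(T)$) instead of a degree-$n$ divisor, and that it asserts rather than argues the scheme-hood of $\pic(\higc(\cam))$, which your extension argument fills in harmlessly.
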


\begin{proof}
Let us start with part $(1)$. By Lemma~\ref{more geometry}, the Kostant section induces a morphism $BZ(\check{G})\rightarrow\higc(\cam)$, hence we get a morphism $\pic(\higc(\cam))\rightarrow\pic(BZ(\check{G}))$. It is well-known that $\pic(BZ(\check{G}))\simeq Z(\check{G})^{\check{}}$, so we get $\pic(\higc(\cam))\rightarrow Z(\check{G})^{\check{}}$. We first will prove it is surjective. For this we need to analyze the action of the automorphism group $Z(\check{G})$ on fibers of the Poincar\'e line bundle. Let us fix a point $\widetilde{x}\in\cam^{un}$ and a cocharacter $\phi\in X_{*}(T)$. Consider the point $\phi_{*}(O(\widetilde{x}))\in\bunt(\cam)$ and look at the restriction of the line bundle $\mathcal{P}'_{G}$ on $\phi_{*}(O(\widetilde{x}))\times\higc(\cam)$. From the construction of the Poincar\'e line bundle on $\cam^{un}\times X_{*}(T)\times\higc(\cam)$, it is not hard to check that for any point $s\in\higc(\cam)$, the automorphism of the fiber of $\mathcal{P}'_{G}$ at $(\phi_{*}(O(\widetilde{x})),s)\in\bunt(\cam)\times\higc(\cam)$ induced by an element $y$ in $Z(\check{G})$ is given by multiplication by $\phi(y)$. Here we have identified $\phi$ with an element in $X^{*}(\check{T})$, hence it induces a character of $Z(\check{G})$ by restriction. Since $X_{*}(T)\simeq X^{*}(\check{T})\twoheadrightarrow Z(\check{G})^{\check{}}$ is surjective, we see that $\pic(\higc(\cam))\rightarrow Z(\check{G})^{\check{}}$ is surjective. Next we will prove the kernel is isomorphic to $\pic(\mhigc(\cam))$. Indeed, since one has a natural morphism $\higc(\cam)\rightarrow\mhigc(\cam)$, we get a functor $\pic(\mhig(\cam))\rightarrow\pic(\higc(\cam))$ given by pullback line bundles. Proposition~\ref{more geometry} implies that the category of line bundles on $\mhigc(\cam)$ is equivalent to the category of line bundles on $\higc(\cam)$ such that the action of $Z(\check{G})$ on the fiber of the line bundle at every point of $\higc(\cam)$ is trivial. Since $\higc(\cam)$ is connected by Proposition~\ref{more geometry}, this is equivalent to the action of $Z(\check{G})$ on the fiber at the point given by the Kostant section is trivial. Part $(1)$ follows from this. 

For part $(2)$, one needs to show that $\pi_{0}(\jtgcam)\rightarrow Z(\check{G})^{\check{}}$ is given by the natural isomorphism $X_{*}(T)/W\simeq Z(\check{G})^{\check{}}$. Recall that we have $\pi_{0}(\bunt(\cam))\simeq X_{*}(T)$ and that $\bunt(\cam)\rightarrow\jtgcam$ induces the natural projection $\pi_{0}(\bunt(\cam))\simeq X_{*}(T)\rightarrow X_{*}(T)/W\simeq\pi_{0}(\jtgcam)$, see Lemma~\ref{surjection on pio}. Hence for any $\phi\in X_{*}(T)$ and $\widetilde{x}\in\cam^{un}$, the image of $(\widetilde{x},\phi)$ in $\pi_{0}(\jtgcam)$ under the morphisms:
\begin{gather}
\cam^{un}\times X_{*}(T)\rightarrow\bunt(\cam)\rightarrow\jtgcam\rightarrow\pi_{0}(\jtgcam) \notag \\
(\widetilde{x},\phi)\rightarrow \phi_{*}(O(\widetilde{x}))\xrightarrow{Nm} Nm(\phi_{*}(O(\widetilde{x})))\rightarrow\pi_{0}(\jtgcam) \notag
\end{gather}
is given by the class of $\phi$ in $X_{*}(T)$. Hence if we denote the restriction of the Poincar\'e line bundle to $Nm(\phi_{*}(O(\widetilde{x})))\times\higc(\cam)$ by $\mathcal{P}_{Nm(\phi_{*}(O(\widetilde{x})))}$, we need to prove that if we look at the $Z(\check{G})$ action on the fiber of $\mathcal{P}_{Nm(\phi_{*}(O(\widetilde{x})))}$ at any point in $\higc(\cam)$, then the action is given by the character of $Z(\check{G})$ induced by $\phi$. To see this, notice that by our construction of the Poincar\'e line bundle on $\cam^{un}\times X_{*}(T)\times\higc(\cam)$, the fiber of $\mathcal{P}_{Nm(\phi_{*}(O(\widetilde{x})))}$ at $(E_{G},\varphi)$ is given by $\phi((E_{G},\varphi)')\mid_{\widetilde{x}}$. Here let us remind the reader that $(E_{G},\varphi)'$ stands for the $\check{T}$-bundle on $\cam^{un}$ obtained from the Higgs bundle $(E_{G},\varphi)$, see part $(2)$ of Proposition~\ref{prop of jtors}. And $\phi((E_{G},\varphi)')$ is the line bundle on $\cam^{un}$ obtained from the $\check{T}$-bundle $(E_{G},\varphi)'$ by identifying $\phi$ as an element in $X^{*}(\check{T})$. The claim now follows directly from this expression.

\end{proof}

For the purpose of the next subsection, let us also prove the following:
\begin{lemma}\label{pullback from normalization}
Let $G$ be adjoint. Let $\cam$ be a cameral cover satisfying the assumptions in Appendix $B$ and let $\cam^{n}$ be its normalization. Then the biextension $\mathcal{P}_{G}$ on $\jtgcam\times\jtgccam$ is the pullback of a biextension $\mathcal{P}^{n}_{G}$ on $\jtgcamn\times\jtgccamn$.

\end{lemma}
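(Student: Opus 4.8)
The plan is to push everything onto the open locus $\cam^{un}$, over which the normalization $\nu\colon\cam^{n}\to\cam$ is an isomorphism. Since the nodes of $\cam$ lie over the discriminant and hence in the ramification locus, $\cam^{un}$ is contained in the smooth locus of $\cam$, so $\nu$ restricts to an isomorphism $(\cam^{n})^{un}\xrightarrow{\sim}\cam^{un}$; I will use this to identify the two unramified loci throughout. Pulling back $W$-equivariant $T$-torsors (resp. $\check{T}$-torsors) along $\nu$, or equivalently using the natural map of regular centralizers $J_{G,\cam}\to J_{G,\cam^{n}}$, I first produce morphisms of Picard stacks $\rho_{1}\colon\jtgcam\to\jtgcamn$ and $\rho_{2}\colon\jtgccam\to\jtgccamn$. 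By the structure theorem of Appendix~B (Theorem~\ref{main theorem of app B}) these are surjective and their kernels $\mathcal{K}_{G}=\ker(\rho_{1})$ and $\mathcal{K}_{\check{G}}=\ker(\rho_{2})$ are tori supported at the nodes. The target biextension $\mathcal{P}^{n}_{G}$ is obtained by running the construction of Subsection~$4.1$ verbatim on the \emph{smooth} curve $\cam^{n}$; smoothness puts the hypotheses of Proposition~\ref{geo of hig} in force, so Corollary~\ref{finish the descend for adjoint} applies and $\mathcal{P}^{n}_{G}$ is a well-defined biextension on $\jtgcamn\times\jtgccamn$.

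The heart of the matter is that the Abel--Jacobi recipe for the fibre, $\de(\phi((E_{\check{G}},\varphi)')_{\widetilde{D}})\otimes\de(O_{\widetilde{D}})^{-1}$, only involves divisors $\widetilde{D}$ supported in $\cam^{un}$ together with the $\check{T}$-torsor $(E_{\check{G}},\varphi)'$ on $\cam^{un}$, and both of these are insensitive to $\nu$. Indeed the symmetric-power cover $(\cam^{un})^{(n_{1})}\phi_{1}\times\cdots\times(\cam^{un})^{(n_{r})}\phi_{r}$ is literally the same scheme for $\cam$ and for $\cam^{n}$, the two Abel--Jacobi maps into $\bunt(\cam)$ and $\bunt(\cam^{n})$ are intertwined by the pullback $\bunt(\cam)\to\bunt(\cam^{n})$, and the $\check{T}$-torsor on $\cam^{un}$ attached to a $J_{\check{G},\cam}$-torsor agrees, by Proposition~\ref{prop of jtors}, with the one attached to its image under $\rho_{2}$. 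Consequently the pullbacks of $\mathcal{P}_{G}$ and of $(\rho_{1}\times\rho_{2})^{*}\mathcal{P}^{n}_{G}$ to the symmetric-power cover times $\higc^{reg}(\cam)$ are given by one and the same explicit formula, so they are canonically isomorphic there. This already shows the two biextensions become isomorphic after pullback to $\bunt(\cam)\times\buntc(\cam)$, where by Lemma~\ref{descend to bunt} and Lemma~\ref{a key lemma} both are identified, via the norm maps, with the Poincar\'e line bundle of Appendix~A (Proposition~\ref{propositions of poincare line bundle on bunt}).

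It then remains to descend this isomorphism from the smooth cover to an isomorphism of biextensions on $\jtgcam\times\jtgccam$. By the biextension formalism this is equivalent to checking that $\mathcal{P}_{G}$ is canonically trivial, compatibly with the partial group laws, on $\mathcal{K}_{G}\times\jtgccam$ and on $\jtgcam\times\mathcal{K}_{\check{G}}$; granting the triviality, the resulting partial trivializations match those of $(\rho_{1}\times\rho_{2})^{*}\mathcal{P}^{n}_{G}$, and compatibility of the biextension structures follows exactly as in Lemma~\ref{new descend to bunt} and Corollary~\ref{additional compatibility}, being verified over the dense open locus $H^{\circ}$ of part $(4)$ of Proposition~\ref{geo of hig} and then extended by rigidity. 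To obtain the triviality on the node tori I pull $\mathcal{P}_{G}$ back to $\bunt(\cam)\times\buntc(\cam)$, where it is the norm-pullback of the Poincar\'e bundle of Appendix~A, and compute its restriction to the node tori using the explicit Weil-pairing formula together with the local description of $\mathcal{K}_{G}$ and $\mathcal{K}_{\check{G}}$ at the nodes furnished by Appendix~B: the node classes are supported away from $\cam^{un}=(\cam^{n})^{un}$, and the expectation is that the root conditions defining the connected regular centralizer force the pairing to degenerate on them.

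I expect the triviality on the node tori to be the main obstacle. While the agreement on the Abel--Jacobi cover is essentially a restatement of the construction, converting it into an isomorphism of biextensions on $\jtgcam\times\jtgccam$ forces one to pin down the kernels $\mathcal{K}_{G}$ and $\mathcal{K}_{\check{G}}$ precisely — which is exactly the input imported from Theorem~\ref{main theorem of app B} — and to verify that the node-supported trivializations are simultaneously compatible with the group structures in both variables. Here the hypothesis that $G$ is adjoint, so that $J_{G,\cam}=J^{0}_{G,\cam}$ and the root conditions cut out the connected regular centralizer, is essential in making the Weil-pairing contribution of the nodes degenerate; and the codimension-two extension argument used in Corollary~\ref{additional compatibility} is again needed to pass from $\higc^{reg}$ to $\higc$.
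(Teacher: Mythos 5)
Your reduction is organized sensibly, but it is missing its proof at exactly the point you yourself flag as ``the main obstacle,'' and that point is the entire content of the lemma. The decisive claim --- that the restriction of $\mathcal{P}_{G}$ to $\mathcal{K}_{G}\times\jtgccam$, where $\mathcal{K}_{G}\simeq\mathbb{G}_{m}$ is the kernel of $\jtgcam\rightarrow\jtgcamn$ (this is the content of Lemma~\ref{structure of jtors}; Theorem~\ref{main theorem of app B}, which you cite for the structure of the kernels, is the pushout description of $\mhigc(\cam)$ and is not what supplies them) --- carries a trivialization compatible with the group structure, is supported only by the sentence that ``the expectation is that the root conditions \dots force the pairing to degenerate.'' No computation is given, and the heuristic is genuinely risky: the restriction of the Poincar\'e line bundle to $\mathbb{G}_{m}\times\mhigc(\cam)$ is \emph{nontrivial} --- the nontriviality of precisely this restriction, through the gluing data at the irregular locus, is the engine of Lemma~\ref{a key lemma on etaleness} --- so any local Weil-pairing argument at the node must explain why triviality holds over the open piece $\jtgccam$ while failing after compactification, a distinction your sketch does not make. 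Moreover, even granting that the pullback to the cover $\bunt(\cam)\times\buntc(\cam)$ is trivial on the relevant subgroup via part $(2)$ of Proposition~\ref{propositions of poincare line bundle on bunt}, you would still have to check that this trivialization is invariant under the descent data before it gives a trivialization on $\mathcal{K}_{G}\times\jtgccam$; this step is also absent.

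The paper closes this gap by an argument of a different nature, which your proposal does not contain. By Lemma~\ref{describe kernel} the automorphisms $T(k)$ act trivially on the fibers, so the biextension lives on $X_{*}(T)\otimes\pic(\cam)\times\jtgccam$; by part $(2)$ of Proposition~\ref{propositions of poincare line bundle on bunt} together with the factorization $\jtgccam\rightarrow\jtgccamn\rightarrow\buntc^{W}(\cam^{n})\rightarrow\buntc(\cam^{n})$, the \emph{second} variable is descended through $\jtgccamn$ first. Then comes the key point: since $\cam^{n}$ is a smooth cameral cover, $\jtgccamn$ is a $Z(\check{G})$-gerbe over an abelian variety, so the dual Picard stack $\mathbb{D}(\jtgccamn)$ is a \emph{projective} algebraic group. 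The biextension is thereby a homomorphism $X_{*}(T)\otimes\pic(\cam)\rightarrow\mathbb{D}(\jtgccamn)$, which kills the elements $w(\phi)\otimes(w^{-1})^{*}(\mathcal{M})-\phi\otimes\mathcal{M}$ by $W$-equivariance and hence, by Lemma~\ref{describe kernel}, factors through $\jtgcam$; it then factors through $\jtgcamn$ for the soft reason that $\mathbb{G}_{m}$ is connected affine and so admits no nonconstant homomorphism to a projective group. This rigidity argument is what replaces your unproven node computation, and without it (or a completed local argument of equal strength, including the descent-compatibility of the trivialization) your proposal does not prove the lemma.
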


\begin{proof}
By Lemma~\ref{describe kernel}, the biextension $\mathcal{P}'$ on $\bunt(\cam)\times\jtgccam$ actually lives on $X_{*}(T)\otimes\pic(\cam)\times\jtgccam$ since $\bunt(\cam)\rightarrow\jtgcam$ factors through $X_{*}(T)\otimes\pic(\cam)$. Hence the automorphism group $T(k)$ acts trivially on each fiber. The construction of the line bundle $\mathcal{P}'$ in Lemma~\ref{new descend to bunt} shows that the line bundle 
$\mathcal{P}'$ is the pullback of the Poincar\'e line bundle on $\bunt(\cam)\times\buntc(\cam)$ via $\bunt(\cam)\times\jtgccam\rightarrow\bunt(\cam)\times\buntc^{W}(\cam)\rightarrow\bunt(\cam)\times\buntc(\cam)$. By our discussions about the Poincar\'e line bundles in Appendix $A$, we see that the Poincar\'e line bundle on $\bunt(\cam)\times\buntc(\cam)$ is isomorphic to the pullback of the Poincar\'e line bundle on $\bunt(\cam^{n})\times\buntc(\cam^{n})$. Hence $\mathcal{P}'$ is isomorphic to the pullback of the line bundle on $\bunt(\cam)\times\buntc^{W}(\cam^{n})$. Since the following diagram is commutative:
$$\xymatrix{
\jtgccam \ar[r] \ar[d] & \buntc^{W}(\cam) \ar[r] \ar[d] & \buntc(\cam^{n}) \ar[d] \\
\jtgccamn \ar[r] & \buntc^{W}(\cam^{n}) \ar[r] & \buntc(\cam^{n}) ,
}
$$
we conclude that the biextension $\mathcal{P}'$ is isomorphic to the pullback of a biextension on $\bunt(\cam)\times\jtgccamn$ via $\bunt(\cam)\times\jtgccamn\rightarrow\bunt(\cam)\times\buntc^{W}(\cam^{n})$. Since $T(k)$ acts trivially on the fiber, this biextension actually lives on $X_{*}(T)\otimes\pic(\cam)\times\jtgccamn$. Since $\cam^{n}$ is a smooth cameral cover over $X$, by Lemma~\ref{more geometry} we conclude that $\jtgccamn$ is a $Z(\check{G})$ gerbe over an abelian variety. If we denote the dual Picard stack of $\jtgccamn$ by $\mathbb{D}(\jtgccamn)$, then it is a projective algebraic group. Hence one has a morphism of algebraic groups $X_{*}(T)\otimes\pic(\cam)\rightarrow\mathbb{D}(\jtgccamn)$. Since the line bundle on $X_{*}(T)\otimes\pic(\cam)\times\jtgccamn$ is the pullback of the line bundle on $X_{*}(T)\otimes\pic(\cam)\times\buntc^{W}(\cam^{n})$, using the $W$-equivariance and our discussions about Poincar\'e line bundles in Appendix $A$, we conclude that elements in $X_{*}(T)\otimes\pic(\cam)$ of the form $w(\phi)\otimes (w^{-1})^{*}(\mathcal{M})-\phi\otimes\mathcal{M}$ will be sent to zero in $\mathbb{D}(\jtgccamn)$. Here $w\in W$, $\phi\in X_{*}(T)$ and $\mathcal{M}$ is a line bundle. From Lemma~\ref{describe kernel} we conclude that if $K$ is the kernel of $X_{*}(T)\otimes\pic(\cam)\rightarrow\jtgcam$, then $K$ will be sent to zero under $X_{*}(T)\otimes\pic(\cam)\rightarrow\mathbb{D}(\jtgccamn)$. Hence the morphism $X_{*}(T)\otimes\pic(\cam)\rightarrow\mathbb{D}(\jtgccamn)$ factors through $\jtgcam$. Moreover, since $\mathbb{D}(\jtgccamn)$ is projective, we conclude from Lemma~\ref{structure of jtors} that it factors through $\jtgcamn$. Hence biextension descends to $\jtgcamn\times\jtgccamn$. 

\end{proof}

\begin{corollary}\label{description of the fiber}
Consider the Abel-Jacobi map for $\cam^{n}$ (Since $\cam^{n}$ is a smooth cameral cover over $X$, the Abel-Jacobi map makes sense over $\cam^{n}$):
\begin{gather}
\cam^{n}\times X_{*}(\check{T})\rightarrow \buntc(\cam^{n})\xrightarrow{Nm}\jtgccamn \notag \\
\widetilde{x}\times\phi\rightarrow\phi_{*}(O(\widetilde{x}))\rightarrow Nm(\phi_{*}(O(\widetilde{x}))) . \notag
\end{gather}
Let $\mathcal{F}^{n}_{T}$ be the universal $T$ bundle on $\cam^{n}\times\bunt(\cam)$. Then the restriction of $\mathcal{P}^{n}_{G}$ to $\jtgcamn\times Nm(\phi_{*}(O(\widetilde{x})))\hookrightarrow\jtgcamn\times\jtgccamn$ is isomorphic to the pullback of the line bundle $\phi(\mathcal{F}^{n}_{T})\mid_{\widetilde{x}}$ on $\bunt(\cam^{n})$ to $\jtgcamn$ via $\jtgcamn\rightarrow\bunt^{W}(\cam^{n})\rightarrow\bunt(\cam^{n})$. 
\end{corollary}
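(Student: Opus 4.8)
The plan is to reduce everything to the smooth curve $\cam^{n}$, where the self-duality of part $(4)$ of Proposition~\ref{geo of hig} holds without any restriction, and then to read off the fibre directly from the $n=k=1$ computation recorded in the Remark of Subsection $4.1$. Since $\cam^{n}$ is a smooth cameral cover it lies over the locus $H^{\circ}$, so all the compatibilities of Corollary~\ref{additional compatibility} are available for $\cam^{n}$ without any extension argument. The key input will be part $(1)$ of Corollary~\ref{additional compatibility}, applied to $\cam^{n}$, which gives an isomorphism of biextensions
$$\mathcal{P}^{n}_{G}\mid_{\jtgcamn\times\jtgccamn}\simeq\mathcal{P}^{n}_{\check{G}}\mid_{\jtgcamn\times\jtgccamn},$$
where on the right $\mathcal{P}^{n}_{\check{G}}$ is regarded on $\jtgcamn\times\jtgccamn$ via the swap of the two factors. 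Thus the restriction of $\mathcal{P}^{n}_{G}$ to the slice $\jtgcamn\times\{Nm(\phi_{*}(O(\widetilde{x})))\}$ is isomorphic to the restriction of $\mathcal{P}^{n}_{\check{G}}$ to the same slice, and it suffices to compute the latter.

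The point of passing to $\mathcal{P}^{n}_{\check{G}}$ is that the fixed slice is exactly an Abel-Jacobi point for the \emph{first} factor in the mirror construction. Indeed, $\mathcal{P}^{n}_{\check{G}}$ is built as in Subsection $4.1$ but with the roles of $G$ and $\check{G}$ exchanged: from the $\check{G}$-Abel-Jacobi map $\cam^{n}\times X_{*}(\check{T})\rightarrow\jtgccamn$ on the first factor and the $G$-Higgs bundles parameterized by $\jtgcamn\simeq\hig^{reg}(\cam^{n})$ on the second. Applying the $n=k=1$ formula of the Remark in Subsection $4.1$ to $\mathcal{P}^{n}_{\check{G}}$, the fibre of $\mathcal{P}^{n}_{\check{G}}$ at a point $(\widetilde{x},\phi,(E_{G},\psi))\in\cam^{n}\times X_{*}(\check{T})\times\jtgcamn$ is $\phi((E_{G},\psi)')\mid_{\widetilde{x}}$, where $(E_{G},\psi)'$ is the $W$-equivariant $T$-torsor on $\cam^{n}$ attached to $(E_{G},\psi)$ by part $(2)$ of Proposition~\ref{prop of jtors}, and $\phi$ is regarded as a character of $T$ under $X_{*}(\check{T})=X^{*}(T)$.

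It then remains to let $(E_{G},\psi)$ vary over $\jtgcamn$. Under the morphism $\jtgcamn\rightarrow\bunt^{W}(\cam^{n})\rightarrow\bunt(\cam^{n})$ of part $(3)$ of Proposition~\ref{prop of jtors}, the family of $T$-torsors $(E_{G},\psi)'$ is precisely the universal $T$-bundle $\mathcal{F}^{n}_{T}$ on $\cam^{n}\times\bunt(\cam^{n})$. Hence the line bundle on $\jtgcamn$ whose fibre at $(E_{G},\psi)$ is $\phi((E_{G},\psi)')\mid_{\widetilde{x}}$ is the pullback of $\phi(\mathcal{F}^{n}_{T})\mid_{\widetilde{x}}$ along $\jtgcamn\rightarrow\bunt^{W}(\cam^{n})\rightarrow\bunt(\cam^{n})$, which is the asserted description.

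The computation above is short once the two ingredients are in place, so the real work is bookkeeping: I expect the main obstacle to be checking that the fixed slice $\{Nm(\phi_{*}(O(\widetilde{x})))\}$ matches the first-factor Abel-Jacobi datum of $\mathcal{P}^{n}_{\check{G}}$ compatibly with the symmetry isomorphism of Corollary~\ref{additional compatibility}, tracking the identifications $X_{*}(T)=X^{*}(\check{T})$ and $X_{*}(\check{T})=X^{*}(T)$ throughout. As an alternative that avoids invoking the symmetry, one could instead pull $\mathcal{P}^{n}_{G}$ back to the smooth cover $\bunt(\cam^{n})\times\buntc(\cam^{n})$ (using Lemma~\ref{pullback from normalization} together with Lemma~\ref{descend to bunt}), lift the slice to the $\check{T}$-torsor $\phi_{*}(O(\widetilde{x}))$, and apply the explicit evaluation formula for the Poincar\'e line bundle on $\bunt(\cam^{n})\times\buntc(\cam^{n})$ from Appendix $A$; this produces the line bundle $\phi(\mathcal{F}^{n}_{T})\mid_{\widetilde{x}}$ directly and serves as a consistency check on the first route.
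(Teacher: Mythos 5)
The main route of your proposal rests on a premise that fails: you treat $\cam^{n}$ as a fiber of the universal cameral cover lying over $H^{\circ}$, so that part $(4)$ of Proposition~\ref{geo of hig}, Corollary~\ref{additional compatibility}, and the Remark of Subsection $4.1$ apply to it verbatim. But $\cam^{n}$ does not correspond to any point of $H$ for the fixed line bundle $L$: it is only an \emph{abstract} cameral cover in the sense of \cite{Gerbe of Higgs}, as the paper is careful to say at the start of Appendix $B$. Indeed, normalizing the node turns the two branches into points where $\cam^{n}\rightarrow X$ is unramified, so the branch divisor of $\cam^{n}$ is that of $\cam$ with $2x$ removed, whereas every cameral cover arising from a section of $\mathfrak{c}^{L}$ has branch divisor of the fixed degree determined by $L$; no section produces $\cam^{n}$. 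Consequently the object $\mathcal{P}^{n}_{\check{G}}$ you swap to is never constructed in the paper, and the symmetry isomorphism of part $(1)$ of Corollary~\ref{additional compatibility} has no referent for $\cam^{n}$ — which is precisely why the paper defines $\mathcal{P}^{n}_{G}$ indirectly, by the descent argument of Lemma~\ref{pullback from normalization}, instead of quoting the smooth theory over $H^{\circ}$. A second, smaller defect of the same route: the $n=k=1$ fiber formula of the Remark is established only on the unramified locus via part $(2)$ of Proposition~\ref{prop of jtors}, while the corollary asserts the formula at \emph{every} $\widetilde{x}\in\cam^{n}$; an isomorphism of the two line bundles over the dense open part of the parameter curve does not extend across the codimension-one ramification locus for free (the discrepancy could be $O(D)$ with $D$ supported there), and you supply no rigidity argument to rule this out.

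Your ``alternative'' route, however, is exactly the paper's proof: the proof of Lemma~\ref{pullback from normalization} shows that the pullback of $\mathcal{P}^{n}_{G}$ to $\jtgcamn\times\buntc(\cam^{n})$ via the norm map on the second factor is the pullback of the Poincar\'e line bundle on $\bunt(\cam^{n})\times\buntc(\cam^{n})$ along the natural maps, and then the explicit evaluation in Proposition~\ref{propositions of poincare line bundle on bunt} (parts $(1)$ and $(5)$, with the two factors interchanged) identifies the slice over $\phi_{*}(O(\widetilde{x}))$ with $\phi(\mathcal{F}^{n}_{T})\mid_{\widetilde{x}}$, valid at every point of $\cam^{n}$ since the Appendix $A$ formulas hold on the whole curve. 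So the fix is organizational: discard the symmetry argument and promote your consistency check to the proof — only that route is supported by what the paper actually establishes for $\cam^{n}$.
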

\begin{proof}
From our discussions in Lemma~\ref{pullback from normalization} we see that the pullback of $\mathcal{P}^{n}_{G}$ to $\jtgcamn\times\buntc(\cam^{n})$ via the norm map is isomorphic to the pullback of the Poincar\'e line bundle on $\bunt(\cam^{n})\times\buntc(\cam^{n})$ via the natural morphism: $\jtgcamn\times\buntc(\cam^{n})\rightarrow\bunt(\cam^{n})\times\buntc(\cam^{n})$. Now the claim follows directly from the expression of the Poincar\'e line bundle in Appendix $A$.
\end{proof}

\begin{lemma}\label{describe kernel}
Let $G$ be adjoint. The morphism $\bunt(\cam)\rightarrow\jtgcam$ induces a morphism: $X_{*}(T)\otimes\pic(\cam)\rightarrow\jtgcam$. Its kernel is equal to the image of:
\begin{gather}
\mathbb{Z}[W]\otimes X_{*}(T)\otimes\pic(\cam)\rightarrow X_{*}(T)\otimes\pic(\cam) \notag \\
w\otimes\phi\otimes\mathcal{M}\rightarrow w(\phi)\otimes (w^{-1})^{*}(\mathcal{M})-\phi\otimes\mathcal{M} \notag
\end{gather}
where $\mathbb{Z}[W]$ stands for the group ring of $W$ over $\mathbb{Z}$.
\end{lemma}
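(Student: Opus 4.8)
The plan is to compute the kernel by a direct cohomological analysis, using the norm presentation of $J_{G,\cam}$ from Section~$3$ together with Tsen's theorem (Lemma~\ref{Tsen}). First I would record why the map descends to $M:=X_{*}(T)\otimes\pic(\cam)$ at all: since $G$ is adjoint, $H^{0}(X,J_{G,\cam})\simeq Z(G)=0$, so by Lemma~\ref{more geometry}(1) the target $\jtgcam$ is a scheme with trivial automorphisms, and the automorphisms $T(k)=H^{0}(\cam,T)$ of a $T$-torsor are killed; thus $\bunt(\cam)\rightarrow\jtgcam$ factors through the rigidification $M=H^{1}(\cam,T)=X_{*}(T)\otimes\pic(\cam)$. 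Writing $\mathcal{A}:=\pi_{*}(T\times\cam)$ (so $H^{1}(X,\mathcal{A})=M$ since $\pi$ is finite), the induced map $M\xrightarrow{Nm_{*}}\jtgcam$ is $H^{1}$ of the norm map $\mathcal{A}\xrightarrow{N=\sum_{w}w}J_{G}$ of Proposition~\ref{norm map} (recall $J_{G}=J^{0}_{G}$ for adjoint $G$). The $W$-action on $M$ is $w\cdot(\phi\otimes\mathcal{M})=w(\phi)\otimes(w^{-1})^{*}\mathcal{M}$ (Lemma~\ref{a key lemma}), so the image described in the statement is exactly $I_{W}\cdot M$, where $I_{W}\subseteq\mathbb{Z}[W]$ is the augmentation ideal. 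Since $N$ is $W$-invariant ($N\circ w=N$), we have $Nm_{*}((w-1)v)=0$ for all $w$ and $v$, which gives the easy inclusion $I_{W}\cdot M\subseteq\ker(Nm_{*})$.

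For the reverse inclusion I would factor $N$ through the sheaf of coinvariants. Let $q:\mathcal{A}\twoheadrightarrow\mathcal{A}_{W}:=\mathcal{A}/I_{W}\mathcal{A}$; since $N$ kills $I_{W}\mathcal{A}$ it factors as $N=\bar{N}\circ q$ with $\bar{N}:\mathcal{A}_{W}\rightarrow J_{G}$ surjective and $\ker\bar{N}=K/I_{W}\mathcal{A}=:R$, where $K=\ker N$. Over the unramified locus $X^{un}$ the sheaf $\mathcal{A}$ is an induced $W$-module, hence $\mathbb{Z}[W]$-free, so there $K=I_{W}\mathcal{A}$ and $R$ is supported on the finite branch locus; as $k$ is algebraically closed this forces $H^{1}(X,R)=0$. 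From $0\rightarrow R\rightarrow\mathcal{A}_{W}\xrightarrow{\bar{N}}J_{G}\rightarrow 0$ we then get that $\bar{N}_{*}:H^{1}(X,\mathcal{A}_{W})\hookrightarrow H^{1}(X,J_{G})=\jtgcam$ is injective, and since $Nm_{*}=\bar{N}_{*}\circ q_{*}$ this yields $\ker(Nm_{*})=\ker(q_{*}:M\rightarrow H^{1}(X,\mathcal{A}_{W}))$. The long exact sequence of $0\rightarrow I_{W}\mathcal{A}\rightarrow\mathcal{A}\xrightarrow{q}\mathcal{A}_{W}\rightarrow 0$ identifies this with $\mathrm{im}\!\left(H^{1}(X,I_{W}\mathcal{A})\rightarrow M\right)$.

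It remains to identify that image with $I_{W}\cdot M$, and here I would use the presentation coming from the simple reflections $S$. Tensoring the free resolution $\mathbb{Z}[W]^{\oplus S}\xrightarrow{(s-1)}\mathbb{Z}[W]\xrightarrow{\epsilon}\mathbb{Z}\rightarrow 0$ of the trivial module with the generically $\mathbb{Z}[W]$-free sheaf $\mathcal{A}$ gives a surjection $\Psi:\mathcal{A}^{\oplus S}\rightarrow I_{W}\mathcal{A}$, $(a_{s})\mapsto\sum_{s}(s-1)a_{s}$, whose kernel is generically a torus (the higher $\mathrm{Tor}$ against the flat module $\mathcal{A}$ vanishes). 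Tsen's theorem (Lemma~\ref{Tsen}) gives $H^{2}(X,\ker\Psi)=0$, so $H^{1}(\mathcal{A}^{\oplus S})\twoheadrightarrow H^{1}(I_{W}\mathcal{A})$, whence $\mathrm{im}(H^{1}(I_{W}\mathcal{A})\rightarrow M)=\sum_{s\in S}(s-1)M$; a telescoping over reduced words ($(w-1)m=(s-1)(w'm)+(w'-1)m$ for $w=sw'$) shows $\sum_{s\in S}(s-1)M=I_{W}\cdot M$. Combining the three steps gives $\ker(Nm_{*})=I_{W}\cdot M$, as asserted.

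The main obstacle is precisely the reverse inclusion at the ramification points: there $K\neq I_{W}\mathcal{A}$, so the quotient $R$ is a genuinely nonzero finite-length sheaf, and one must argue it contributes no extra classes to the kernel. The device that makes this clean is the simple-reflection presentation, which keeps the relevant auxiliary kernel $\ker\Psi$ generically a torus so that Lemma~\ref{Tsen} applies; the finite discrepancy $R$ is then cohomologically invisible in the degree that matters because $k$ is algebraically closed (so skyscraper-type sheaves have no higher cohomology).
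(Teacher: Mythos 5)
Your argument is correct and is essentially the paper's proof in lightly reorganized form: your $I_{W}\mathcal{A}$ is the paper's sheaf $A$, your $R=K/I_{W}\mathcal{A}$ is its $K/A$, and both proofs rest on the same inputs --- surjectivity of the norm map (Proposition~\ref{norm map}, using $J_{G}=J^{0}_{G}$ for adjoint $G$), vanishing of $H^{1}$ of the finite-support sheaf $K/A$, and Lemma~\ref{Tsen} applied to a generically-torus auxiliary kernel to obtain surjectivity on $H^{1}$. The only cosmetic differences are that you run the diagram chase through the coinvariants $\mathcal{A}_{W}$ (via injectivity of $H^{1}(\mathcal{A}_{W})\rightarrow H^{1}(J_{G})$) where the paper works with the kernel $K$ directly, and that you cover $I_{W}\mathcal{A}$ by $\mathcal{A}^{\oplus S}$ indexed by simple reflections together with a telescoping identity, where the paper uses $\mathbb{Z}[W]\otimes\mathcal{A}$.
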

\begin{proof}
Since $X_{*}(T)\otimes\pic(\cam)$ is the moduli space of $\bunt(\cam)$ and $\jtgcam$ is a scheme by Lemma~\ref{more geometry}, we have a factorization $$\bunt(\cam)\rightarrow X_{*}(T)\otimes\pic(\cam)\rightarrow\jtgcam .$$ Let $\cam\xrightarrow{\pi} X$ be the cameral cover. Since $G$ is adjoint, by our discussions in Section $3$, we have $J_{G,\cam}=J^{0}_{G,\cam}$. Using Proposition~\ref{norm map} we see that the morphism of sheaves $\pi_{*}(T\times\cam)\rightarrow J_{G,\cam}$ is surjective. Let $K$ be its kernel. Since $H^{1}(\cam,T)\simeq H^{1}(X,\pi_{*}(T\times\cam))$, the kernel of $H^{1}(\cam,T)\rightarrow H^{1}(X,J_{G,\cam})$ is equal to the image of $H^{1}(X,K)$ in $H^{1}(X,\pi_{*}(T\times\cam))$. It is easy to see that the image of:
\begin{gather}
\mathbb{Z}[W]\otimes\pi_{*}(T\times\cam)\rightarrow \pi_{*}(T\times\cam) \notag\\
w\otimes s \rightarrow w(s)-s \notag
\end{gather}
is contained in $K$, where $s$ is a local section of $\pi_{*}(T\times\cam)$. Denote the image of this morphism by $A$ and denote its kernel by $B$. We have exact sequences:
\begin{gather}
0\rightarrow B\rightarrow\mathbb{Z}[W]\otimes\pi_{*}(T\times\cam)\rightarrow A\rightarrow 0 \notag\\
0\rightarrow A\rightarrow K\rightarrow K/A\rightarrow 0 .
\end{gather}
Since $\cam\rightarrow X$ is unramified over $X^{un}$, we have that $A=K$ over $X^{un}$. Hence $K/A$ has finite support. Since $B$ is generically a torus, Lemma~\ref{Tsen} tells us that $H^{1}(X,\mathbb{Z}[W]\otimes\pi_{*}(T\times\cam))\rightarrow H^{1}(X,A)$ is surjective. Since $K/A$ has finite support, we also have that $H^{1}(X,A)\rightarrow H^{1}(X,K)$ is surjective. So we have shown that the image of $H^{1}(X,K)$ in $H^{1}(X,\pi_{*}(T\times\cam))$ is equal to the image of $H^{1}(X,\mathbb{Z}[W]\otimes\pi_{*}(T\times\cam))$ in $H^{1}(X,\pi_{*}(T\times\cam))$. Now identity points in $X_{*}(T)\otimes\pic(\cam)$ with $H^{1}(X,\pi_{*}(T\times\cam))$, we see that the claim follows directly from this. 
\end{proof}

\subsection{Cohomology of the Poincar\'e line bundle}
In this subsection we determine the cohomology of the Poincar\'e line bundle and use it to show the Fourier-Mukai functor $\quasicoh(\jtgcam)\rightarrow \quasicoh(\higc(\cam))$ induced by the Poincar\'e line bundle is fully faithful. The main results are Theorem~\ref{main theorem on coho}, Theorem~\ref{2nd main theorem of cohomology of poincare line bundle} and Corollary~\ref{fully faithful fourier-mukai}.

In this subsection we shall always assume the $\delta$ constant strata $H_{int,\delta}$ has codimension greater than or equals to $\delta$, see $[ ]$. In the case when $L=\omega_{X}(D)$ for some effective divisor $D$, then one can show that this assumption always holds.

The first theorem of this subsection is the following:
\begin{theorem}\label{main theorem on coho}
Assume $G$ is semisimple. Let $\mathcal{P}_{G}$ be the Poincar\'e line bundle on $\jtgcam\times\higc(\cam)$. Consider:
$$\xymatrix{
\jtgcam\times\higc(\cam) \ar[d]_{p_{1}}\\
\jtgcam .
}
$$
Let $d$ be the dimension of $\higc(\cam)$ and let $e$ be the morphism $\textrm{Spec}(k)\rightarrow\jtgcam$ corresponds to the unit of $\jtgcam$. Then we have $Rp_{1 *}(\mathcal{P}_{G})\simeq e_{*}(k)[-d]$. 
\end{theorem}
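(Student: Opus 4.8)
The plan is to reduce the statement to two assertions: that the complex $\mathcal{F}:=Rp_{1*}\mathcal{P}_{G}$ is set-theoretically supported at the identity $e$, and that near $e$ it is the reduced skyscraper $k$ placed in cohomological degree $d$. Three structural inputs will be used throughout. First, $\higc(\cam)$ is connected, proper and Gorenstein of dimension $d$ with \emph{trivial} dualizing sheaf, by Proposition~\ref{geo of hitchin fibration} and Lemma~\ref{more geometry}; in particular $R\Gamma(\higc(\cam),\mathcal{O})$ is concentrated in degrees $[0,d]$ with $H^{0}=H^{d}=k$. Second, $\mathcal{P}_{G}$ restricts to $\mathcal{O}_{\higc(\cam)}$ along $\{e\}\times\higc(\cam)$, since the identity section of a biextension is canonically trivial (Corollary~\ref{finish the descend for adjoint}). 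Third, $\mathcal{P}_{G}$ is multiplicative: the action of $\jtgccam$ on $\higc(\cam)$ satisfies $a^{*}\mathcal{P}_{G}\simeq q^{*}\mathcal{P}_{G}\otimes p^{*}\mathcal{P}_{G}$ by Corollary~\ref{additional compatibility}(3), while $((-1)\times\textrm{id})^{*}\mathcal{P}_{G}\simeq\mathcal{P}_{G}^{-1}$ by Corollary~\ref{additional compatibility}(4).

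For the support, fix $j\neq e$ and set $\mathcal{L}_{j}=\mathcal{P}_{G}\mid_{\{j\}\times\higc(\cam)}$; I must show $R\Gamma(\higc(\cam),\mathcal{L}_{j})=0$. The components of $\jtgcam$ away from the identity component are disposed of first: by Lemma~\ref{more geometry}(1) the class of such $j$ in $\pi_{0}(\jtgcam)\simeq Z(\check{G})^{\check{}}$ is nontrivial, so by the weight computation of Lemma~\ref{additional data adjoint} the bundle $\mathcal{L}_{j}$ carries a nonzero $Z(\check{G})$-character; since $\higc(\cam)$ is a gerbe over its moduli space banded by $Z(\check{G})$ (Lemma~\ref{more geometry}(2)), pushforward to the moduli space annihilates $\mathcal{L}_{j}$ and its cohomology vanishes. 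The moral reason for vanishing on the identity component is a Mumford argument: multiplicativity of $\mathcal{L}_{j}$ under the $\jtgccam$-action forces $R\Gamma(\higc(\cam),\mathcal{L}_{j})=0$ whenever $\mathcal{L}_{j}$ is nontrivial, by comparing $Rp_{*}a^{*}\mathcal{L}_{j}$ via the projection formula with its computation through the shear map $(a,p)$.

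The main obstacle is that this Mumford argument does not apply verbatim, because the acting group $\jtgccam$ is not proper---its orbit is only the dense open regular locus $\higc^{reg}(\cam)$---so $(a,p)$ fails to be an isomorphism of the proper $\higc(\cam)\times\higc(\cam)$. I would circumvent this by descending to the normalization. Working over $H_{int}$ and using the hypothesis that the $\delta$-constant strata have codimension $\geq\delta$, the codimension estimate of Lemma~\ref{codim estimate} localizes the problem to cameral covers with at worst nodal singularities. For such $\cam$, Lemma~\ref{pullback from normalization} and Corollary~\ref{description of the fiber} exhibit $\mathcal{P}_{G}$ on $\jtgcam\times\jtgccam$ as pulled back from the \emph{smooth} cameral cover $\cam^{n}$, where $\jtgccamn$ is a $Z(\check{G})$-gerbe over an abelian variety and the classical Mumford vanishing does hold; the structure theorem for nodal fibers, Theorem~\ref{main theorem of app B}, then transports this vanishing across the nodal degeneration to $R\Gamma(\higc(\cam),\mathcal{L}_{j})$. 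This yields $\textrm{Supp}(\mathcal{F})=\{e\}$.

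Finally I identify $\mathcal{F}$, now a coherent complex supported at the smooth point $e\in\jtgcam$. As $p_{1}$ is proper of relative dimension $d$, the complex $\mathcal{F}$ has cohomological amplitude $[0,d]$. Grothendieck--Serre duality for $p_{1}$, whose relative dualizing complex is $\mathcal{O}[d]$ by triviality of $\omega_{\higc(\cam)}$, combined with $((-1)\times\textrm{id})^{*}\mathcal{P}_{G}\simeq\mathcal{P}_{G}^{-1}$ and $(-1)(e)=e$, produces a self-duality $R\mathcal{H}om(\mathcal{F},\mathcal{O})\simeq(-1)^{*}\mathcal{F}[d]$; read on cohomology, this interchanges the skyscraper $\mathcal{H}^{i}(\mathcal{F})$ with the dual of $\mathcal{H}^{2d-i}(\mathcal{F})$, i.e. forces symmetry about degree $d$. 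Together with the amplitude bound this concentrates $\mathcal{F}\simeq e_{*}(V)[-d]$ for a single finite-length module $V$. Base change along $\{e\}\times\higc(\cam)$ identifies $Le^{*}\mathcal{F}$ with $R\Gamma(\higc(\cam),\mathcal{O})$, and its top term $H^{d}(\higc(\cam),\mathcal{O})\simeq H^{0}(\higc(\cam),\omega)^{\vee}\simeq k$ matches the degree-$d$ part of $Le^{*}e_{*}(V)[-d]$, namely the fiber of $V$; self-duality of $V$ together with the Gorenstein length-one comparison then gives $V\simeq k$, so $\mathcal{F}\simeq e_{*}(k)[-d]$, as claimed. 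The essential difficulty is the identity-component vanishing of the third paragraph; once the reduction to $\cam^{n}$ is in place, the remaining steps are formal consequences of duality and the biextension structure.
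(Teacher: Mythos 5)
Your proposal proves, at best, the adjoint case, and even there it leaves the two hardest steps unargued. The theorem is asserted for all semisimple $G$, but every structural input you invoke is stated in the paper only for $G$ adjoint: by part $(2)$ of Proposition~\ref{geo of hitchin fibration} one has $\pi_{0}(\higc(\cam))\simeq\pi_{1}(\check{G})$, so $\higc(\cam)$ is \emph{disconnected} whenever $G$ is not adjoint and your opening claim $H^{0}=H^{d}=k$ fails; likewise $\jtgcam$ is then a $Z(G)$-gerbe rather than a scheme, and Lemma~\ref{more geometry}, Lemma~\ref{additional data adjoint}, Lemma~\ref{codim estimate} and Lemma~\ref{pullback from normalization} are all adjoint-only statements. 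For non-adjoint $G$ the pushforward over a single component $\higc^{\gamma}(\cam)$ is not $e_{*}(k)[-d]$ but $a_{*}\mathcal{L}_{\gamma}[-d]$, a skyscraper on $BZ(G)$ twisted by the character $\gamma\in\pi_{1}(\check{G})\simeq Z(G)^{\check{}}$, and only the sum over all components reassembles $e_{*}(k)[-d]$. The paper devotes the entire second half of the proof to this reduction via the exact sequences~\ref{exact sequences}: Proposition~\ref{compatibility between different groups}, the $H^{1}(X,Z(G)^{\check{}})$-torsor structure and the perfect pairing of Lemma~\ref{connections between groups}, Lemma~\ref{almost there}, and the translation argument by Abel--Jacobi points $\xi$. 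None of this appears in your proposal.

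Even in the adjoint case, the two steps you present as reductions are exactly where the content lies, and both fail as sketched. First, your own normalization step undercuts the Mumford strategy: by Lemma~\ref{pullback from normalization} the biextension on $\jtgcam\times\jtgccam$ is pulled back from $\jtgcamn\times\jtgccamn$, so for $j$ in the $\mathbb{G}_{m}$-kernel of $\jtgcam\rightarrow\jtgcamn$ (Lemma~\ref{structure of jtors}) the bundle $\mathcal{L}_{j}$ is \emph{trivial} on the whole regular locus, and no vanishing argument factoring through the abelian variety $\mjtgccamn$ can detect such $j$. Its nontriviality on $\mhigc(\cam)$ lives entirely in the gluing datum across the two boundary copies of $\mjtgccamn$ in Theorem~\ref{main theorem of app B}, and the paper must compute that datum explicitly --- Lemma~\ref{a key lemma on etaleness} and the unnamed lemma following it, which compares the trivializations at $0,\infty\in\mathbf{P}^{1}$ against translation by $Nm(\check{\alpha}_{*}(O(\widetilde{x}_{1})))$; ``transporting the vanishing across the nodal degeneration'' names this problem rather than solving it. Moreover the fiberwise localization to nodal covers is insufficient: for a fixed $\cam$ with $\delta\geq 2$, Lemma~\ref{codim estimate} only bounds $\dim\supp\leq\delta$, and the paper concentrates the support at $e$ for such fibers through the universal family --- Cohen--Macaulayness of $Rp_{1*}\mathcal{P}_{G}[d]$ over $H_{int}$ (Corollary~\ref{Cohen-Macaulayness}), \'etaleness of $\jtgcam^{0}\rightarrow\pic(\mhigc)$, and the known answer over $H^{0}$ from Theorem $1.2.1$ of \cite{Geometric Langlands} --- not fiber by fiber. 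Second, your endgame is also gapped: a cyclic, self-dual, finite-length module $V$ need not be $k$; for $d=1$ the module $V=k[t]/t^{n}$ is cyclic and Gorenstein self-dual with $\mathrm{Tor}_{0}(V,k)\simeq\mathrm{Tor}_{1}(V,k)\simeq k$, so your base-change comparison with $R\Gamma(\higc(\cam),O)$ cannot rule out a thickened skyscraper. Reducedness (the scheme-theoretic identification) again comes from the family argument in the paper, via the morphism $Rp_{1*}\mathcal{P}_{G}[d]\rightarrow e_{*}(\mathfrak{j})$ of Lemma~\ref{a morphism} and the density of $e(H^{0})$ in the support.
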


We will first analyze the case when $G$ is adjoint. Let us make some general observations about the support of $Rp_{1,*}(\mathcal{P}_{G})$ in this case:
\begin{lemma}\label{codim estimate}
\begin{enumerate1}
\item Let $\alpha$ be a point in $\jtgcam$ such that $H^{*}(\higc,\mathcal{P}_{\alpha})\neq 0$ where $\mathcal{P}_{\alpha}$ stands for the restriction of $\mathcal{P}_{G}$ to $\{\alpha\}\times\higc$. Then $\alpha\in\jtgcam^{0}$ where $\jtgcam$ stands for the neutral component. Moreover, the restriction of $\mathcal{P}_{\alpha}$ to $\jtgccam$ is trivial.
\item Under the same assumptions as in part $(1)$, the $J_{G,\cam}$ torsor $\alpha$ on $X$ is trivial when restricted to the open set $X^{un}$. 
\item $\dimension\supp Rp_{1 *}(\mathcal{P}_{G})\leq \delta$, where $\delta$ stands for the $\delta$ invariant associated to $\cam$, see Subsection $5.6$ of \cite{Fundamental lemma}.
\end{enumerate1}
\end{lemma}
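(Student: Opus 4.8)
The plan is to establish the three parts in order, with parts (1) and (2) feeding into the support bound of part (3). Throughout $G$ is adjoint, so $\check{G}$ is simply connected and, by Lemma~\ref{more geometry}, $\higc(\cam)$ is connected and is a $Z(\check{G})$-gerbe over its projective moduli space $\mhigc(\cam)$, while $\pi_{0}(\jtgcam)\simeq Z(\check{G})^{\check{}}$.

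For the first assertion of part (1) I would argue that the $Z(\check{G})$-action on the fibres of $\mathcal{P}_{\alpha}$ forces the component of $\alpha$ to be trivial. By Lemma~\ref{additional data adjoint}, $\mathcal{P}_{\alpha}$ is an eigen-object on the gerbe $\higc(\cam)\to\mhigc(\cam)$ on which $Z(\check{G})$ acts through the character obtained as the image of $[\alpha]\in\pi_{0}(\jtgcam)$ under the isomorphism $\pi_{0}(\jtgcam)\simeq Z(\check{G})^{\check{}}$. The cohomology of a $Z(\check{G})$-gerbe with coefficients in a line bundle of nontrivial central character vanishes, since the pushforward of the corresponding isotypic component to the coarse space $\mhigc(\cam)$ is zero. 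Hence $H^{*}(\higc(\cam),\mathcal{P}_{\alpha})\neq 0$ forces this character, and therefore $[\alpha]$, to be trivial, i.e. $\alpha\in\jtgcam^{0}$. For the second assertion, set $\mathcal{L}:=\mathcal{P}_{\alpha}\mid_{\jtgccam}$, which by the biextension structure of Corollary~\ref{finish the descend for adjoint} and the multiplicativity of Corollary~\ref{additional compatibility}(3) is a translation-invariant (degree $0$) line bundle on $\jtgccam$. I would exploit the action $m:=m_{\check{G}}\colon\jtgccam\times\higc(\cam)\to\higc(\cam)$: restricting Corollary~\ref{additional compatibility}(3) to $\{\alpha\}$ gives $m^{*}\mathcal{P}_{\alpha}\simeq p_{1}^{*}\mathcal{L}\otimes p_{2}^{*}\mathcal{P}_{\alpha}$. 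Computing $R\Gamma(\jtgccam\times\higc(\cam),m^{*}\mathcal{P}_{\alpha})$ in two ways — once by K\"unneth via this formula, and once by the projection formula after observing that the shearing automorphism $(b,y)\mapsto(b,b\cdot y)$ identifies $m$ with the second projection, so that $Rm_{*}\mathcal{O}\simeq R\Gamma(\jtgccam,\mathcal{O})\otimes\mathcal{O}$ — yields
\[
R\Gamma(\jtgccam,\mathcal{L})\otimes R\Gamma(\higc(\cam),\mathcal{P}_{\alpha})\simeq R\Gamma(\jtgccam,\mathcal{O})\otimes R\Gamma(\higc(\cam),\mathcal{P}_{\alpha}).
\]
Since $R\Gamma(\higc(\cam),\mathcal{P}_{\alpha})\neq 0$, comparing total dimensions gives $\dim H^{*}(\jtgccam,\mathcal{L})=\dim H^{*}(\jtgccam,\mathcal{O})$; as $\mathcal{L}$ is of degree $0$, the structure of $\jtgccam$ (Lemma~\ref{more geometry} together with Appendix $B$) shows this equality is achieved only by the trivial bundle, so $\mathcal{L}$ is trivial.

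For part (2) I would transport the triviality of $\mathcal{L}$ to $\cam^{un}$ through the Abel--Jacobi map. Under the identification $\mathcal{P}_{G}\simeq\mathcal{P}_{\check{G}}$ of Corollary~\ref{additional compatibility}(1) and the explicit fibre formula for the Poincar\'e bundle (cf. Corollary~\ref{description of the fiber}), the pullback of $\mathcal{L}$ along $\cam^{un}\times X^{*}(T)\to\jtgccam$ has, for each $\psi\in X^{*}(T)$, class $\psi(\mathcal{F}_{\alpha})$, where $\mathcal{F}_{\alpha}$ is the $W$-equivariant $T$-torsor on $\cam^{un}$ corresponding to $\alpha\mid_{X^{un}}$ under Proposition~\ref{prop of jtors}(2). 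Triviality of $\mathcal{L}$ forces $\psi(\mathcal{F}_{\alpha})$ to be trivial for every character $\psi$, compatibly with $W$, whence $\mathcal{F}_{\alpha}$, and therefore $\alpha\mid_{X^{un}}$, is trivial. Finally, for part (3), parts (1) and (2) show that $\supp Rp_{1*}(\mathcal{P}_{G})$ is contained in the locus of $J_{G,\cam}$-torsors that are trivial over $X^{un}$. Such torsors are glued from the trivial torsor on $X^{un}$ and local data at the finitely many ramification points, so this locus is an affine group scheme whose dimension is exactly the $\delta$-invariant of $\cam$ (Subsection $5.6$ of \cite{Fundamental lemma}); this gives $\dim\supp Rp_{1*}(\mathcal{P}_{G})\leq\delta$.

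I expect the main obstacle to be the cohomological cancellation in part (1). One must set up the two computations of $R\Gamma(\jtgccam\times\higc(\cam),m^{*}\mathcal{P}_{\alpha})$ carefully — in particular justifying K\"unneth, the projection formula, and the shearing identification on these stacks — and, more seriously, one must pin down precisely which degree-$0$ line bundles on the possibly singular Picard stack $\jtgccam$ satisfy $\dim H^{*}=\dim H^{*}(\mathcal{O})$; this is exactly the point at which the structural analysis of $\hig(\cam)$ from Appendix $B$ is required.
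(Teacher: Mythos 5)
Your treatment of the first claim of part (1) (the gerbe/central-character vanishing via Lemma~\ref{additional data adjoint}), of part (2) (pullback along the Abel--Jacobi map, triviality of $\phi(\mathcal{F}_{T})$ for all $\phi$, and $W$-equivariance of the Abel--Jacobi map), and of part (3) (reduction to the image of $\prod_{x\in X-X^{un}}\affgras_{J_{G,\cam},x}$) coincides with the paper's proof; for part (3) the paper additionally carries out the d\'evissage through $J^{b}_{G,\cam}=\pi^{n}_{*}(T\times\cam^{n})^{W}$ to justify the dimension bound that you simply import from Subsection $5.6$ of \cite{Fundamental lemma}, which is a harmless difference (note only that this gives $\dimension\leq\delta$, not an exact dimension for the image in $\jtgcam$, which is all that is needed).

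The genuine gap is the second claim of part (1), the triviality of $\mathcal{L}=\mathcal{P}_{\alpha}\mid_{\jtgccam}$. Your K\"unneth/shearing computation, where valid, only yields an isomorphism of graded vector spaces $R\Gamma(\jtgccam,\mathcal{L})\otimes V\simeq R\Gamma(\jtgccam,\mathcal{O})\otimes V$ with $V=R\Gamma(\higc(\cam),\mathcal{P}_{\alpha})\neq 0$, and both of your next steps fail. First, when $\cam$ is singular $\mjtgccam$ is \emph{not} proper (it is an extension of an abelian variety by a $\delta$-dimensional connected affine group), so the graded pieces --- already $H^{0}(\mjtgccam,\mathcal{O})$ --- are infinite-dimensional, and the cancellation ``$\dim H^{*}(\mathcal{L})=\dim H^{*}(\mathcal{O})$'' extracts no information: one cannot divide Poincar\'e series with infinite coefficients. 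Second, even granting a dimension identity, the classification step ``degree $0$ with $\dim H^{*}=\dim H^{*}(\mathcal{O})$ forces $\mathcal{L}\simeq\mathcal{O}$'' is precisely the unproven point, and your appeal to Appendix $B$ does not supply it: Appendix $B$ analyzes the structure of $\hig(\cam)$ for nodal $\cam$ and is used later (in Lemma~\ref{a key lemma on etaleness}), not to classify multiplicative line bundles on the non-proper group $\mjtgccam$ with prescribed cohomology. The paper replaces your entire cohomological cancellation by a finite-dimensional representation-theoretic argument, which is where the multiplicativity you correctly observed should actually be used: by part $(3)$ of Corollary~\ref{additional compatibility}, $\mathcal{L}$ is multiplicative, so its total space $A$ is a commutative group extension $0\rightarrow\mathbb{G}_{m}\rightarrow A\rightarrow\mjtgccam\rightarrow 0$ acting on the nonzero \emph{finite-dimensional} space $H^{*}(\mhigc(\cam),\mathcal{P}_{\alpha})$ (finite because it descends to the projective $\mhigc(\cam)$); since $A$ is abelian there is a common eigenvector, and because the central $\mathbb{G}_{m}$ acts by scalars the resulting character of $A$ splits the extension, i.e.\ trivializes $\mathcal{L}$ as a multiplicative line bundle. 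This sidesteps both the non-properness of $\jtgccam$ and the classification problem you flagged as the ``main obstacle.'' Substituting this splitting argument for your dimension count repairs the proof; the rest of your write-up then goes through as in the paper.
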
 
\begin{proof}
We will use the same strategy as in \cite{Poincare line bundle}. 

For part $(1)$, using part $(2)$ of Lemma~\ref{additional data adjoint}, we see that if $\jtg^{\gamma}$ is a component of $\jtg$ corresponds to a nonzero element $\gamma\in\pi_{0}(\jtg)$, then the $Z(\check{G})$ action on the restriction of $\mathcal{P}_{G}$ to $\jtg^{\gamma}\times_{H_{int}}\higc$ is nontrivial. Since $\higc$ is a $Z(\check{G})$ gerbe over $\mhigc$ and that $p_{1}$ factors as 
$$\jtg\times_{H_{int}}\higc\rightarrow\jtg\times_{H_{int}}\mhigc\rightarrow\jtg ,$$
we conclude that the restriction of $Rp_{1*}\mathcal{P}_{G}$ to $\jtg^{\gamma}$ is trivial. Hence it is supported on $\jtg^{0}$. So now we shall work over $\jtg^{0}$. In this case $\mathcal{P}_{G}$ lives on $\jtgcam^{0}\times\mhigc(\cam)$. Notice that if we denote the restriction of $\mathcal{P}_{\alpha}$ to $\mjtgccam$ by $A$ ($\mjtgccam$ stands for the moduli space of $\jtgccam$), then part $(3)$ of Corollary~\ref{additional compatibility} implies $A$ is an abelian group which is an extension of $\mjtgccam$ by $\mathbb{G}_{m}$ and it acts on $\mathcal{P}_{\alpha}$. When $H^{*}(\mhigc,\mathcal{P}_{\alpha})\neq 0$, we get an action of $A$ on a vector space. Since $A$ is abelian, there exists a one dimensional subspace fixed by $A$. Moreover, it is easy to see that the subgroup $\mathbb{G}_{m}$ acts by scalars. This provides a splitting of $A$.

For part $(2)$, let us look at the pullback of $\mathcal{P}_{\alpha}$ along the Abel-Jacobi map for $\jtgccam$:
$$\cam^{un}\times X_{*}(\check{T})\xrightarrow{AJ} \jtgccam .$$
Using Part $(1)$ of Corollary~\ref{additional compatibility} and the construction of the Poincar\'e line bundle , one can check that the restriction of $AJ^{*}(\mathcal{P}_{\alpha})$ to $\cam^{un}\times\phi$ is isomorphic to $\phi(\mathcal{F}_{T})$ where $\mathcal{F}_{T}$ is the $W$-equivariant $T$-bundle on $\cam^{un}$ corresponds to $\alpha$, see part $(2)$ Proposition~\ref{prop of jtors}. $\phi(\mathcal{F}_{T})$ is the line bundle induced from the $T$-bundle $\mathcal{F}_{T}$ and the character $\phi\in X^{*}(T)\simeq X_{*}(\check{T})$. By part $(1)$, we see that $\phi(\mathcal{F}_{T})$ is trivial for all $\phi$. Moreover, from the construction of the Abel-Jacobi map, we see that it commutes with the $W$-action on $\cam^{un}\times X_{*}(\check{T})$, i.e. we have the following commutative diagram for every $w\in W$:
$$\xymatrix{
\cam^{un}\times X_{*}(\check{T}) \ar[rd]_{AJ} \ar[rr]^{w\times w} & & \cam^{un}\times X_{*}(\check{T}) \ar[ld]^{AJ}\\
 & \jtgccam & .
}
$$
This implies that $\mathcal{F}_{T}$ is actually $W$-equivariantly trivial, i.e. it is trivial as a $W$-equivariant $T$-bundle on $\cam^{un}$. Now part $(2)$ of Proposition~\ref{prop of jtors} implies the claim.

For part $(3)$, notice that by part $(1)$ and $(2)$, if $\alpha\in\supp Rp_{1 *}(\mathcal{P}_{G})$, then $\alpha$ is trivial when restricted to $X^{un}$. Hence $\alpha$ lies in the image of the natural morphism: 
\begin{equation*}
\prod_{x\in X-X^{un}} \affgras_{J_{G,\cam},x}\rightarrow \jtgcam .
\end{equation*}
where $\affgras_{J_{G},x}$ stands for the affine grassmannian associated to the group scheme $J_{G,\cam}$ at the point $x\in X$. So it remains to show the dimension of the reduced part of the ind-scheme $\prod_{x\in X-X^{un}} \affgras_{J_{G},x}$ is bounded by $\delta$. To do this let us denote the normalization of $\cam$ by $\cam^{n}$. $\cam^{n}$ is a $W$-cover over $X$. Let us set $J^{b}_{G,\cam}=\pi^{n}_{*}(T\times\cam^{n})^{W}$ where $\pi^{n}$ is the projection from $\cam^{n}$ to $X$. From its definition we see that one has a morphism of group schemes $J_{G,\cam}\rightarrow J^{b}_{G,\cam}$. Moreover, for each $x\in X$, one has $J_{G,\cam}(\widehat{O}_{x})\subseteq J^{b}_{G,\cam}(\widehat{O}_{x})$ and $J_{G,\cam}(\widehat{\mathcal{K}}_{x})\simeq J^{b}_{G,\cam}(\widehat{\mathcal{K}}_{x})$ where $\widehat{O}_{x}$ is the formal completion of $X$ at $x$ and $\widehat{\mathcal{K}}_{x}$ is the field of fractions of $\widehat{O}_{x}$. So one has an exact sequence:
\begin{equation*}
0\rightarrow \prod_{x\in X-X^{un}}J^{b}_{G,\cam}(\widehat{O}_{x})/J_{G,\cam}(\widehat{O}_{x})\rightarrow \prod_{x\in X-X^{un}} \affgras_{J_{G,\cam},x}\rightarrow\prod_{x\in X-X^{un}} \affgras_{J^{n}_{G,\cam},x}\rightarrow 0 .
\end{equation*}
The reduced part of $ \affgras_{J^{b}_{G,\cam},x}$ is discrete and it is known that 
\begin{equation*}
\dimension \prod_{x\in X-X^{un}}J^{b}_{G,\cam}(\widehat{O}_{x})/J_{G,\cam}(\widehat{O}_{x})=\delta ,
\end{equation*}
see $4.9.1$ of \cite{Fundamental lemma}. This finishes the proof.
\end{proof}

\begin{corollary}\label{Cohen-Macaulayness}
Consider the universal Poincar\'e line bundle $\mathcal{P}_{G}$ on $\jtg\times_{H_{int}}\higc$. Then $Rp_{1*}(\mathcal{P}_{G})[d]$ is a Cohen-Macaulay sheaf of codimension  $d$ supported in the neutral component $\jtg^{0}$ of $\jtg$.
\end{corollary}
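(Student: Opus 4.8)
The plan is to combine the support estimate of Lemma~\ref{codim estimate} with a Grothendieck--Serre duality argument that exhibits $K:=Rp_{1*}(\mathcal{P}_{G})$ as self-dual up to the involution $x\mapsto-x$, a line-bundle twist and a shift by $d$; the self-duality together with the codimension bound then pins the cohomological amplitude of $K$ to a single degree and simultaneously yields the Cohen--Macaulay property. Throughout I work in the adjoint case, so by Lemma~\ref{more geometry} the scheme $\jtg$ is smooth (hence regular) quasi-projective over $H_{int}$, of relative dimension $d$. Since $K$ is supported on $\jtg^{0}$ by Lemma~\ref{codim estimate}(1), and there $\mathcal{P}_{G}$ descends to $\jtg^{0}\times_{H_{int}}\mhigc$, I may replace the gerbe $\higc$ by its moduli space $\mhigc$ and compute everything on honest schemes; note also that $K\in D^{b}_{coh}(\jtg^{0})$ is automatically perfect because $\jtg^{0}$ is regular.

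First I would record the self-duality. The projection $p_{1}\colon\jtg^{0}\times_{H_{int}}\mhigc\to\jtg^{0}$ is the base change of $\mhigc\to H_{int}$, which is a proper, flat, Gorenstein relative complete intersection of relative dimension $d$ by Proposition~\ref{geo of hitchin fibration}(1); hence $p_{1}^{!}O_{\jtg^{0}}\simeq\omega_{p_{1}}[d]$ with $\omega_{p_{1}}$ invertible. By Lemma~\ref{more geometry}(3) the relative dualizing sheaf of $\mhigc$ over $H_{int}$ is pulled back from $H_{int}$, so $\omega_{p_{1}}\simeq p_{1}^{*}L_{0}$ for a line bundle $L_{0}$ on $\jtg^{0}$. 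Grothendieck duality and the projection formula then give
$$R\mathcal{H}om(K,O_{\jtg^{0}})\simeq Rp_{1*}\bigl(\mathcal{P}_{G}^{-1}\otimes p_{1}^{*}L_{0}[d]\bigr)\simeq Rp_{1*}(\mathcal{P}_{G}^{-1})\otimes L_{0}[d].$$
Finally, Corollary~\ref{additional compatibility}(4) identifies $\mathcal{P}_{G}^{-1}$ with $((-1)\times\textrm{id})^{*}\mathcal{P}_{G}$, and since $(-1)$ is an automorphism of $\jtg^{0}$ commuting with $p_{1}$, this yields the self-duality
$$R\mathcal{H}om(K,O_{\jtg^{0}})\simeq (-1)^{*}K\otimes L_{0}[d].$$

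With this in hand I would run the amplitude squeeze. On the one hand $K=Rp_{1*}$ of a line bundle along a proper morphism of relative dimension $d$, so its cohomology sheaves lie in degrees $[0,d]$; write $[a,b]\subseteq[0,d]$ for the actual amplitude. On the other hand, combining the fibrewise bound $\dim\supp\le\delta$ of Lemma~\ref{codim estimate}(3) with the standing assumption that $H_{int,\delta}$ has codimension $\ge\delta$ shows that $\supp K$ has dimension $\le\dim H_{int}$, i.e.\ codimension $\ge d$ in $\jtg^{0}$. Since $\jtg^{0}$ is regular, $\mathcal{E}xt^{p}(\mathcal{F},O)=0$ for $p<d$ for every coherent $\mathcal{F}$ supported there, so the hyper-Ext spectral sequence forces $R\mathcal{H}om(K,O)$ to live in degrees $\ge d-b$; comparing with the self-duality formula, whose lowest nonzero degree is $a-d$, gives $a-d\ge d-b$, i.e.\ $a+b\ge 2d$. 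Together with $a\le b\le d$ this forces $a=b=d$, so $K[d]$ is a genuine coherent sheaf $\mathcal{F}$. The same formula now reads $R\mathcal{H}om(\mathcal{F},O)\simeq(-1)^{*}\mathcal{F}\otimes L_{0}[-d]$, i.e.\ $\mathcal{E}xt^{i}(\mathcal{F},O)=0$ for $i\ne d$ and $\mathcal{E}xt^{d}(\mathcal{F},O)\ne 0$, which is exactly the homological characterization of a Cohen--Macaulay sheaf of pure codimension $d$ on a regular scheme.

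The main obstacle I expect is not the formal duality but assembling its inputs correctly: verifying that the relative dualizing complex of $p_{1}$ really is a shifted line bundle pulled back from the base (which is where the Gorenstein property and Lemma~\ref{more geometry}(3) enter, and where one must take care to pass from the gerbe $\higc$ to its moduli space $\mhigc$ over $\jtg^{0}$), and feeding the fibrewise support estimate of Lemma~\ref{codim estimate}(3) through the $\delta$-stratification to obtain the global codimension bound $\ge d$. Once these two points are secured, the amplitude squeeze and the passage to the Cohen--Macaulay characterization are purely formal.
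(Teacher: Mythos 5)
Your proposal is correct and follows essentially the same route as the paper: reduce to the scheme $\jtg^{0}\times_{H_{int}}\mhigc$ via Lemma~\ref{additional data adjoint} and Lemma~\ref{codim estimate}(1), bound the amplitude of $Rp_{1*}\mathcal{P}_{G}$ and of its Grothendieck dual using Lemma~\ref{more geometry}(3) and Corollary~\ref{additional compatibility}(4), and combine this with the codimension-$\geq d$ support bound from Lemma~\ref{codim estimate}(3) and the $\delta$-strata assumption. The only difference is that where the paper concludes by citing Lemma~7.7 of \cite{Autoduality}, you prove that step by hand (the amplitude squeeze via the $\mathcal{E}xt$-vanishing on a regular scheme and the homological characterization of Cohen--Macaulay sheaves), which is precisely the content of the cited lemma.
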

\begin{proof}
By part $(2)$ of Lemma~\ref{additional data adjoint}, the restriction of the Poincar\'e line bundle on $\jtgcam^{0}\times\higc(\cam)$ descends to $\mjtgcam^{0}\mhigc$. Using part $(1)$ of Lemma~\ref{codim estimate} , we can just look at:
$$\xymatrix{
\jtg^{0}\times_{H_{int}}\mhigc \ar[d] _{p_{1}} \\
\jtg^{0} .
}
$$
Now $p_{1}$ is projective and of relative dimension $d$, hence $Rp_{1*}\mathcal{P}_{G}[d]$ is concentrated in cohomological degree less than or equals to zero. Let us also consider the dual $\mathcal{R}Hom(Rp_{1*}\mathcal{P}_{G}[d], O_{\jtg})$. By part $(3)$ of Lemma~\ref{more geometry} and part $(4)$ of Corollary~\ref{additional compatibility}, we conclude by Grothendieck duality that $\mathcal{R}Hom(Rp_{1*}\mathcal{P}_{G}[d], O_{\jtg})$ sits in cohomological degree less than or equals to $d$. Moreover, from part $(3)$ of Lemma~\ref{codim estimate} and our assumptions on the codimension of $\delta$ strata, we see that the codimension of the support of $Rp_{1*}(\mathcal{P}_{G})$ is less than or equals to $d$. So the claim follows from Lemma $7.7$ of \cite{Autoduality}.  
\end{proof}

\begin{lemma}~\label{a morphism}
Let $\mathfrak{j}$ be the line bundle on $H_{int}$ given by the determinant of the Lie algebra of $\mjtgc$. Then  one has a natural morphism $Rp_{1*}(\mathcal{P}_{G})[d]\rightarrow e_{*}(\mathfrak{j})$ where $e$ is the unit $H_{int}\xrightarrow{e}\jtg$.
\end{lemma}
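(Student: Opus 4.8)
Since $Rp_{1*}(\mathcal{P}_{G})$ is supported in the neutral component $\jtg^{0}$ by Corollary~\ref{Cohen-Macaulayness}, I would work entirely over $\jtg^{0}$, where the Poincar\'e line bundle descends to $\jtg^{0}\times_{H_{int}}\mhigc$. The plan is to build the morphism by adjunction along the unit section. The unit $e\colon H_{int}\to\jtg^{0}$ is a section of $\jtg^{0}\to H_{int}$, hence a closed immersion, so by the adjunction $(Le^{*},e_{*})$ it suffices to produce a natural morphism $Le^{*}Rp_{1*}(\mathcal{P}_{G})[d]\to\mathfrak{j}$ over $H_{int}$.

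First I would evaluate $Le^{*}Rp_{1*}(\mathcal{P}_{G})$ by base change. The fiber of $p_{1}$ over the unit section is Cartesian with $\mhigc$, and the restriction of $\mathcal{P}_{G}$ to this fiber is canonically trivial: over $\higc^{reg}$ this is the unit axiom of the biextension of Corollary~\ref{finish the descend for adjoint}, and it extends across the codimension-two complement of $\higc^{reg}$. As $p_{1}$ is projective, derived proper base change then gives $Le^{*}Rp_{1*}(\mathcal{P}_{G})\simeq Rf_{*}(O_{\mhigc})$, where $f\colon\mhigc\to H_{int}$ is the Hitchin morphism; here I use Lemma~\ref{more geometry} to pass from $\higc$ to its moduli space together with the fact that in characteristic zero the pushforward of the structure sheaf is insensitive to the $Z(\check{G})$-gerbe.

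Next I would identify the top cohomology sheaf $R^{d}f_{*}(O_{\mhigc})$ with $\mathfrak{j}$. By Proposition~\ref{geo of hitchin fibration} the morphism $f$ is proper, flat and Gorenstein of relative dimension $d$, and by part $(3)$ of Lemma~\ref{more geometry} its relative dualizing sheaf is $f^{*}(\mathcal{D})$ with $\mathcal{D}=\mathfrak{j}^{-1}$. Relative Grothendieck--Serre duality, together with the connectedness of the fibers (so that $f_{*}O_{\mhigc}\simeq O_{H_{int}}$), then yields $R^{d}f_{*}(O_{\mhigc})\simeq(f_{*}f^{*}\mathfrak{j}^{-1})^{\vee}\simeq\mathfrak{j}$. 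Since $Rf_{*}(O_{\mhigc})[d]$ lives in cohomological degrees $\leq 0$ with $H^{0}\simeq\mathfrak{j}$, the canonical truncation map onto $H^{0}$ gives $Rf_{*}(O_{\mhigc})[d]\to\mathfrak{j}$; composing with the adjunction identification produces the desired morphism $Rp_{1*}(\mathcal{P}_{G})[d]\to e_{*}(\mathfrak{j})$.

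The step I expect to be most delicate is the naturality over $H_{int}$: one must check that the trivialization of $\mathcal{P}_{G}$ along the unit fiber furnished by the biextension is canonical rather than merely fiberwise, and that the determinant-of-Lie-algebra conventions of Lemma~\ref{more geometry} match correctly through the duality isomorphism (Lemma~\ref{more geometry} defines $\mathcal{D}$ as the dual of $\mathfrak{j}$). The remaining ingredients---proper base change, gerbe-invariance of $Rf_{*}O_{\mhigc}$, and the degeneration of relative duality to $R^{d}f_{*}O_{\mhigc}\simeq\mathfrak{j}$---are then formal once these bookkeeping points are settled.
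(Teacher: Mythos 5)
Your proof is correct, and the underlying ingredients are the same two used in the paper --- the canonical trivialization of $\mathcal{P}_{G}$ along the unit fiber $e\times_{H_{int}}\mhigc$ and the identification $\omega_{\mhigc\mid H_{int}}\simeq f^{*}(\mathfrak{j}^{-1})$ from part $(3)$ of Lemma~\ref{more geometry} --- but you run the adjunctions in the opposite order, which changes what has to be verified. The paper's proof is a one-step transposition: by Grothendieck duality for the projective morphism $p_{1}$, a map $Rp_{1*}(\mathcal{P}_{G})[d]\rightarrow e_{*}(\mathfrak{j})$ is the same as a map $\mathcal{P}_{G}[d]\rightarrow p_{1}^{!}(e_{*}\mathfrak{j})\simeq (e\times\textrm{id})_{*}(O_{\mhigc})[d]$, i.e.\ exactly the trivialization of $\mathcal{P}_{G}$ on the unit fiber; nothing about $Rf_{*}O_{\mhigc}$ needs to be computed. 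Your version instead uses the plain $(Le^{*},e_{*})$ adjunction and therefore must actually evaluate $Le^{*}Rp_{1*}(\mathcal{P}_{G})$, which costs three auxiliary facts the paper's route sidesteps: tor-independent base change (fine, since $p_{1}$ is flat and projective, flatness coming from part $(1)$ of Proposition~\ref{geo of hitchin fibration}); the identity $f_{*}O_{\mhigc}\simeq O_{H_{int}}$, which needs the fibers to be not only connected (part $(2)$ of Lemma~\ref{more geometry}) but also reduced --- this holds because the regular locus is dense and smooth in the Gorenstein, hence Cohen--Macaulay, fiber, and the paper does invoke it later, but it is an extra input here; and the compatibility of the fiberwise Serre-duality isomorphism $R^{d}f_{*}\omega_{\mhigc\mid H_{int}}\simeq O_{H_{int}}$ with base change (harmless, since top-degree pushforwards always commute with base change, but it should be said rather than absorbed into ``formal''). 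In exchange, your route makes visible the concrete content of the map --- restriction to the unit section followed by projection onto top cohomology $R^{d}f_{*}O_{\mhigc}\simeq\mathfrak{j}$ --- which is precisely the description used later in Corollary~\ref{proof in the adjoint case} when checking the map is an isomorphism along $e(H_{int})$; and indeed your morphism agrees with the paper's, the two being adjoint transposes of the same trivialization, so nothing downstream is affected.
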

\begin{proof}
Using the fact that the complement of $\higc^{reg}$ in $\higc$ has codimension greater than or equals to two, it is easy to see that the pullback of $\mathfrak{j}$ to $\mhigc$ is isomorphic to $\omega^{-1}_{\mhigc\mid H_{int}}$ where $\omega_{\mhigc\mid H_{int}}$ is the relative dualizing line bundle. By Grothendieck duality, a morphism 
$$Rp_{1*}(\mathcal{P}_{G})[d]\rightarrow e_{*}(\mathfrak{j})$$
is the same as a morphism 
$$\mathcal{P}_{G}[d]\rightarrow p^{!}_{1}(\mathfrak{j})\simeq e_{*}(O_{\mhigc})[d] .$$
Since the restriction of $\mathcal{P}_{G}$ to $e\times_{H_{int}}\mhigc$ is canonically trivial, we are done.  
\end{proof}

We will eventually prove that the morphism $Rp_{1*}(\mathcal{P}_{G})[d]\rightarrow e_{*}(\mathfrak{j})$ is an isomorphism. To do that we need the following result:
\begin{lemma}\label{a key lemma on etaleness}
The morphism $\jtgcam^{0}\rightarrow \textrm{Pic}(\mhigc)$ constructed in part $(2)$ of Lemma~\ref{additional data adjoint} is etale. 
\end{lemma}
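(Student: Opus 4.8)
The plan is to reduce the assertion to a computation of the differential at the identity. In characteristic zero both $\jtgcam^{0}$ and $\pic(\mhigc(\cam))$ are smooth commutative group schemes (the former is a scheme by part $(1)$ of Lemma~\ref{more geometry}, the latter by part $(1)$ of Lemma~\ref{additional data adjoint}), and by part $(2)$ of Corollary~\ref{additional compatibility} the assignment $\alpha\mapsto\mathcal{P}_{\alpha}$ is additive, i.e. it is a homomorphism of group schemes. Hence it is \'etale if and only if its differential at the identity
\[
d_{e}\colon H^{1}(X,\mathrm{Lie}(J_{G,\cam}))\longrightarrow H^{1}(\mhigc(\cam),\mathcal{O})
\]
is an isomorphism, where I use the identifications $T_{e}\jtgcam^{0}\simeq H^{1}(X,\mathrm{Lie}(J_{G,\cam}))$ and $T_{e}\pic(\mhigc(\cam))\simeq H^{1}(\mhigc(\cam),\mathcal{O})$. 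Since $d_{e}$ is a map of finite dimensional $k$-vector spaces, it suffices to prove that $d_{e}$ is injective and that the two sides have the same dimension.

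For injectivity I would make $d_{e}$ explicit through the biextension $\mathcal{P}_{G}$ on $\jtgcam\times\jtgccam$ of Corollary~\ref{finish the descend for adjoint}. Differentiating this biextension at $(e,e)$ yields a bilinear pairing
\[
H^{1}(X,\mathrm{Lie}(J_{G,\cam}))\times H^{1}(X,\mathrm{Lie}(J_{\check{G},\cam}))\longrightarrow k ,
\]
and injectivity of $d_{e}$ is exactly non-degeneracy of this pairing in the first variable. Over the locus $H^{\circ}$ the pairing is perfect by part $(4)$ of Proposition~\ref{geo of hig}; to transport non-degeneracy to the possibly singular fiber I would invoke part $(1)$ of Lemma~\ref{a key lemma}, which exhibits the pairing as the pullback along the (smooth, surjective) norm maps $Nm_{G}\times Nm_{\check{G}}$ of the perfect Poincar\'e pairing on $\bunt(\cam)\times\buntc(\cam)$ from Appendix $A$. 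Parts $(1)$ and $(2)$ of Lemma~\ref{codim estimate}, which force a class in the kernel to be trivial over $X^{un}$, give corroborating control on the kernel of the homomorphism itself.

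It then remains to prove the dimension equality $h^{1}(\mhigc(\cam),\mathcal{O})=\dimension \jtgcam^{0}$. Because $G$ is adjoint, part $(5)$ of Proposition~\ref{geo of hig} gives $H^{0}(X,\mathrm{Lie}(J_{G,\cam}))=\mathrm{Lie}\,Z(G)=0$, so $\dimension \jtgcam^{0}=h^{1}(X,\mathrm{Lie}(J_{G,\cam}))$. On the other side I would first use that $\higc(\cam)\to\mhigc(\cam)$ is a $Z(\check{G})$-gerbe (part $(2)$ of Lemma~\ref{more geometry}) to identify $H^{1}(\mhigc(\cam),\mathcal{O})\simeq H^{1}(\higc(\cam),\mathcal{O})$, and then reduce to the normalization $\cam^{n}$: by the structure theorem for $\higc(\cam)$ with nodal singularities (Theorem~\ref{main theorem of app B}) together with Lemma~\ref{pullback from normalization}, the group $\pic^{0}(\mhigc(\cam))$ is an extension of $\pic^{0}(\mhigc(\cam^{n}))$ by a toric part of rank $\delta$, exactly paralleling the semiabelian structure of $\jtgcam^{0}$ whose toric rank is computed to be $\delta$ in the proof of part $(3)$ of Lemma~\ref{codim estimate}. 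Over the smooth cameral cover $\cam^{n}$ the duality of part $(4)$ of Proposition~\ref{geo of hig} gives $h^{1}(\higc(\cam^{n}),\mathcal{O})=\dimension\jtgcamn^{0}$, and matching the toric contributions yields the equality. The main obstacle is precisely this last cohomological comparison for the non-normal fiber $\mhigc(\cam)$: controlling $H^{1}(\mhigc(\cam),\mathcal{O})$ requires the explicit nodal description of Appendix $B$, and bookkeeping the degenerate toric parts on both sides so that they cancel is the delicate point on which the argument rests.
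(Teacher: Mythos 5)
Your reduction to the differential at the identity matches the paper's first step, but the mechanism you propose for injectivity fails at exactly the point the lemma is about. Injectivity of $d_{e}$ is \emph{implied by}, not equivalent to, non-degeneracy in the first variable of the pairing obtained by differentiating the biextension on $\jtgcam\times\jtgccam$: that pairing only tests $\mathcal{P}_{\theta}$ against the open part $\jtgccam\subseteq\higc(\cam)$, whereas $d_{e}$ lands in $\mathrm{Lie}\,\pic(\mhigc(\cam))$, i.e. tests line bundles on the whole compactified fiber. And in the singular case the pairing you write down is genuinely degenerate: by Lemma~\ref{pullback from normalization} the biextension on $\jtgcam\times\jtgccam$ is pulled back from $\jtgcamn\times\jtgccamn$, so its differential annihilates the affine part of $\mathrm{Lie}(\jtgcam)$ --- in the nodal case the $\mathbb{G}_{m}=\ker(\jtgcam\rightarrow\jtgcamn)$ of Lemma~\ref{structure of jtors} pairs to zero with everything. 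Your proposed transport of non-degeneracy via part $(1)$ of Lemma~\ref{a key lemma} cannot repair this: the Poincar\'e pairing on $\bunt(\cam)\times\buntc(\cam)$ for singular $\cam$ is itself pulled back from the normalization (part $(2)$ of Proposition~\ref{propositions of poincare line bundle on bunt}), hence not perfect, and in any case pulling a pairing back along the non-injective norm maps says nothing about non-degeneracy downstairs. The actual content of the lemma is that $d_{e}$ is injective \emph{despite} this degeneration, because for $\theta$ in the toric direction $\mathcal{P}_{\theta}$, while trivial on $\jtgccam$, is nontrivial on $\mhigc(\cam)$; the paper detects this through the boundary of the compactification, using the pushout description of Theorem~\ref{main theorem of app B} to exhibit nontrivial gluing data on $\mjtgccamn\amalg\mjtgccamn$ when one restricts to $\mathbb{G}_{m}\subseteq\mjtgcam^{0}$. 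Your argument never accesses the boundary, so it cannot see this.

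There is a second structural gap: you argue fiberwise at an arbitrary integral $\cam$, but your dimension count (and the paper's nodal analysis) is only available for $\delta\leq 1$; for general $\delta$ no structure theorem like Theorem~\ref{main theorem of app B} exists, and the ``matching of toric contributions'' you defer is precisely what is missing. The paper avoids this by working over $H_{int}$: Corollary~\ref{desciption of H1} shows both Lie algebras are vector bundles of the same rank, so it suffices to prove the isomorphism over the open set $U$ of Lemma~\ref{an open set of H}, whose complement has codimension at least two (a determinant of a map of equal-rank bundles over a smooth base cannot vanish exactly in codimension $\geq 2$); over $U$ one has $\delta=0$, handled by the support estimate of Lemma~\ref{codim estimate}, or $\delta=1$, handled by the nodal gluing computation. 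Without this reduction to $U$, your fiberwise plan would require both the injectivity and the equality $h^{1}(\mhigc(\cam),\mathcal{O})=\dimension\jtgcam^{0}$ at every integral cameral cover, which neither you nor the paper establishes directly.
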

\begin{proof}
By Corollary~\ref{desciption of H1} in Appendix $B$, one conclude that $\textrm{Pic}(\mhigc)$ is a smooth group scheme over $H_{int}$. Hence we only need to show the induced morphism at the level of lie algebras is an isomorphism. In fact, since the lie algebras are vector bundles of the same rank over $H_{int}$ by Lemma~\ref{desciption of H1}, it is enough to prove they are isomorphic over an open subscheme of $H_{int}$ whose complement has codimension greater than or equals to two. We will take the open subscheme to be $U$ constructed in Lemma~\ref{an open set of H} in Appendix $C$. We will show that for any cameral cover corresponds to a point in $U$, the induced morphism on the level of lie algebras is an isomorphism. By part $(2)$ of Lemma~\ref{an open set of H}, we see that for any point in $U$, the corresponding cameral cover either has the property that $\delta=0$ or $\delta=1$. Let us fix a cameral cover $\cam$ corresponds to a point in $U$ and let $\theta$ be a point the kernel of $\jtgcam^{0}\rightarrow\textrm{Pic}(\mhigccam)$. Let $\mathcal{P}_{\theta}$ be the restriction of the Poincar\'e line bundle to $\theta\times\mhigccam$. Since by our assumption $\mathcal{P}_{\theta}\simeq O_{\mhigccam}$, we conclude from Part $(3)$ of Lemma~\ref{codim estimate} that if $\delta=0$, then the kernel of $\jtgcam\rightarrow\textrm{Pic}(\mhigccam)$ is zero dimensional, hence the induced morphism on lie algebras is an isomorphism. When $\delta=1$, $\cam$ has nodal singularities over a unique point $x\in X$ and we have seen in the proof of part $2$ of Lemma~\ref{codim estimate} that $\theta$ lies in the image of $\textrm{Gr}_{J_{G,\cam},x}$ in $\jtgcam$. Moreover, in the proof of part $(2)$ of Lemma~\ref{codim estimate} we have seen that the neutral component of $\textrm{Gr}_{J_{G,\cam},x}$ can be identified with $J_{G,\cam^{n}}(\widehat{O}_{x})/J_{G,\cam}(\widehat{O}_{x})$ where $\cam^{n}$ is the normalization of $\cam$. Lemma~\ref{structure of jtors} implies that the neutral component of the kernel of $\jtgcam\rightarrow\textrm{Pic}(\mhigccam)$ is contained in the kernel of $\jtgcam\rightarrow\jtgcamn$, which is $\mathbb{G}_{m}$. So we conclude that the neutral component of the kernel of $\jtgcam^{0}\rightarrow\pic(\higc(\cam))$ is contained in this $\mathbb{G}_{m}$.

Next we claim that the restriction of the Poincar\'e line bundle to $\mathbb{G}_{m}\times\mhigc(\cam)$ is nontrivial, this will finish the proof. To do so we will use the description of $\mhigc(\cam)$ in Theorem~\ref{main theorem of app B}. It implies that a line bundle on $\mjtgcam^{0}\times\mhigc(\cam)$ is equivalent to a line bundle on $\mjtgcam^{0}\times (\mathbf{P}^{1}\times^{\mathbb{G}_{m}}\mjtgccam)$ plus a gluing data on $\mjtgcam^{0}\times(\mjtgccamn\amalg\mjtgccamn)$. We will prove that the gluing data is nontrivial when we restrict to $\mathbb{G}_{m}\times(\mjtgccamn\amalg\mjtgccamn)$, this will imply the claim. Since $\mjtgcam^{0}\times(\mathbf{P}^{1}\times^{\mathbb{G}_{m}}\mjtgccam)$ is a $\mathbf{P}^{1}$ bundle over $\mjtgcam^{0}\times\mjtgccamn$ by Lemma~\ref{structure of jtors}, the pullback of the Poincar\'e line bundle to $\mjtgcam^{0}\times(\mathbf{P}^{1}\times^{\mathbb{G}_{m}}\mjtgccam)$ is of the form $O(k)\otimes f^{*}(\mathcal{N})$ where we denote the canonical line bundle associated to the projective bundle $\mathbf{P}^{1}\times^{\mathbb{G}_{m}}\mjtgccam\rightarrow\mjtgccamn$ by $O(1)$ and $O(k)$ is its $k$th tensor power, $f$ is the morphism $\mjtgcam^{0}\times(\mathbf{P}^{1}\times^{\mathbb{G}_{m}}\mjtgccam)\xrightarrow{f}\mjtgcam^{0}\times\mjtgccamn$ and $\mathcal{N}$ is a biextension on $\mjtgcam^{0}\times\mjtgccamn$. Because the restriction of the Poincar\'e line bundle to $e\times\mhigc(\cam)$ is trivial where $e$ is the unit element in $\mjtgcam^{0}$, this implies that $k=0$. Moreover, by Lemma~\ref{pullback from normalization}, the restriction of $f^{*}(\mathcal{N})$ to the open set $\mjtgcam^{0}\times\mjtgccam\subseteq\mjtgcam^{0}\times(\mathbf{P}^{1}\times^{\mathbb{G}_{m}}\mjtgccam)$ is isomorphic to the pullback of the biextension $(q\times\textrm{id})^{*}(\mathcal{P}^{n}_{G})$ on $\mjtgcam^{0}\times\mjtgccamn$. Here $q$ stands for $\mjtgcam^{0}\times\mjtgccamn\rightarrow\mjtgcam^{0}\rightarrow\mjtgcamn^{0}$. So we have $\mathcal{N}\simeq (q\times\textrm{id})^{*}\mathcal{P}^{n}_{G}$. Using Lemma~\ref{the gluing map} we conclude that the gluing data amongs to an isomorphism 
$$(\textrm{id}\times t_{Nm(\check{\alpha}_{*}(O(\widetilde{x}_{1})))})^{*}(q\times\textrm{id})^{*}(\mathcal{P}^{n}_{G})\simeq (q\times\textrm{id})^{*}\mathcal{P}^{n}_{G} ,$$
where $t_{Nm(\check{\alpha}_{*}(O(\widetilde{x}_{1})))}$ stands for the automorphism of $\mjtgccamn$ given by translation by the element $Nm(\check{\alpha}_{*}(O(\widetilde{x}_{1})))$, see Lemma~\ref{the gluing map}. Since $\mathcal{P}^{n}_{G}$ is a biextension, one has a canonical isomorphism 
$$(\textrm{id}\times t_{Nm(\check{\alpha}_{*}(O(\widetilde{x}_{1})))})^{*}(q\times\textrm{id})^{*}(\mathcal{P}^{n}_{G})\simeq q^{*}(\mathcal{P}^{n}_{Nm(\check{\alpha}_{*}(O(\widetilde{x}_{1})))})\otimes(q\times\textrm{id})^{*}(\mathcal{P}^{n}_{G})$$
where $\mathcal{P}^{n}_{Nm(\check{\alpha}_{*}(O(\widetilde{x}_{1})))}$ stands for the restriction of $\mathcal{P}^{n}_{G}$ to 
$$\mjtgcamn^{0}\times Nm(\check{\alpha}_{*}(O(\widetilde{x}_{1})))\hookrightarrow\mjtgcamn^{0}\times\mjtgccamn .$$ 
Hence we need a trivialization of $q^{*}(\mathcal{P}^{n}_{Nm(\check{\alpha}_{*}(O(\widetilde{x}_{1})))})$ on $\mjtgcam^{0}$. By Corollary~\ref{description of the fiber}, $\mathcal{P}^{n}_{Nm(\check{\alpha}_{*}(O(\widetilde{x}_{1})))}$ is isomorphic to $\alpha(\mathcal{F}^{n}_{T})\mid_{\widetilde{x}_{1}}$. By the next lemma, the trivialization of the pullback of $\alpha(\mathcal{F}^{n}_{T})\mid_{\widetilde{x}_{1}}$ to $\mjtgcam^{0}$ is provided by Lemma~\ref{structure of jtors}. In particular, when we restrict it to $\mathbb{G}_{m}\subseteq\mjtgcam^{0}$, this trivialization gives nontrivial gluing data. This finishes the proof.

\end{proof}

\begin{lemma}
Consider the morphism $\mathbf{P}^{1}\rightarrow\higc(\cam)$ provided by Theorem~\ref{main theorem of app B}. Then the pullback of $\mathcal{P}_{G}$ to $\mjtgcam^{0}\times\mathbf{P}^{1}$ is canonically trivial. Moreover, if one denotes the restriction of $\mathcal{P}_{G}$ to $\mjtgcam^{0}\times 0$ and $\mjtgcam^{0}\times\infty$ by $\mathcal{P}_{G,0}$ and $\mathcal{P}_{G,\infty}$, respectively, then the trivializations of $\mathcal{P}_{G,0}$ and $\mathcal{P}_{G,\infty}$ induces a trivialization of the pullback of $\alpha(\mathcal{F}^{n}_{T})\mid_{\widetilde{x}_{1}}$ to $\mjtgcam^{0}$ via $\mjtgcam^{0}\xrightarrow{q}\mjtgcamn^{0}$, which coincide with the natural trivialization given in Lemma~\ref{structure of jtors}. 
\end{lemma}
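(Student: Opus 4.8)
The plan is to deduce both assertions from the explicit shape of the pullback of $\mathcal{P}_{G}$ to the $\mathbf{P}^{1}$-bundle that was already computed in the proof of Lemma~\ref{a key lemma on etaleness}, combined with the biextension property of $\mathcal{P}^{n}_{G}$, the duality of part $(4)$ of Proposition~\ref{geo of hig}, and the gluing description of $\mhigc(\cam)$ from Theorem~\ref{main theorem of app B}.

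First I would establish the triviality statement. Recall from Lemma~\ref{a key lemma on etaleness} that the pullback of $\mathcal{P}_{G}$ to $\mjtgcam^{0}\times(\mathbf{P}^{1}\times^{\mathbb{G}_{m}}\mjtgccam)$ has the form $O(k)\otimes f^{*}(\mathcal{N})$ with $k=0$ and $\mathcal{N}\simeq(q\times\textrm{id})^{*}(\mathcal{P}^{n}_{G})$, where $f$ is the projection onto $\mjtgcam^{0}\times\mjtgccamn$. The morphism $\mathbf{P}^{1}\rightarrow\higc(\cam)$ is the fiber of this $\mathbf{P}^{1}$-bundle over the origin $e\in\mjtgccamn$; this is forced, since the restriction of $f^{*}(\mathcal{N})$ to the fiber over a point $p$ is $\textrm{pr}_{1}^{*}(q^{*}(\mathcal{P}^{n}_{G}\mid_{\mjtgcamn^{0}\times p}))$, and because $\cam^{n}$ is a smooth cameral cover the biextension $\mathcal{P}^{n}_{G}$ realizes the duality of part $(4)$ of Proposition~\ref{geo of hig}, so $\mathcal{P}^{n}_{G}\mid_{\mjtgcamn^{0}\times p}$ is trivial only when $p=e$. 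Restricting to the fiber over $e$, the $O(0)$ factor is trivial and $f$ collapses $\mathbf{P}^{1}$ to $e$, so the pullback becomes $\textrm{pr}_{1}^{*}(q^{*}(\mathcal{P}^{n}_{G}\mid_{\mjtgcamn^{0}\times e}))$; as $\mathcal{P}^{n}_{G}$ is a biextension, its restriction to the identity section $\mjtgcamn^{0}\times e$ carries a canonical trivialization, and pulling it back yields the asserted canonical trivialization of $\mathcal{P}_{G}$ on $\mjtgcam^{0}\times\mathbf{P}^{1}$.

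For the second assertion I would compare $\mathcal{P}_{G,0}$ and $\mathcal{P}_{G,\infty}$ through the gluing datum. By Theorem~\ref{main theorem of app B}, passing from the $\mathbf{P}^{1}$-bundle to $\mhigc(\cam)$ identifies the $\infty$-section with the $0$-section after translating by $Nm(\check\alpha_{*}(O(\widetilde{x}_{1})))$ on $\mjtgccamn$. Exactly as in the proof of Lemma~\ref{a key lemma on etaleness}, the biextension property of $\mathcal{P}^{n}_{G}$ shows that, under this identification, the two trivializations of $\mathcal{P}_{G,0}$ and $\mathcal{P}_{G,\infty}$ produced in the first step differ by a trivialization of $q^{*}(\mathcal{P}^{n}_{Nm(\check\alpha_{*}(O(\widetilde{x}_{1})))})$, which by Corollary~\ref{description of the fiber} is identified with $q^{*}(\alpha(\mathcal{F}^{n}_{T})\mid_{\widetilde{x}_{1}})$. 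This is precisely the trivialization of the pullback of $\alpha(\mathcal{F}^{n}_{T})\mid_{\widetilde{x}_{1}}$ to $\mjtgcam^{0}$ claimed in the statement.

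The final and main point is to check that this trivialization coincides with the natural one of Lemma~\ref{structure of jtors}, and here I would unwind both descriptions at the node. On one side, the natural trivialization of Lemma~\ref{structure of jtors} reflects the definition of $J_{G,\cam}$ as the subsheaf of $J^{1}_{G}$ cut out by the conditions $\alpha=1$: a $J_{G,\cam}$-torsor pulled back to $\cam^{n}$ carries, at the two branches $\widetilde{x}_{1},\widetilde{x}_{2}$ of the node, descent data compatible with these root conditions, and this is exactly what rigidifies $\alpha(\mathcal{F}^{n}_{T})\mid_{\widetilde{x}_{1}}$ along $\mjtgcam^{0}$. On the other side, the gluing datum producing the morphism $\mathbf{P}^{1}\rightarrow\higc(\cam)$ in Appendix $B$ is built from the same descent datum at the node. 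I would therefore trace, through the identifications of Appendix $B$ and of Section $3$, that the $\mathbb{G}_{m}$ parametrizing the node (the kernel of $q$) acts on the two trivializations through the character $\alpha$, and conclude that the two agree. I expect this last matching of the geometric gluing datum of Appendix $B$ with the group-theoretic datum defining $J_{G,\cam}$ to be the principal obstacle, since the two are phrased in different languages and reconciling them demands careful bookkeeping of the identifications at the node.
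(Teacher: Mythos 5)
Your proposal gets the overall architecture right (triviality on $\mjtgcam^{0}\times\mathbf{P}^{1}$; comparing the restrictions at $0$ and $\infty$ to produce, via Corollary~\ref{description of the fiber}, a trivialization of $q^{*}(\alpha(\mathcal{F}^{n}_{T})\mid_{\widetilde{x}_{1}})$), but the decisive assertion of the lemma --- that this induced trivialization \emph{coincides} with the tautological one of Lemma~\ref{structure of jtors} --- is exactly the step you leave unproven: you describe an intention to ``trace through the identifications'' and yourself flag it as the principal obstacle. Moreover, the test you propose, namely that the node $\mathbb{G}_{m}$ (the kernel of $q$) acts on both trivializations through the character $\alpha$, cannot close the gap even in principle: two trivializations of the same line bundle on $\mjtgcam^{0}$ with the same $\mathbb{G}_{m}$-weight still differ by a unit pulled back from the base, so equivariance pins them down only up to a nonzero constant, and equality requires an actual computation. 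The paper supplies this computation: it pulls $q^{*}(\mathcal{P}^{n}_{Nm(\check{\alpha}_{*}(O(\widetilde{x}_{1})))})$ back along $\cam^{un}\times\{\phi\}\rightarrow\mjtgcam$, uses Corollary~\ref{description of the fiber} and Proposition~\ref{propositions of poincare line bundle on bunt} to write the fiber at $(\widetilde{x},\phi)$ as $\otimes_{w\in W}O(w(\widetilde{x}_{1}))^{\otimes\langle w(\phi),\alpha\rangle}\mid_{\widetilde{x}}$, equivalently $\otimes_{w\in W}O(\widetilde{x})^{\otimes\langle w(\phi),\alpha\rangle}\mid_{w(\widetilde{x}_{1})}$, both canonically trivial because $w(\widetilde{x}_{1})\notin\cam^{un}$, and then matches this with the trivialization of Lemma~\ref{structure of jtors} using the canonical identification of the fibers of $O(\widetilde{x})$ (pulled back to $\cam^{n}$) at $w(\widetilde{x}_{1})$ and $w(\widetilde{x}_{2})$ together with $\langle w(\phi),\alpha\rangle=-\langle ws_{\alpha}(\phi),\alpha\rangle$. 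Nothing in your outline substitutes for this explicit fiber bookkeeping.

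Your first step also has two logical defects worth fixing. First, you import the decomposition $O(k)\otimes f^{*}(\mathcal{N})$ with $k=0$ and $\mathcal{N}\simeq(q\times\textrm{id})^{*}(\mathcal{P}^{n}_{G})$ from the proof of Lemma~\ref{a key lemma on etaleness}; but that proof cites the present lemma, so this requires restructuring (the borrowed facts do happen to be established before the citation, so the circle can be broken, but you should say so). Second, you identify the $\mathbf{P}^{1}$ with the fiber over $e\in\mjtgccamn$ by arguing that triviality would fail over any other point --- i.e., you use the conclusion you are proving to locate the fiber (and the claim that $\mathcal{P}^{n}_{G}\mid_{\mjtgcamn^{0}\times p}$ is trivial \emph{only} for $p=e$ is itself not immediate from part $(4)$ of Proposition~\ref{geo of hig}, which concerns the full Picard stacks). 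The location is in fact part of the construction: by Lemma~\ref{normalization}, points of this $\mathbf{P}^{1}$ are Higgs bundles agreeing with the Kostant section away from $x$, hence corresponding to the \emph{trivial} $W$-equivariant $\check{T}$-bundle on $\cam^{un}$ --- and it is precisely this description, fed into the formula for $\mathcal{P}_{G}$ on $\cam^{un}\times X_{*}(T)\times\higc(\cam)$, that the paper uses to get a genuinely canonical trivialization on $\mjtgcam^{0}\times\mathbf{P}^{1}$ (via the triviality of $\mjtgcam^{0}\rightarrow\pic(\mathbf{P}^{1})$), rather than a trivialization defined only up to the non-canonical isomorphisms entering your $O(k)\otimes f^{*}(\mathcal{N})$ description; canonicity matters here because the second half of the lemma compares specific trivializations.
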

\begin{proof}
Let $\pic(\mathbf{P}^{1})$ be the Picard scheme of $\mathbf{P}^{1}$ parameterizing line bundles together with a trivialization at the point in $\mathbf{P}^{1}$ corresponds to the Kostant section. The restriction of $\mathcal{P}_{G}$ on $\mjtgcam^{0}\times\mathbf{P}^{1}$ induces a group morphism $\mjtgcam^{0}\rightarrow\pic(\mathbf{P}^{1})$. Moreover, the morphism $\mathbf{P}^{1}\rightarrow\higc(\cam)$ is obtained by interpreting $\mathbf{P}^{1}$ as a component of the affine springer fiber of $\check{G}$. So points in $\mathbf{P}^{1}$ corresponds to the trivial $W$ equivariant $T$ bundle on $\cam^{un}$. By our construction of the Poincar\'e line bundle on $\cam^{un}\times X_{*}(T)\times\higc(\cam)$ in Subsection $4.1$, we conclude that the pullback of $\mathcal{P}_{G}$ to $\cam^{un}\times X_{*}(T)\times\mathbf{P}^{1}$ is canonically trivial. Hence the induced morphism $\cam^{un}\times X_{*}(T)\rightarrow\pic(\mathbf{P}^{1})$ sends $\cam^{un}\times X_{*}(T)$ to the unit of $\pic(\mathbf{P}^{1})$. This implies that $\jtgcam^{0}\rightarrow\pic(\mathbf{P}^{1})$ is the trivial one. 

For the second claim, our discussions in Lemma~\ref{normalization} and Corollary~\ref{the gluing map} implies that $0$ and $\infty$ are identified under the $\mathbb{Z}$ action on the affine springer fiber. More precisely, 
in the proof of Corollary~\ref{the gluing map} we have shown that $0$ and $\infty$ are identified by the action of an element $\eta\in\textrm{Gr}_{J_{G,\cam_{x},x}}$ whose image in $\textrm{Gr}_{J_{G,\cam^{n}_{x},x}}$ is given by $Nm(\check{\alpha}(O(\widetilde{x}_{1})))$. Hence from part $(3)$ of Corollary~\ref{additional compatibility}, Lemma~\ref{pullback from normalization}, Corollary~\ref{description of the fiber}, Lemma~\ref{structure of jtors} and  Corollary~\ref{the gluing map}, we conclude that $\mathcal{P}_{G,\infty}\simeq\mathcal{P}_{G,0}\otimes q^{*}(\mathcal{P}^{n}_{Nm(\check{\alpha}_{*}(O(\widetilde{x}_{1})))})$. So the trivializations of $\mathcal{P}_{G,0}$ and $\mathcal{P}_{G,\infty}$ induces a trivialization of $q^{*}(\mathcal{P}^{n}_{Nm(\check{\alpha}_{*}(O(\widetilde{x}_{1})))})$. We will show that this trivialization is the one given by Lemma~\ref{structure of jtors}. By Corollary~\ref{description of the fiber} and Proposition~\ref{propositions of poincare line bundle on bunt}, we see that if we consider the pullback of $q^{*}(\mathcal{P}^{n}_{Nm(\check{\alpha}_{*}(O(\widetilde{x}_{1})))})$ on $\mjtgcam$ to $\cam^{un}\times \phi\subseteq\cam^{un}\times X_{*}(T)$, then its fiber at $(\widetilde{x},\phi)$ is given by 
\begin{equation}
\otimes_{w\in W}O(w(\widetilde{x}_{1}))^{\otimes<w(\phi),\alpha>}\mid_{\widetilde{x}} .
\end{equation}
Here we used the fact that the coroot $\check{\alpha}$ for $\check{G}$ corresponds to the root $\alpha$ of $G$. Since $w(\widetilde{x}_{1})$ are not in $\cam^{un}$, this is canonically trivial. This describes the pullback of the trivialization of $q^{*}(\mathcal{P}^{n}_{Nm(\check{\alpha}_{*}(O(\widetilde{x}_{1})))})$ on $\mjtgcam$ to $\cam^{un}\times X_{*}(T)$.  Using Proposition~\ref{propositions of poincare line bundle on bunt} again, we see that the fiber can also be written as 
\begin{equation}\label{expresstion of the fiber}
\otimes_{w\in W}O(\widetilde{x})^{\otimes<w(\phi),\alpha>}\mid_{w(\widetilde{x}_{1})}
\end{equation}
and the trivialization comes from the fact that $w(\widetilde{x}_{1})$ are not in $\cam^{un}$. It can also be described as follows: Let $s_{\alpha}(\widetilde{x}_{1})=\widetilde{x}_{2}$, see the notations right before Lemma~\ref{structure of jtors}. If we view $O(\widetilde{x})$ as a line bundle on $\cam$, then its pullback to $\cam^{n}$ has the property that its fibers at $w(\widetilde{x}_{1})$ and $ws_{\alpha}(\widetilde{x}_{1})=w(\widetilde{x}_{2})$ are canonically identified since they are mapped to the same point in $\cam$. Since $O(\widetilde{x})^{\otimes<w(\phi),\alpha>}\simeq O(\widetilde{x})^{\otimes-<ws_{\alpha}(\phi),\alpha>}$, we conclude that this induces a canonical trivialization of~\ref{expresstion of the fiber}. Using the discussions in Lemma~\ref{structure of jtors}, it is not hard to see that this trivialization coincide with the pullback of the trivialization of $q^{*}(\mathcal{P}^{n}_{Nm(\check{\alpha}_{*}(O(\widetilde{x}_{1})))})$ on $\mjtgcam$ to $\cam^{un}\times X_{*}(T)$. This finishes the proof.
\end{proof}

Now one can prove:
\begin{corollary}\label{proof in the adjoint case}
Theorem~\ref{main theorem on coho} holds when $G$ is adjoint.
\end{corollary}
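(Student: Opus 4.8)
The plan is to prove that the natural morphism $Rp_{1*}(\mathcal{P}_{G})[d]\rightarrow e_{*}(\mathfrak{j})$ of Lemma~\ref{a morphism} is an isomorphism, working universally over $H_{int}$; restricting to a point of $H_{int}$ then recovers the fiberwise statement of Theorem~\ref{main theorem on coho}, since there $\mathfrak{j}$ specializes to a one dimensional space and, $p_{1}$ being proper (Lemma~\ref{more geometry}), cohomology and base change applies. By Corollary~\ref{Cohen-Macaulayness} the source $S:=Rp_{1*}(\mathcal{P}_{G})[d]$ is a Cohen--Macaulay sheaf of codimension $d$ supported in $\jtg^{0}$, while $e_{*}(\mathfrak{j})$ is a line bundle on the smooth unit section, hence also Cohen--Macaulay of codimension $d$. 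Since both are $S_{2}$, it suffices to prove the morphism is an isomorphism over an open $U\subseteq H_{int}$ with $\codim(H_{int}\setminus U)\geq 2$, and then extend: any kernel or cokernel is supported over $H_{int}\setminus U$, hence in codimension $\geq d+2$ inside $\jtg^{0}$, so it cannot occur in a pure codimension $d$ subsheaf of $S$ and cannot obstruct equality of the two $S_{2}$ sheaves.

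I would take $U$ to be the open subscheme of Lemma~\ref{an open set of H} in Appendix C, so that $\codim(H_{int}\setminus U)\geq 2$ and every cameral cover over $U$ has $\delta\leq 1$. The first step is to pin down the support of $S$ over $U$. By part $(1)$ of Lemma~\ref{codim estimate} this support lies in the kernel $K$ of $\jtg^{0}\rightarrow\pic(\mhigc)$, and by Lemma~\ref{a key lemma on etaleness} this morphism is etale, so $K$ is quasi-finite and etale over $H_{int}$. Over the locus of smooth cameral covers the morphism is the biduality isomorphism of dual abelian varieties, so $K$ is there reduced to the unit section; as etale morphisms are open and the smooth locus is dense, every component of $K$ meets the generic fibers and therefore agrees with the unit section by separatedness. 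Hence $K=e(U)$ over $U$, so $S|_{U}$ is a Cohen--Macaulay sheaf of codimension $d$ supported on the unit section, i.e.\ a vector bundle on $e(U)\cong U$, and the morphism becomes a map of vector bundles on $U$.

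It remains to check the morphism is an isomorphism over $U$. Over the smooth cameral locus this is the classical autoduality for the Poincar\'e bundle on a pair of dual Picard stacks (\cite{Geometric Langlands}, \cite{Langlands Hitchin}): $S$ has rank one there and maps isomorphically to $\mathfrak{j}$; since $e(U)$ is irreducible, $S|_{U}$ is then a line bundle. The genuinely new input is the $\delta=1$ locus, where $\cam$ acquires a single node. Here I would use the explicit description of $\mhigc(\cam)$ from Theorem~\ref{main theorem of app B} as a $\mathbf{P}^{1}$-bundle over $\mjtgccamn$ with its two sections glued by the datum computed in the preceding lemma, together with the nontriviality of $\mathcal{P}_{G}$ on $\mathbb{G}_{m}\times\mhigc(\cam)$ established there. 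This reduces $R\Gamma(\mhigc(\cam),\mathcal{P}_{\alpha})$ to a relative cohomology computation along the $\mathbf{P}^{1}$-bundle, in which the nontrivial gluing cancels everything except a one dimensional class sitting in the top degree and detected exactly by the unit, matching $\mathfrak{j}$; combined with part $(4)$ of Corollary~\ref{additional compatibility}, controlling the Grothendieck dual, this shows the morphism is an isomorphism along $\delta=1$, finishing the proof over $U$ and hence everywhere.

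The main obstacle is precisely this $\delta=1$ computation: one must show that neither the rank nor the degree of $S$ jumps as one crosses the nodal divisor, equivalently that the cohomology of $\mathcal{P}_{\alpha}$ remains one dimensional and that $u$ does not degenerate there. Etaleness of $\jtg^{0}\rightarrow\pic(\mhigc)$ is the conceptual device collapsing the whole problem to this single transverse computation, and the nodal structure theory of Appendix B together with the explicit gluing analysis is exactly what is needed to carry it out.
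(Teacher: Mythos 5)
Your skeleton is in fact the paper's: the morphism of Lemma~\ref{a morphism}, the Cohen--Macaulay input of Corollary~\ref{Cohen-Macaulayness}, trapping the support in the kernel of $\jtg^{0}\rightarrow\pic(\mhigc)$, and exploiting the \'etaleness of Lemma~\ref{a key lemma on etaleness} together with density of the smooth-cameral locus. The first genuine gap is your justification that the support lies in that kernel. Part $(1)$ of Lemma~\ref{codim estimate} only gives triviality of $\mathcal{P}_{\alpha}$ on the regular locus $\jtgccam$, and in the nodal fibers the complement of the regular locus is a \emph{divisor} (by Theorem~\ref{main theorem of app B} it is a copy of $\mjtgccamn$, of codimension one in $\mhigccam$), so triviality on the regular part does not yield $\mathcal{P}_{\alpha}\simeq O_{\mhigccam}$, i.e. does not place $\alpha$ in the kernel of $\jtgcam^{0}\rightarrow\pic(\mhigccam)$. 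The paper closes this by a step you omit: nonvanishing of $H^{d}(\mhigccam,\mathcal{P}_{\theta})$ plus Grothendieck duality and part $(4)$ of Corollary~\ref{additional compatibility} give $H^{0}(\mhigccam,\mathcal{P}_{-\theta})\neq 0$, and then algebraic equivalence to the trivial bundle (as $\theta$ lies in the neutral component) on the reduced, connected, projective $\mhigccam$ (Lemma~\ref{more geometry}) forces $\mathcal{P}_{-\theta}\simeq O_{\mhigccam}$. This argument is needed, and it in fact works fiberwise over all of $H_{int}$, not just over $U$.

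The second gap is the endgame. Your extension step as stated is wrong: $p_{1}^{-1}(H_{int}\setminus U)$ has codimension $2$ in $\jtg^{0}$, not $d+2$, and since the $\delta$-strata only have codimension $\geq\delta$ while the fiberwise support can have dimension up to $\delta$ (part $(3)$ of Lemma~\ref{codim estimate}), a dimension count does not exclude a codimension-$d$ component of the support lying entirely over $H_{int}\setminus U$; your "$\geq d+2$" tacitly presupposes the kernel is supported on the unit section, which is what is to be proved. This is repairable, because your \'etale-kernel analysis is actually global over $H_{int}$, but then it must be run globally, not over $U$. More substantively, you leave the decisive $\delta=1$ verification as an unexecuted "the gluing cancels everything" computation, and you never address scheme-theoretic (as opposed to set-theoretic) support, without which your claim that $S|_{U}$ is a vector bundle on $e(U)$ is unjustified. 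The paper needs neither: after establishing scheme-theoretic support on $e(H_{int})$ (via Theorem $1.2.1$ of \cite{Geometric Langlands} over $H^{0}$ together with Cohen--Macaulay purity), the restriction of $\mathcal{P}_{G}$ to $e(H_{int})\times_{H_{int}}\mhigc$ is trivial, and relative Grothendieck duality identifies $R^{d}p_{1*}(O)$ with $e_{*}(\mathfrak{j})$ uniformly over $H_{int}$, including along the nodal divisor; all the nodal gluing input is already consumed inside Lemma~\ref{a key lemma on etaleness}, so your proposed transverse computation would duplicate it in harder form and is currently only an assertion.
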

\begin{proof}
By Lemma~\ref{a morphism} and Lemma~\ref{Cohen-Macaulayness}, we have a morphism of Cohen-Macaulay sheaves of codimension $d$: $Rp_{1*}(\mathcal{P}_{G})[d]\rightarrow e_{*}(\mathfrak{j})$. We will first claim the support of $Rp_{1*}(\mathcal{P}_{G})[d]$ is equal to the image of the closed embedding $e$. So suppose $\theta$ is a point in the support of $Rp_{1*}(\mathcal{P}_{G})[d]$. Then $H^{d}(\mhigccam,\mathcal{P}_{\theta})\neq 0$ where $\mathcal{P}_{\theta}$ stands for the restriction of $\mathcal{P}_{G}$ to $\theta\times\mhigccam$. By Grothendieck duality and part $(4)$ of Corollary~\ref{additional compatibility}, this implies $H^{0}(\mhigccam,\mathcal{P}_{-\theta})\neq 0$. Notice that since $\theta$ is a point in the neutral component $\mjtgcam^{0}$, $\mathcal{P}_{-\theta}$ is algebraically equivalent to the trivial line bundle. Since $\mhigccam$ is a reduced and connected projective variety by Lemma~\ref{more geometry}, this implies that we must have $\mathcal{P}_{-\theta}\simeq O_{\mhigccam}$. So $\theta$ is a point in the kernel of $\mjtgcam^{0}\rightarrow\pic^{0}(\mhigccam)$, which is an etale group scheme over $H_{int}$ by Lemma~\ref{a key lemma on etaleness}. Moreover, when we restrict to the open subscheme $H^{0}\subseteq H_{int}$, the results in $[ ]$ implies that the support of $Rp_{1*}(\mathcal{P}_{G})[d]$ is equal to $e(H^{0})$. Since the support is etale over $H_{int}$ and that $Rp_{1*}(\mathcal{P}_{G})[d]$ is Cohen-Macaulay of codimension $d$, it implies $e(H^{0})$ must be dense in the support. So $Rp_{1*}(\mathcal{P}_{G})[d]$ is set theoretically supported on $e(H_{int})$. Moreover, Theorem $1.2.1$ of \cite{Geometric Langlands} implies that $Rp_{1*}(\mathcal{P}_{G})[d]$ is scheme-theoretically supported on $e(H^{0})$ when restricted to $H^{0})$, this shows that it is actually scheme-theoretically supported in $e(H_{int})$.

By the results above, to finish the proof we only need to show that the morphism $R^{d}p_{1*}(\mathcal{P}_{G})\rightarrow e_{*}(\mathfrak{j})$ is an isomorphism when we restrict both sides to $e(H_{int})$. This follows from the fact that the restriction of $\mathcal{P}_{G}$ to $e(H_{int})\times_{H_{int}}\mhigc$ is trivial, so that 
$$Rp^{d}_{1*}(\mathcal{P}_{G}\mid_{e(H_{int})\times_{H_{int}}\mhigc})\simeq R^{d}p_{1*}(O_{e(H_{int})\times_{H_{int}}\mhigc})\simeq e_{*}(\mathfrak{j})$$
by Grothendieck duality and Corollary~\ref{Cohen-Macaulayness}. 
\end{proof}

Next we will prove Theorem~\ref{main theorem on coho} for general semisimple groups. Let $G$ be a semisimple algebraic group and $\check{G}$ be its Langlands dual. Let $G_{ad}$ be the adjoint group associated with $G$ and $G_{sc}$ be the simply connected cover of $\check{G}$. One has exact sequences:
\begin{equation}\label{exact sequences}
\begin{gathered}
0\rightarrow Z(G)\rightarrow G\rightarrow G_{ad}\rightarrow 0  \\
0\rightarrow Z(G)^{\check{}}\rightarrow G_{sc}\rightarrow\check{G}\rightarrow 0  
\end{gathered}
\end{equation}
where $Z(G)^{\check{}}$ denotes the Cartier dual of $Z(G)$. We fix a Kostant section for $\higsc$, which will induce a Kostant section for $\higc$.

First we prove a compatibility results for Poincar\'e line bundles:
\begin{proposition}\label{compatibility between different groups}
Consider the diagram:
$$\xymatrix{
\jtgcam\times\higsccam \ar[r] \ar[d] & \jtgcam\times\higc(\cam)\\
\jtgadcam\times\higsccam .
}
$$
Denote the Poincar\'e line bundle on $\jtgadcam\times\higsccam$ by $\mathcal{P}_{G_{ad}}$ and the Poincar\'e line bundle on $\jtgcam\times\higc(\cam)$ by $\mathcal{P}_{G}$, respectively. Then the pullback of $\mathcal{P}_{G_{ad}}$ and $\mathcal{P}_{G}$ to $\jtgcam\times\higsccam$ are isomorphic. 
\end{proposition}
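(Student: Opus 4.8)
The plan is to reduce both line bundles to the explicit Abel--Jacobi presentation of Subsection~$4.1$, match them fiberwise on a common smooth cover, and then propagate the isomorphism down to $\jtgcam\times\higsccam$ by the descent-and-extension technique already employed in Lemma~\ref{descend to bunt}, Lemma~\ref{new descend to bunt} and Corollary~\ref{additional compatibility}. Fix a basis $\phi_{1},\dots,\phi_{r}$ of $X_{*}(T)$ and large integers $n_{i}$, and pass to the cover $(\cam^{un})^{(n_{1})}\phi_{1}\times\cdots\times(\cam^{un})^{(n_{r})}\phi_{r}\times\higsccam$. On this cover the pullback of $\mathcal{P}_{G}$ along $\higsccam\to\higc(\cam)$ has fiber $\de(\phi((E_{\check{G}},\varphi)')_{\widetilde{D}})\otimes\de(O_{\widetilde{D}})^{-1}$ at $(\widetilde{D},\phi,(E_{G_{sc}},\psi))$, where $(E_{\check{G}},\varphi)$ is the image of $(E_{G_{sc}},\psi)$ and $(E_{\check{G}},\varphi)'$ is its associated $\check{T}$-torsor on $\cam^{un}$ in the sense of Proposition~\ref{prop of jtors}; the pullback of $\mathcal{P}_{G_{ad}}$ along $\jtgcam\to\jtgadcam$ has fiber $\de(\phi((E_{G_{sc}},\psi)')_{\widetilde{D}})\otimes\de(O_{\widetilde{D}})^{-1}$, now with $\phi$ viewed in $X_{*}(T_{ad})$ and $(E_{G_{sc}},\psi)'$ the associated $T_{sc}$-torsor.

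The heart of the matching is the observation that $G_{sc}\to\check{G}$ is the Langlands dual of $G\to G_{ad}$, so on maximal tori the isogeny $T_{sc}\to\check{T}$ is dual to $T\to T_{ad}$. Consequently, on $\cam^{un}$ the $\check{T}$-torsor $(E_{\check{G}},\varphi)'$ is the pushout of the $T_{sc}$-torsor $(E_{G_{sc}},\psi)'$ along $T_{sc}\to\check{T}$, and for $\phi\in X_{*}(T)=X^{*}(\check{T})$ one has a canonical identification of line bundles $\phi((E_{\check{G}},\varphi)')\simeq(\phi|_{T_{sc}})((E_{G_{sc}},\psi)')$ on $\cam^{un}$. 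Under the duality identifications the restriction $X^{*}(\check{T})\to X^{*}(T_{sc})$ is precisely the map $X_{*}(T)\to X_{*}(T_{ad})$ induced by $G\to G_{ad}$, which is exactly the cocharacter governing $\mathcal{P}_{G_{ad}}$ after pullback along $\jtgcam\to\jtgadcam$. Hence the two fiber formulae coincide term by term, producing a canonical isomorphism of the two pullbacks on the common cover.

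It remains to descend this isomorphism and to extend it over the non-regular Higgs locus; this is where the main work lies. Following the paper's template I would argue universally over $H_{int}$. Over the smooth-cameral locus $H^{\circ}$ the two pullbacks agree by Part~$(4)$ of Proposition~\ref{geo of hig} together with the autoduality of \cite{Geometric Langlands} and \cite{Langlands Hitchin} and the standard functoriality of the dual abelian scheme under the isogeny; this already pins down the isomorphism on a dense open set. The resulting meromorphic isomorphism over $H_{int}$ becomes regular and invertible after pullback to the smooth cover $\bunt(\cam)\times\buntsc(\cam)$, by the explicit formula above and Part~$(1)$ of Lemma~\ref{a key lemma}, and is therefore regular; finally, since the complement of $\higsc^{reg}(\cam)$ in $\higsccam$ has codimension at least two by Proposition~\ref{geo of hig}, the isomorphism extends across it. The genuinely delicate point is checking that the descent data of the two biextensions are matched rather than merely the underlying line bundles, but as in Corollary~\ref{additional compatibility} this compatibility holds over $H^{\circ}$ and hence, by density, everywhere.
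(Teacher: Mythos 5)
Your proposal is correct and follows essentially the same route as the paper's proof: both match the two pullbacks on the smooth cover $\bunt(\cam)\times\buntsc(\cam)$ using the fact that $T\rightarrow T_{ad}$ and $T_{sc}\rightarrow\check{T}$ are dual isogenies (your explicit Abel--Jacobi fiber computation is just the unwound form of the paper's appeal to part $(4)$ of Proposition~\ref{propositions of poincare line bundle on bunt} via the norm maps), then argue universally over $H_{int}$, citing \cite{Geometric Langlands} for agreement over the smooth-cameral locus, checking regularity of the resulting meromorphic isomorphism after pullback to the smooth cover, and extending across the irregular locus by the codimension-two estimate. The only differences are expository, not mathematical.
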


\begin{proof}
We will prove that the biextensions on $\jtgcam\times\jtgsccam$ obtained by pullback of $\mathcal{P}_{G_{ad}}$ and $\mathcal{P}_{G}$ are isomorphic. This is enough since if we work over the universal case, since the complement of $\mathcal{H}iggs^{reg}_{G_{sc}}$ in $\mathcal{H}iggs_{G_{sc}}$ has codimension greater than or equals to two. 

To simplify the notations, we will write $\jtg$ instead of $\jtgcam$ in the rest of the proof. Consider the following commutative diagram:
\[\xymatrixcolsep{1pc}\xymatrix{
\bunt(\cam)\times\textrm{Bun}_{T_{sc}}(\cam) \ar[rd] \ar[rr] \ar[dd] & & \bunt(\cam)\times\buntc(\cam) \ar[d]\\
 & \jtg\times\jtgsc \ar[r] \ar[d] & \jtg\times\jtgc \\
\textrm{Bun}_{T_{ad}}(\cam)\times \textrm{Bun}_{T_{sc}}(\cam) \ar[r] & \jtgad\times\jtgsc .
}
\]
Our construction of the Poincar\'e line bundle implies that the pullback of $\mathcal{P}_{G}$ to $\bunt(\cam)\times\buntc(\cam)$ is canonically isomorphic to $(Nm_{T}\times\textrm{id})^{*}(\mathcal{Q}_{T})$ where $Nm_{T}$ is the morphism $\buntc(\cam)\rightarrow\buntc(\cam)$ given in the proof of part $(1)$ of Lemma~\ref{a key lemma}, $\mathcal{Q}_{T}$ is the Poincar\'e line bundle on $\bunt(\cam)\times\buntc(\cam)$. Similarly, the pullback of $\mathcal{P}_{G_{ad}}$ to $\textrm{Bun}_{T_{ad}}(\cam)\times \textrm{Bun}_{T_{sc}}(\cam)$ is canonically isomorphic to $(Nm_{T_{ad}}\times\textrm{id})^{*}(\mathcal{Q}_{T_{ad}})$. Since the morphisms of the torus $T\rightarrow T_{ad}$ and $T_{sc}\rightarrow\check{T}$ are dual to each other, our discussions about Poincar\'e line bundle in Appendix $A$ implies that the pullbacks of $(Nm_{T}\times\textrm{id})^{*}(\mathcal{Q}_{T})$ and $(Nm_{T_{ad}}\times\textrm{id})^{*}(\mathcal{Q}_{T_{ad}})$ to $\bunt(\cam)\times\textrm{Bun}_{T_{sc}}(\cam)$ are canonically isomorphic. Now we consider the universal case. The constructions in Subsection $3.3$ of \cite{Geometric Langlands} and Theorem $1.2.1$ of \cite{Geometric Langlands} implies that the claim of the lemma is true over the open subscheme $H^{0}\subseteq H_{int}$. So we need to show this morphism of line bundles defined over $H^{0}$ is regular everywhere. Our discussions above shows that it is regular when we pullback to the smooth cover $\bunt(\cam)\times\textrm{Bun}_{T_{sc}}(\cam)$, hence it is regular everywhere.

\end{proof}

We also need the following lemma:
\begin{lemma}\label{connections between groups}
\begin{enumerate1}
\item Let $\higc^{0}(\cam)$ be the connected component of $\higc(\cam)$ containing the Kostant section. Then the restriction of $\mathcal{P}_{G}$ to $\jtgcam\times\higc^{0}(\cam)$ descends to $\mjtgcam\times\higc^{0}(\cam)$. Here we remind the reader that $\mjtgcam$ denotes the moduli space of $\jtgcam$.
\item $\higsc(\cam)$ has a natural action by $B(Z(G)^{\check{}})$ such that if we denote the quotient by $\overline{\higsc}(\cam)$, then $\higsc(\cam)\rightarrow\higc^{0}(\cam)$ factors through $\overline{\higsc}(\cam)$. Moreover, it is an $H^{1}(X,Z(G)^{\check{}})$ torsor on $\higc^{0}(\cam)$.
\item Let $u$ be the projection $\overline{\higsc}(\cam)\xrightarrow{u}\higc^{0}(\cam)$. We have a decomposition $u_{*}(O_{\overline{\higsc}(\cam)})\simeq\oplus_{\chi}\mathcal{M}_{\chi}$ where $\mathcal{M}_{\chi}$ is a line bundle on $\higc^{0}(\cam)$ and $\chi$ ranges over all the characters of the finite group $H^{1}(X,Z(G)^{\check{}})$.
\item Consider $\mathcal{P}_{G}\mid_{H^{1}(X,Z(G))\times\higc^{0}(\cam)}$. There exists a perfect pairing:
$$H^{1}(X,Z(G))\times H^{1}(X,Z(G)^{\check{}})\rightarrow\mathbb{G}_{m} $$
such that for each $\chi\in H^{1}(X,Z(G))$, if we denote the corresponding character of $H^{1}(X,Z(G)^{\check{}})$ still by $\chi$, then we have $\mathcal{P}_{G}\mid_{\chi\times\higc^{0}(\cam)}\simeq\mathcal{M}_{\chi}$. 
Here we recall that $H^{0}(X,J_{G,\cam})\simeq Z(G)$, see part $(5)$ of Proposition~\ref{geo of hig}. Using this one get an embedding  $H^{1}(X,Z(G))\hookrightarrow\mjtgcam$. Hence we can $H^{1}(X,Z(G))\times\higc^{0}(\cam)$ as a closed substack of $\mjtgcam\times\higc^{0}(\cam)$. 
\end{enumerate1} 
\end{lemma}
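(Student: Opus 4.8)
The plan is to handle the four parts in turn, in each case reducing to the regular locus and to the smooth Hitchin base $H^{\circ}$ and then extending by the codimension-two argument used throughout Subsection $4.1$.

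For part $(1)$, recall that by part $(5)$ of Proposition~\ref{geo of hig} the automorphism group of every $J_{G,\cam}$-torsor is $H^{0}(X,J_{G,\cam})\simeq Z(G)$, so $\jtgcam\to\mjtgcam$ is a $Z(G)$-gerbe. To descend $\mathcal{P}_{G}|_{\jtgcam\times\higc^{0}(\cam)}$ it suffices to show the gerbe automorphisms act trivially on the fibers. I would compute this action exactly as in Lemma~\ref{additional data adjoint}, using the Abel--Jacobi description of $\mathcal{P}_{G}$ on $\cam^{un}\times X_{*}(T)\times\higc(\cam)$: the action of $z\in Z(G)$ on the fiber over a point of $\higc(\cam)$ is multiplication by the value of $z$ against the class of that point in $\pi_{0}(\higc(\cam))\simeq Z(G)^{\check{}}$ under the Cartier duality pairing $Z(G)\times Z(G)^{\check{}}\to\mathbb{G}_{m}$ (here $\pi_{0}(\higc(\cam))\simeq\pi_{1}(\check{G})\simeq Z(G)^{\check{}}$ by part $(2)$ of Proposition~\ref{geo of hitchin fibration}). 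Since $\higc^{0}(\cam)$ is the component through the Kostant section, i.e. the trivial class, this action is trivial there and $\mathcal{P}_{G}$ descends.

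For part $(2)$, the central isogeny $0\to Z(G)^{\check{}}\to G_{sc}\to\check{G}\to 0$ makes $\textrm{Bun}_{Z(G)^{\check{}}}$ act on $\higsccam$ by twisting, and the pushforward along $G_{sc}\to\check{G}$ is invariant. As $Z(G)^{\check{}}$ is finite abelian, $\textrm{Bun}_{Z(G)^{\check{}}}$ is a disjoint union of copies of $B(Z(G)^{\check{}})$ indexed by $\pi_{0}=H^{1}(X,Z(G)^{\check{}})$; its neutral component $B(Z(G)^{\check{}})$ gives the quotient $\overline{\higsc}(\cam)$. Because $G_{sc}$ is simply connected, $\higsccam$ is connected by Proposition~\ref{geo of hitchin fibration}, so its image lands in $\higc^{0}(\cam)$. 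I would then identify $\higc^{0}(\cam)$ with the full quotient of $\higsccam$ by $\textrm{Bun}_{Z(G)^{\check{}}}$, so that the residual group $\Gamma:=H^{1}(X,Z(G)^{\check{}})$ acts on $\overline{\higsc}(\cam)$ with quotient $\higc^{0}(\cam)$; freeness of this action (hence the torsor claim) is checked on the dense regular locus via Proposition~\ref{prop of jtors}, where everything reduces to the central isogeny of tori $T_{sc}\to\check{T}$ acting on $W$-equivariant torsors over $\cam^{un}$, and then propagated by density using part $(2)$ of Proposition~\ref{geo of hig}. Part $(3)$ is then formal: $u$ is a finite étale Galois cover with abelian group $\Gamma$, so $u_{*}(O_{\overline{\higsc}(\cam)})$ decomposes into the isotypic line bundles $\mathcal{M}_{\chi}$ indexed by $\widehat{\Gamma}$.

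Part $(4)$ is the crux. I would first produce the pairing as the cup product $H^{1}(X,Z(G))\times H^{1}(X,Z(G)^{\check{}})\to H^{2}(X,\mu_{n})\simeq\mu_{n}\subseteq\mathbb{G}_{m}$, using the Cartier duality pairing $Z(G)\otimes Z(G)^{\check{}}\to\mu_{n}$ together with the fundamental class; its perfectness is étale Poincaré duality for finite coefficients on $X$. Under the resulting identification $\widehat{\Gamma}\simeq H^{1}(X,Z(G))$, both $\chi\mapsto\mathcal{M}_{\chi}$ and $\chi\mapsto\mathcal{P}_{G}|_{\chi\times\higc^{0}(\cam)}$ are homomorphisms into $\pic(\higc^{0}(\cam))$, the former from the $\Gamma$-torsor structure of part $(2)$ and the latter from the biextension property in parts $(2)$ and $(3)$ of Corollary~\ref{additional compatibility}. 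It therefore suffices to prove these two homomorphisms coincide, and I would do this in the universal case: over $H^{\circ}$ the autoduality $\jtg\simeq(\jtgc)^{\check{}}$ of part $(4)$ of Proposition~\ref{geo of hig}, combined with the constructions of \cite{Geometric Langlands} and \cite{Langlands Hitchin}, identifies the pairing induced by $\mathcal{P}_{G}$ on the finite subgroups with the Weil pairing and $\mathcal{P}_{G}|_{\chi}$ with $\mathcal{M}_{\chi}$; the equality of the two homomorphisms then extends from the dense open $H^{\circ}$ to all of $H_{int}$, the complement of $\higc^{reg}$ having codimension $\geq 2$.

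The main obstacle is precisely this last matching in part $(4)$: showing that the pairing on finite subgroups induced by the biextension $\mathcal{P}_{G}$ is the arithmetic Weil pairing, and not merely some perfect pairing, and that the two associated line bundles agree canonically rather than up to a translation. The most reliable route is to pin the identity down over the smooth Hitchin locus $H^{\circ}$, where it forms part of the known duality, and to transport it everywhere by the codimension-two extension argument.
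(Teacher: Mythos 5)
Your parts (1)--(3) are broadly sound, though partly by a different route than the paper. For part (1) you compute the $Z(G)$-band action directly via the Abel--Jacobi description (action by the pairing of $z$ with the component of the point in $\pi_{0}(\higc(\cam))\simeq Z(G)^{\check{}}$); the paper instead pulls $\mathcal{P}_{G}$ back to $\jtgcam\times\higsccam$, where by Proposition~\ref{compatibility between different groups} it agrees with the pullback of $\mathcal{P}_{G_{ad}}$ from $\jtgadcam\times\higsccam$, and since $\jtgadcam$ is a scheme the band acts trivially; connectedness of $\higsccam$ then covers all of $\higc^{0}(\cam)$. Your computation is the dual of the one in Lemma~\ref{additional data adjoint} and is the mechanism the paper itself invokes later, so this is an acceptable alternative, but note one flaw in part (2): ``freeness checked on the dense regular locus and propagated by density'' is not a valid principle --- a free action on a dense open can acquire stabilizers on the boundary (think of $x\mapsto -x$ on $\mathbb{A}^{1}$). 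The paper instead derives the torsor property from the long exact sequence in cohomology attached to $0\rightarrow Z(G)^{\check{}}\rightarrow G_{sc}\rightarrow\check{G}\rightarrow 0$, which controls stabilizers everywhere, not just generically.

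The genuine gap is in part (4), in two places. First, your extension step is unjustified: the complement of $H^{\circ}$ in $H_{int}$ is the discriminant locus, which has codimension \emph{one}, so an identification of the two homomorphisms into $\pic(\higc^{0}(\cam))$ established over $H^{\circ}$ does not extend by the codimension-two argument (that argument applies to the complement of $\higc^{reg}$ inside $\higc$, a different inclusion); whenever the paper extends an identification across the discriminant it pulls back to a smooth cover such as $\bunt(\cam/H_{int})\times_{H_{int}}\cdots$ and checks regularity there via the explicit Appendix $A$ formulas, a step your outline omits. Second, you take on an unnecessary and harder task: the lemma only asserts the \emph{existence} of a perfect pairing with the stated matching, and the paper produces it intrinsically --- since $H^{1}(X,Z(G))$ lies in the kernel of $\mjtgcam\rightarrow\jtgadcam$, Proposition~\ref{compatibility between different groups} gives a \emph{canonical} trivialization of $i^{*}(\textrm{id}\times u)^{*}(\mathcal{P}_{G})$ on $H^{1}(X,Z(G))\times\overline{\higsc}(\cam)$, and the residual $H^{1}(X,Z(G)^{\check{}})$-equivariant structure on this trivialized bundle is precisely a pairing; the identification $\mathcal{P}_{G}\mid_{\chi\times\higc^{0}(\cam)}\simeq\mathcal{M}_{\chi}$ is then automatic from descent along the torsor $u$ (a trivial bundle with $\chi$-twisted equivariance descends to the $\chi$-isotypic summand of $u_{*}O$), with no extension problem at all. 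Only perfectness is transported from $H^{\circ}$, which is legitimate because a pairing of constant finite group schemes over connected $H_{int}$ is locally constant. Your route via the cup product and \'etale Poincar\'e duality would additionally require proving that the biextension pairing \emph{is} the Weil pairing, which, as you concede, is exactly the step you cannot complete; the paper's construction makes that identification unnecessary.
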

\begin{proof}
For part $(1)$, since $\jtgadcam$ is a scheme, we see that $Z(G)$ acts trivially on the fibers of the pullback of $\mathcal{P}_{G}$ to $\jtgcam\times\higsc(\cam)$. Also, since $\higsc(\cam)$ is connected by part $(2)$ of Proposition~\ref{geo of hitchin fibration}, the image of $\higsc(\cam)\rightarrow\higc(\cam)$ is equal to $\higc^{0}(\cam)$. Now Proposition~\ref{compatibility between different groups} implies the claim.

For part $(2)$, part $(5)$ of Proposition~\ref{geo of hig} implies that we have an embedding $BZ(G_{sc})$ into $\jtgsccam$ and it induces an action of $BZ(G_{sc})$ on $\higsc(\cam)$. We can restrict it to $BZ(G)^{\check{}}$. Now the exact sequences~\ref{exact sequences} implies that one has exact sequence:
$$0\rightarrow H^{0}(X,Z(G)^{\check{}})\rightarrow H^{0}(X,G_{sc})\rightarrow H^{0}(X,\check{G})\rightarrow $$
$$H^{1}(X,Z(G)^{\check{}})\rightarrow H^{1}(X,G_{sc})\rightarrow H^{1}(X,\check{G}) .$$
This implies the claim.

Part $(3)$ follows immediately from part $(2)$. 

For part $(4)$, let us denote the quotient $\jtgsccam/BZ(G)^{\check{}}$ by $\overline{\mathcal{T}ors}(J_{G_{sc},\cam})$. The $\jtgsccam$ action on $\higsc(\cam)$ induces an action of $\overline{\mathcal{T}ors}(J_{G_{sc},\cam})$ on $\overline{\higsc}(\cam)$. Moreover, the exact sequences~\ref{exact sequences} induce the following exact sequence of sheaves on $X$:
$$0\rightarrow Z(G)^{\check{}}\rightarrow J_{G_{sc},\cam}\rightarrow J_{\check{G},\cam}\rightarrow 0.$$
From this we conclude from part $(5)$ of Proposition~\ref{geo of hig} that $H^{1}(X,Z(G)^{\check{}})$ is isomorphic to the kernel of the morphism $\overline{\mathcal{T}ors}(J_{G_{sc},\cam})\rightarrow\jtgccam$. Now notice that part $(1)$ implies that the restriction of $\mathcal{P}_{G}$ to $\jtgcam\times\higc^{0}(\cam)$ actually lives on $\mjtgcam\times\higc^{0}(\cam)$. By part $(2)$, $\higsc(\cam)\rightarrow\higc^{0}(\cam)$ factors through $\overline{\higsc}(\cam)$. Since $H^{1}(X,Z(G)^{\check{}})$ is the kernel of $\overline{\mathcal{T}ors}(J_{G_{sc},\cam})\rightarrow\jtgccam$, the action of $\overline{\mathcal{T}ors}(J_{G_{sc},\cam})$ on $\overline{\higsc}(\cam)$ induces an action of $H^{1}(X,Z(G)^{\check{}})$ on the pullback of $\mathcal{P}_{G}$ to $H^{1}(X,Z(G))\times\overline{\higsc}(\cam)$ via:
$$H^{1}(X,Z(G))\times\overline{\higsc}(\cam)\xrightarrow{i}\mjtgcam\times\overline{\higsc}(\cam)\xrightarrow{\textrm{id}\times u} $$
$$\mjtgcam\times\higc^{0}(\cam) .$$
Since $H^{1}(X,Z(G))$ lies in the kernel of $\mjtgcam\rightarrow\jtgadcam$, Proposition~\ref{compatibility between different groups} implies that $i^{*}(\textrm{id}\times u)^{*}(\mathcal{P}_{G})$ is canonically trivial. So the $H^{1}(X,Z(G)^{\check{}})$ action on $i^{*}(\textrm{id}\times u)^{*}(\mathcal{P}_{G})$ is induced by a pairing:
$$H^{1}(X,Z(G))\times H^{1}(X,Z(G)^{\check{}})\rightarrow\mathbb{G}_{m} .$$
The same argument shows that we have a similar pairing in the universal case as a pairing between constant group schemes over $H_{int}$. The results in $[ ]$ implies that in the universal case, this pairing is nondegenerate over the open subscheme $H^{0}$. Hence it is nondegenerate everywhere. For any element $\chi\in H^{1}(X,Z(G))$, let us denote the corresponding character of $H^{1}(X,Z(G)^{\check{}})$ still by $\chi$. To summarize, we have shown that the restriction of $i^{*}(\textrm{id}\times u)^{*}(\mathcal{P}_{G})$ to $\chi\times\overline{\higsc}(\cam)$ is isomorphic to $O_{\overline{\higsc}(\cam)}$, but their $H^{1}(X,Z(G)^{\check{}})$ actions differ by $\chi$. The claim follows from this. 

\end{proof}

\begin{lemma}\label{almost there}
Consider the restriction of the Poincar\'e line bundle on $\mjtgcam\times\higc^{0}(\cam)$ (Part $(1)$ of Lemma~\ref{connections between groups} implies that the Poincar\'e line bundle lives on $\mjtgcam\times\higc^{0}(\cam)$). Then we have $Rp_{1*}\mathcal{P}_{G}\simeq e_{*}(k)[-d]$, where $p_{1}$ denotes $\mjtgcam\times\higc^{0}(\cam)\xrightarrow{p_{1}}\mjtgcam$.
\end{lemma}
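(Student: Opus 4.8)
The plan is to deduce the statement from the adjoint case, already proved in Corollary~\ref{proof in the adjoint case}, by pulling everything back along the covering $\higsc(\cam)\to\higc^{0}(\cam)$ and using the compatibility of Poincar\'e bundles from Proposition~\ref{compatibility between different groups}. Write $a\colon\mjtgcam\to\jtgadcam$ for the morphism induced by $J_{G,\cam}\to J_{G_{ad},\cam}$, and let $v\colon\higsc(\cam)\to\higc^{0}(\cam)$ be the composite of the $B(Z(G)^{\check{}})$-gerbe $\higsc(\cam)\to\overline{\higsc}(\cam)$ with the finite \'etale torsor $u$ of part (2) of Lemma~\ref{connections between groups}. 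Set $w=\textrm{id}\times v$ and let $\tilde{q}\colon\mjtgcam\times\higsc(\cam)\to\mjtgcam$ be $p_{1}\circ w$. Finally put $\Gamma=H^{1}(X,Z(G)^{\check{}})$, the deck group of $u$, so that by the perfect pairing of part (4) of Lemma~\ref{connections between groups} its character group $\Gamma^{\check{}}$ is identified with the finite subgroup $H^{1}(X,Z(G))\subseteq\mjtgcam$.

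The heart of the argument is to compute $R\tilde{q}_{*}w^{*}\mathcal{P}_{G}$ in two ways. Since $\mathcal{P}_{G}$ already descends to $\mjtgcam\times\higc^{0}(\cam)$ by part (1) of Lemma~\ref{connections between groups}, Proposition~\ref{compatibility between different groups} yields a canonical isomorphism $w^{*}\mathcal{P}_{G}\simeq(a\times\textrm{id})^{*}\mathcal{P}_{G_{ad}}$. On one hand, pushing forward along $w$ first and using that the gerbe contributes trivially to the cohomology of $O$ while $Rv_{*}O\simeq\bigoplus_{\chi}\mathcal{M}_{\chi}$ (part (3) of Lemma~\ref{connections between groups}), the projection formula together with $\mathcal{M}_{\chi}\simeq\mathcal{P}_{G}|_{\chi\times\higc^{0}(\cam)}$ (part (4)) and the translation property of part (2) of Corollary~\ref{additional compatibility} gives
$$R\tilde{q}_{*}w^{*}\mathcal{P}_{G}\simeq\bigoplus_{\chi\in\Gamma^{\check{}}}(t_{\chi})_{*}Rp_{1*}\mathcal{P}_{G},$$
which is precisely the decomposition of $R\tilde{q}_{*}w^{*}\mathcal{P}_{G}$ into isotypic pieces for the $\Gamma$-action coming from the deck transformations of $u$; the trivial isotypic piece, i.e.\ the $\Gamma$-invariants, is $Rp_{1*}\mathcal{P}_{G}$. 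On the other hand, the square with vertical arrows $\tilde{q}$, $p_{1}'$ and horizontal arrows $a$, $a\times\textrm{id}$ (where $p_{1}'\colon\jtgadcam\times\higsc(\cam)\to\jtgadcam$) is Cartesian with $p_{1}'$ proper and flat, so proper base change and the adjoint result $Rp_{1*}'\mathcal{P}_{G_{ad}}\simeq e_{*}(k)[-d]$ of Corollary~\ref{proof in the adjoint case} give $R\tilde{q}_{*}w^{*}\mathcal{P}_{G}\simeq La^{*}(e_{*}(k))[-d]$. In characteristic zero $a$ is a homomorphism of smooth commutative group schemes whose kernel is the finite reduced group $\ker a=H^{1}(X,Z(G))$, sitting in the neutral component (the map $\pi_{0}(\mjtgcam)\to\pi_{0}(\jtgadcam)$ is injective), so $a$ is \'etale along $\ker a$ and $La^{*}(e_{*}(k))\simeq O_{\ker a}\simeq\bigoplus_{\lambda\in\ker a}k_{\lambda}$.

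Combining the two computations, $Rp_{1*}\mathcal{P}_{G}$ is the $\Gamma$-invariant part of $La^{*}(e_{*}(k))[-d]$, and I would determine the $\Gamma$-action on each skyscraper $k_{\lambda}$ by transporting the deck action through $w^{*}\mathcal{P}_{G}\simeq(a\times\textrm{id})^{*}\mathcal{P}_{G_{ad}}$: translating the $\higsc(\cam)$-factor by $\gamma\in\Gamma$ changes $\mathcal{P}_{G_{ad}}$ by $p_{1}'^{*}N_{\gamma}$ with $N_{\gamma}=\mathcal{P}_{G_{ad}}|_{\jtgadcam\times\gamma}$ (part (3) of Corollary~\ref{additional compatibility}), and since $a^{*}N_{\gamma}$ is canonically trivial on $\mjtgcam$, that trivialization evaluated at $\lambda\in\ker a$ produces a character $\gamma\mapsto\langle\lambda,\gamma\rangle$ of $\Gamma$. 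By part (4) of Lemma~\ref{connections between groups} this pairing is perfect, so the unique $\lambda$ on which $\Gamma$ acts trivially is $\lambda=e$; taking $\Gamma$-invariants (an exact operation over $k$ in characteristic zero) therefore collapses the sum onto the single skyscraper at $e$, and $Rp_{1*}\mathcal{P}_{G}\simeq e_{*}(k)[-d]$.

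The main obstacle is exactly this last identification of equivariant structures: one must show that the character by which $\Gamma$ acts on the summand $k_{\lambda}$ of $La^{*}(e_{*}(k))$ is the perfect pairing of Lemma~\ref{connections between groups}(4) and not merely some pairing, since it is the nondegeneracy that forces the invariants onto the unit. Concretely this amounts to comparing the trivialization of $a^{*}N_{\gamma}$ furnished by Proposition~\ref{compatibility between different groups} with the pairing built abstractly in Lemma~\ref{connections between groups}(4); the remaining inputs, namely that $a$ is \'etale along its kernel with $\ker a=H^{1}(X,Z(G))$ and that the dimensions match so that the codimension and shift are both $d$, are routine in characteristic zero.
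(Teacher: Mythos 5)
Your proposal is correct, and its first two thirds coincide with the paper's own proof: the paper likewise pulls $\mathcal{P}_{G}$ back along the covering coming from $\higsc(\cam)$, combines Proposition~\ref{compatibility between different groups} with the adjoint case (Corollary~\ref{proof in the adjoint case}) and base change to identify the total pushforward with $i_{*}(O_{H^{1}(X,Z(G))})[-d]$, and uses parts $(3)$ and $(4)$ of Lemma~\ref{connections between groups} together with part $(2)$ of Corollary~\ref{additional compatibility} to rewrite that pushforward as $\bigoplus_{\chi}t_{\chi}^{*}(Rp_{1*}\mathcal{P}_{G})$. Where you genuinely diverge is the endgame. The paper never tracks the equivariant structure: from the displayed isomorphism a length count forces $Rp_{1*}\mathcal{P}_{G}$ to be a single skyscraper in degree $d$, and the paper pins it at $e$ by a support argument, quoting Theorem $1.2.1$ of \cite{Geometric Langlands} over the smooth locus $H^{0}$ in the universal family so that $e$ must lie in the (closed) support. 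You instead extract $Rp_{1*}\mathcal{P}_{G}$ as the $\Gamma$-invariants, where $\Gamma=H^{1}(X,Z(G)^{\check{}})$, compute the character by which $\Gamma$ acts on each skyscraper $k_{\lambda}$, $\lambda\in\ker a$, and let the perfectness of the pairing of Lemma~\ref{connections between groups}$(4)$ collapse the sum onto $\lambda=e$. This buys an argument that is local to the fixed cameral cover and avoids re-quoting the $H^{0}$ input at the last step; note, however, that the perfectness of the pairing is itself established in the paper by degenerating from $H^{0}$, so the smooth-locus input is still present, merely repackaged. The two points you flag as the remaining work are real but unproblematic, and are in fact used implicitly by the paper as well: (i) the scheme-theoretic identification $\ker a=H^{1}(X,Z(G))$ with $a$ \'etale along it follows from the exact sequence of sheaves $0\rightarrow Z(G)\rightarrow J_{G,\cam}\rightarrow J_{G_{ad},\cam}\rightarrow 0$ (surjectivity via the norm map and $J_{G_{ad}}=J^{0}_{G_{ad}}$), the vanishing $H^{0}(X,J_{G_{ad},\cam})\simeq Z(G_{ad})=0$, and the identification of Lie algebras, which the paper needs anyway to write its first display; (ii) the comparison of your transported $\Gamma$-character with the abstract pairing is not hidden extra work, because the proof of Lemma~\ref{connections between groups}$(4)$ constructs that pairing by exactly the recipe you describe — the canonical trivialization, furnished by Proposition~\ref{compatibility between different groups}, of the restriction of $\mathcal{P}_{G}$ over the kernel — so the two pairings agree by construction.
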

\begin{proof}
Denote $\overline{\higsc}(\cam)\rightarrow\higc^{0}(\cam)$ by $u$. Combining Corollary~\ref{proof in the adjoint case} and Proposition~\ref{compatibility between different groups} together, we conclude that we have an isomorphism 
$$Rp'_{1*}(\textrm{id}\times u)^{*}(\mathcal{P}_{G})\simeq i_{*}(O_{H^{1}(X,Z(G))})[-d]$$
where $p'_{1}$ denotes the projection $\mjtgcam\times\overline{\higsc}(\cam)\xrightarrow{p'_{1}}\mjtgcam$ and $i$ is the embedding $H^{1}(X,Z(G))\hookrightarrow\mjtgcam$. This implies that 
$$Rp_{1*}(\textrm{id}\times u)_{*}(\textrm{id}\times u)^{*}(\mathcal{P}_{G})\simeq i_{*}(O_{H^{1}(X,Z(G))})[-d] .$$
We have 
$$(\textrm{id}\times u)_{*}(\textrm{id}\times u)^{*}(\mathcal{P}_{G})\simeq\oplus_{\chi}\mathcal{P}_{G}\otimes\mathcal{M}_{\chi}, $$
see the notations of Part $(3)$ of Lemma~\ref{connections between groups}. Part $(4)$ of Lemma~\ref{connections between groups} and part $(2)$ of Corollary~\ref{additional compatibility} implies that $\mathcal{P}_{G}\otimes\mathcal{M}_{\chi}\simeq (t_{\chi}\times\textrm{id})^{*}(\mathcal{P}_{G})$ where $t_{\chi}$ stands for the automorphism of $\mjtgcam$ given by translation by $\chi$. Combining the discussions above together, we get:
$\oplus_{\chi} t_{\chi}^{*}(Rp_{1*}\mathcal{P}_{G})\simeq i_{*}(O_{H^{1}(X,Z(G))})[-d]$ where $\chi\in H^{1}(X,Z(G))$. Now it remains to show that the image of $e$ lies in the support of $Rp_{1*}\mathcal{P}_{G}$. This follows from the universal case, since we know the claim is true over $H^{0}$ by Theorem $1.2.1$ of \cite{Geometric Langlands}, so the image of $e$ must be in the support. 

\end{proof}

Now we shall prove Theorem~\ref{main theorem on coho} for all semisimple groups:
\begin{proof}
Let $\gamma\in \pi_{1}(\check{G})$. We shall identify it with an element in $Z(G)^{\check{}}$ via exact sequences~\ref{exact sequences}. Let $\mathcal{P}^{\gamma}_{G}$ be the restriction of $\mathcal{P}_{G}$ to $\jtgcam\times\higc^{\gamma}(\cam)$ where $\higc^{\gamma}(\cam)$ is the connected component of $\higc(\cam)$ determined by $\gamma$, see part $(2)$ of Proposition~\ref{geo of hitchin fibration}. Lemma~\ref{almost there} implies that $Rp_{1*}\mathcal{P}^{0}_{G}\simeq a_{*}(O_{BZ(G)})[-d]$ where we denote the embedding $BZ(G)\hookrightarrow\jtgcam$ by $a$. It remains to determine $Rp_{1*}\mathcal{P}^{\gamma}_{G}$ for $\gamma\neq 0$. Let us fix $\widetilde{x}\in \cam^{un}$ and $\phi\in X_{*}(\check{T})$ such that the image of $\phi$ in $\pi_{1}(\check{G})$ is equal to $\gamma$, see Lemma~\ref{surjection on pio}. Let $\xi$ be the image of $(\widetilde{x},\phi)$ under the Abel-Jacobi map $\cam^{un}\times X_{*}(\check{T})\rightarrow \jtgccam$. Then translation by $\xi$ induces an isomorphism $\higc^{0}(\cam)\xrightarrow{t_{\xi}}\higc^{\gamma}(\cam)$. Part $(3)$ of Corollary~\ref{additional compatibility} implies that $(\textrm{id}\times t_{\xi})^{*}(\mathcal{P}^{\gamma}_{G})\simeq \mathcal{P}^{0}_{G}\otimes p_{1}^{*}(\mathcal{P}_{G,\xi})$ where $\mathcal{P}_{G,\xi}$ stands for the restriction to $\jtgcam\times\xi$ of the biextension on $\jtgcam\times\jtgccam$ corresponds to $\mathcal{P}_{G}$. This implies that $Rp_{1*}\mathcal{P}^{\gamma}_{G}\simeq Rp_{1*}(\mathcal{P}^{0}_{G})\otimes\mathcal{P}_{G,\xi}$. Since $Rp_{1*}\mathcal{P}^{0}_{G}\simeq a_{*}(O_{BZ(G)})[-d]$ by Lemma~\ref{almost there}, it remains to determine $a^{*}(\mathcal{P}_{G,\xi})$ where we remind the reader that $a$ is the embedding $BZ(G)\rightarrow\jtgcam$ at the unit element. Using our construction of the Poincar\'e line bundle in Subsection $4.1$ it is not hard to see that if we pullback $a^{*}(\mathcal{P}_{G,\xi})$ along the natural morphism $\spec(k)\rightarrow BZ(G)$, then the $Z(G)$ action on the fiber is given by the character of $Z(G)$ determined by the image of $\phi$ in $\pi_{1}(\check{G})\simeq Z(G)^{\check{}}$. This shows that $Rp_{1*}\mathcal{P}^{\gamma}_{G}\simeq a_{*}\mathcal{L}_{\gamma}[-d]$ where $\mathcal{L}_{\gamma}$ is the line bundle on $BZ(G)$ determined by the character $\gamma\in Z(G)^{\check{}}$. Hence we have $Rp_{1*}\mathcal{P}_{G}\simeq\oplus_{\gamma}a_{*}\mathcal{L}_{\gamma}[-d]\simeq e_{*}(k)[-d]$.

\end{proof}

Now we shall indicate how to extend Theorem~\ref{main theorem on coho} to all reductive groups. We will prove:
\begin{theorem}\label{2nd main theorem of cohomology of poincare line bundle}
Let $G$ be reductive. Let $z$ be the dimension of the center of $G$ and $d$ be the dimension of $\mathcal{H}iggs_{(G/Z(G))}(\cam)$. Let $\mathcal{P}_{G}$ be the Poincar\'e line bundle on $\jtgcam\times\higc(\cam)$ and $p_{1}$ be the projection to $\jtgcam$. Then we have:
$$Rp_{1*}\mathcal{P}_{G}\simeq e_{*}(k)[-d-zg]$$
where $g$ is the genus of $X$. 
\end{theorem}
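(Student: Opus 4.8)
The plan is to reduce the reductive case to two inputs that are already available: Theorem~\ref{main theorem on coho} applied to the semisimple (indeed adjoint) group $G/Z(G)=G_{ad}$, which should account for the shift $[-d]$, and the classical Fourier--Mukai computation for the central torus $Z(G)^{0}$ of dimension $z$ (the torus Poincar\'e bundle of Appendix~$A$), which should account for the extra shift $[-zg]$. As a sanity check, when $G$ is a torus one has $G/Z(G)=1$, so $d=0$ and the formula reads $e_{*}(k)[-zg]$, the Mukai shift for a rank $z$ torus; when $G$ is semisimple one has $z=0$ and the formula recovers Theorem~\ref{main theorem on coho}.

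Concretely, I would set $Z^{0}=Z(G)^{0}$ and use part $(5)$ of Proposition~\ref{geo of hig}, which realizes $Z(G)=H^{0}(X,J_{G,\cam})$ and hence the constant torus $Z^{0}$ as a central subgroup scheme of $J_{G,\cam}$. This yields a morphism $\textrm{Bun}_{Z^{0}}\to\jtgcam$. Dually, the abelianization $\check{G}\to\check{G}^{ab}=\check{G}/\check{G}_{der}$ is a torus of dimension $z$ whose cocharacter lattice is Langlands dual to the character lattice of $Z^{0}$, and it produces a morphism $\higc(\cam)\to\mathcal{H}iggs_{\check{G}^{ab}}(\cam)$ to the stack of Higgs bundles for this torus. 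The key structural claim to establish is that, after pulling back to the smooth cover $\bunt(\cam)\times\buntc(\cam)$ as in the proof of Lemma~\ref{a key lemma} and Proposition~\ref{compatibility between different groups}, the line bundle $\mathcal{P}_{G}$ decomposes: in the $Z^{0}$-directions it is the classical torus Poincar\'e bundle pairing $\textrm{Bun}_{Z^{0}}$ with $\mathcal{H}iggs_{\check{G}^{ab}}(\cam)$ from Proposition~\ref{propositions of poincare line bundle on bunt}, and in the complementary directions it is pulled back from $\mathcal{P}_{G_{ad}}$. This factorization holds on the open subscheme $H^{0}$ by the construction in \cite{Geometric Langlands}, and extends by the now-standard argument that regularity on the smooth cover forces the meromorphic identification to be regular, exactly as in Corollary~\ref{additional compatibility}.

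With this in hand I would assemble the computation along the lines of the semisimple proof. Using the biextension identities of Corollary~\ref{additional compatibility} and the translation structure, I would decompose $\higc(\cam)$ into its connected components and translate each to the neutral component by the image of a suitable Abel--Jacobi point, reducing $Rp_{1*}\mathcal{P}_{G}$ to a computation over the neutral component. There a K\"unneth/base-change argument combines the adjoint answer $e_{*}(k)[-d]$ (Theorem~\ref{main theorem on coho} for $G_{ad}$) with the torus answer $e_{*}(k)[-zg]$ (the Mukai computation for $Z^{0}$), giving $e_{*}(k)[-d-zg]$, the shift $zg$ being the Mukai shift for the rank $z$ torus $Z^{0}$, i.e.\ the dimension of the corresponding Jacobian. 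For the remaining components, the $Z^{0}$-direction Poincar\'e bundle is algebraically nontrivial along the fibers, and the nondegenerate pairing $H^{1}(X,Z(G))\times H^{1}(X,Z(G)^{\check{}})\to\mathbb{G}_{m}$ of part $(4)$ of Lemma~\ref{connections between groups} forces the pushforward to be supported only at the unit $e$, so that all contributions collect into $e_{*}(k)[-d-zg]$.

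The main obstacle is that $G$ is not literally the product $Z^{0}\times G_{ad}$: neither $\jtgcam$ nor $\higc(\cam)$ splits as a product, and the decomposition of $\mathcal{P}_{G}$ above is only valid up to finite central isogeny and only after passing to the covers $\bunt(\cam)$ and $\buntc(\cam)$. The delicate point is therefore the bookkeeping of the component groups and of the pairings on $H^{1}(X,Z(G))$, which parallels Lemma~\ref{connections between groups} but must now be combined with genuine continuous torus directions that contribute a cohomological shift $[-zg]$ rather than a mere discrete character decomposition. Keeping track of how the translation isomorphisms interact with this torus factor, and verifying that the K\"unneth decomposition of $Rp_{1*}$ is compatible with descent from the smooth covers back to $\jtgcam$, is where the real work lies.
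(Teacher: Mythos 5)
Your proposal matches the paper's proof in essentially every respect: the paper implements your isogeny reduction via the exact sequences $0\rightarrow K\rightarrow G\rightarrow G'_{ad}\times S\rightarrow 0$ and $0\rightarrow \check{K}\rightarrow G'_{sc}\times\check{S}\rightarrow\check{G}\rightarrow 0$, proves the compatibility of Poincar\'e bundles by the same regular-on-the-smooth-cover extension argument (Proposition~\ref{compatibility for reductive groups}), carries out your ``bookkeeping'' in Lemma~\ref{connections for different reductive groups} --- where, as you anticipate, the perfect pairing lives on $H^{1}$ of the \emph{finite} isogeny kernel, $H^{1}(X,K)\times H^{1}(X,\check{K})\rightarrow\mathbb{G}_{m}$, rather than on $H^{1}(X,Z(G))$ of the positive-dimensional center --- and combines the adjoint answer with the torus Mukai computation $Rp_{1*}\mathcal{P}_{S}\simeq e_{*}(k)[-zg]$ of Appendix $A$ in Lemma~\ref{almost there for reductive groups}. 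The remaining connected components are then handled by translation along Abel--Jacobi points together with the biextension identity of Corollary~\ref{additional compatibility}, exactly as in your final step.
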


We will prove Theorem~\ref{2nd main theorem of cohomology of poincare line bundle} by reducing it to the semisimple case. First, since $G$ is reductive, one can choose a torus $S$, a simply connected group $G'_{sc}$ and its dual $G'_{ad}$ so that we have exact sequences:
\begin{equation}\label{exact sequences for reductive groups}
\begin{gathered}
0\rightarrow K\rightarrow G\rightarrow G'_{ad}\times S\rightarrow 0 \\
0\rightarrow \check{K}\rightarrow G'_{sc}\times\check{S}\rightarrow \check{G}\rightarrow 0 
\end{gathered}
\end{equation}
where $K$ is a finite group and $\check{K}$ is its dual. In this case one have natural isomorphisms: 
$$\mathcal{T}ors(J_{G'_{ad}\times S,\cam})\simeq\mathcal{T}ors(J_{G'_{ad},\cam})\times\textrm{Bun}_{S}(X)$$
$$\mathcal{H}iggs_{G'_{ad}\times S}(\cam)\simeq\mathcal{H}iggs_{G'_{ad}}(\cam)\times\textrm{Bun}_{S}(X) $$
$$\mathcal{T}ors(J_{G'_{sc}\times \check{S},\cam})\simeq\mathcal{T}ors(J_{G'_{sc},\cam})\times\textrm{Bun}_{\check{S}}(X)$$
$$\mathcal{H}iggs_{G'_{sc}\times \check{S}}(\cam)\simeq\mathcal{H}iggs_{G'_{sc}}(\cam)\times\textrm{Bun}_{\check{S}}(X) .$$

We will make these identifications freely in the rest of this section. The dimension of $S$ is equal to the dimension of $Z(G)$, which is $z$. Notice that under these isomorphisms, the Poincar\'e line bundle on $\mathcal{T}ors(J_{G'_{ad}\times S,\cam})\times\mathcal{H}iggs_{G'_{sc}\times \check{S}}(\cam)$ is given by the tensor product of the pullback of the Poincar\'e line bundle on $\mathcal{T}ors(J_{G'_{ad},\cam})\times\mathcal{H}iggs_{G'_{ad}}(\cam)$ with the pullback of the Poincar\'e line bundle on $\textrm{Bun}_{S}(X)\times\textrm{Bun}_{\check{S}}(X)$. We fix a Kostant section for $\higsc(\cam)$. This plus the trivial $\check{S}$ bundle on $X$ induces a 
Kostant section of $\higc(\cam)$ via $\mathcal{H}iggs_{G'_{sc}\times \check{S}}(\cam)\rightarrow\higc(\cam)$

The analogue of Proposition~\ref{compatibility between different groups} still holds, with the same proof:
\begin{proposition}\label{compatibility for reductive groups}
Consider the diagram:
$$\xymatrix{
\jtgcam\times\mathcal{H}iggs_{G'_{sc}\times \check{S}}(\cam) \ar[r] \ar[d] & \jtgcam\times\higc(\cam)\\
\mathcal{T}ors(J_{G'_{ad}\times S,\cam})\times\mathcal{H}iggs_{G'_{sc}\times \check{S}}(\cam) .
}
$$
\end{proposition}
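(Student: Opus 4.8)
The plan is to run the proof of Proposition~\ref{compatibility between different groups} essentially verbatim, replacing the semisimple pair $(G_{ad},G_{sc})$ by the reductive pair $(G'_{ad}\times S,\,G'_{sc}\times\check{S})$ attached to the exact sequences~\ref{exact sequences for reductive groups}. The claim to establish is that the pullbacks of $\mathcal{P}_{G'_{ad}\times S}$ and of $\mathcal{P}_{G}$ to $\jtgcam\times\mathcal{H}iggs_{G'_{sc}\times\check{S}}(\cam)$ are isomorphic. First I would reduce to an isomorphism of biextensions on $\jtgcam\times\mathcal{T}ors(J_{G'_{sc}\times\check{S},\cam})$: by parts $(2)$ and $(3)$ of Proposition~\ref{geo of hig} the complement of $\mathcal{H}iggs^{reg}_{G'_{sc}\times\check{S}}$ in $\mathcal{H}iggs_{G'_{sc}\times\check{S}}$ has codimension at least two, so it suffices to compare over the regular locus $\mathcal{H}iggs^{reg}_{G'_{sc}\times\check{S}}(\cam)\simeq\mathcal{T}ors(J_{G'_{sc}\times\check{S},\cam})$ and invoke extension across codimension two.

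Next I would pull both Poincar\'e line bundles back to the smooth cover $\bunt(\cam)\times\textrm{Bun}_{T'_{sc}\times\check{S}}(\cam)$. By the construction of the Poincar\'e line bundle, the pullback of $\mathcal{P}_{G}$ to $\bunt(\cam)\times\buntc(\cam)$ is canonically $(Nm_{T}\times\textrm{id})^{*}(\mathcal{Q}_{T})$, and the pullback of $\mathcal{P}_{G'_{ad}\times S}$ to $\textrm{Bun}_{T'_{ad}\times S}(\cam)\times\textrm{Bun}_{T'_{sc}\times\check{S}}(\cam)$ is canonically $(Nm_{T'_{ad}\times S}\times\textrm{id})^{*}(\mathcal{Q}_{T'_{ad}\times S})$, where $\mathcal{Q}$ denotes the Poincar\'e line bundle on $\textrm{Bun}_{T}\times\textrm{Bun}_{\check{T}}$ recalled in Appendix $A$. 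The essential input is that the torus morphisms $T\to T'_{ad}\times S$ and $T'_{sc}\times\check{S}\to\check{T}$ coming from~\ref{exact sequences for reductive groups} are dual to each other; by the functoriality of $\mathcal{Q}$ in Appendix $A$ this makes the two pullbacks to $\bunt(\cam)\times\textrm{Bun}_{T'_{sc}\times\check{S}}(\cam)$ canonically isomorphic.

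Finally I would globalize over $H_{int}$. Over the open subscheme $H^{0}$ of smooth cameral covers the comparison is already established by the constructions of Subsection $3.3$ and Theorem $1.2.1$ of \cite{Geometric Langlands}, giving the desired isomorphism there; to extend it over $H_{int}\setminus H^{0}$ I would observe that, by the canonical comparison of the previous paragraph, the isomorphism becomes regular after pullback to the smooth cover $\bunt(\cam)\times\textrm{Bun}_{T'_{sc}\times\check{S}}(\cam)$, and regularity descends.

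The only point requiring care beyond the semisimple case is the bookkeeping of the torus duality once the central factor $S$ (resp.\ $\check{S}$) is present, namely that the Poincar\'e pairing for the product torus factors as $\mathcal{Q}_{T'_{ad}\times S}\simeq\mathcal{Q}_{T'_{ad}}\boxtimes\mathcal{Q}_{S}$ compatibly with the norm maps. Since the Poincar\'e pairing of a product of tori is the external product of the pairings of the factors, this splits the comparison into the two already-handled pieces, namely the semisimple adjoint/simply-connected case of Proposition~\ref{compatibility between different groups} and the trivial case $\textrm{Bun}_{S}\times\textrm{Bun}_{\check{S}}$, so no genuinely new difficulty arises and the proof is identical in structure to that of Proposition~\ref{compatibility between different groups}.
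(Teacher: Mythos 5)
Your proposal is correct and matches the paper's argument: the paper proves this proposition precisely by running the proof of Proposition~\ref{compatibility between different groups} verbatim for the pair $(G'_{ad}\times S,\,G'_{sc}\times\check{S})$, using the same codimension-two reduction to the regular locus, the duality of the torus morphisms from the sequences~\ref{exact sequences for reductive groups} together with the functoriality of the Poincar\'e pairing in Appendix $A$, and extension from $H^{0}$ via regularity on the smooth cover $\bunt(\cam)\times\textrm{Bun}_{T'_{sc}\times\check{S}}(\cam)$. Your closing observation that the pairing for the product torus splits as an external product compatibly with the norm maps is exactly the bookkeeping the paper also relies on (it records the same splitting just before the proposition), so no new content is needed.
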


Then the pullback of the Poincar\'e line bundle $\mathcal{P}_{G}$ to $\jtgcam\times\mathcal{H}iggs_{G'_{sc}\times \check{S}}(\cam)$ is isomorphic to the pullback of the Poincar\'e line bundle $\mathcal{P}_{G'_{ad}\times S}$ to $\jtgcam\times\mathcal{H}iggs_{G'_{sc}\times \check{S}}(\cam)$.

The following lemma is the analogue of Lemma~\ref{connections between groups}:
\begin{lemma}\label{connections for different reductive groups}
\begin{enumerate1}
\item Let $\higc^{0}(\cam)$ be the connected component of $\higc(\cam)$ containing the Kostant section. Then the
restriction of $\mathcal{P}_{G}$ to $\jtgcam\times\higc^{0}(\cam)$ descends to $\mjtgcam\times\higc^{0}(\cam)$. Here we remind the reader that $\mjtgcam$ denotes the moduli space of $\jtgcam$.
\item Let $\mathcal{H}iggs^{0}_{G'_{sc}\times \check{S}}(\cam)$ be the connected component of $\mathcal{H}iggs_{G'_{sc}\times \check{S}}(\cam)$ containing the Kostant section (It is isomorphic to $\higsc(\cam)\times\textrm{Bun}^{0}_{\check{S}}(X)$ where $\textrm{Bun}^{0}_{\check{S}}(X)$ is the neutral component of $\textrm{Bun}^{0}_{\check{S}}(X)$). Then it has a natural action by $B\check{K}$ such that if we denote the quotient by $\overline{\mathcal{H}iggs^{0}}_{G'_{sc}\times \check{S}}(\cam)$, then $\mathcal{H}iggs^{0}_{G'_{sc}\times \check{S}}(\cam)\rightarrow\higc^{0}(\cam)$ factors through $\overline{\mathcal{H}iggs^{0}}_{G'_{sc}\times \check{S}}(\cam)$. Moreover, it is an $H^{1}(X,\check{K})$ torsor over $\higc^{0}(\cam)$. 
\item Let $u$ be the morphism $\overline{\mathcal{H}iggs^{0}}_{G'_{sc}\times \check{S}}(\cam)\xrightarrow{u}\higc^{0}(\cam)$. Then we have 
$$u_{*}(O_{\overline{\mathcal{H}iggs^{0}}_{G'_{sc}\times \check{S}}(\cam)})\simeq\oplus_{\chi}\mathcal{M}_{\chi}$$
where $\mathcal{M}_{\chi}$ is a line bundle on $\higc^{0}(\cam)$ and $\chi$ ranges over all characters of the finite group $H^{1}(X,\check{K})$. 
\item By part $(1)$ $\mathcal{P}_{G}$ lives on $\mjtgcam\times\higc^{0}(\cam)$ and $H^{1}(X,K)$ is naturally a closed subgroup of $\mjtgcam$. Consider $\mathcal{P}_{G}\mid_{H^{1}(X,K)}\times\higc^{0}(\cam)$. There exists a perfect pairing:
$$H^{1}(X,K)\times H^{1}(X,\check{K})\rightarrow\mathbb{G}_{m} $$
such that for each $\chi\in H^{1}(X,K)$, if we denote the corresponding character of $H^{1}(X,\check{K})$, still by $\chi$, then we have $\mathcal{P}_{G}\mid_{\chi\times\higc^{0}(\cam)}\simeq\mathcal{M}_{\chi}$. 

\end{enumerate1}
\end{lemma}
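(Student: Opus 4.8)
The plan is to transpose the proof of Lemma~\ref{connections between groups} to the reductive setting, replacing the exact sequences~\ref{exact sequences} by the exact sequences~\ref{exact sequences for reductive groups}, the central subgroup $Z(G)$ by the finite group $K$ and its dual $Z(G)^{\check{}}$ by $\check{K}$, the adjoint group $G_{ad}$ by $G'_{ad}\times S$, the simply connected cover $G_{sc}$ by $G'_{sc}\times\check{S}$, and Proposition~\ref{compatibility between different groups} by Proposition~\ref{compatibility for reductive groups}. The only genuinely new feature compared to the semisimple case is the presence of the positive dimensional torus factors $S$ and $\check{S}$; these force the descent in part $(1)$ to be carried out with care and are the source of whatever difficulty arises.

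For part $(1)$ I would show that the automorphism group $H^{0}(X,J_{G,\cam})\simeq Z(G)$ of part $(5)$ of Proposition~\ref{geo of hig} acts trivially on the fibers of the restriction of $\mathcal{P}_{G}$ to $\jtgcam\times\higc^{0}(\cam)$. By Proposition~\ref{compatibility for reductive groups} the pullback of $\mathcal{P}_{G}$ to $\jtgcam\times\mathcal{H}iggs_{G'_{sc}\times \check{S}}(\cam)$ agrees with the pullback of $\mathcal{P}_{G'_{ad}\times S}$, which lives on $\mathcal{T}ors(J_{G'_{ad}\times S,\cam})\simeq\mathcal{T}ors(J_{G'_{ad},\cam})\times\textrm{Bun}_{S}(X)$. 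On the first factor automorphisms act trivially because $\mathcal{T}ors(J_{G'_{ad},\cam})$ is a scheme by Lemma~\ref{more geometry}; on the $\textrm{Bun}_{S}(X)$ factor the residual torus automorphisms act through the standard Poincar\'e pairing on $\textrm{Bun}_{S}(X)\times\textrm{Bun}_{\check{S}}(X)$ recalled in Appendix $A$, and this action is trivial once the $\check{S}$-side is restricted to its neutral component, which is exactly the component singled out by $\higc^{0}(\cam)$. Since $\mathcal{H}iggs_{G'_{sc}\times \check{S}}(\cam)\to\higc(\cam)$ surjects onto $\higc^{0}(\cam)$, the triviality propagates and $\mathcal{P}_{G}$ descends to $\mjtgcam\times\higc^{0}(\cam)$.

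For parts $(2)$ and $(3)$, part $(5)$ of Proposition~\ref{geo of hig} supplies the embedding $B\check{K}\hookrightarrow\mathcal{T}ors(J_{G'_{sc}\times \check{S},\cam})$ (using $\check{K}\subseteq Z(G'_{sc}\times\check{S})$), whose induced action on $\mathcal{H}iggs^{0}_{G'_{sc}\times \check{S}}(\cam)$ is the one in the statement. The long exact cohomology sequence attached to the second line of~\ref{exact sequences for reductive groups} then exhibits the quotient $\overline{\mathcal{H}iggs^{0}}_{G'_{sc}\times \check{S}}(\cam)$ as an $H^{1}(X,\check{K})$-torsor over $\higc^{0}(\cam)$ and shows $\mathcal{H}iggs^{0}_{G'_{sc}\times \check{S}}(\cam)\to\higc^{0}(\cam)$ factors through it; part $(3)$ is then the isotypic decomposition of $u_{*}(O)$ for a torsor under the finite abelian group $H^{1}(X,\check{K})$.

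The core of the argument is part $(4)$. I would form $\overline{\mathcal{T}ors}(J_{G'_{sc}\times \check{S},\cam}):=\mathcal{T}ors(J_{G'_{sc}\times \check{S},\cam})/B\check{K}$, let it act on $\overline{\mathcal{H}iggs^{0}}_{G'_{sc}\times \check{S}}(\cam)$, and use the sheaf sequence $0\to\check{K}\to J_{G'_{sc}\times \check{S},\cam}\to J_{\check{G},\cam}\to 0$ to identify $H^{1}(X,\check{K})$ with the kernel of $\overline{\mathcal{T}ors}(J_{G'_{sc}\times \check{S},\cam})\to\jtgccam$. By part $(1)$ the line bundle $\mathcal{P}_{G}$ already lives on $\mjtgcam\times\higc^{0}(\cam)$, and $H^{1}(X,K)$ lies in the kernel of $\mjtgcam\to\mathcal{T}ors(J_{G'_{ad}\times S,\cam})$, so Proposition~\ref{compatibility for reductive groups} makes the pullback of $\mathcal{P}_{G}$ to $H^{1}(X,K)\times\overline{\mathcal{H}iggs^{0}}_{G'_{sc}\times \check{S}}(\cam)$ canonically trivial; the residual $H^{1}(X,\check{K})$-action is then encoded by a pairing $H^{1}(X,K)\times H^{1}(X,\check{K})\to\mathbb{G}_{m}$, and comparison with $u_{*}(O)\simeq\oplus_{\chi}\mathcal{M}_{\chi}$ gives $\mathcal{P}_{G}\mid_{\chi\times\higc^{0}(\cam)}\simeq\mathcal{M}_{\chi}$. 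The one substantive point is nondegeneracy of this pairing: I would establish it in the universal family over $H_{int}$ by invoking the duality over the locus $H^{0}$ of smooth cameral covers proved in \cite{Geometric Langlands} and \cite{Langlands Hitchin}, where the pairing is nondegenerate, and then observe that nondegeneracy of a pairing of constant finite group schemes over the integral base $H_{int}$ is an open and closed condition and hence holds everywhere once it holds on the dense open $H^{0}$. The main obstacle I anticipate is not the finite group dualities, which are formal once the sheaf-theoretic identifications are in place, but rather keeping the torus factors $S$, $\check{S}$ and the neutral-component conventions coherent through parts $(1)$ and $(4)$ so that the pairing is seen to descend correctly.
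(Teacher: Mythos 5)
Your proposal is correct and takes essentially the same route as the paper: the paper's proof likewise transposes Lemma~\ref{connections between groups} part by part via Proposition~\ref{compatibility for reductive groups}, replacing $\overline{\mathcal{T}ors}(J_{G_{sc},\cam})$ by $\mathcal{T}ors(J_{G'_{sc}\times\check{S},\cam})/B\check{K}$ in part $(4)$, and handles the torus factor in part $(1)$ by the same Appendix $A$ fact you cite, namely that the restriction of the Poincar\'e line bundle to $\textrm{Bun}_{S}(X)\times\textrm{Bun}^{0}_{\check{S}}(X)$ descends to $X_{*}(S)\otimes\pic(X)\times\textrm{Bun}^{0}_{\check{S}}(X)$. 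Your argument for nondegeneracy of the pairing (a pairing of constant finite group schemes over $H_{int}$, nondegenerate over $H^{0}$ by the smooth-cover duality, hence everywhere) is also exactly the paper's.
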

\begin{proof}
Part $(1)$ follows directly from part $(1)$ of Lemma~\ref{connections between groups} combined with Proposition~\ref{compatibility for reductive groups} as well as the fact that the restriction of the Poincar\'e line bundle on $\textrm{Bun}_{S}(X)\times\textrm{Bun}_{\check{S}}(X)$ to $\textrm{Bun}_{S}(X)\times\textrm{Bun}^{0}_{\check{S}}(X)$ descends to $X_{*}(S)\otimes\pic(X)\times\textrm{Bun}^{0}_{\check{S}}(X)$, see our discussions in Appendix $A$. 

The proof of part $(2)$ is also completely analogous to the proof of part $(2)$ of Lemma~\ref{connections between groups}.

Part $(3)$ follows directly from part $(2)$.

Part $(4)$ also follows the same way as part $(4)$ of Lemma~\ref{connections between groups}. One simply replace $\overline{\mathcal{T}ors}(J_{G_{sc},\cam})$ by $\mathcal{T}ors(J_{G_{sc}\times\check{S}})/B\check{K}$. 
\end{proof}

\begin{lemma}\label{almost there for reductive groups}
Consider the Poincar\'e line bundle $\mathcal{P}_{G}$ on $\mjtgcam\times\higc^{0}(\cam)$ (Part $(1)$ of Lemma~\ref{connections for different reductive groups} ensures that is lives on $\mjtgcam\times\higc^{0}(\cam)$). Then we have $Rp_{1*}\mathcal{P}_{G}\simeq e_{*}(k)[-d-zg]$, where $p_{1}$ denotes the projection $\mjtgcam\times\higc^{0}(\cam)\rightarrow\mjtgcam$. 
\end{lemma}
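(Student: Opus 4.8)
The plan is to run the proof of Lemma~\ref{almost there} essentially verbatim, substituting the reductive analogues for each semisimple ingredient and inserting one new input, the classical Fourier--Mukai computation on the torus factor, which is the source of the extra shift by $zg$. Throughout, write $u$ for the projection $\overline{\mathcal{H}iggs^{0}}_{G'_{sc}\times\check{S}}(\cam)\to\higc^{0}(\cam)$ and let $p'_{1}$ denote the projection $\mjtgcam\times\overline{\mathcal{H}iggs^{0}}_{G'_{sc}\times\check{S}}(\cam)\to\mjtgcam$. By Proposition~\ref{compatibility for reductive groups} the pullback of $\mathcal{P}_{G}$ to $\jtgcam\times\mathcal{H}iggs_{G'_{sc}\times\check{S}}(\cam)$ agrees with the pullback of $\mathcal{P}_{G'_{ad}\times S}$, and under the product decompositions recorded before Proposition~\ref{compatibility for reductive groups} the latter is the external tensor product of $\mathcal{P}_{G'_{ad}}$ on $\mathcal{T}ors(J_{G'_{ad},\cam})\times\mathcal{H}iggs_{G'_{sc}}(\cam)$ with the classical Poincar\'e bundle, call it $\mathcal{Q}_{S}$, on $\textrm{Bun}_{S}(X)\times\textrm{Bun}_{\check{S}}(X)$.

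First I would establish the statement for the group $G'_{ad}\times S$ by a K\"unneth argument across this product. On the semisimple factor, Corollary~\ref{proof in the adjoint case} gives that pushing $\mathcal{P}_{G'_{ad}}$ down to $\mathcal{T}ors(J_{G'_{ad},\cam})$ yields $e_{*}(k)[-d]$, where $d=\dim\mathcal{H}iggs_{G'_{ad}}(\cam)=\dim\mathcal{H}iggs_{G/Z(G)}(\cam)$. On the torus factor, I would restrict $\mathcal{Q}_{S}$ to $\textrm{Bun}_{S}(X)\times\textrm{Bun}^{0}_{\check{S}}(X)$, use that it descends to $X_{*}(S)\otimes\pic(X)\times\textrm{Bun}^{0}_{\check{S}}(X)$ as in Appendix~$A$, and compute its pushforward as the Fourier--Mukai transform of the structure sheaf across the Poincar\'e bundle on the product of the $z$ copies of $\pic^{0}(X)$ with its dual; by Mukai's theorem this is $e_{*}(k)[-zg]$. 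Combining the two factors gives $Rp'_{1*}(\textrm{id}\times u)^{*}(\mathcal{P}_{G})\simeq i_{*}(O_{H^{1}(X,K)})[-d-zg]$, with $i$ the embedding $H^{1}(X,K)\hookrightarrow\mjtgcam$ of part~$(4)$ of Lemma~\ref{connections for different reductive groups}; the spreading of the support over $H^{1}(X,K)=\ker(\mjtgcam\to\mathcal{T}ors(J_{G'_{ad}\times S,\cam}))$ occurs exactly as in the semisimple case.

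The last step descends this to $\mjtgcam$. Part~$(3)$ of Lemma~\ref{connections for different reductive groups} gives $(\textrm{id}\times u)_{*}(\textrm{id}\times u)^{*}(\mathcal{P}_{G})\simeq\oplus_{\chi}\mathcal{P}_{G}\otimes\mathcal{M}_{\chi}$, and part~$(4)$ together with part~$(2)$ of Corollary~\ref{additional compatibility} identifies $\mathcal{P}_{G}\otimes\mathcal{M}_{\chi}$ with $(t_{\chi}\times\textrm{id})^{*}(\mathcal{P}_{G})$ for $t_{\chi}$ translation by $\chi\in H^{1}(X,K)$. Applying $Rp_{1*}$ and comparing with the previous step yields $\oplus_{\chi}t_{\chi}^{*}(Rp_{1*}\mathcal{P}_{G})\simeq i_{*}(O_{H^{1}(X,K)})[-d-zg]$; since the $\chi$ act simply transitively on $\ker(\mjtgcam\to\mathcal{T}ors(J_{G'_{ad}\times S,\cam}))$, matching summands forces $Rp_{1*}\mathcal{P}_{G}$ to be supported at the single point $e$ with the asserted shift. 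That $e$ actually lies in the support follows from the universal situation, where over $H^{0}$ the claim is Theorem~$1.2.1$ of \cite{Geometric Langlands}.

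The genuinely new point, and the step I expect to require the most care, is the torus factor: one must verify that the restriction of $\mathcal{Q}_{S}$ to the neutral component $\textrm{Bun}^{0}_{\check{S}}(X)$ descends as claimed and that its Fourier--Mukai pushforward produces exactly the shift $-zg$, with no spurious contribution from the $B\mathbb{G}_{m}$-gerbe directions of $\textrm{Bun}_{S}(X)$, so that the K\"unneth combination reproduces $e_{*}(k)[-d-zg]$ and not something off by the dimension of the automorphism gerbes. The remaining bookkeeping, namely that the perfect pairing of part~$(4)$ of Lemma~\ref{connections for different reductive groups} distributes the summands $\mathcal{M}_{\chi}$ so that after summation only the identity component survives with multiplicity one, is formally identical to the semisimple case handled in Lemma~\ref{almost there}.
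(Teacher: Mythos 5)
Your proposal is correct and follows essentially the same route as the paper: the paper's proof simply runs the argument of Lemma~\ref{almost there} verbatim for the product $G'_{ad}\times S$, adding exactly the torus ingredient you identify, namely that $\mathcal{P}_{S}$ on $\textrm{Bun}_{S}(X)\times\textrm{Bun}^{0}_{\check{S}}(X)$ descends to $X_{*}(S)\otimes\pic(X)\times\textrm{Bun}^{0}_{\check{S}}(X)$ and pushes forward to $e_{*}(k)[-zg]$ (Appendix~$A$, which is your appeal to Mukai). The point you single out as delicate --- that the gerbe directions of $\textrm{Bun}_{S}(X)$ contribute nothing --- is handled in the paper precisely by this descent to the moduli space, so your treatment matches the intended one.
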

\begin{proof}
The argument is the same as the proof of Lemma~\ref{almost there}. One only needs the additional fact that if we consider the Poincar\'e line bundle $\mathcal{P}_{S}$ on $\textrm{Bun}_{S}(X)\times\textrm{Bun}^{0}_{\check{S}}(X)$, then it lives on $X_{*}(S)\otimes\pic(X)\times\textrm{Bun}^{0}_{\check{S}}(X)$. And if we consider $X_{*}(S)\otimes\pic(X)\times\textrm{Bun}^{0}_{\check{S}}(X)\xrightarrow{p_{1}}X_{*}(S)\otimes\pic(X)$, then one has $Rp_{1*}\mathcal{P}_{S}\simeq e_{*}(k)[-zg]$, see Appendix $A$.

\end{proof}

Now one can finish the proof of Theorem~\ref{2nd main theorem of cohomology of poincare line bundle}:

\begin{proof}
The argument is similar to the proof of Theorem~\ref{main theorem on coho}. Let $\gamma\in\pi_{1}(\check{G})$, denote the corresponding connected component of $\higc(\cam)$ by $\higc^{\gamma}(\cam)$. Denote the restriction of $\mathcal{P}_{G}$ to $\jtgcam\times\higc^{\gamma}(\cam)$ by $\mathcal{P}^{\gamma}_{G}$. Lemma~\ref{almost there for reductive groups} implies that we have $Rp_{1*}\mathcal{P}^{0}_{G}\simeq a_{*}(O_{BZ(G)})[-d-zg]$ where $a$ is the embedding $BZ(G)\xrightarrow{a}\jtgcam$. Now fix $\widetilde{x}\in\cam^{un}$ and $\phi\in X_{*}(\check{T})$ such that the image of $\phi$ in $\pi_{1}(\check{G})$ is equal to $\gamma$. Let $\xi$ be the image of $(\widetilde{x},\phi)$ under the Abel-Jacobi map $\cam^{un}\times X_{*}(\check{T})\rightarrow \jtgccam$. Then translation by $\xi$ induces an isomorphism $\higc^{0}(\cam)\xrightarrow{t_{\xi}}\higc^{\gamma}(\cam)$. Part $(3)$ of Corollary~\ref{additional compatibility} implies that $(\textrm{id}\times t_{\xi})^{*}(\mathcal{P}^{\gamma}_{G})\simeq \mathcal{P}^{0}_{G}\otimes p_{1}^{*}(\mathcal{P}_{G,\xi})$ where $\mathcal{P}_{G,\xi}$ stands for the restriction to $\jtgcam\times\xi$ of the biextension on $\jtgcam\times\jtgccam$ corresponds to $\mathcal{P}_{G}$. This implies that $Rp_{1*}\mathcal{P}^{\gamma}_{G}\simeq Rp_{1*}(\mathcal{P}^{0}_{G})\otimes\mathcal{P}_{G,\xi}$. Since $Rp_{1*}\mathcal{P}^{0}_{G}\simeq a_{*}(O_{BZ(G)})[-d-zg]$ by Lemma~\ref{almost there for reductive groups}, it remains to determine $a^{*}(\mathcal{P}_{G,\xi})$ where we remind the reader that $a$ is the embedding $BZ(G)\rightarrow\jtgcam$ at the unit element. Using our construction of the Poincar\'e line bundle in Subsection $4.1$ it is not hard to see that if we pullback $a^{*}(\mathcal{P}_{G,\xi})$ along the natural morphism $\spec(k)\rightarrow BZ(G)$, then the $Z(G)$ action on the fiber is given by the character of $Z(G)$ determined by the image of $\phi$ in $\pi_{1}(\check{G})\simeq Z(G)^{\check{}}$. This shows that $Rp_{1*}\mathcal{P}^{\gamma}_{G}\simeq a_{*}\mathcal{L}_{\gamma}[-d-zg]$ where $\mathcal{L}_{\gamma}$ is the line bundle on $BZ(G)$ determined by the character $\gamma\in Z(G)^{\check{}}$. Hence we have $Rp_{1*}\mathcal{P}_{G}\simeq\oplus_{\gamma}a_{*}\mathcal{L}_{\gamma}[-d-zg]\simeq e_{*}(k)[-d-zg]$. 

\end{proof}

\begin{corollary}\label{fully faithful fourier-mukai}
Keep the same assumptions as in Theorem~\ref{2nd main theorem of cohomology of poincare line bundle}. Consider the Fourier-Mukai functor 
$$\quasicoh(\jtgcam)\xrightarrow{F_{\mathcal{P}}}\quasicoh(\higc(\cam))$$
induced by $\mathcal{P}_{G}$. Then $F_{\mathcal{P}}$ is fully faithful.
\end{corollary}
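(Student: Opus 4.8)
The plan is to prove full faithfulness by exhibiting the right adjoint $F_{\mathcal{P}}^{R}$ of $F_{\mathcal{P}}$ as a Fourier--Mukai functor and showing that the unit of adjunction $\textrm{id}\rightarrow F_{\mathcal{P}}^{R}F_{\mathcal{P}}$ is an isomorphism; since $F_{\mathcal{P}}$ admits a right adjoint, this is equivalent to full faithfulness. Writing $F_{\mathcal{P}}(\mathcal{G})=Rp_{2*}(Lp_{1}^{*}\mathcal{G}\otimes\mathcal{P}_{G})$, the right adjoint is the integral transform with kernel $\mathcal{P}_{G}^{\vee}$ twisted by the relative dualizing complex of $p_{2}$; explicitly $F_{\mathcal{P}}^{R}(\mathcal{H})=Rp_{1*}(p_{2}^{!}\mathcal{H}\otimes\mathcal{P}_{G}^{\vee})$. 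Here $p_{2}\colon\jtgcam\times\higc(\cam)\rightarrow\higc(\cam)$ is proper, because $\higc(\cam)$ is a gerbe over the projective scheme $\mhigc(\cam)$ by part $(2)$ of Lemma~\ref{more geometry}, and $p_{2}^{!}$ is a twist-and-shift since $\higc(\cam)$ is Gorenstein and lci by part $(1)$ of Proposition~\ref{geo of hitchin fibration}. The composite $F_{\mathcal{P}}^{R}F_{\mathcal{P}}$ is then the Fourier--Mukai functor on $\quasicoh(\jtgcam)$ whose kernel on $\jtgcam\times\jtgcam$ is the convolution
\[
\mathcal{K}=Rp_{13*}\bigl(p_{12}^{*}\mathcal{P}_{G}\otimes p_{23}^{*}\mathcal{P}_{G}^{\vee}\otimes(\text{dualizing twist})\bigr),
\]
the pushforward being along the proper projection $p_{13}$ of $\jtgcam\times\higc(\cam)\times\jtgcam$. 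The goal is to identify $\mathcal{K}$ with $\mathcal{O}_{\Delta}$, the kernel of the identity functor.

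The second step is to simplify the integrand using the biextension structure. Parts $(2)$ and $(4)$ of Corollary~\ref{additional compatibility}, which hold on all of $\jtgcam\times\higc(\cam)$, give $(m_{G}\times\textrm{id})^{*}\mathcal{P}_{G}\simeq p_{13}^{*}\mathcal{P}_{G}\otimes p_{23}^{*}\mathcal{P}_{G}$ and $((-1)\times\textrm{id})^{*}\mathcal{P}_{G}\simeq\check{\mathcal{P}}_{G}$. Combining these, the line bundle $p_{12}^{*}\mathcal{P}_{G}\otimes p_{23}^{*}\mathcal{P}_{G}^{\vee}$ on $\jtgcam\times\higc(\cam)\times\jtgcam$ with coordinates $(\alpha_{1},s,\alpha_{2})$ is canonically the pullback of $\mathcal{P}_{G}$ under the map $(\alpha_{1},s,\alpha_{2})\mapsto(\alpha_{1}-\alpha_{2},s)$. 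The square relating this map, $p_{13}$, the subtraction morphism $\sigma\colon\jtgcam\times\jtgcam\rightarrow\jtgcam$, and $p_{1}\colon\jtgcam\times\higc(\cam)\rightarrow\jtgcam$ is cartesian, and $\sigma$ is flat, so flat base change and the projection formula reduce the fibre integral to $\sigma^{*}(Rp_{1*}\mathcal{P}_{G})$.

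The third step applies Theorem~\ref{2nd main theorem of cohomology of poincare line bundle}: $Rp_{1*}\mathcal{P}_{G}\simeq e_{*}(k)[-d-zg]$, so $\sigma^{*}(Rp_{1*}\mathcal{P}_{G})$ is supported on $\sigma^{-1}(e)=\{\alpha_{1}=\alpha_{2}\}$ and is isomorphic to $\mathcal{O}_{\Delta}[-d-zg]$. It remains to check that the dualizing twist and the shift $[d+zg]$ carried by $p_{2}^{!}$ exactly cancel this shift and trivialize the twist along $\Delta$. This is a Grothendieck--duality bookkeeping: the relative dimension of $p_{2}$ equals $\dimension\jtgcam=\dimension\higc(\cam)=d+zg$, and the relative dualizing sheaf of $\higc(\cam)$ is pulled back from the base by part $(3)$ of Lemma~\ref{more geometry}, so the twist matches the one produced by $Rp_{1*}$ via part $(4)$ of Corollary~\ref{additional compatibility}, exactly as in the duality argument of Corollary~\ref{Cohen-Macaulayness}. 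This yields $\mathcal{K}\simeq\mathcal{O}_{\Delta}$, whence $F_{\mathcal{P}}^{R}F_{\mathcal{P}}\simeq\textrm{id}$ and $F_{\mathcal{P}}$ is fully faithful.

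The main obstacle I anticipate is this final step: not merely producing an abstract isomorphism $\mathcal{K}\simeq\mathcal{O}_{\Delta}$, but checking that it is compatible with the unit of adjunction, and tracking the relative dualizing complexes and shifts through the convolution in the stacky, relative-over-$H_{int}$ setting (properness of $\higc(\cam)$, the Gorenstein condition, and base change along $\sigma$). A secondary point requiring care is that $\jtgcam$ is disconnected and non-proper, so the convolution must be organized componentwise and the continuity of the Fourier--Mukai functors on $\quasicoh$ invoked to pass from the kernel identity $\mathcal{K}\simeq\mathcal{O}_{\Delta}$ to the functor identity $F_{\mathcal{P}}^{R}F_{\mathcal{P}}\simeq\textrm{id}$.
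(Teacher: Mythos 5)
Your proposal is correct and follows essentially the same route as the paper: the paper also composes $F_{\mathcal{P}}$ with the transform $G_{\check{\mathcal{P}}}$ induced by $\check{\mathcal{P}}_{G}[d+zg]$, rewrites the convolution kernel $Rp_{13*}(p_{12}^{*}\mathcal{P}_{G}\otimes p_{23}^{*}\check{\mathcal{P}}_{G})[d+zg]$ as $(m'\times\textrm{id})^{*}(\mathcal{P}_{G})$ for the subtraction map $m'(\alpha,\beta)=\alpha-\beta$ using parts $(2)$ and $(4)$ of Corollary~\ref{additional compatibility}, and concludes $\mathcal{K}\simeq\Delta_{*}(O_{\jtgcam})$ by base change from Theorem~\ref{2nd main theorem of cohomology of poincare line bundle}. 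Your only departures are packaging $G_{\check{\mathcal{P}}}$ explicitly as the right adjoint via $p_{2}^{!}$ and flagging the unit-compatibility and dualizing-twist bookkeeping, which the paper leaves implicit.
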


\begin{proof}
Let us consider the functor 
$$\quasicoh(\higc(\cam))\xrightarrow{G_{\check{\mathcal{P}}}}\quasicoh(\jtgcam)$$
induced by $\check{\mathcal{P}_{G}}[d+zg]$. We will prove the composition $G_{\check{\mathcal{P}}}\circ F_{\mathcal{P}}\simeq\textrm{id}$. Indeed, the composition $G_{\check{\mathcal{P}}}\circ F_{\mathcal{P}}$ is induced by:
$$Rp_{13*}(p^{*}_{12}(\mathcal{P}_{G})\otimes p^{*}_{23}(\check{\mathcal{P}}_{G}))[d+zg]$$
where $p_{ij}$ denotes the projection of $\jtgcam\times\higc(\cam)\times\jtgcam$ to the $i,j$ components. We need to prove:
$$Rp_{13*}(p^{*}_{12}(\mathcal{P}_{G})\otimes p^{*}_{23}(\check{\mathcal{P}}_{G}))[d+zg]\simeq \Delta_{*}(O_{\jtgcam}) .$$
By part $(2)$ and part $(4)$ of Corollary~\ref{additional compatibility}, if we set $m'$ to be the morphism:
\begin{gather}
\jtgcam\times\jtgcam \rightarrow\jtgcam \notag \\
(\alpha,\beta)\mapsto\alpha-\beta , \notag
\end{gather}
then we have that $p_{12}^{*}(\mathcal{P}_{G})\otimes p_{23}^{*}(\check{\mathcal{P}}_{G})[d+zg]\simeq (m'\times\textrm{id})^{*}(\mathcal{P}_{G})$ under the morphism 
$$\jtgcam\times\higc(\cam)\times\jtgcam\xrightarrow{m'\times\textrm{id}}\jtgcam\times\higc(\cam) .$$
The claim now follows from Theorem~\ref{2nd main theorem of cohomology of poincare line bundle} by base change.

\end{proof}

\appendix

\section{Review about Poincar\'e line bundles on $\bunt(C)\times\buntc(C)$}

In this section we gather some facts about the Poincar\'e line bundle on $\bunt(C)\times\buntc(C)$ where $C$ is an integral projective curve. The main references are \cite{Poincare line bundle}, \cite{Fundamental lemma} and \cite{AF}. 

First let us look at the Poincar\'e line bundle on $\stackypic(C)\times\stackypic(C)$ where $\stackypic(C)$ is the Picard stack of $C$. 
\begin{proposition}
\begin{enumerate1}
\item There exists a Poincar\'e line bundle $\mathcal{P}$ on $\stackypic(C)\times\stackypic(C)$ such that the fiber of $\mathcal{P}$ at the point $(L_{1},L_{2})\in\stackypic(C)\times\stackypic(C)$ is given by:
$$\mathcal{P}=\de(R\Gamma(C,L_{1}\otimes L_{2}))\otimes\de(R\Gamma(C,O_{C}))\otimes\de(R\Gamma(C,L_{1}))^{-1}\otimes\de(R\Gamma(C,L_{2}))^{-1}$$
where $L_{1}$ and $L_{2}$ are line bundles on $C$. Moreover, $\mathcal{P}$ is canonically a biextension on $\stackypic(C)\times\stackypic(C)$. 
\item If $C^{n}$ is the normalization of $C$, then pullback of line bundles induces $\stackypic(C)\rightarrow\stackypic(C^{n})$. Moreover, the Poincar\'e line bundle on $\stackypic(C)\times\stackypic(C)$ is isomorphic to the pullback of the Poincar\'e line bundle on $\stackypic(C^{n})\times\stackypic(C^{n})$ under the morphism $\stackypic(C)\rightarrow\stackypic(C^{n})$. 
\item Let $\stackypic^{0}(C)$ be the neutral component of $\stackypic(C)$ and $\pic(C)$ be the Picard scheme of $C$. Then restriction of $\mathcal{P}$ to $\stackypic^{0}(C)\times\stackypic(C)$ descends to $\stackypic^{0}(C)\times\pic(C)$. 
\item Let $\textrm{Div}^{n}$ be the scheme parameterizing degree $n$ divisors on $C$. Consider the natural morphism $\textrm{Div}^{n}\xrightarrow{AJ}\stackypic(C)$ given by $D\mapsto O_{C}(D)$. If we pullback $\mathcal{P}$ along $\textrm{Div}^{n}\times\stackypic(C)\rightarrow\stackypic(C)\times\stackypic(C)$, then the fiber of $(AJ\times\textrm{id})^{*}(\mathcal{P})$ at $(D,L)$ is given by $\de(L_{D})\otimes\de(O_{D})^{-1}$ where $L_{D}$ stands for the restriction of $L$ to $D$. 
\item If $C$ is smooth of genus $g$, then we have $Rp_{1*}\mathcal{P}\simeq e_{*}(k)[-g]$ where $e$ is the unit $\spec(k)\xrightarrow{e}\stackypic(C)$. 
\end{enumerate1}
\end{proposition}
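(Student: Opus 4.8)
The plan is to reduce the statement to Mukai's theorem on the Jacobian $J=\pic^{0}(C)$, an abelian variety of dimension $g$, and then to account for the $\mathbb{G}_{m}$-gerbe and the degree components of $\stackypic(C)$.

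First I would unwind the stacky structure. Since $C$ is smooth, $\stackypic(C)\simeq\coprod_{d}\pic^{d}(C)\times B\mathbb{G}_{m}$, so a quasi-coherent sheaf on it amounts to a $\mathbb{G}_{m}$-graded sheaf on each $\pic^{d}(C)$. I would first record the action of the two copies of $\mathbb{G}_{m}=\mathrm{Aut}(L_{1})\times\mathrm{Aut}(L_{2})$ on the fiber of $\mathcal{P}$: scaling $L_{1}$ by $t$ multiplies $\det R\Gamma(L_{1}\otimes L_{2})\otimes\det R\Gamma(L_{1})^{-1}$ by $t^{\chi(L_{1}\otimes L_{2})-\chi(L_{1})}$, which by Riemann--Roch on $C$ equals $t^{\deg L_{2}}$; symmetrically $\mathrm{Aut}(L_{2})$ acts with weight $\deg L_{1}$. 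Since pushing forward along the $B\mathbb{G}_{m}$ of the second factor extracts $\mathrm{Aut}(L_{2})$-invariants, $Rp_{1*}\mathcal{P}$ is supported in the locus $\deg L_{1}=0$, i.e. on the neutral component $\stackypic^{0}(C)$, and its $\mathrm{Aut}(L_{1})$-weight-$d$ part is obtained by integrating only over the degree-$d$ component $\pic^{d}(C)$ of the second factor.

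Next I would identify each of these pieces with the Fourier--Mukai transform on $J$. By part $(3)$ the restriction of $\mathcal{P}$ to $\stackypic^{0}(C)\times\stackypic(C)$ descends to $\pic^{0}(C)\times\pic(C)$, and the formula of part $(1)$ together with the biextension structure shows that, after choosing a base point to trivialize the torsor $\pic^{d}(C)$, the bundle $\mathcal{P}|_{\pic^{0}(C)\times\pic^{d}(C)}$ is the Poincar\'e bundle realizing $J\simeq\widehat{J}$ up to a line bundle pulled back from the first factor $\pic^{0}(C)$ (which, since the transform will be supported at $e$, does not affect the answer). The decisive geometric input is the vanishing theorem on abelian varieties: a nontrivial algebraically trivial line bundle on $J$ has no cohomology. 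By cohomology and base change this forces each weight piece, and hence $Rp_{1*}\mathcal{P}$, to be set-theoretically supported at the origin $e$.

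Finally I would pin down the value at $e$, where $\mathcal{P}$ restricts to the trivial bundle. Although the naive fiber $R\Gamma(J,\mathcal{O}_{J})=\wedge^{\bullet}H^{1}(J,\mathcal{O}_{J})$ is spread over degrees $0,\dots,g$ (and has Euler characteristic $0$), the actual complex is the single shifted skyscraper: this is exactly Mukai's theorem. I would deduce it from Mukai's inversion theorem, noting that $e_{*}(k)$ satisfies $\mathrm{WIT}_{0}$ with transform $\mathcal{O}$ because $\mathcal{P}$ is canonically trivial along $e\times\stackypic(C)$, so that the inversion isomorphism $\hat{S}\circ S\simeq(-1)^{*}[-g]$ returns $\mathcal{O}\mapsto e_{*}(k)[-g]$; I expect to cite this rather than reprove it. The last bookkeeping point is that the unit section $e\colon\spec(k)\to\stackypic(C)$ factors through $B\mathbb{G}_{m}$, so $e_{*}(k)$ carries one copy of $k$ in every $\mathbb{G}_{m}$-weight, which matches the weight-$d$ contributions over all $d\in\mathbb{Z}$ from the second step. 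The main obstacle is precisely this concentration in a single cohomological degree, together with the gerbe-and-degree bookkeeping needed to assemble the pieces into $e_{*}(k)$; the support computation itself is routine.
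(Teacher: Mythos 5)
The paper does not actually prove this proposition: Appendix A is explicitly a review, and all five parts are stated as known facts with references to \cite{Poincare line bundle}, \cite{Fundamental lemma} and \cite{AF}, so there is no in-paper argument to compare against step by step. Measured against the cited literature, your route for part $(5)$ is exactly the standard one, and your bookkeeping is correct: Riemann--Roch gives the weight of $\mathrm{Aut}(L_{1})$ (resp.\ $\mathrm{Aut}(L_{2})$) on the fiber as $\deg L_{2}$ (resp.\ $\deg L_{1}$), so pushing forward along the gerbe of the second factor kills everything off the neutral component of the first factor and splits $Rp_{1*}\mathcal{P}$ into weight pieces indexed by the degree $d$ of the second factor; each piece reduces, after trivializing the torsor $\pic^{d}(C)$ by a base point, to Mumford's vanishing theorem plus Mukai's computation, which you correctly deduce from the $\mathrm{WIT}_{0}$ property of $e_{*}(k)$ and the inversion theorem (as in \cite{AF}); and the identification of the resulting $\bigoplus_{d\in\mathbb{Z}}k(d)$ at the origin with $e_{*}(k)$ via the regular representation of $\mathbb{G}_{m}$ is right. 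Your remark that the discrepancy line bundle pulled back from the first factor is harmless is also correct, since the transform is a skyscraper and the isomorphism asserted is abstract.

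The one genuine shortfall is scope: the statement has five parts, and you prove only part $(5)$, using parts $(1)$, $(3)$ and $(4)$ as inputs --- the fiber formula and biextension structure enter your weight computation and your identification over $\pic^{0}(C)\times\pic^{d}(C)$, descent enters the reduction to schemes, and the Abel--Jacobi formula enters implicitly when you control the restriction of $\mathcal{P}$ along $O_{C}(dx_{0})$. Since the paper itself treats these as standard, this is a defensible division of labor, but a self-contained proof should at least record the missing constructions: part $(1)$ is the determinant-of-cohomology construction applied to a universal line bundle on $C\times\stackypic(C)$, with the biextension structure coming from the bilinearity of the Deligne pairing (theorem of the cube); part $(2)$ follows from the sequence $0\rightarrow O_{C}\rightarrow\nu_{*}O_{C^{n}}\rightarrow Q\rightarrow 0$ with $Q$ torsion, the torsion contributions cancelling in the four-term alternating combination defining $\mathcal{P}$; part $(3)$ is your own weight computation read on the second factor, where the weight is $\deg L_{1}=0$ on the neutral component; and part $(4)$ is a direct computation from $0\rightarrow O_{C}\rightarrow O_{C}(D)\rightarrow O_{C}(D)\mid_{D}\rightarrow 0$. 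None of these requires a new idea, but as written your proposal is a proof of part $(5)$ conditional on the rest of the proposition rather than a proof of the proposition itself.
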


Now we look at the Poincar\'e line bundle on $\bunt(C)\times\buntc(C)$. When $\phi\in X_{*}(T)$ and $L$ is a line bundle on $C$, we will use the notation $\phi_{*}(L)$ to denote the $T$ bundle induced from $L$ and the group homomorphism $\mathbb{G}_{m}\xrightarrow{\phi} T$. Also, if $\mathcal{F}_{\check{T}}$ is a $\check{T}$ bundle, then $\phi(\mathcal{F}_{\check{T}})$ denotes the line bundle induced from $\mathcal{F}_{\check{T}}$ and the morphism $\check{T}\xrightarrow{\phi}\mathbb{G}_{m}$. 
\begin{proposition}\label{propositions of poincare line bundle on bunt}
\begin{enumerate1}
\item There exists a Poincar\'e line bundle $\mathcal{P}_{T}$ on $\bunt(C)\times\buntc(C)$ with the following property: Let $L_{1}$ and $L_{2}$ be line bundles and $\phi\in X_{*}(T)$, $\varphi\in X_{*}(\check{T})$, then the fiber of $\mathcal{P}_{T}$ at $(\phi_{*}(L_{1}),\varphi_{*}(L_{2}))$ is given by $\mathcal{P}^{\otimes <\phi,\varphi>}_{L_{1},L_{2}}$ where $\mathcal{P}_{L_{1},L_{2}}$ is the fiber of the Poincar\'e line bundle on $\stackypic(C)\times\stackypic(C)$ at $(L_{1}.L_{2})$. $\mathcal{P}_{T}$ is naturally a biextension on $\bunt(C)\times\buntc(C)$. 
\item Let $C^{n}$ be the normalization of $C$. Consider the morphisms $\bunt(C)\rightarrow\bunt(C^{n})$ and $\buntc(C)\rightarrow\buntc(C^{n})$ induced by pullback. Then the Poincar\'e line bundle on $\bunt(C)\times\buntc(C)$ is isomorphic to the pullback of the Poincar\'e line bundle on $\bunt(C^{n})\times\buntc(C^{n})$ via the morphisms above. 
\item Let $\bunt^{0}(C)$ be the neutral component of $\bunt(C)$ Then the restriction of $\mathcal{P}_{T}$ to $\bunt^{0}(C)\times\buntc(C)$ descends to $\bunt^{0}(C)\times X_{*}(\check{T})\otimes\pic(C)$. 
\item Let $T_{1}\rightarrow T_{2}$ be a group homomorphism and $\check{T}_{2}\rightarrow\check{T}_{1}$ be the induced morphism on the dual torus. Consider the diagram:
$$\xymatrix{
\textrm{Bun}_{T_{1}}(C)\times\textrm{Bun}_{\check{T}_{2}}(C) \ar[r] \ar[d] & \textrm{Bun}_{T_{1}}(C)\times\textrm{Bun}_{\check{T}_{1}}(C) \\
\textrm{Bun}_{T_{2}}(C)\times\textrm{Bun}_{\check{T}_{2}}(C) .
}
$$
Then the pullback of $\mathcal{P}_{T_{1}}$ to $\textrm{Bun}_{T_{1}}(C)\times\textrm{Bun}_{\check{T}_{2}}(C)$ is isomorphic to the pullback of $\mathcal{P}_{T_{2}}$ to $\textrm{Bun}_{T_{1}}(C)\times\textrm{Bun}_{\check{T}_{2}}(C)$. 
\item Consider the morphism $\textrm{Div}^{n}(C)\times X_{*}(T)\xrightarrow{AJ}\bunt(C)$ given by $(D,\phi)\mapsto \phi_{*}(O_{C}(D))$. Then the fiber of $(AJ\times\textrm{id})^{*}(\mathcal{P}_{T})$ at $((D,\phi),\mathcal{F}_{\check{T}})$ is given by $\de(\phi(\mathcal{F}_{\check{T}})\mid_{D})\otimes\de(O_{D})^{-1}$. 
\item If $C$ is smooth of dimension $g$, then we have $Rp_{1*}\mathcal{P}_{T}\simeq e_{*}(k)[-rg]$ where $r$ is the dimension of $T$.
\end{enumerate1}
\end{proposition}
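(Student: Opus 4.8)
The plan is to deduce every part of the proposition from the corresponding statement of the previous proposition (the one describing the biextension $\mathcal{P}$ on $\stackypic(C)\times\stackypic(C)$) by ``extending scalars along the cocharacter lattices''. The starting point is the canonical identifications of Picard stacks $\bunt(C)\simeq X_{*}(T)\otimes_{\mathbb{Z}}\stackypic(C)$ and $\buntc(C)\simeq X_{*}(\check{T})\otimes_{\mathbb{Z}}\stackypic(C)$, under which a cocharacter $\phi\in X_{*}(T)$ and a line bundle $L$ produce the $T$-bundle $\phi_{*}(L)$. Concretely, a basis $\phi_{1},\dots,\phi_{r}$ of $X_{*}(T)$ with dual basis $\varphi_{1},\dots,\varphi_{r}$ of $X_{*}(\check{T})=X^{*}(T)$ (so $\langle\phi_{i},\varphi_{j}\rangle=\delta_{ij}$) identifies both sides with $\stackypic(C)^{r}$, and this is valid whether or not $C$ is singular since a $T$-torsor for a split torus is just an $r$-tuple of line bundles.

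First I would construct $\mathcal{P}_{T}$. In the dual bases above I set $\mathcal{P}_{T}=\bigotimes_{i=1}^{r}p_{i,i}^{*}\mathcal{P}$, where $p_{i,i}$ projects $\bunt(C)\times\buntc(C)\simeq\stackypic(C)^{r}\times\stackypic(C)^{r}$ onto the product of the $i$-th factor of each. The fiber formula of part $(1)$ then follows from the bilinearity (biextension) of $\mathcal{P}$: writing $\phi=\sum_{i}a_{i}\phi_{i}$ and $\varphi=\sum_{i}b_{i}\varphi_{i}$, the $i$-th coordinates of $\phi_{*}(L_{1})$ and $\varphi_{*}(L_{2})$ are $L_{1}^{\otimes a_{i}}$ and $L_{2}^{\otimes b_{i}}$, so the fiber is $\bigotimes_{i}\mathcal{P}_{L_{1},L_{2}}^{\otimes a_{i}b_{i}}=\mathcal{P}_{L_{1},L_{2}}^{\otimes\langle\phi,\varphi\rangle}$. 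The same bilinearity shows independence of the basis — invariantly, $\mathcal{P}_{T}$ is induced by $\mathcal{P}$ together with the canonical pairing $X_{*}(T)\otimes X_{*}(\check{T})\to\mathbb{Z}$ — and endows $\mathcal{P}_{T}$ with a biextension structure inherited from that of $\mathcal{P}$.

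Parts $(2)$, $(3)$ and $(4)$ are then transported through this construction. For $(2)$, the morphisms $\bunt(C)\to\bunt(C^{n})$ and $\buntc(C)\to\buntc(C^{n})$ are $X_{*}(T)\otimes(-)$ and $X_{*}(\check{T})\otimes(-)$ applied to $\stackypic(C)\to\stackypic(C^{n})$, so the claim reduces coordinatewise to part $(2)$ of the previous proposition. For $(3)$, descent of the $\check{T}$-automorphisms is checked factor by factor: on $\bunt^{0}(C)$ every coordinate has degree $0$, so each $\mathbb{G}_{m}$-automorphism in the second variable acts trivially on the fiber of $\mathcal{P}$ by part $(3)$ of the previous proposition, and the target $X_{*}(\check{T})\otimes\pic(C)$ is exactly $X_{*}(\check{T})\otimes(-)$ applied to $\pic(C)$. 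For $(4)$, a homomorphism $T_{1}\to T_{2}$ induces $X_{*}(T_{1})\to X_{*}(T_{2})$ and dually $X_{*}(\check{T}_{2})\to X_{*}(\check{T}_{1})$; the adjunction $\langle f(\phi),\varphi\rangle=\langle\phi,f^{\vee}(\varphi)\rangle$ for the canonical pairings translates, via the fiber formula of part $(1)$, into the asserted isomorphism of pullbacks.

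Finally, part $(5)$ follows from part $(1)$ together with part $(4)$ of the previous proposition: the line bundle $\phi(\mathcal{F}_{\check{T}})$ on $C$ pairs with $O_{C}(D)$ through the Abel--Jacobi formula $\de(\phi(\mathcal{F}_{\check{T}})\mid_{D})\otimes\de(O_{D})^{-1}$ there, and the biextension bilinearity reduces a general $\mathcal{F}_{\check{T}}$ to line bundles. Part $(6)$ is a Künneth computation: in the dual bases $\mathcal{P}_{T}$ is the external tensor product of $r$ copies of $\mathcal{P}$, one for each paired coordinate, so $Rp_{1*}\mathcal{P}_{T}$ is the external tensor product of the $r$ pushforwards $Rp_{1*}\mathcal{P}\simeq e_{*}(k)[-g]$ of part $(5)$ of the previous proposition, yielding $e_{*}(k)[-rg]$ by flat base change. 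The only genuinely delicate points are the bookkeeping in part $(1)$ — verifying that the biextension axioms and basis-independence descend from $\mathcal{P}$ itself rather than merely its underlying line bundle — and, in part $(3)$, the need for the degree-zero (neutral component) hypothesis to kill the automorphisms before descending; everything else is formal transport of the single-curve statements.
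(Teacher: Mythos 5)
Your proposal is correct, but there is nothing in the paper to compare it against: Appendix $A$ is explicitly a review, and Proposition~\ref{propositions of poincare line bundle on bunt} is stated there without proof, the burden being carried by the references \cite{Poincare line bundle}, \cite{Fundamental lemma} and \cite{AF}. What you have written is the standard argument those references presuppose: a biextension $\mathcal{P}$ of $\stackypic(C)\times\stackypic(C)$ together with the perfect pairing $X_{*}(T)\otimes X_{*}(\check{T})\rightarrow\mathbb{Z}$ induces, functorially in the lattices, a biextension of $\bunt(C)\times\buntc(C)\simeq (X_{*}(T)\otimes\stackypic(C))\times(X_{*}(\check{T})\otimes\stackypic(C))$, after which parts $(2)$--$(6)$ are coordinatewise transports of the corresponding statements for $\stackypic(C)$. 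Your verifications of the individual parts check out: on $\bunt^{0}(C)$ all multidegrees vanish, so the $\check{T}(k)$-automorphisms act trivially on fibers and the bundle descends along the gerbe $\buntc(C)\rightarrow X_{*}(\check{T})\otimes\pic(C)$; the Abel--Jacobi fiber formula in part $(5)$ follows because $L\mapsto\de(L\mid_{D})\otimes\de(O_{D})^{-1}$ is additive in $L$ (for $D=\sum n_{x}x$ it is $\otimes_{x}L_{x}^{\otimes n_{x}}$); and in part $(6)$ the identification $\mathcal{P}_{T}\simeq\boxtimes_{i}\mathcal{P}$ in dual coordinates gives $Rp_{1*}\mathcal{P}_{T}\simeq e_{*}(k)[-rg]$ by K\"unneth. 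One caution deserves more emphasis than you give it: in part $(4)$ the fiber formula of part $(1)$ only controls points of the special form $(\phi_{*}(L_{1}),\varphi_{*}(L_{2}))$, which do not exhaust the stack, so matching fibers there does not by itself yield an isomorphism of line bundles; the isomorphism must come from the basis-free construction --- functoriality of the induced biextension in the lattice variables together with the adjunction $\langle f_{*}\phi,\varphi\rangle=\langle\phi,f^{*}\varphi\rangle$ --- which you do invoke parenthetically, so the argument is sound provided that invariant formulation, and not the pointwise computation, is understood to carry the weight (the same mechanism settles basis-independence in part $(1)$, which you correctly flag as the one genuinely delicate point).
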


\section{Detailed analysis of $\mathcal{H}iggs_{G}(\widetilde{X})$ when $\widetilde{X}$ has nodal singularities}
In this section we give a detailed analysis about the structure of $\hig(\cam)$ when $\cam$ has nodal singularities over a unique point of $X$. Here are the precise assumptions on $\cam$:
\begin{Assumption}
\begin{enumerate1}
\item $\cam$ is integral. 
\item For any pair of roots $\alpha$ and $\beta$ such that $\alpha\neq\pm\beta$, the ramification divisors $D_{\alpha}$ dose not intersect with $D_{\beta}$
\item There exists a unique point $x\in X$ such that $\cam$ is smooth over the preimage of $X-\{x\}$. Moreover, $\cam$ has nodal singularities over the preimage of $x$. 
\end{enumerate1}
\end{Assumption}
One can also reformulate it as follows: 
\begin{Assumption}
$\cam$ is integral and it is smooth away from $x$. Moreover, if we restrict $\cam$ to the formal disk $\spec(\widehat{O}_{x})$ and look at the induced morphism $\spec(\widehat{O}_{x})\rightarrow\mathfrak{c}$, then the image of $x$ lies in the smooth part of the discriminant divisor in $\mathfrak{c}$ and that the image of the formal curve $\spec(\widehat{O}_{x})$ intersects the discriminant divisor with multiplicity equals to two. 
\end{Assumption}

Let us denote the normalization of $\cam$ by $\cam^{n}$. Notice that in this case $\cam^{n}$ is also an abstract cameral cover over $X$ in the sense of \cite{Gerbe of Higgs}. Hence one can define the group scheme $J_{G,\cam^{n}}$ associated to the cameral cover $\cam^{n}$. One has a natural morphism $J_{G,\cam}\rightarrow J_{G,\cam^{n}}$ and hence a morphism $\jtgcam\rightarrow\jtgcamn$. Notice that since $\cam^{n}$ is smooth, the moduli space of $\jtgcamn$ is projective. Let us denote the moduli spaces of $\jtgcam$ and $\jtgcamn$ by $\mjtgcam$ and $\mjtgcamn$. $\jtgcam$ and $\jtgcamn$ are  $Z(G)$ gerbes over their moduli spaces.

The main result of this section is the following:
\begin{theorem}\label{main theorem of app B}
Assuming $G$ is simply connected. There exists a pushout diagram:
$$\xymatrix{
\mjtgcamn\amalg\mjtgcamn \ar[d] \ar[r] & \mjtgcamn \ar[d]\\
\mathbf{P}^{1}\times^{\mathbb{G}_{m}}\mjtgcam \ar[r] & \mhig(\cam) .
}
$$
Let us remind the reader that $\mjtgcam$ and $\mhigc(\cam)$ stands for the moduli space of $\jtgcam$ and $\higc(\cam)$. 
\end{theorem}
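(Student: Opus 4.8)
The plan is to exhibit $\mhig(\cam)$ as the seminormal gluing of the $\mathbf{P}^{1}$-bundle $\mathbf{P}^{1}\times^{\mathbb{G}_{m}}\mjtgcam$ along its two distinguished sections, mirroring the classical description of the compactified Jacobian of an integral curve with a single node. First I would isolate the dense open piece. By part $(2)$ of Proposition~\ref{geo of hig} the regular locus $\hig^{reg}(\cam)\simeq\jtgcam$ is dense in $\hig(\cam)$; passing to moduli spaces, $\mjtgcam$ is a dense open subscheme of $\mhig(\cam)$ whose complement is the non-regular (boundary) locus. The structural analysis of the regular centralizer, namely that the kernel of $\jtgcam\rightarrow\jtgcamn$ is $\mathbb{G}_{m}$ (this single $\mathbb{G}_{m}$ being the contribution of the node) together with the fact that the smooth cover $\cam^{n}$ makes $\mjtgcamn$ smooth and projective, shows that $\mjtgcam$ is a $\mathbb{G}_{m}$-torsor over $\mjtgcamn$. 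Its associated $\mathbf{P}^{1}$-bundle is precisely $\mathbf{P}^{1}\times^{\mathbb{G}_{m}}\mjtgcam$, and $\mjtgcam$ is its complement of the $0$- and $\infty$-sections, each a copy of $\mjtgcamn$.

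Next I would analyze the boundary and the local degeneration at the node. Using the local model of $\cam$ near $x$ (transversal intersection with the smooth part of the discriminant, hence a nodal singularity), I would identify the relevant piece of the affine Springer fiber $\affgras_{J_{G,\cam},x}$ with $\mathbf{P}^{1}$, exactly as for the compactified Jacobian of a node; this produces the $\mathbf{P}^{1}$ direction fiberwise and identifies the non-regular Higgs bundles, those pushed forward from the normalization at the node, with a single copy of $\mjtgcamn$. The two ends $0,\infty$ of this $\mathbf{P}^{1}$ limit onto the same boundary stratum, but the two resulting maps $\mjtgcamn\rightrightarrows\mjtgcamn$ differ by a translation: concretely the gluing is by translation by the element $\eta$ produced from the two preimages of the node in $\cam^{n}$, which is the content of the structural gluing computation (Lemma~\ref{normalization}, Corollary~\ref{the gluing map}). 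This is what makes the top horizontal map of the diagram the fold map $\mjtgcamn\amalg\mjtgcamn\rightarrow\mjtgcamn$.

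Finally I would assemble the pushout. The $\mathbb{G}_{m}$-compactification together with the boundary identification yields a finite, birational morphism $\mathbf{P}^{1}\times^{\mathbb{G}_{m}}\mjtgcam\rightarrow\mhig(\cam)$ that is an isomorphism over $\mjtgcam$ and carries the two sections onto the boundary $\mjtgcamn$ by the two maps above. Since $\hig(\cam)$ is Gorenstein by part $(1)$ of Proposition~\ref{geo of hitchin fibration} and its boundary is locally a node (the source being smooth over $\mjtgcamn$), $\mhig(\cam)$ is seminormal with this morphism as its normalization; hence $\mhig(\cam)$ is recovered as the pushout of $\mjtgcamn \leftarrow \mjtgcamn\amalg\mjtgcamn\rightarrow \mathbf{P}^{1}\times^{\mathbb{G}_{m}}\mjtgcam$, which is the asserted diagram. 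I expect the main obstacle to be the second step: proving scheme-theoretically (not merely set-theoretically) that the boundary is a single smooth copy of $\mjtgcamn$ along which two branches cross transversally, and pinning down the translation in the gluing map. This requires a careful local computation of the Higgs moduli near the node via the affine Springer fiber and the comparison between $J_{G,\cam}$ and $J_{G,\cam^{n}}$, together with the seminormality input needed to upgrade the set-theoretic gluing to an isomorphism of schemes.
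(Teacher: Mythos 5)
Your skeleton coincides with the paper's: Lemma~\ref{structure of jtors} gives the $\mathbb{G}_{m}$-torsor $\mjtgcam\rightarrow\mjtgcamn$, Lemma~\ref{normalization} realizes $\mathbf{P}^{1}\times^{\mathbb{G}_{m}}\mjtgcam$ as the normalization via the $\textrm{SL}_{2}$ affine Springer fiber, Lemma~\ref{description of irregular higgs bundles} identifies the irregular locus as a torsor over $\jtgcamn$, and Corollary~\ref{the gluing map} pins down the translation in the gluing. Up to that point your proposal tracks the paper's argument faithfully.

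The gap is in your final step, and it is genuine. You assert that $\mhig(\cam)$ is seminormal with nodal boundary, offering as justification that $\hig(\cam)$ is Gorenstein and that the source of the normalization map is smooth over $\mjtgcamn$. Neither suffices: Gorenstein does not imply seminormal (a cuspidal curve is Gorenstein, and a cuspidal degeneration here would make the pushout description false while leaving your cited inputs intact), and smoothness of $\mathbf{P}^{1}\times^{\mathbb{G}_{m}}\mjtgcam$ over $\mjtgcamn$ is a statement about the source, not about the local rings of $\mhig(\cam)$. The claim that the formal completion of $\mhig(\cam)$ at an irregular point is $k[[t_{1},\dots,t_{n}]]/(t_{1}t_{2})$ is exactly the technical heart of the appendix, and you flag it as ``the main obstacle'' without supplying an argument. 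The paper proves it by deformation theory: Lemma~\ref{local deformation} shows $\hig(\cam)\rightarrow\hig(\cam_{x})$ is formally smooth, Corollary~\ref{local tool} reduces the local problem to the minimal Levi, i.e.\ to $\textrm{GL}_{2}$ or $\textrm{SL}_{2}$, hence to torsion-free rank-one sheaves on a nodal curve, where smoothness of $\widetilde{C}\times J_{\widetilde{C}}\rightarrow\overline{J}_{\widetilde{C}}$ yields the nodal formal model. Note also that even granting the nodal structure, the paper does not invoke seminormality abstractly: it first builds the pushout $\mathcal{X}$ and a finite morphism $\mathcal{X}\rightarrow\mhig(\cam)$ bijective on closed points (Lemma~\ref{construct pushout}), then uses Cohen--Macaulayness of $\mhig(\cam)$ (hence $S_{2}$) to reduce to codimension-one points, where a length-one computation of the cokernel of $O_{\mhig(\cam)}\rightarrow q_{*}(O_{\mathbf{P}^{1}\times^{\mathbb{G}_{m}}\mjtgcam})$ forces $O_{\mathcal{X},\eta}=O_{\mhig(\cam),\eta}$. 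Your seminormality route could be made rigorous once the formal local computation is in place, but as written the decisive step is asserted rather than proved.
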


We will prove the theorem in several steps. To begin with, we need to determine the structure of $\jtgcam$. Let $\widetilde{x}$ be a point in $\cam$ in the preimage of $x\in X$ and let $\widetilde{x}_{1}$ and $\widetilde{x}_{2}$ be the two points in $\cam^{n}$ above $\widetilde{x}$. Assume $\widetilde{x}$ corresponds to the ramification divisor $D_{\alpha}$. One has $s_{\alpha}(\widetilde{x}_{1})=\widetilde{x}_{2}$. By part $(1)$ of Proposition~\ref{prop of jtors} we have a natural morphism $\jtgcamn\rightarrow\bunt(\cam^{n})$. Denote the universal $T$-bundle on $\cam^{n}\times\bunt(\cam^{n})$ by $\mathcal{F}^{n}_{T}$.
\begin{lemma}\label{structure of jtors}
$\jtgcam$ is a $\mathbb{G}_{m}$ torsor over $\jtgcamn$. More precisely, $\jtgcam$ is isomorphic to the $\mathbb{G}_{m}$ torsor corresponds to the pullback of the line bundle $\alpha(\mathcal{F}^{n}_{T})\mid_{\widetilde{x}_{1}}$ on $\bunt(\cam^{n})$ via $\jtgcamn\rightarrow\bunt(\cam^{n})$. The group homomorphism $\mathbb{G}_{m}\rightarrow\jtgcam$ can be identified with $\mathbb{G}_{m}\rightarrow\textrm{Gr}_{J_{G,\cam},x}\rightarrow\jtgcam$ where the first arrow is obtained by $\mathbb{G}_{m}\simeq J_{G,\cam^{n}}(\widehat{O}_{x})/J_{G,\cam}(\widehat{O}_{x})\subseteq\textrm{Gr}_{J_{G,\cam},x}$. 
\end{lemma}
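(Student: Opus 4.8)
The plan is to reduce the statement to a purely local computation at the point $x$ and then pass to torsors. Since $G$ is simply connected we have $J_{G,\cam}=\pi_{*}(T\times\cam)^{W}$ and likewise $J_{G,\cam^{n}}=\pi^{n}_{*}(T\times\cam^{n})^{W}$, so a section of either group scheme over an open set is a $W$-equivariant map to $T$ from the corresponding cameral cover. Away from $x$ the curve $\cam$ is smooth, hence $\cam=\cam^{n}$ there and $J_{G,\cam}=J_{G,\cam^{n}}$; the two group schemes differ only over $x$. The first and main step is therefore to establish a short exact sequence of sheaves of abelian groups on $X$
\begin{equation*}
0\rightarrow J_{G,\cam}\rightarrow J_{G,\cam^{n}}\xrightarrow{\ e\ } i_{x,*}\mathbb{G}_{m}\rightarrow 0,
\end{equation*}
where $i_{x}$ is the inclusion of $x$ and $e$ sends a local section $t$ to $\alpha(t(\widetilde{x}_{1}))\in\mathbb{G}_{m}$.

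First I would carry out this local computation. A section $t$ of $J_{G,\cam^{n}}$ near $x$ descends to a section of $J_{G,\cam}$ precisely when its two values at the points $\widetilde{x}_{1},\widetilde{x}_{2}$ lying over the node $\widetilde{x}$ agree, i.e. $t(\widetilde{x}_{1})=t(\widetilde{x}_{2})$. Since $s_{\alpha}(\widetilde{x}_{1})=\widetilde{x}_{2}$, the $W$-equivariance forces $t(\widetilde{x}_{2})=s_{\alpha}(t(\widetilde{x}_{1}))$, so the descent condition becomes $t(\widetilde{x}_{1})\in T^{s_{\alpha}}$. Because $G$ is simply connected the coroot $\check{\alpha}$ is primitive, hence injective as a cocharacter, and from $s_{\alpha}(t)=t\cdot\check{\alpha}(\alpha(t))^{-1}$ this gives $T^{s_{\alpha}}=\ker\alpha$; consequently the descent condition is exactly $\alpha(t(\widetilde{x}_{1}))=1$. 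Thus $e$ is well defined with kernel $J_{G,\cam}$, and it is surjective onto $\mathbb{G}_{m}$ because near $\widetilde{x}_{1}$ the cover $\cam^{n}\rightarrow X$ is unramified with $\widetilde{x}_{1}$ of trivial stabilizer, so $t(\widetilde{x}_{1})$ may be prescribed arbitrarily in $T$ (as in the local model $y^{2}=u^{2}$ for a single reflection). This yields the displayed sequence, the quotient being the skyscraper $T/\ker\alpha\simeq\mathbb{G}_{m}$ at $x$.

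Next I would pass to torsors on $X$. Applying the functor of torsors on $X$ to the sequence produces a fiber sequence of Picard stacks $\jtgcam\rightarrow\jtgcamn\xrightarrow{E}\mathcal{T}ors(i_{x,*}\mathbb{G}_{m})$. Since $H^{1}(X,i_{x,*}\mathbb{G}_{m})=H^{1}(\spec(k),\mathbb{G}_{m})=0$ while the automorphisms are $\mathbb{G}_{m}$, the target is $B\mathbb{G}_{m}$; hence $E$ is the same datum as a line bundle $\mathcal{L}$ on $\jtgcamn$ and $\jtgcam$ is the total space of the associated $\mathbb{G}_{m}$-torsor. To identify $\mathcal{L}$ I would trace $E$ through the construction: it sends a $J_{G,\cam^{n}}$-torsor $\mathcal{T}$ to the $\mathbb{G}_{m}$-torsor obtained by pushing $\mathcal{T}$ along $e$, which by the formula for $e$ is exactly the fiber at $\widetilde{x}_{1}$ of the associated $W$-equivariant $T$-torsor on $\cam^{n}$, pushed out along $\alpha$. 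Using the morphism $\jtgcamn\rightarrow\bunt(\cam^{n})$ of Proposition~\ref{prop of jtors} and the universal bundle $\mathcal{F}^{n}_{T}$, this says $\mathcal{L}$ is the pullback of $\alpha(\mathcal{F}^{n}_{T})\mid_{\widetilde{x}_{1}}$, which gives the first assertion.

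Finally, for the description of $\mathbb{G}_{m}\rightarrow\jtgcam$, I would identify the kernel of $\jtgcam\rightarrow\jtgcamn$ with the subgroup $\mathbb{G}_{m}\simeq J_{G,\cam^{n}}(\widehat{O}_{x})/J_{G,\cam}(\widehat{O}_{x})$ of $\affgras_{J_{G,\cam},x}$. The inclusion $J_{G,\cam^{n}}(\widehat{O}_{x})\subseteq J_{G,\cam}(\widehat{\mathcal{K}}_{x})$ coming from the generic isomorphism $J_{G,\cam}\simeq J_{G,\cam^{n}}$ over $\widehat{\mathcal{K}}_{x}$ embeds this quotient into $\affgras_{J_{G,\cam},x}$, and by Beauville--Laszlo gluing the induced $J_{G,\cam}$-torsor on $X$ becomes trivial after extension of structure group to $J_{G,\cam^{n}}$, hence lands in the fiber over the identity of $\jtgcamn$; matching the two copies of $\mathbb{G}_{m}$ is then the compatibility between the connecting map of the sequence and the gluing description of the affine Grassmannian. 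The main obstacle is the local computation in the first two steps: one must handle the $W$-equivariance at the node correctly and use simple-connectedness in an essential way to see that the quotient is the full $\mathbb{G}_{m}$ (through $\alpha$) rather than a finite extension of it, and to pin down that it is $\alpha$, not $\check{\alpha}$ or a proper power, that appears.
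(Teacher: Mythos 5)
Your argument is correct in substance for simply connected $G$, and it is precisely the alternative route that the paper itself flags in the remark immediately following the lemma: show directly that the quotient $J_{G,\cam^{n}}/J_{G,\cam}$ is the skyscraper $i_{x*}(\mathbb{G}_{m})$, with quotient map $t\mapsto\alpha(t(\widetilde{x}_{1}))$, and then pass to torsors. The paper's actual proof stays instead at the level of $W$-equivariant $T$-bundles: the equivariance structure at the node produces a canonical reduction $\mathcal{F}_{K_{\alpha}}$ of $\mathcal{F}^{n}_{T}\mid_{\widetilde{x}_{1}}\otimes(\mathcal{F}^{n}_{T})^{-1}\mid_{\widetilde{x}_{2}}$ to $K_{\alpha}=\ker(T\rightarrow T_{\alpha})$, and a point of this torsor is converted into gluing data for a $W$-equivariant $T$-torsor on $\cam$ itself, which is then promoted to a $J_{G,\cam}$-torsor via Lemmas $3.1.2$ and $3.1.3$ of \cite{Geometric Langlands}. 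Your route is cleaner for the torsor statement, and it has the merit of actually proving the final claim about $\mathbb{G}_{m}\simeq J_{G,\cam^{n}}(\widehat{O}_{x})/J_{G,\cam}(\widehat{O}_{x})\subseteq\affgras_{J_{G,\cam},x}$ via the connecting homomorphism and Beauville--Laszlo gluing, a verification the paper leaves to the reader. (One small point you should spell out: there is one node $w(\widetilde{x})$ for each class $w\in W/\langle s_{\alpha}\rangle$, and the matching conditions at all of them collapse by $W$-equivariance to the single condition at $\widetilde{x}$, so the quotient is one copy of $\mathbb{G}_{m}$ and not a product.)

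The genuine gap is your blanket assumption that $G$ is simply connected, which you use twice in an essential way: to write $J_{G,\cam}=\pi_{*}(T\times\cam)^{W}$, and to deduce from primitivity of $\check{\alpha}$ that $T^{s_{\alpha}}=\ker\alpha$, so that the descent condition at the node is exactly $\alpha(t(\widetilde{x}_{1}))=1$. But the lemma is later invoked for $G$ \emph{adjoint} (in Lemma~\ref{pullback from normalization} and in Lemma~\ref{a key lemma on etaleness}), and the paper's proof correspondingly treats the case of imprimitive $\check{\alpha}$ separately. When $\check{\alpha}=2\lambda$ with $\lambda$ primitive, one only has $T^{s_{\alpha}}=\{t:\alpha(t)^{2}=1\}$, and $J_{G,\cam}$ is the strictly smaller subsheaf of $J^{1}_{G,\cam}=\pi_{*}(T\times\cam)^{W}$ cut out by the condition $\alpha(t)=1$ along the non-reduced ramification divisor $D^{\alpha}$. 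Applied verbatim to $J^{1}$, your computation gives a quotient $\mathbb{G}_{m}$ through $t\mapsto\alpha(t(\widetilde{x}_{1}))$ modulo $\mu_{2}$, and the resulting line bundle is $\alpha(\mathcal{F}^{n}_{T})^{\otimes 2}\mid_{\widetilde{x}_{1}}$ rather than $\alpha(\mathcal{F}^{n}_{T})\mid_{\widetilde{x}_{1}}$; it is exactly the divisorial condition defining $J_{G}\subsetneq J^{1}_{G}$ (the extra trivialization data of Lemma $3.1.3$ of \cite{Geometric Langlands} in the paper's proof) that supplies the square root asserted in the lemma. So to serve the paper's applications you would need to redo the node-local computation for $J_{G}$ in the imprimitive case; with that addition your argument gives a complete and somewhat more economical proof.
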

\begin{proof}
To begin with, let us recall that $\jtgcamn$ parameterizes $W$-equivariant $T$-bundles on $\cam^{n}$ together with some additional structures at ramification points. For details, see Subsection $3.1$ of \cite{Geometric Langlands}. Let $T_{\alpha}$ be $T/(s_{\alpha}-1)$, the torus of coinvariants. One has a $W$-equivariant $T$-bundle on $\cam^{n}\times\jtgcamn$. The $W$-equivariant structure implies that we have an isomorphism of $T$-bundles on $\jtgcamn$: 
$$T\times^{T,s_{\alpha}}(s^{-1}_{\alpha})^{*}(\mathcal{F}^{n}_{T})\mid_{\widetilde{x}_{1}}\simeq\mathcal{F}^{n}_{T}\mid_{\widetilde{x}_{1}} .$$
Take the associated $T_{\alpha}$ bundles on both sides and use the fact that 
$$(s^{-1}_{\alpha})^{*}(\mathcal{F}^{n}_{T})\mid_{\widetilde{x}_{1}}\simeq \mathcal{F}^{n}_{T}\mid_{\widetilde{x}_{2}} ,$$ 
one concludes that there exists an isomorphism of $T_{\alpha}$-bundles:
$$T_{\alpha}\times^{T}\mathcal{F}^{n}_{T}\mid_{\widetilde{x}_{2}}\simeq T_{\alpha}\times^{T}\mathcal{F}^{n}_{T}\mid_{\widetilde{x}_{1}} .$$
So one has a trivialization of the $T_{\alpha}$ bundle $T_{\alpha}\times^{T}(\mathcal{F}^{n}_{T}\mid_{\widetilde{x}_{1}}\otimes(\mathcal{F}^{n}_{T})^{-1}\mid_{\widetilde{x}_{2}})$. Let $K_{\alpha}$ be the kernel of $T\rightarrow T_{\alpha}$. Then the discussions above implies that the $T$-bundle $\mathcal{F}^{n}_{T}\mid_{\widetilde{x}_{1}}\otimes(\mathcal{F}^{n}_{T})^{-1}\mid_{\widetilde{x}_{2}}$ on $\jtgcamn$ has a canonical reduction to a $K_{\alpha}$. Denote the $K_{\alpha}$ bundle by $\mathcal{F}_{K_{\alpha}}$. By definition, a point in $\mathcal{F}_{K_{\alpha}}$ corresponds to a point in $\jtgcamn$ together with a trivialization of $\mathcal{F}_{K_{\alpha}}$. By our discussions above, a trivialization of $\mathcal{F}_{K_{\alpha}}$ gives a trivialization of $\mathcal{F}^{n}_{T}\mid_{\widetilde{x}_{1}}\otimes(\mathcal{F}^{n}_{T})^{-1}\mid_{\widetilde{x}_{2}}$, so we get a gluing data between $\mathcal{F}^{n}_{T}\mid_{\widetilde{x}_{1}}$ and $\mathcal{F}^{n}_{T}\mid_{\widetilde{x}_{2}}$. Using the $W$-equivariance one concludes that we actually get gluing datas between $\mathcal{F}^{n}_{T}\mid_{w(\widetilde{x}_{1})}$ and $\mathcal{F}^{n}_{T}\mid_{w(\widetilde{x}_{2})}$ for all $w\in W$ and these are compatible with the $W$-equivariance structure. Since $\cam$ has nodal singularities, we see that these gluing data determines a $W$-equivariant $T$-torsor $\mathcal{G}_{T}$ on $\cam$. Moreover, one checks that the $W$-equivariance structure induces the identity automorphism on $T_{\alpha}\times^{T}\mathcal{G}_{T}\mid_{\widetilde{x}}$. And since $\cam^{n}$ is isomorphic to $\cam$ at points away from the preimage of $x\in X$, the $W$-equivariant $T$-torsor $\mathcal{G}_{T}$ is actually determines a $J_{G,\cam}$ torsor away from the preimage of $x$.

Assume first that $\check{\alpha}$ is primitive. Then we can conclude by Lemma $3.1.2$ of \cite{Geometric Langlands} that we actually get a $J_{G,\cam}$ torsor. So this defines a morphism $\mathcal{F}_{K_{\alpha}}\rightarrow\jtgcam$ and it is not hard to see the procedure above actually gives an isomorphism between them. We also claim that in this case $\mathcal{F}_{K_{\alpha}}$ is isomorphic to the pullback of the line bundle $\alpha(\mathcal{F}^{n}_{T})\mid_{\widetilde{x}_{1}}$ on $\bunt(\cam^{n})$. Indeed, since the coroot $\check{\alpha}$ is primitive, $\check{\alpha}$ induces an isomorphism: 
$$\mathbb{G}_{m}\xrightarrow{\check{\alpha}}K_{\alpha}\subseteq T .$$ 
We shall now identify $K_{\alpha}$ with $\mathbb{G}_{m}$ via $\check{\alpha}$. By the definition of $\mathcal{F}_{K_{\alpha}}$, we have 
$$\check{\alpha}_{*}(\mathcal{F}_{K_{\alpha}})\simeq \mathcal{F}^{n}_{T}\mid_{\widetilde{x}_{1}}\otimes(\mathcal{F}^{n}_{T})^{-1}\mid_{\widetilde{x}_{2}} .$$
We claim that 
$$\mathcal{F}^{n}_{T}\mid_{\widetilde{x}_{1}}\otimes(\mathcal{F}^{n}_{T})^{-1}\mid_{\widetilde{x}_{2}}\simeq\check{\alpha}(\alpha(\mathcal{F}_{T}))\mid_{\widetilde{x}_{1}} .$$
This will prove that $\mathcal{F}_{K_{\alpha}}\simeq \alpha(\mathcal{F}^{n}_{T})\mid_{\widetilde{x}_{1}}$ since $\check{\alpha}$ is primitive. It is easy to see that we have:
$$T\times^{T,s_{\alpha}}(s^{-1}_{\alpha})^{*}(\mathcal{F}^{n}_{T})\simeq \check{\alpha}(\alpha(s^{-1 *}_{\alpha}(\mathcal{F}^{n}_{T})))\otimes s^{-1*}_{\alpha}(\mathcal{F}^{n}_{T}) .$$
So the isomorphism:
$$T\times^{T,s_{\alpha}}(s^{-1}_{\alpha})^{*}(\mathcal{F}^{n}_{T})\mid_{\widetilde{x}_{2}}\simeq\mathcal{F}^{n}_{T}\mid_{\widetilde{x}_{2}}$$
implies that 
$$\mathcal{F}^{n}_{T}\mid_{\widetilde{x}_{1}}\otimes(\mathcal{F}^{n}_{T})^{-1}\mid_{\widetilde{x}_{2}}\simeq\check{\alpha}(\alpha(\mathcal{F}^{n}_{T}))\mid_{\widetilde{x}_{1}} .$$

Next we shall deal with the case when $\check{\alpha}$ is not primitive. In this case we claim that we still have a natural isomorphism $\jtgcam\simeq\alpha(\mathcal{F}^{n}_{T})\mid_{\widetilde{x}_{1}}$. Indeed, since $\check{\alpha}$ is not primitive, one concludes that $\alpha$ induces an isomorphism between $K_{\alpha}$ and $\mathbb{G}_{m}$. Since 
$$T\times^{K_{\alpha}}\mathcal{F}_{K_{\alpha}}\simeq\mathcal{F}^{n}_{T}\mid_{\widetilde{x}_{1}}\otimes(\mathcal{F}^{n}_{T})^{-1}\mid_{\widetilde{x}_{2}} , $$
we conclude that 
$$\mathcal{F}_{K_{\alpha}}\simeq \alpha(\mathcal{F}^{n}_{T})\mid_{\widetilde{x}_{1}}\otimes\alpha(\mathcal{F}^{n}_{T})^{-1}\mid_{\widetilde{x}_{2}}\simeq\alpha(\mathcal{F}^{n}_{T})^{\otimes 2}\mid_{\widetilde{x}_{1}}.$$
where the last isomorphism comes from $s_{\alpha}$ equivariance. So if we have a point in $\alpha(\mathcal{F}^{n}_{T})\mid_{\widetilde{x}_{1}}$, we get a trivialization of $\alpha(\mathcal{F}^{n}_{T})\mid_{\widetilde{x}_{1}}$. This induces a trivialization of $\mathcal{F}_{K_{\alpha}}$, so by our previous discussions this gives rise to a $W$-equivariant $T$-torsor $\mathcal{G}_{T}$ such that the $W$-equivariance structure induces the identity automorphism on $T_{\alpha}\times^{T}\mathcal{G}_{T}\mid_{\widetilde{x}}$. And that $\mathcal{G}_{T}$ corresponds to a $J_{G,\cam}$ torsor away from $x$. To lift this to a $J_{G,\cam}$ torsor near $x$, we we need a trivialization of $\alpha(\mathcal{G}_{T})\mid_{\widetilde{x}}$ that is compatible with the trivialization of $\check{\alpha}(\alpha(\mathcal{G}_{T}))\mid_{\widetilde{x}}$, see Lemma $3.1.3$ of \cite{Geometric Langlands}. 
This is provided by the trivialization of $\alpha(\mathcal{F}^{n}_{T})\mid_{\widetilde{x}_{1}}$. 

The last claim follows from an easy calculation of the cokernel of the group scheme $J_{G,\cam}\rightarrow J_{G,\cam^{n}}$, which is left the reader.
\end{proof}

\begin{remark}
It is possible to prove the Lemma~\ref{structure of jtors} by showing that the quotient $J_{G,\cam^{n}}/J_{G,\cam}$ is isomorphic to $i_{x *}(\mathbb{G}_{m})$ directly.
\end{remark}

To proceed further, one needs to analyze the structure of $\higc(\cam)$. To do so we need to study the local situation. That is, one needs to analyze the category of Higgs bundles on the formal disk at $x\in X$. We first recall the following well-known lemma:
\begin{lemma}\label{reduce to levi}
Let $L$ be a Levi subgroup of $G$. Denote the Chevalley base of $G$ by $\mathfrak{c}_{G}$ and the Chevalley base of $L$ by $\mathfrak{c}_{L}$. Let $\mathfrak{c}^{\circ}_{L}$ be the open subscheme of $\mathfrak{c}_{L}$ such that the natural morphism $\mathfrak{c}_{L}\rightarrow\mathfrak{c}_{G}$ is etale over $\mathfrak{c}^{\circ}_{L}$. Denote the lie algebra of $L$ by $\mathfrak{l}$ and lie algebra of $G$ by $\mathfrak{g}$. Then we have a Cartesian diagram:
$$\xymatrix{
(\mathfrak{l}/L)^{\circ} \ar[r] \ar[d] & \mathfrak{c}^{\circ}_{L} \ar[d] \\
\mathfrak{g}/G \ar[r] & \mathfrak{c}_{G}
}
$$
where $(\mathfrak{l}/L)^{\circ}$ is the preimage of $\mathfrak{c}^{\circ}_{L}$ in $\mathfrak{l}/L$. 
\end{lemma}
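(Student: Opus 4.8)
The plan is to reduce the assertion about quotient stacks to a statement about $G$-schemes and then prove the latter by exhibiting a natural morphism that is étale and bijective. Write $\mathfrak{c}_G=\mathfrak{g}/\!/G$, $\mathfrak{c}_L=\mathfrak{l}/\!/L$, and let $c_G\colon\mathfrak{g}\to\mathfrak{c}_G$, $c_L\colon\mathfrak{l}\to\mathfrak{c}_L$ be the Chevalley maps. Let $\mathfrak{l}^{\circ}$ be the preimage of $\mathfrak{c}^{\circ}_L$ in $\mathfrak{l}$ and set $\mathfrak{g}^{\circ}_L:=\mathfrak{g}\times_{\mathfrak{c}_G}\mathfrak{c}^{\circ}_L$. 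Using the identification $[(G\times^{L}V)/G]\simeq [V/L]$ for an $L$-scheme $V$, the square in the statement is Cartesian precisely when the natural $G$-equivariant morphism
\[ a\colon G\times^{L}\mathfrak{l}^{\circ}\longrightarrow \mathfrak{g}^{\circ}_L,\qquad [g,y]\mapsto \bigl(\mathrm{Ad}_g(y),\,c_L(y)\bigr) \]
is an isomorphism of schemes. Since $\mathfrak{g}^{\circ}_L\to\mathfrak{g}$ is étale (base change of the étale map $\mathfrak{c}^{\circ}_L\to\mathfrak{c}_G$) and both source and target are smooth, it suffices to prove that $a$ is étale and bijective on geometric points.

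For étaleness I would compute the differential, reducing by $G$-equivariance to a point $[1,y]$ with $y\in\mathfrak{l}^{\circ}$. Using the decomposition $\mathfrak{g}=\mathfrak{l}\oplus\mathfrak{u}^{+}\oplus\mathfrak{u}^{-}$ with $\mathfrak{u}^{\pm}=\bigoplus_{\alpha\in R^{\pm}\setminus R_L}\mathfrak{g}_{\alpha}$ (each summand $\mathrm{ad}_{\mathfrak{l}}$-stable), the differential of the composite $G\times^{L}\mathfrak{l}^{\circ}\to\mathfrak{g}^{\circ}_L\to\mathfrak{g}$ is the map $(z,w)\mapsto [z,y]+w$ on $(\mathfrak{g}\oplus\mathfrak{l})/\mathfrak{l}$, a map of vector spaces of equal dimension $\dim\mathfrak{g}$. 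Its bijectivity is equivalent to $\mathrm{ad}_y$ being invertible on $\mathfrak{u}^{+}$ and on $\mathfrak{u}^{-}$. Because the eigenvalues of $\mathrm{ad}_{y}$ on $\mathfrak{g}_{\alpha}$ agree with those of its semisimple part, equal to $\alpha(t)$ once $t=y_s$ is conjugated into $\mathfrak{t}$, and the defining condition of $\mathfrak{c}^{\circ}_L$ forces $\alpha(t)\neq0$ for every $\alpha\in R_G\setminus R_L$, this invertibility holds and $a$ is étale.

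For bijectivity on points, surjectivity is the parabolic-descent statement: given $(x,\bar c)$, lift $\bar c$ to $t\in\mathfrak{t}$, conjugate $x$ so that $x_s=t$, and note $Z_{\mathfrak{g}}(t)=\mathfrak{t}\oplus\bigoplus_{\alpha(t)=0}\mathfrak{g}_{\alpha}\subseteq\mathfrak{l}$ (as $\alpha(t)=0$ forces $\alpha\in R_L$), so $x_n\in\mathfrak{l}$ as well and the conjugate of $x$ lands in $\mathfrak{l}^{\circ}$ with $L$-characteristic $\bar c$. Injectivity is where I expect the real subtlety: two $G$-conjugate elements of $\mathfrak{l}^{\circ}$ need not be $L$-conjugate in general, but fixing the lift $\bar c$ forces their semisimple parts to lie in a common $W_L$-orbit, hence to be $L$-conjugate, which separates the Weyl sheets of $\mathfrak{c}_L\to\mathfrak{c}_G$. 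Reducing to semisimple parts then shows the conjugating element lies in $Z_G(t)$, and the key group-theoretic input is that $Z_G(t)$ is connected and generated by $T$ together with the root groups $U_{\alpha}$ with $\alpha(t)=0$, all of which lie in $L$; thus the conjugator lies in $L$. Finally, an étale, radicial (here, injective on geometric points in characteristic zero) and surjective morphism is an isomorphism, which completes the reduction.

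The hard part is exactly this injectivity/separation-of-sheets step: keeping careful track of the fixed lift $\bar c\in\mathfrak{c}^{\circ}_L$ so that $G$-conjugacy is upgraded to $L$-conjugacy, and isolating $Z_G(t)\subseteq L$ as the precise reason the multivaluedness of $\mathfrak{c}_L\to\mathfrak{c}_G$ produces no extra points in the fibers of $a$. Everything else is a dimension count and the standard structure theory of centralizers of semisimple elements.
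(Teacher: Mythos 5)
The paper gives no proof of this lemma at all: it is invoked as ``well-known,'' and the only accompanying content is the remark immediately after it, recording that a semisimple $x\in\mathfrak{l}$ lies over $\mathfrak{c}^{\circ}_{L}$ if and only if $Z_{G}(x)\subseteq L$. So there is no in-paper argument to compare yours against; judged on its own terms, your proof is correct and complete. The reduction is sound: since $[\mathfrak{l}^{\circ}/L]\simeq[(G\times^{L}\mathfrak{l}^{\circ})/G]$ and the stack fiber product is $[(\mathfrak{g}\times_{\mathfrak{c}_{G}}\mathfrak{c}^{\circ}_{L})/G]$, Cartesianness is equivalent to your $G$-map $a$ being an isomorphism of schemes. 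The differential computation is right: $\mathrm{ad}_{y}$ preserves $\mathfrak{u}^{\pm}$, its eigenvalues there coincide with those of $\mathrm{ad}_{y_{s}}$, namely $\alpha(t)$ for $\alpha\in R\setminus R_{L}$, and these are nonzero over $\mathfrak{c}^{\circ}_{L}$ --- though you should make explicit that translating the \'etale-locus condition on $\mathfrak{t}/W_{L}\rightarrow\mathfrak{t}/W_{G}$ into the root condition $\alpha(t)\neq 0$ for $\alpha\in R\setminus R_{L}$ uses Steinberg's theorem (the stabilizer $W_{G,t}$ is generated by the reflections $s_{\alpha}$ with $\alpha(t)=0$, so \'etaleness at the image of $t$ amounts to $W_{G,t}=W_{L,t}$, i.e.\ to all such $\alpha$ lying in $R_{L}$). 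You correctly identify injectivity as the crux, and your argument there works: fixing $\bar{c}$ pins the semisimple parts to one $W_{L}$-orbit, hence one $L$-orbit, and after adjusting by $L$ the conjugator lies in $Z_{G}(t)$; in characteristic zero $Z_{G}(t)$ is connected (again Steinberg, applied to $W_{t}$ --- note this is special to Lie-algebra semisimple elements; for group elements in non-simply-connected $G$ it can fail) and is generated by $T$ and the $U_{\alpha}$ with $\alpha(t)=0$, all of which lie in $L$ because $t$ lies over $\mathfrak{c}^{\circ}_{L}$. This last input, $Z_{G}(t)\subseteq L$, is exactly what the paper's remark isolates, so your proof in effect explains why that remark is the whole content of the lemma. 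The concluding step (\'etale, universally injective and surjective implies isomorphism) is standard. No gaps.
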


\begin{remark}
The open set $\mathfrak{c}^{\circ}_{L}$ can be described as follows: Let $x$ be a semisimple element in $\mathfrak{l}$. Then the conjugacy class of $x$ lies in $\mathfrak{c}^{\circ}_{L}$ iff $Z_{G}(x)\subseteq L$. 
\end{remark}

Here is a useful corollary:
\begin{corollary}\label{local tool}
Let $x\in X$ be a closed point and $\widehat{O}_{x}$ be the formal completion at $x$. Let $\cam_{G}$ be a cameral cover of $\spec(\widehat{O}_{x})$ for $G$ and $\cam_{L}$ be a cameral cover of $\spec(\widehat{O}_{x})$ for $L$. Assuming that there exists an isomorphism of $W_{G}$-covers: $\cam_{G}\simeq W_{G}\times^{W_{L}}\cam_{L}$ where $W_{G}$ and $W_{L}$ denotes the Weyl group of $G$ and $L$, respectively. Then the category of $G$-Higgs bundles on $\spec(\widehat{O}_{x})$ with cameral cover $\cam_{G}$ is equivalent to the category of $L$-Higgs bundles on $\spec(\widehat{O}_{x})$ with cameral cover $\cam_{L}$. 
\end{corollary}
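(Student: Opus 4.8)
The plan is to reinterpret both categories of Higgs bundles as mapping groupoids into adjoint quotient stacks and then to invoke the Cartesian square of Lemma~\ref{reduce to levi}. First I would observe that over the formal disk $\spec(\widehat{O}_{x})$ the line bundle $L$ is trivializable, so that a $G$-Higgs bundle on $\spec(\widehat{O}_{x})$ is the same datum as a morphism $\spec(\widehat{O}_{x})\rightarrow\mathfrak{g}/G$; requiring its cameral cover to be $\cam_{G}$ amounts to fixing the composite $\spec(\widehat{O}_{x})\rightarrow\mathfrak{g}/G\rightarrow\mathfrak{c}_{G}$ to be the classifying map $\sigma_{G}$ of $\cam_{G}$. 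The identical reformulation applies to $L$-Higgs bundles via $\mathfrak{l}/L\rightarrow\mathfrak{c}_{L}$ and the classifying map $\sigma_{L}$ of $\cam_{L}$.

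The second step is to translate the hypothesis $\cam_{G}\simeq W_{G}\times^{W_{L}}\cam_{L}$ into the statement that $\sigma_{G}$ factors through the open subscheme $\mathfrak{c}^{\circ}_{L}\hookrightarrow\mathfrak{c}_{G}$, with factoring map $\sigma_{L}$ taking values in $\mathfrak{c}^{\circ}_{L}$. Here I would use that whenever $\sigma_{L}$ lands in $\mathfrak{c}^{\circ}_{L}$ one has a canonical identification $\spec(\widehat{O}_{x})\times_{\mathfrak{c}_{G}}\mathfrak{t}\simeq W_{G}\times^{W_{L}}\cam_{L}$: indeed $\mathfrak{c}^{\circ}_{L}\times_{\mathfrak{c}_{G}}\mathfrak{t}\simeq W_{G}\times^{W_{L}}(\mathfrak{t}|_{\mathfrak{c}^{\circ}_{L}})$, because over $\mathfrak{c}^{\circ}_{L}$ the stabilizer in $W_{G}$ of a lift to $\mathfrak{t}$ already lies in $W_{L}$, by the description of $\mathfrak{c}^{\circ}_{L}$ given in the remark following Lemma~\ref{reduce to levi}. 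Conversely, the mere existence of the asserted isomorphism of $W_{G}$-covers forces $\sigma_{L}$ into $\mathfrak{c}^{\circ}_{L}$, since a $G$-cover induced from $W_{L}$ to $W_{G}$ in this free manner is incompatible with $\sigma_{L}$ meeting the non-\'etale locus of $\mathfrak{c}_{L}\rightarrow\mathfrak{c}_{G}$; this pins down the factorization.

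Finally, with the factorization $\sigma_{G}=(\mathfrak{c}^{\circ}_{L}\hookrightarrow\mathfrak{c}_{G})\circ\sigma_{L}$ in hand, I would apply the Cartesian diagram $(\mathfrak{l}/L)^{\circ}\simeq(\mathfrak{g}/G)\times_{\mathfrak{c}_{G}}\mathfrak{c}^{\circ}_{L}$ of Lemma~\ref{reduce to levi}. By the $2$-categorical universal property of the fiber product of stacks, giving a morphism $\spec(\widehat{O}_{x})\rightarrow\mathfrak{g}/G$ whose composite to $\mathfrak{c}_{G}$ equals $\sigma_{G}$ (equivalently, factors as $\sigma_{L}$ through $\mathfrak{c}^{\circ}_{L}$) is the same as giving a morphism $\spec(\widehat{O}_{x})\rightarrow(\mathfrak{l}/L)^{\circ}$ whose composite to $\mathfrak{c}^{\circ}_{L}$ equals $\sigma_{L}$; the latter is precisely an $L$-Higgs bundle with cameral cover $\cam_{L}$, since such a Higgs bundle automatically lands in $(\mathfrak{l}/L)^{\circ}$. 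As this correspondence is induced by a morphism of stacks it upgrades to an equivalence of groupoids, yielding the asserted equivalence of categories. The main obstacle is the second step: one must carefully match the combinatorial isomorphism $\cam_{G}\simeq W_{G}\times^{W_{L}}\cam_{L}$ with the scheme-theoretic factorization through the \'etale locus $\mathfrak{c}^{\circ}_{L}$, after which the equivalence is a formal consequence of the Cartesian square.
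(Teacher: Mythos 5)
Your proposal is correct and follows essentially the same route as the paper: interpret Higgs bundles with fixed cameral cover as factorizations of the classifying section through $\mathfrak{g}/G$ (resp.\ $\mathfrak{l}/L$) and conclude from the Cartesian square of Lemma~\ref{reduce to levi} once the section is known to land in $\mathfrak{c}^{\circ}_{L}$. Your second step merely spells out, via the stabilizer description of $\mathfrak{c}^{\circ}_{L}$, the factorization claim that the paper asserts in one line (``the assumptions on the cameral covers implies that there exists a morphism $s$\dots''), so the two arguments coincide in substance.
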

\begin{proof}
The assumptions on the cameral covers implies that there exists a morphism $\spec(\widehat{O}_{x})\xrightarrow{s}\mathfrak{c}^{\circ}_{L}$ such that $\cam_{L}$ corresponds to $s$ and $\cam_{G}$ corresponds to the composition $\spec(\widehat{O}_{x})\xrightarrow{s}\mathfrak{c}^{\circ}_{L}\rightarrow\mathfrak{c}_{G}$. A $L$-Higgs bundle with cameral cover $\cam_{L}$ is equivalent to a factorization:
$$\xymatrix{
 & \spec(\widehat{O}_{x}) \ar@{-->}[ld] \ar[d]_{s}\\
\mathfrak{l}/L \ar[r] & \mathfrak{c}_{L} .
}
$$
While $G$-Higgs bundles with cameral cover $\cam_{G}$ is equivalent to a factorization:
$$\xymatrix{
 &  \spec(\widehat{O}_{x}) \ar@{-->}[ldd] \ar[d]_{s}\\
 & \mathfrak{c}_{L} \ar[d]\\
\mathfrak{g}/G \ar[r] & \mathfrak{c}_{G} .
}
$$
Since the image of $s$ lies in $\mathfrak{c}^{\circ}_{L}$, the previous lemma implies the claim.
\end{proof}

From Lemma~\ref{structure of jtors} we can derive the following:
\begin{lemma}\label{normalization}
Let $G$ be simply connected. There exists a surjective birational morphism:
$$\mathbf{P}^{1}\times^{\mathbb{G}_{m}}\mjtgcam\rightarrow\mhig(\cam) .$$
Moreover, this realizes $\mathbf{P}^{1}\times^{\mathbb{G}_{m}}\mjtgcam$ as the normalization of $\mhig(\cam)$.
\end{lemma}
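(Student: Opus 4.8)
The plan is to exhibit the claimed map $f\colon \mathbf{P}^{1}\times^{\mathbb{G}_{m}}\mjtgcam\to\mhig(\cam)$ as a finite birational morphism out of a smooth (hence normal) source, so that the universal property of normalization applies. First I would record the geometry of the source. Since $\cam^{n}$ is a smooth cameral cover, its moduli space $\mjtgcamn$ is smooth and projective (an abelian variety, cf.\ Lemma~\ref{more geometry}). By Lemma~\ref{structure of jtors}, $\mjtgcam$ is a $\mathbb{G}_{m}$-torsor over $\mjtgcamn$; consequently $\mathbf{P}^{1}\times^{\mathbb{G}_{m}}\mjtgcam$ is the associated $\mathbf{P}^{1}$-bundle over $\mjtgcamn$, hence smooth and projective, in particular normal. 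Writing $\Sigma_{0},\Sigma_{\infty}\cong\mjtgcamn$ for the two sections cut out by the $\mathbb{G}_{m}$-fixed points $0,\infty\in\mathbf{P}^{1}$, the open complement $(\mathbf{P}^{1}\times^{\mathbb{G}_{m}}\mjtgcam)\setminus(\Sigma_{0}\cup\Sigma_{\infty})$ is canonically identified with $\mjtgcam$.

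Next, the morphism on the dense open part comes for free: via the chosen Kostant section $\mjtgcam\simeq\mhig^{reg}(\cam)$ by Proposition~\ref{geo of hig}(1), and since $\cam$ is integral this is open and dense in $\mhig(\cam)$ by Proposition~\ref{geo of hig}(2). This gives an open immersion $\mjtgcam\hookrightarrow\mhig(\cam)$, the restriction of $f$ to the open part, and shows in particular that any extension of $f$ is birational. The real content is to extend $f$ across the two boundary sections. Here I would use the reduction to a rank-one Levi provided by Corollary~\ref{local tool}: on the formal disk at the node $x$ the cover $\cam$ is, up to the $W$-action, the nodal cameral cover of a Levi $L$ with Weyl group $\{1,s_{\alpha}\}$, so local $G$-Higgs bundles with cover $\cam$ are the same as $L$-Higgs bundles with a nodal rank-one cover. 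In this rank-one model the Hitchin fiber is classical: the gluing parameter $t\in\mathbb{G}_{m}$ of Lemma~\ref{structure of jtors} (the datum that glues $\mathcal{F}^{n}_{T}$ across $\widetilde{x}_{1},\widetilde{x}_{2}$ to descend to $X$) degenerates at $t=0$ and $t=\infty$ to the two non-regular Higgs fields at $x$, and each limit is a genuine Higgs bundle with cover $\cam$. Globalizing, I would spread this out to a family of Higgs bundles over the $\mathbf{P}^{1}$-bundle restricting to the tautological regular family on $\mjtgcam$; this family induces the extension $f$, which then carries $\Sigma_{0}$ and $\Sigma_{\infty}$ into the non-regular locus $\mhig(\cam)\setminus\mhig^{reg}(\cam)$.

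Finally I would check finiteness and conclude. Properness is automatic, the source being projective and $\mhig(\cam)$ separated, and surjectivity follows since the image is closed and contains the dense open $\mhig^{reg}(\cam)$. For quasi-finiteness, $f$ is an isomorphism over $\mhig^{reg}(\cam)$, while over the boundary the local analysis shows that a non-regular Higgs bundle corresponds to a degenerate gluing at the node which can be recovered in exactly two ways (the $t=0$ and $t=\infty$ limits), so each boundary point has at most two preimages, one in $\Sigma_{0}$ and one in $\Sigma_{\infty}$; hence all fibers of $f$ are finite. A proper quasi-finite morphism is finite, so $f$ is finite, and being finite and birational with normal source it realizes $\mathbf{P}^{1}\times^{\mathbb{G}_{m}}\mjtgcam$ as the normalization of $\mhig(\cam)$.

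The genuinely technical step is the extension across the boundary in the second paragraph: one must show that the two degenerations of the node's gluing $\mathbb{G}_{m}$-torsor really produce well-defined non-regular Higgs bundles with cover $\cam$, and that the resulting $f$ is quasi-finite with precisely the expected two-to-one identification of $\Sigma_{0}$ and $\Sigma_{\infty}$ over the boundary. This is where the rank-one reduction of Corollary~\ref{local tool} and the explicit $SL_{2}/PGL_{2}$ local model do the work, and it is also the input needed for the pushout description in Theorem~\ref{main theorem of app B}.
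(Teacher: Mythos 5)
Your proposal is correct and follows essentially the same route as the paper: reduction to the rank-one Levi at the node via Corollary~\ref{local tool}, identification of the local model with the $\textrm{SL}_{2}$ affine Springer fiber of $\gamma=\textrm{diag}(t,-t)$ (an infinite chain of $\mathbf{P}^{1}$'s whose regular locus is $\mathbb{G}_{m}\subseteq\mathbf{P}^{1}$), and twisting by the $\jtgcam$-action to obtain a global map that is an isomorphism over $\mhig^{reg}(\cam)$ and surjective by properness of the $\mathbf{P}^{1}$-bundle. The one step you leave as an assertion --- ``globalizing, I would spread this out to a family over the $\mathbf{P}^{1}$-bundle'' --- is precisely where the paper invokes Ng\^o's product formula (Proposition $4.15.1$ of \cite{Fundamental lemma}) together with the computation that the neutral component of the reduced part of $\textrm{Gr}_{J_{G,\cam},x}$ is $\mathbb{G}_{m}$: it is this pair of facts, not an ad hoc spreading-out, that makes the twisted product $\mathbf{P}^{1}\times^{\mathbb{G}_{m}}\mjtgcam$ map to $\mhig(\cam)$ compatibly with the torsor action, so you should cite it rather than construct the family by hand. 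On the other hand your final paragraph is more explicit than the paper at this point: the paper's proof of this lemma stops at ``surjective birational from a projective normal source,'' and the quasi-finiteness needed to conclude finiteness of the normalization map (two preimages over each irregular point, one in each of $\Sigma_{0},\Sigma_{\infty}$, which follows from Lemma~\ref{description of irregular higgs bundles} making the irregular locus a torsor over $\jtgcamn(k)$) is only spelled out later, in Lemma~\ref{construct pushout}; supplying it here, as you do, is a legitimate tightening rather than a deviation.
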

\begin{proof}
First we claim that there exists a morphism $\mathbf{P}^{1}\rightarrow \hig(\cam)$. Indeed, by our assumptions on $\cam$, if we choose a point $\widetilde{x}\in D_{\alpha}$ above $x\in X$, we see that over the formal disk at $x$, the cameral cover $\cam$ is induced from a cameral cover for $L$ where $L$ is the minimal Levi subgroup of $G$ corresponds to the root $\alpha$. Hence Corollary~\ref{local tool} applies. Since $G$ is simply connected, $\check{\alpha}$ is primitive, so $L$ is isomorphic to either $\textrm{GL}_{2}\times T_{0}$ or $\textrm{SL}_{2}\times T_{0}$ where $T_{0}$ is a torus, see Section $3$ of \cite{Higgs Faltings}. In either case, our assumptions about the cameral cover implies that the cameral cover for $L$ over $\spec(\widehat{O}_{x})$ is isomorphic to 
$$\spec (\widehat{O}_{x}[T]/(T^{2}-t^{2}))$$
where $t$ is the uniformizer for $\widehat{O}_{x}$. Now consider the matrix 
\begin{equation*}
\gamma=
\left( \begin{array}{cc}
t & 0 \\
0 & -t \\
\end{array} \right)
\end{equation*}
View it as an element in $\textrm{SL}_{2}$. Denote the affine springer fiber of $\gamma$ by $\textrm{Spr}(\gamma)$. It is well -known that $\textrm{Spr}(\gamma)$ parameterizes $\textrm{SL}_{2}$ Higgs bundles on $\spec(\widehat{O}_{x})$ with cameral cover $\spec (\widehat{O}_{x}[T]/(T^{2}-t^{2}))$ together with a trivialization over $\spec(\mathcal{K}_{x})$. It is well-known that affine springer fibers of $\gamma$ is an infinite chain of $\mathbf{P}^{1}$'s and regular Higgs bundles corresponds to $\mathbb{G}_{m}\subseteq\mathbf{P}^{1}$. Consider $\textrm{SL}_{2}$ as a subgroup of $L$ and fix a copy of $\mathbf{P}^{1}$, we conclude from Corollary~\ref{local tool} that we have a morphism $\mathbf{P}^{1}\rightarrow \mhig(\cam)$. Moreover, one checks that in our case, the neutral component of the reduced part of $\textrm{Gr}_{J_{G,\cam}}$ is isomorphic to $\mathbb{G}_{m}$. So from the product formula (See Proposition $4.15.1$ of \cite{Fundamental lemma}) we get the desired morphism. When we restrict to the open part $\mathbb{G}_{m}\times^{\mathbb{G}_{m}}\mjtgcam$ this is an isomorphism. Moreover, by Lemma~\ref{structure of jtors}, we conclude that $\mathbf{P}^{1}\times^{\mathbb{G}_{m}}\mjtgcam$ is a $\mathbf{P}^{1}$ fibration over $\mjtgcamn$, hence it is projective. So $\mathbf{P}^{1}\times^{\mathbb{G}_{m}}\mjtgcam\rightarrow\mhig(\cam)$ is surjective. This implies the claim.
\end{proof}

\begin{corollary}\label{description of projective bundle}
Let $\mathcal{Q}$ be the line bundle on $\mjtgcamn$ corresponds to the $\mathbb{G}_{m}$ torsor $\mjtgcam$ on $\mjtgcamn$, see Lemma~\ref{structure of jtors}. Then the morphism $\mathbf{P}^{1}\times^{\mathbb{G}_{m}}\mjtgcam\rightarrow\mjtgcamn$ identifies $\mathbf{P}^{1}\times^{\mathbb{G}_{m}}\mjtgcam$ with the projective bundle $\textrm{Proj}(O\oplus\mathcal{Q})$ over $\mjtgcamn$.
\end{corollary}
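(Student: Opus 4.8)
The plan is to realize both sides as $\mathbf{P}^1$-bundles over $\mjtgcamn$ and to match them by linearizing the $\mathbb{G}_m$-action on $\mathbf{P}^1$. By Lemma~\ref{structure of jtors} the morphism $\mjtgcam\rightarrow\mjtgcamn$ is a $\mathbb{G}_m$-torsor, and by definition $\mathcal{Q}$ is the line bundle on $\mjtgcamn$ associated to it; write $P$ for this torsor. The left-hand side $\mathbf{P}^1\times^{\mathbb{G}_m}\mjtgcam$ is then the fiber bundle associated to $P$ with fiber $\mathbf{P}^1$, where $\mathbb{G}_m$ acts on $\mathbf{P}^1$ by extending multiplication on the open orbit $\mathbb{G}_m\subseteq\mathbf{P}^1$, so that the two constant sections $0$ and $\infty$ are precisely the fixed loci. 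The first step is to note that this action, with fixed points $0=[1:0]$ and $\infty=[0:1]$, is the projectivization of the linear $\mathbb{G}_m$-action on $\mathbb{A}^2=V_{0}\oplus V_{1}$, where $V_{0}$ carries the trivial character and $V_{1}$ the standard weight-one character; thus $\mathbf{P}^1=\mathbf{P}(V_{0}\oplus V_{1})$ as a $\mathbb{G}_m$-space.

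Next I would invoke the compatibility of the associated-bundle construction with projectivization: for any $\mathbb{G}_m$-torsor $P$ over a base and any finite-dimensional $\mathbb{G}_m$-representation $E$, there is a canonical isomorphism $\mathbf{P}(E)\times^{\mathbb{G}_m}P\simeq\mathbf{P}(E\times^{\mathbb{G}_m}P)$ of projective bundles over the base. Applying this with $E=V_{0}\oplus V_{1}$ gives
$$\mathbf{P}^1\times^{\mathbb{G}_m}\mjtgcam\simeq\mathbf{P}\big((V_{0}\times^{\mathbb{G}_m}P)\oplus(V_{1}\times^{\mathbb{G}_m}P)\big).$$
Since $V_{0}$ is the trivial representation, $V_{0}\times^{\mathbb{G}_m}P\simeq O_{\mjtgcamn}$, and since $V_{1}$ is the weight-one representation, $V_{1}\times^{\mathbb{G}_m}P$ is the line bundle attached to $P$ by the weight-one character, which is $\mathcal{Q}$ up to replacing it by $\mathcal{Q}^{-1}$ depending on the sign convention for associating a line bundle to a $\mathbb{G}_m$-torsor.

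Finally I would absorb this sign ambiguity: tensoring a rank-two bundle by a line bundle does not change its projectivization, so $\mathbf{P}(O\oplus\mathcal{Q})\simeq\mathbf{P}((O\oplus\mathcal{Q})\otimes\mathcal{Q}^{-1})\simeq\mathbf{P}(O\oplus\mathcal{Q}^{-1})$, and either convention yields the same bundle $\textrm{Proj}(O\oplus\mathcal{Q})$ over $\mjtgcamn$. This matches the structure maps to $\mjtgcamn$ on both sides, proving the claim. There is no deep obstacle here; the only point requiring care is fixing the weight convention so that the two fixed sections $0$ and $\infty$ of the $\mathbf{P}^1$-bundle correspond respectively to the summands $O$ and $\mathcal{Q}$, which is exactly the information recorded by the linearization $\mathbf{P}^1=\mathbf{P}(V_{0}\oplus V_{1})$ together with the compatibility with Lemma~\ref{structure of jtors}.
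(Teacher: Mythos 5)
Your proof is correct and is essentially the argument the paper has in mind: the paper's own proof consists of the single line ``this follows directly from Lemma~\ref{structure of jtors},'' and what you have written is precisely the standard unwinding of that assertion (linearizing the $\mathbb{G}_{m}$-action as $\mathbf{P}^{1}=\mathbf{P}(V_{0}\oplus V_{1})$, using compatibility of the associated-bundle construction with projectivization, and absorbing the torsor-to-line-bundle sign convention via $\mathbf{P}(O\oplus\mathcal{Q})\simeq\mathbf{P}(O\oplus\mathcal{Q}^{-1})$). Your treatment of the weight convention and the identification of the fixed sections $0$ and $\infty$ with the summands $O$ and $\mathcal{Q}$ is a useful detail the paper leaves implicit, and nothing is missing.
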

\begin{proof}
This follows directly from Lemma~\ref{structure of jtors}.
\end{proof}

We also have the following description about the complement of $\hig^{reg}(\cam)$ in $\hig(\cam)$:
\begin{lemma}\label{description of irregular higgs bundles}
The category of irregular Higgs bundles with cameral cover $\cam$ over $k$ form a single orbit under the action of the Picard stack $\jtgcam(k)$. Moreover, it is a torsor over $\jtgcamn(k)$.
\end{lemma}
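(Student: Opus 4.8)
The plan is to analyze the irregular locus $\mathcal{I}:=\hig(\cam)\setminus\hig^{reg}(\cam)$ by combining the normalization picture of Lemma~\ref{normalization} with the local structure at $x$. First I would record that the twisting action of $\jtgcam$ on $\hig(\cam)$ preserves regularity of the Higgs field, so it restricts to an action on $\mathcal{I}$; this is the action in the statement. Since $\cam$ is smooth away from $x$, every Higgs bundle with cameral cover $\cam$ is automatically regular away from $x$ (over $X^{un}$ the Higgs field is regular semisimple, and a smooth cameral cover forces regularity, cf. part $(3)$ of Proposition~\ref{geo of hig}). Hence a Higgs bundle is irregular precisely when its Higgs field fails to be regular at $x$, and the whole problem becomes local at $x$.

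Next I would exploit the local model. By Corollary~\ref{local tool} the cameral cover near $x$ is induced from the minimal Levi $L$ attached to the root $\alpha$, so the local moduli of Higgs bundles is governed by the affine Springer fiber appearing in the proof of Lemma~\ref{normalization}: a single $\mathbf{P}^{1}$ on which the regular locus is $\mathbb{G}_{m}$ and the two fixed points $0,\infty$ are the irregular Higgs bundles. By Lemma~\ref{structure of jtors} this $\mathbb{G}_{m}$ is exactly the kernel of $\jtgcam\rightarrow\jtgcamn$, embedded via $J_{G,\cam^{n}}(\widehat{O}_{x})/J_{G,\cam}(\widehat{O}_{x})\subseteq\textrm{Gr}_{J_{G,\cam},x}$, and it acts on $\mathbf{P}^{1}$ by scaling. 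Consequently $\mathbb{G}_{m}$ acts freely and transitively on the regular locus (consistent with $\hig^{reg}(\cam)$ being a $\jtgcam$-torsor), while it fixes each irregular Higgs bundle, contributing an extra $\mathbb{G}_{m}$ to its automorphism group. In particular $\mathbb{G}_{m}=\ker(\jtgcam\rightarrow\jtgcamn)$ lies in the stabiliser of every irregular point.

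Finally I would establish transitivity and the torsor property. By Lemma~\ref{normalization} and Corollary~\ref{description of projective bundle} the normalization of $\mhig(\cam)$ is the $\mathbf{P}^{1}$-bundle $\textrm{Proj}(O\oplus\mathcal{Q})$ over $\mjtgcamn$, an isomorphism onto $\hig^{reg}(\cam)$ over the $\mathbb{G}_{m}$-locus, and the irregular locus is the image of the two sections $0$ and $\infty$. Because the fibers of $\mhig(\cam)\rightarrow\mjtgcamn$ are nodal $\mathbf{P}^{1}$'s whose nodes are the common image of $0$ and $\infty$, these two sections map to one and the same copy of $\mjtgcamn$ inside $\mhig(\cam)$; thus the irregular locus is a single copy of $\mjtgcamn$. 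The residual action of $\jtgcamn=\jtgcam/\mathbb{G}_{m}$ is then the translation action of the Picard stack $\jtgcamn$ on its own moduli space, which is free and transitive (with automorphism groups $H^{0}(X,J_{G,\cam^{n}})\simeq Z(G)$). Hence $\jtgcam$ acts transitively with stabiliser exactly $\mathbb{G}_{m}$, proving that $\mathcal{I}$ is a single $\jtgcam$-orbit and a $\jtgcamn$-torsor. The hard part will be the gluing step: showing that the $0$- and $\infty$-sections are genuinely identified in $\hig(\cam)$ (equivalently, that the Hitchin fiber is nodal rather than a smooth $\mathbf{P}^{1}$-bundle), together with the local-to-global bookkeeping, via the product formula, that pins the stabiliser down to exactly the $\mathbb{G}_{m}$ of Lemma~\ref{structure of jtors} and nothing more.
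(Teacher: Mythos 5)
There is a genuine gap, and it is essentially one of circularity. Your argument derives the lemma from the global geometry of $\mhig(\cam)$: you invoke the claim that the fibers of $\mhig(\cam)\rightarrow\mjtgcamn$ are nodal $\mathbf{P}^{1}$'s, so that the $0$- and $\infty$-sections of $\mathbf{P}^{1}\times^{\mathbb{G}_{m}}\mjtgcam$ are identified to a single copy of $\mjtgcamn$ on which $\jtgcamn$ translates simply transitively. But in the paper this identification is a \emph{consequence} of the present lemma, not an input: Lemma~\ref{construct pushout} uses Lemma~\ref{description of irregular higgs bundles} to produce the torsor $\mathcal{T}$ with $\lambda_{0}\otimes\mathcal{T}\simeq\lambda_{\infty}$ and hence the relation $i_{\infty}(-)=i_{0}(-\otimes\mathcal{T}^{n})$, and only then is the pushout description (Theorem~\ref{main theorem of app B}, via the formal-completion analysis in Lemma~\ref{local deformation}) established. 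What Lemma~\ref{normalization} alone gives you is that the irregular locus is the image of the two sections; it does not tell you that each section maps injectively, that the two images coincide, or that they are glued by a translation. You flag exactly this as ``the hard part'' at the end, but deferring it means the proposal proves nothing beyond what Lemma~\ref{normalization} already says: the core content of the lemma is precisely that gluing statement.

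The paper's proof is local and sheaf-theoretic, and avoids the global geometry entirely: given two irregular Higgs bundles $\lambda_{1},\lambda_{2}$, one shows the sheaf $\textrm{Iso}(\lambda_{1},\lambda_{2})$ is a $J_{G,\cam^{n}}$-torsor. Away from $x$ everything is regular, so this is the standard torsor statement; over $\spec(\widehat{O}_{x})$ one reduces via Corollary~\ref{local tool} to $\textrm{GL}_{2}$ or $\textrm{SL}_{2}$, where irregular Higgs bundles correspond to torsion-free rank-one sheaves on the nodal local cameral cover that are not line bundles --- and these are all isomorphic (in the $\textrm{SL}_{2}$ case one matches determinant trivializations using surjectivity of $\widetilde{O}^{n\times}\xrightarrow{\de}\widehat{O}^{\times}_{x}$, since the normalized cover is unramified). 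The local and punctured-disk isomorphisms are then glued by killing the obstruction class in $J_{G,\cam^{n}}(U-x)\setminus J_{G,\cam^{n}}(\mathcal{K}_{x})/J_{G,\cam^{n}}(\widehat{O}_{x})$ after shrinking $U$, using Lemma~\ref{zariski local trivial}. Note also that the lemma is a statement about the \emph{category} of irregular Higgs bundles as a torsor over the Picard groupoid $\jtgcamn(k)$; even if your moduli-space picture were available, it would only give a bijection on isomorphism classes, whereas the sheaf-of-isomorphisms argument simultaneously controls automorphisms, which is what the torsor claim requires. To repair your proposal you would have to prove the local statement at $x$ anyway --- at which point you have reproduced the paper's proof and the global detour through the normalization becomes unnecessary.
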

\begin{proof}
Let $\lambda_{1}$ and $\lambda_{2}$ be two irregular Higgs bundles on $X$ with cameral cover $\cam$. We will prove they are locally isomorphic and that the sheaf of isomorphisms $\textrm{Iso}(\lambda_{1},\lambda_{2})$ is a $J_{G,\cam^{n}}$ torsor. This will imply the claim of the lemma. 

Since $\cam$ is smooth away from the preimage of $x$, we see that all Higgs bundles with cameral cover $\cam$ are regular over $X-x$, hence they form a torsor for the Picard stack $\jtgcam(X-x)\simeq\jtgcamn(X-x)$. So the claim is true over $X-x$. Now we pick an open set $U$ containing $x$. Using Lemma~\ref{zariski local trivial}, one may shrink $U$ so that $\lambda_{1}$ and $\lambda_{2}$ are isomorphic over $U-x$. To show they are isomorphic over $U$ we look at the formal disk $\spec(\widehat{O}_{x})$. We claim that all irregular Higgs bundles with cameral cover $\cam$ over $\spec(\widehat{O}_{x})$ are isomorphic and that the set of isomorphisms between $\lambda_{1}$ and $\lambda_{2}$ over $\spec(\widehat{O}_{x})$ is a $J_{G,\cam^{n}}(\widehat{O}_{x})$ torsor. Assuming this for the moment, we will finish the proof. Indeed, $\lambda_{1}$ and $\lambda_{2}$ are isomorphic on $U-x$ as well as on $\spec(\widehat{O}_{x})$, the obstruction for them to be isomorphic is an element in $\xi\in J_{G,\cam^{n}}(U-x)\setminus J_{G,\cam^{n}}(\mathcal{K}_{x})/J_{G,\cam^{n}}(\widehat{O}_{x})$. Now we can shrink $U$ so that it becomes zero. Hence $\lambda_{1}$ and $\lambda_{2}$ are isomorphic over $U$ and it is not hard to check that the set of isomorphisms between them is a $J_{G,\cam^{n}}(U)$ torsor.

Now we analyze the local situation. Let us recall that if we have a cameral cover $\cam\xrightarrow{\pi} X$ for $\textrm{GL}_{2}$, then the category of $\textrm{GL}_{2}$ Higgs bundles with cameral cover $\cam$ is equivalent to the category of torsion free rank on sheaves on $\cam$. Similarly, if $\cam\xrightarrow{\pi} X$ is a cameral cover for $\textrm{SL}_{2}$, then the category of $\textrm{SL}_{2}$ Higgs bundles with cameral cover $\cam$ is equivalent to the pair $(F,s)$ where $F$ is a torsion free rank on sheaf on $\cam$ and $s$ is an isomorphism $\de(\pi_{*}(F))\simeq O_{X}$. In the local situation, using Corollary~\ref{local tool} as we did in the proof of Lemma~\ref{normalization}, we reduce the question to $\textrm{SL}_{2}$ or $\textrm{GL}_{2}$ case. Denote the cameral cover and its normalization by $\spec(\widetilde{O}_{x})$ and $\spec(\widetilde{O}^{n})$. Since $\spec(\widetilde{O}_{x})\simeq \spec (\widehat{O}_{x}[T]/(T^{2}-t^{2}))$ has nodal singularity, it is well known that every torsion-free rank one sheaves on it that are not line bundles are isomorphic. So in the $\textrm{GL}_{2}$ case we conclude that irregular Higgs bundles over $\spec(\widehat{O}_{x})$ are all isomorphic. In the $\textrm{SL}_{2}$ case one observe that if $F$ is a torsion free rank one sheaf that is not a line bundle, then $F\simeq \widetilde{O}^{n}$. Since $\spec(\widetilde{O}^{n})$ is an unramified cameral cover over $\spec(\widehat{O}_{x})$, the determinant map $\widetilde{O}^{n \times}\xrightarrow{\de}\widehat{O}^{\times}_{x}$ is surjective. Hence this shows that in the $\textrm{SL}_{2}$ case all irregular Higgs bundles are isomorphic. The claim that the set of isomorphisms between $\lambda_{1}$ and $\lambda_{2}$ over $\spec(\widehat{O}_{x})$ is a $J_{G,\cam^{n}}(\widehat{O}_{x})$ torsor also follows from this. 

\end{proof}

Combining these together, we can show:
\begin{lemma}\label{construct pushout}
There exists a pushout diagram (In our case the pushout exists in the category of schemes exists, see $36.59$ of \cite{Stacks}):
$$\xymatrix{
\mjtgcamn\amalg\mjtgcamn \ar[d] \ar[r] & \mjtgcamn \ar[d]\\
\mathbf{P}^{1}\times^{\mathbb{G}_{m}}\mjtgcam \ar[r] & \mathcal{X} .
}
$$
Moreover, there exists a natural morphism $\mathcal{X}\rightarrow\mhig(\cam)$. It is a finite surjective morphism which induces a bijection on closed points.
\end{lemma}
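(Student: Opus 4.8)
The plan is to exhibit $\mathcal{X}$ as an explicit nodal gluing of the normalization of $\mhig(\cam)$. By Corollary~\ref{description of projective bundle}, $N:=\mathbf{P}^{1}\times^{\mathbb{G}_{m}}\mjtgcam$ is the projective bundle $\textrm{Proj}(O\oplus\mathcal{Q})$ over $\mjtgcamn$, so the two $\mathbb{G}_{m}$-fixed points $0,\infty\in\mathbf{P}^{1}$ give two disjoint sections, each isomorphic to $\mjtgcamn$; together they form a closed immersion $\mjtgcamn\amalg\mjtgcamn\hookrightarrow N$, which I take as the left vertical arrow. For the top arrow I take the finite surjective morphism $\mjtgcamn\amalg\mjtgcamn\to\mjtgcamn$ equal to the identity on the first copy and to the translation $\tau$ by the class $Nm(\check{\alpha}_{*}(O(\widetilde{x}_{1})))\in\mjtgcamn$ on the second. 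Since the left arrow is a closed immersion and the top arrow is finite, $36.59$ of \cite{Stacks} guarantees that the pushout $\mathcal{X}$ exists in the category of schemes; the construction commutes with passage to affine opens $\spec(A)\subseteq\mjtgcamn$, over which $\mathcal{X}$ is the spectrum of the fiber product of the corresponding finite $A$-algebras.

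To produce the morphism $\mathcal{X}\to\mhig(\cam)$ I would invoke the universal property of the pushout. The datum on $N$ is the normalization morphism $\nu$ of Lemma~\ref{normalization}, and the datum on $\mjtgcamn$ is the closed immersion onto the locus of irregular Higgs bundles: by Lemma~\ref{description of irregular higgs bundles} this locus is a torsor under $\jtgcamn$, hence, after trivializing by a base point, is identified with $\mjtgcamn$. What must be checked is that the two induced maps $\mjtgcamn\amalg\mjtgcamn\to\mhig(\cam)$ coincide, i.e.\ that $\nu$ carries the $0$-section to the irregular locus through the chosen identification and the $\infty$-section through its $\tau$-translate.

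The heart of the matter, and the main obstacle, is exactly this compatibility. I would establish it by the local analysis at $x$ already used in Lemma~\ref{normalization}: over $\spec(\widehat{O}_{x})$ the relevant piece of the affine Springer fiber is an infinite chain of $\mathbf{P}^{1}$'s, regular Higgs bundles being the $\mathbb{G}_{m}$ inside one $\mathbf{P}^{1}$ and its two endpoints $0,\infty$ being irregular bundles that are identified in the global moduli space. These endpoints are interchanged by the generator of the $\mathbb{Z}$-action on the chain, and computing this generator---it is the element of $\affgras_{J_{G,\cam},x}$ whose image in $\affgras_{J_{G,\cam^{n}},x}$ is $Nm(\check{\alpha}_{*}(O(\widetilde{x}_{1})))$---shows that moving from the $0$-section to the $\infty$-section is exactly translation by $\tau$. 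This is precisely what makes the square commute and what forces the top arrow to be $(\textrm{id},\tau)$.

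It remains to verify the stated properties of $f:\mathcal{X}\to\mhig(\cam)$. Surjectivity is inherited from Lemma~\ref{normalization}, since $N\to\mathcal{X}$ is surjective and $\nu=f\circ(N\to\mathcal{X})$. For the bijection on closed points I would stratify: over the regular locus $\mjtgcam\subseteq N$ the map $\nu$ is an isomorphism and the pushout leaves it untouched, while on the glued copy $\mjtgcamn$ the morphism is, by construction, the chosen bijection onto the irregular locus of $\mhig(\cam)$. Finally $\mathcal{X}$ is proper (as $N\to\mathcal{X}$ is finite surjective and $N$ is projective over $\mjtgcamn$), and $f$ is quasi-finite by the bijection on points, so $f$ is proper and quasi-finite, hence finite. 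This completes the argument.
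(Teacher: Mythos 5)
Your proposal is correct and matches the paper's own proof in all essentials: the same pushout along the $0$- and $\infty$-sections of $\mathbf{P}^{1}\times^{\mathbb{G}_{m}}\mjtgcam$ with existence supplied by the Stacks project result, the same use of the universal property together with the morphism from Lemma~\ref{normalization} to produce $\mathcal{X}\rightarrow\mhig(\cam)$, and the same deduction of finiteness from properness of $\mathcal{X}$ plus the bijection on closed points. The only divergence is that you pin down the gluing translation explicitly as $Nm(\check{\alpha}_{*}(O(\widetilde{x}_{1})))$ via the $\mathbb{Z}$-action on the affine Springer fiber---a computation the paper defers to Corollary~\ref{the gluing map}---whereas the paper's proof of this lemma gets by with the weaker fact from Lemma~\ref{description of irregular higgs bundles} that $\lambda_{0}$ and $\lambda_{\infty}$ differ by tensoring with some torsor $\mathcal{T}$, gluing by $\textrm{id}\amalg(\otimes\,\mathcal{T}^{n})$; your sharper identification is correct but not needed at this stage.
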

\begin{proof}
In Lemma~\ref{normalization} we already constructed the morphism: 
$$\mathbf{P}^{1}\times^{\mathbb{G}_{m}}\mjtgcam\rightarrow \mhig(\cam) .$$ 
It comes from a morphism $\mathbf{P}^{1}\rightarrow \mhig(\cam)$ by realizing $\mathbf{P}^{1}$ as a component of the affine springer fiber of a Levi subgroup. As we already mentioned in the proof of Lemma~\ref{normalization}, the open set $\mathbb{G}_{m}\subseteq\mathbf{P}^{1}$ corresponds to regular Higgs bundles and the two points $0$ and $\infty$ correspond to irregular Higgs bundles. Denote the image of $0$ and $\infty$ in $\mhig(\cam)$ by $\lambda_{0}$ and $\lambda_{\infty}$. By Lemma~\ref{description of irregular higgs bundles} we see that there exists a $J_{G,\cam}$ torsor $\mathcal{T}$ such that $\lambda_{0}\otimes\mathcal{T}\simeq\lambda_{\infty}$. Now we have two morphisms $i_{0}$ and $i_{\infty}$ from $\mjtgcamn$ to $\mhig(\cam)$ induced by:
\begin{gather}
\mjtgcamn\simeq 0\times^{\mathbb{G}_{m}}\mjtgcam\rightarrow \mhig(\cam) \notag \\
\mjtgcamn\simeq\infty\times^{\mathbb{G}_{m}}\mjtgcam\rightarrow \mhig(\cam) \notag .
\end{gather}
Moreover, if $\mathcal{T}^{n}$ is the induced $J_{G,\cam^{n}}$ torsor from $\mathcal{T}$, we see from Lemma~\ref{description of irregular higgs bundles} that $i_{\infty}(-)=i_{0}(-\otimes\mathcal{T}^{n})$. Now let us identify the irregular Higgs bundles with $\mjtgcamn$ via Lemma~\ref{description of irregular higgs bundles} using the $\jtgcam$ action on $\lambda_{0}$. Then by our discussion above, the following diagram commutes:
\[\xymatrixcolsep{3pc}\xymatrix{
\mjtgcamn\amalg\mjtgcamn \ar[r]^-{\textrm{id}\amalg (\otimes \mathcal{T}^{n})} \ar[d]_{0\amalg\infty} & \mjtgcamn \ar[d]_{i_{0}}\\
\mathbf{P}^{1}\times^{\mathbb{G}_{m}}\mjtgcam \ar[r] & \mhig(\cam) .
}
\]
Hence if we denote the pushout by $\mathcal{X}$, then we get $\mathcal{X}\rightarrow\mhig(\cam)$. And by our discussions above, it is easy to see that it induces an isomorphism on closed points. By Lemma $36.59.4$ of \cite{Stacks} and the fact that $\mathbf{P}^{1}\times^{\mathbb{G}_{m}}\mjtgcam\rightarrow\mathcal{X}$ is surjective, we see that $\mathcal{X}$ is proper. Hence $\mathcal{X}\rightarrow\mhig(\cam)$ is a proper surjective morphism which induces an isomorphism on closed points, hence it is finite. 

\end{proof}

\begin{corollary}\label{the gluing map}
The gluing morphism between $\mjtgcamn$ in the pushout diagram in Lemma~\ref{construct pushout} can be described as follows: Let us fix a point $\widetilde{x}\in\cam$ in the preimage of $x\in X$ that lies in the ramification divisor $D_{\alpha}$. Let $\widetilde{x}_{1}$ be a point in $\cam^{n}$ above $\widetilde{x}$. Then the gluing morphism is induced by translation by $Nm(\check{\alpha}_{*}O(\widetilde{x}_{1}))$. Here $Nm$ is the morphism $\cam^{n}\times X_{*}(T)\rightarrow\bunt(\cam^{n})\rightarrow\jtgcamn$ defined in Section $3$ (Notice that since $\cam^{n}$ is a smooth cameral cover over $X$, $\cam^{n}\times X_{*}(T)\rightarrow\bunt(\cam^{n})$ is defined over the entire $\cam^{n}$). 
\end{corollary}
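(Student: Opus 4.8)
The plan is to reduce the computation of the gluing datum to a purely local problem at $x$ and then pin down the relevant element of the local affine Springer fiber. Recall from the proof of Lemma~\ref{construct pushout} that the gluing morphism between the two copies of $\mjtgcamn$ is translation by the $J_{G,\cam^{n}}$-torsor $\mathcal{T}^{n}$ induced from the $J_{G,\cam}$-torsor $\mathcal{T}$ satisfying $\lambda_{0}\otimes\mathcal{T}\simeq\lambda_{\infty}$, where $\lambda_{0},\lambda_{\infty}$ are the images in $\mhig(\cam)$ of the points $0,\infty$ of the fixed $\mathbf{P}^{1}$. First I would observe that $\lambda_{0}$ and $\lambda_{\infty}$ lie on one and the same component $\mathbf{P}^{1}$ of the affine Springer fiber constructed in the proof of Lemma~\ref{normalization}, so they coincide over $X-x$; by Lemma~\ref{description of irregular higgs bundles} the torsor $\mathcal{T}$ is then trivial away from $x$ and is determined by a single element $\eta\in\textrm{Gr}_{J_{G,\cam},x}$ of the local affine Grassmannian. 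Thus $\mathcal{T}^{n}$ is the image of $\eta$ under $\textrm{Gr}_{J_{G,\cam},x}\to\textrm{Gr}_{J_{G,\cam^{n}},x}$, globalized through the uniformization of $\jtgcamn$.

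Next I would identify $\eta$ from the local structure at $x$. By Corollary~\ref{local tool} the cameral cover near $x$ is induced from the minimal Levi $L$ attached to $\alpha$, which (after reducing to $\textrm{SL}_{2}$ or $\textrm{GL}_{2}$ exactly as in the proof of Lemma~\ref{normalization}) brings everything to the element $\gamma=\textrm{diag}(t,-t)$ with cameral cover $\spec(\widehat{O}_{x}[T]/(T^{2}-t^{2}))$. The associated affine Springer fiber is the standard infinite chain of $\mathbf{P}^{1}$'s of type $\widetilde{A}_{1}$, carrying a $\mathbb{Z}$-action by translation along the chain whose quotient glues the two boundary nodes of a single $\mathbf{P}^{1}$. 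I would then check that the two points $0,\infty$ of our chosen $\mathbf{P}^{1}$ are interchanged precisely by the generator of this translation action, and that this generator corresponds to the coweight $\check{\alpha}$ based at $\widetilde{x}_{1}$. This shows $\eta$ is the image in $\textrm{Gr}_{J_{G,\cam},x}$ of the local modification $\check{\alpha}(O(\widetilde{x}_{1}))$ and that its image in $\textrm{Gr}_{J_{G,\cam^{n}},x}$ is the local norm $Nm(\check{\alpha}(O(\widetilde{x}_{1})))$.

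Finally I would globalize. By the product formula (Proposition $4.15.1$ of \cite{Fundamental lemma}) the moduli space $\mhig(\cam)$ is reassembled from the local affine Springer fibers and the global torsors, and the uniformization $\textrm{Gr}_{J_{G,\cam^{n}},x}\to\jtgcamn$ is compatible with the Abel--Jacobi construction of Section $3$. Under this compatibility the global $J_{G,\cam^{n}}$-torsor attached to the local element $Nm(\check{\alpha}(O(\widetilde{x}_{1})))$ is exactly the image $Nm(\check{\alpha}_{*}(O(\widetilde{x}_{1})))$ of the divisor $\widetilde{x}_{1}$ under $\cam^{n}\times X_{*}(T)\to\bunt(\cam^{n})\to\jtgcamn$, so $\mathcal{T}^{n}\simeq Nm(\check{\alpha}_{*}(O(\widetilde{x}_{1})))$ and the gluing is translation by this element. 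Although only $\widetilde{x}_{1}$ appears, the Weyl average built into $Nm$ combines the $w$- and $ws_{\alpha}$-contributions and thereby encodes the expected difference $\widetilde{x}_{1}-\widetilde{x}_{2}=\widetilde{x}_{1}-s_{\alpha}(\widetilde{x}_{1})$ of the two preimages of the node, matching the classical picture of the compactified Jacobian of a nodal curve.

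The main obstacle will be the second step: precisely identifying the lattice-translation element of the $\widetilde{A}_{1}$ affine Springer chain with $\check{\alpha}$ based at $\widetilde{x}_{1}$, and then matching the local norm at $x$ with the global Abel--Jacobi/norm map through the product formula. One must also carry along the distinction between the primitive and non-primitive coroot cases already handled in Lemma~\ref{structure of jtors}, since the identification of $\eta$ with $\check{\alpha}(O(\widetilde{x}_{1}))$ runs through the $\textrm{SL}_{2}$ versus $\textrm{GL}_{2}\times T_{0}$ dichotomy for the minimal Levi $L$.
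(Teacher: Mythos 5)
Your proposal is correct and takes essentially the same route as the paper's proof: reduction to the local $\textrm{SL}_{2}$ situation at $x$ via the minimal Levi and Corollary~\ref{local tool}, identification of the lattice translation $\left( \begin{smallmatrix} t & 0 \\ 0 & t^{-1} \end{smallmatrix} \right)$ in the affine Grassmannian of the regular centralizer as the element interchanging $0$ and $\infty$ on the chain of $\mathbf{P}^{1}$'s, and recognition of its image in $\textrm{Gr}_{J_{G,\cam^{n}_{x}},x}$ as $Nm(\check{\alpha}_{*}O(\widetilde{x}_{1}))$ (the line bundle $O(\widetilde{x}_{1}-\widetilde{x}_{2})$ with its generic trivialization), followed by globalization. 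Your closing worry about the non-primitive coroot case is vacuous in this setting, since Theorem~\ref{main theorem of app B} assumes $G$ simply connected, so $\check{\alpha}$ is primitive, exactly as used in the proof of Lemma~\ref{normalization}.
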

\begin{proof}
Recall from the proof of Lemma~\ref{normalization} that $\alpha$ determines a subgroup $\textrm{SL}_{2}\subseteq G$ and that the morphism $\mathbf{P}^{1}\rightarrow\hig(\cam)$ is obtained by identifying $\mathbf{P}^{1}$ as a component of the affine springer fiber of the matrix $\gamma$ for the group $\textrm{SL}_{2}$:
\begin{equation*}
\gamma=
\left( \begin{array}{cc}
t & 0 \\
0 & -t \\
\end{array} \right) .
\end{equation*}
The full affine springer fiber is a chain of $\mathbf{P}^{1}$'s equipped with an action of the group of integers $\mathbb{Z}$ where $1\in\mathbb{Z}$ acts on the lattices fixed by $\gamma$ via the matrix: 
\begin{equation*}
\left( \begin{array}{cc}
t & 0 \\
0 & t^{-1} \\
\end{array} \right) .
\end{equation*}
$0\in\mathbf{P}^{1}$ and $\infty\in\mathbf{P}^{1}$ are identified under this action. Notice that this $\mathbb{Z}$ action comes from the action of the affine grassmannian of the regular centralizer group scheme for $\textrm{SL}_{2}$. Moreover, if we denote the restriction of $\cam$ to $\spec(\widehat{O}_{x})$ by $\cam_{x}$, then from the proof of Lemma~\ref{normalization} and Corollary~\ref{local tool}, we see that $\cam_{x}$ is induced from a cameral cover $\widetilde{Y}_{x}$ for $\textrm{SL}_{2}$: $\widetilde{X}_{x}\simeq W\times^{S_{\alpha}}\widetilde{Y}_{x}$ where $S_{\alpha}$ stands for the subgroup generated by the reflection $s_{\alpha}$. We may assume $\widetilde{x}\in\widetilde{Y}_{x}$. This induces a morphism $\textrm{Gr}_{J_{\textrm{SL}_{2},\widetilde{Y}_{x},x}}\rightarrow\textrm{Gr}_{J_{G,\cam_{x},x}}$. Passing to normalizations we get $\textrm{Gr}_{J_{\textrm{SL}_{2},\widetilde{Y}^{n}_{x},x}}\rightarrow\textrm{Gr}_{J_{G,\cam^{n}_{x},x}}$.
One can view 
\begin{equation*}
\left( \begin{array}{cc}
t & 0 \\
0 & t^{-1} \\
\end{array} \right) 
\end{equation*}
as an element in $\textrm{Gr}_{J_{\textrm{SL}_{2},\widetilde{Y}_{x},x}}$. Hence we get the corresponding elements in 
$\textrm{Gr}_{J_{G,\cam_{x},x}}$ and $\textrm{Gr}_{J_{G,\cam^{n}_{x},x}}$.
It is not hard to check that the element 
\begin{equation*}
\left( \begin{array}{cc}
t & 0 \\
0 & t^{-1} \\
\end{array} \right) 
\end{equation*}
in $\textrm{Gr}_{J_{\textrm{SL}_{2},\widetilde{Y}^{n}_{x},x}}$ corresponds to the line bundle $O(\widetilde{x}_{1}-\widetilde{x}_{2})$ on $\widetilde{Y}^{n}_{x}$ with its natural trivialization at the generic point. So this is equal to $Nm_{\textrm{SL}_{2}}(\check{\alpha}_{*}O(\widetilde{x}_{1}))$ via the morphism:
$$\widetilde{Y}^{n}_{x}\times\mathbb{Z}\check{\alpha}\xrightarrow{Nm_{\textrm{SL}_{2}}}\textrm{Gr}_{J_{\textrm{SL}_{2},\widetilde{Y}^{n}_{x},x}} .$$
Unraveling the definitions, we see that its image under $$\textrm{Gr}_{J_{\textrm{SL}_{2},\widetilde{Y}^{n}_{x},x}}\rightarrow\textrm{Gr}_{J_{G,\cam^{n}_{x},x}}\rightarrow\jtgcam$$
is equal to $Nm(\check{\alpha}_{*}O(\widetilde{x}_{1}))$. 
\end{proof}

In order to show the morphism $\mathcal{X}\rightarrow\mhigc(\cam)$ in Lemma~\ref{construct pushout} is an isomorphism, one has to analyze the local structure of $\hig(\cam)$ near the points correspond to irregular Higgs bundles. This is provided by the following:

\begin{lemma}\label{local deformation}
Let $\cam$ be a cameral cover of $X$ satisfying the assumptions at the beginning of this section. Let $\cam_{x}$ be the induced cameral cover over the formal completion $\spec(\widehat{O}_{x})$ and let $\hig(\cam_{x})$ be the stack of Higgs bundles on $\spec(\widehat{O}_{x})$ with cameral cover $\cam_{x}$. Then :
\begin{enumerate1}
\item $\hig(\cam_{x})$ admits versal deformations.
\item The natural morphism $\hig(\cam)\rightarrow \hig(\cam_{x})$ is formally smooth.
\end{enumerate1}
\end{lemma}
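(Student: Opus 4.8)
The plan is to treat the two parts in turn, in both cases first applying the Levi reduction of Corollary~\ref{local tool}, exactly as in the proof of Lemma~\ref{normalization}. Over the formal disk $\spec(\widehat{O}_{x})$ the cameral cover $\cam_{x}$ is induced from a cameral cover for the minimal Levi $L$ attached to the root $\alpha$, and $L\simeq\textrm{GL}_{2}\times T_{0}$ or $\textrm{SL}_{2}\times T_{0}$. The $T_{0}$-factor contributes the moduli of $T_{0}$-bundles, whose deformation theory is unobstructed, so it may be discarded, and we are reduced to analysing $\textrm{SL}_{2}$- or $\textrm{GL}_{2}$-Higgs bundles with the nodal spectral curve $\spec(\widehat{O}_{x}[T]/(T^{2}-t^{2}))$.

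For part $(1)$, I would identify $\hig(\cam_{x})$ in this reduced situation with the stack of torsion-free rank-one sheaves on the nodal spectral curve (subject to a determinant condition in the $\textrm{SL}_{2}$ case), as recalled in the proof of Lemma~\ref{description of irregular higgs bundles}. Existence of versal deformations then follows from the standard deformation theory of such objects via Schlessinger's (or Artin's) criteria: the tangent and obstruction spaces are the hypercohomology groups of the two-term deformation complex $[\mathfrak{g}_{E_{G}}\xrightarrow{\textrm{ad}\,\phi}\mathfrak{g}_{E_{G}}\otimes L]$ on $\spec(\widehat{O}_{x})$, which are coherent over the base, so a versal family exists.

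For part $(2)$, I would verify the infinitesimal lifting criterion. Given a square-zero extension $A'\twoheadrightarrow A$ of Artinian local $k$-algebras, a Higgs bundle on $X$ with cameral cover $\cam$ over $\spec(A)$, and a lift of its restriction to $\spec(\widehat{O}_{x})$ over $\spec(A')$, I must produce a global lift inducing the given one. The key is to reconstruct a global Higgs bundle from its restriction to $X-x$, its restriction to the formal disk, and a gluing datum over the punctured disk $\spec(\widehat{\mathcal{K}}_{x})$ in the style of Beauville--Laszlo; the morphism $\hig(\cam)\rightarrow\hig(\cam_{x})$ forgets the first and third of these, so the relative obstruction to lifting lives in cohomology supported on $X-x$ and on the punctured disk.

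The crux, and the main obstacle, is the vanishing of this relative obstruction. Over $X-x$ the cameral cover is smooth, so by Proposition~\ref{geo of hig} every Higgs bundle with cameral cover $\cam$ is regular there, and the deformation complex is quasi-isomorphic to the single sheaf $\textrm{Lie}(J_{G,\cam})$; the deformation problem over $X-x$ thus reduces to that of $J_{G,\cam}$-torsors on the affine curve $X-x$, whose obstruction group $H^{1}(X-x,\textrm{Lie}(J_{G,\cam}))$, together with the corresponding group over $\spec(\widehat{\mathcal{K}}_{x})$, vanishes by affineness of $X-x$ and the Tsen-type argument underlying Lemma~\ref{Tsen}. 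Hence both the deformation of the restriction to $X-x$ and the deformation of the gluing datum lift freely, and they glue to a global lift compatible with the prescribed lift on the disk, establishing formal smoothness.
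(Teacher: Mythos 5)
Your proposal is correct in substance, and your part (2) is essentially the paper's own argument: the paper likewise interprets a Higgs bundle on $X\times\spec(A')$ with cameral cover $\cam$ as a triple consisting of its restriction to $(X-x)\times\spec(A')$, a Higgs bundle on $\spec(A'[[t]])$, and a gluing isomorphism over the punctured disk $\spec(A'((t)))$; it reduces deformations over $X-x$ to deformations of $J_{G,\cam}$-torsors using regularity of all Higgs bundles where $\cam$ is smooth, and concludes existence and uniqueness (up to isomorphism) of the lifts from affineness of $X-x$ and of the punctured disk, exactly as you do. For part (1) your route genuinely differs: the paper does not invoke the Levi reduction of Corollary~\ref{local tool} here, but directly verifies Schlessinger's conditions (S1), (S2) and finite-dimensionality of the tangent spaces for the deformation functor of $\hig(\cam_{x})$, citing the Stacks Project; your reduction to torsion-free rank-one sheaves on the nodal spectral curve (with a determinant condition for $\textrm{SL}_{2}$) is a legitimate alternative --- indeed it is how the paper itself argues later, in the proof of Theorem~\ref{main theorem of app B}, when computing completed local rings --- at the cost of an extra reduction step that buys you a concrete moduli description. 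Two small inaccuracies are worth fixing. First, since the cameral cover is held fixed, the tangent and obstruction spaces are \emph{not} the hypercohomology of the full complex $[\mathfrak{g}_{E_{G}}\xrightarrow{\textrm{ad}\,\phi}\mathfrak{g}_{E_{G}}\otimes L]$: that complex governs deformations with varying Hitchin invariants, and you must pass to the fiber of the Hitchin map over the fixed point of the base (equivalently, the relative deformation complex). Second, the vanishing you need over $X-x$ and over $\spec(A((t)))$ is coherent cohomology of $\textrm{Lie}(J_{G,\cam})$ on affine schemes, for which ordinary vanishing of higher coherent cohomology on affines suffices; Lemma~\ref{Tsen} is a statement about torsors under the group scheme itself (nonabelian $H^{1}$ and $H^{2}$), not about its Lie algebra, and is not what carries the square-zero lifting argument, though invoking it is harmless.
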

\begin{proof}
For part $(1)$, it is not hard to check that the deformation functor $\textrm{Def}_{\hig(\cam_{x})}$ attached to $\hig(\cam_{x})$ satisfies the Schlessinger's conditions $S1$ and $S2$, see Section $87.16$ of \cite{Stacks}. Moreover, its tangent spaces are also of finite dimension. From this one concludes that it admits versal deformations, see Lemma $87.13.4$ of \cite{Stacks}.

For part $(2)$, let $\spec(A)\hookrightarrow\spec(A')$ be a small extension. Assume we have a commutative diagram:
$$\xymatrix{
 & & \hig(\cam) \ar[d]\\
\spec(A) \ar[r] \ar[rru] & \spec(A') \ar[r] \ar@{.>}[ru] & \hig(\cam_{x})
}
$$
where $\spec(A)\rightarrow\hig(\cam)$ is given by a Higgs bundle $x_{A}$ on $X$ and $\spec(A')\rightarrow\hig(\cam_{x})$ is given by a Higgs bundle $x^{loc}_{A'}$ on $\spec(A'[[t]])$. We want to lift the $A'$ point of $\hig(\cam_{x})$ to an $A'$ point of $\hig(\cam)$. To do this, we interpret a Higgs bundle on $X\times\spec(A')$ with cameral cover $\cam$ as a Higgs bundle on $(X-x)\times\spec(A')$, a Higgs bundle on $\spec(A'[[t]])$ together with an isomorphism of the two over the formal punctured disk $\spec(A'((t)))$. So what we need to is a deformation of $x_{A}\mid_{(X-x)\times\spec(A)}$ to $(X-x)\times\spec(A')$ and then a deformation of the isomorphism between $x_{A}\mid_{\spec(A((t)))}$ and $x^{loc}_{A'}\mid_{\spec(A((t)))}$ to $\spec(A'((t)))$. But since $\cam$ is smooth away from $x$, deformations of $x_{A}\mid_{(X-x)\times\spec(A)}$ is the same as deformations of $J_{G,\cam}$ torsors over $X-x$. Using the fact that $X-x$ is affine, we see that such deformations exists and all deformations are isomorphic. Similarly, $\cam$ is smooth over the formal punctured disk $k((t))$, so any two deformations over $\spec(A((t)))$ will be isomorphic. This finishes the proof. 

\end{proof}

\begin{remark}
Similar arguments can be used to show that if $\cam$ is a reduced cameral cover over $X$, then $\hig(\cam_{x})$ admits versal deformations. Moreover, if we denote $\hig^{0}(\cam)$ to be the open substack of $\hig(\cam)$ corresponds to Higgs bundles that are regular away from $x$, then the morphism $\hig^{0}(\cam)\rightarrow\hig(\cam_{x})$ is formally smooth.
\end{remark}

Now we can finish the proof of Theorem~\ref{main theorem of app B}:
\begin{proof}
First we claim that if $y$ is a closed point in $\mhig(\cam)$ corresponds to an irregular Higgs bundle and $O_{\mhig(\cam),y}$ is the local ring of $\mhig(\cam)$ at $y$, then the formal completion $\widehat{O}_{\mhig(\cam),y}$ is isomorphic to $k[[t_1,t_2,\cdots,t_n]]/(t_{1}t_{2})$. Indeed, since $\hig(\cam)$ is a $Z(G)$ gerbe over $\mhig(\cam)$ by Lemma~\ref{more geometry}, we only need to prove this for versal deformations of $\hig(\cam)$. Using Lemma~\ref{local deformation} we conclude that we only need to prove it for versal deformations of $\hig(\cam_{x})$. We can further reduced the problem to versal deformations of $\mathcal{H}iggs_{\textrm{GL}_{2}}(\widetilde{C}_{x})$ or $\mathcal{H}iggs_{\textrm{SL}_{2}}(\widetilde{C}_{x})$ by Lemma~\ref{local tool}, where $\widetilde{C}_{x}$ is a nodal $S_{2}$ cover over $\spec(\widehat{O}_{x})$. This can again be reduced to the versal deformations of rank two Higgs bundles on any smooth curve $C$ with a nodal cameral cover $\widetilde{C}$ using Lemma~\ref{local deformation}. In this case Higgs bundles with cameral cover $\widetilde{C}$ are parameterized by the compactified Jacobian $\overline{J}_{\widetilde{C}}$. It is well-known that the natural morphism $\widetilde{C}\times J_{\widetilde{C}}\rightarrow \overline{J}_{\widetilde{C}}$ is smooth. So we are done. 

Now we can prove $\mathcal{X}\simeq\mhig(\cam)$. Since $\mathcal{X}\rightarrow\mhig(\cam)$ is birational and finite and that $\mhig(\cam)$ is Cohen-Macaulay (See Lemma~\ref{geo of hitchin fibration}), we only need to prove that $O_{\mathcal{X}}\simeq O_{\mhig(\cam)}$ at all codimension one points of $\mhig(\cam)$. Since $\mathcal{X}$ is isomorphic to $\mhig(\cam)$ over the open set of regular Higgs bundles, we only need to look at generic points of the closed subvariety of $\mhig(\cam)$ formed by irregular Higgs bundles. Let $\eta$ be the generic point. Denote the morphism $\mathbf{P}^{1}\times^{\mathbb{G}_{m}}\mjtgcam\rightarrow\mhig(\cam)$ by $q$. Our discussions above the formal completions of $\mhig(\cam)$ at closed points implies that the cokernel of $O_{\mhig(\cam)}\rightarrow q_{*}(O_{\mathbf{P}^{1}\times^{\mathbb{G}_{m}}\mjtgcam})$ is supported on $\mjtgcamn$ and it is isomorphic to a line bundle on $\mjtgcamn$. Let us look at the local ring $O_{\mhig(\cam),\eta}$. We conclude from the discussion above that the cokernel of $O_{\mhig(\cam),\eta}\rightarrow q_{*}(O_{\mathbf{P}^{1}\times^{\mathbb{G}_{m}}\mjtgcam}))_{\eta}$ is isomorphic to the residue field of $O_{\mhig(\cam),\eta}$. Since we have injections 
$$O_{\mhig(\cam),\eta}\hookrightarrow O_{\mathcal{X},\eta}\hookrightarrow q_{*}(O_{\mathbf{P}^{1}\times^{\mathbb{G}_{m}}\mjtgcam})_{\eta} ,$$ 
and that $O_{\mathcal{X},\eta}$ is not equal to $q_{*}(O_{\mathbf{P}^{1}\times^{\mathbb{G}_{m}}\mjtgcam})_{\eta}$, we are done.

\end{proof}

As a corollary, we will prove the following result about $H^{1}(O_{\hig})$. 
\begin{corollary}\label{desciption of H1}
Let $G$ be simply connected and $H$ be the Hitchin base. Let $\textrm{Lie}(\mjtg)$ be the vector bundle on $H$ corresponds to the lie algebra of $\mjtg$. To simplify the notation, let us also denote the restrictions of $\hig$ and $\textrm{Lie}(\mjtg)$ to $H_{int}$ still by $\hig$ and $\textrm{Lie}(\mjtg)$. Then we have $R^{1}\pi_{*}(O_{\hig})\simeq \textrm{Lie}(\mjtg)$ as coherent sheaves on $H_{int}$. Here $\pi$ is the morphism $\hig\xrightarrow{\pi} H_{int}$. 
\end{corollary}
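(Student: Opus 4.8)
The plan is to realize both sides as coherent sheaves on $H_{int}$ — the left as the Lie algebra of a relative Picard scheme, the right as a relative $H^{1}$ on $X$ — to produce a comparison map from the Poincar\'e biextension for $\check{G}$, and to prove it is an isomorphism by reducing to the smooth and nodal loci via Theorem~\ref{main theorem of app B}.

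First I would make two reductions. Since $G$ is simply connected, $Z(G)$ is finite and $\hig(\cam)\to\mhig(\cam)$ is a $Z(G)$-gerbe (as in Lemma~\ref{more geometry}); in characteristic zero this gives $R^{1}\pi_{*}(O_{\hig})\simeq R^{1}\pi_{*}(O_{\mhig})$. Because $G$ is simply connected the fibers $\mhig(\cam)$ are geometrically connected (Proposition~\ref{geo of hitchin fibration}(2)) and, for integral $\cam$, reduced (they are pushouts of reduced schemes by Theorem~\ref{main theorem of app B}); as $\mhig\to H_{int}$ is flat and proper with $\pi_{*}O_{\mhig}=O_{H_{int}}$, deformation theory of the relative Picard functor identifies $R^{1}\pi_{*}(O_{\mhig})$ with $\textrm{Lie}(\textrm{Pic}^{0}_{\mhig/H_{int}})$. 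On the other side, writing $p\colon X\times H_{int}\to H_{int}$, one has $\textrm{Lie}(\mjtg)\simeq R^{1}p_{*}(\textrm{Lie}(J_{G}))$; since $\textrm{Lie}(J_{G})$ is generically a torus, Lemma~\ref{Tsen} kills $H^{2}(X,-)$, so $\mjtg\to H_{int}$ is smooth and $\textrm{Lie}(\mjtg)$ is locally free of rank $\dim\hig(\cam)=\dim H$.

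Next I would build the comparison map. As $\check{G}$ is adjoint, the construction of Corollary~\ref{finish the descend for adjoint} applied to $\check{G}$ produces a Poincar\'e biextension $\mathcal{P}_{\check{G}}$ on $\jtgc\times\hig$ whose restriction to $\jtgc\times\jtg$ is a biextension. Together with Corollary~\ref{additional compatibility} this yields a homomorphism $\mjtgc\to\textrm{Pic}^{0}_{\mhig/H_{int}}$, hence on Lie algebras a map $\textrm{Lie}(\mjtgc)\to R^{1}\pi_{*}(O_{\hig})$. Precomposing with the canonical isomorphism $\textrm{Lie}(\mjtg)\simeq\textrm{Lie}(\mjtgc)$ — coming from the fixed $W$-invariant form identifying $\mathfrak{t}\simeq\check{\mathfrak{t}}$ and roots with coroots, whence $\textrm{Lie}(J_{G})\simeq\textrm{Lie}(J_{\check{G}})$ as sheaves on $X$ — gives a global map $\Phi\colon\textrm{Lie}(\mjtg)\to R^{1}\pi_{*}(O_{\hig})$, which over $H^{0}$ is an isomorphism by Proposition~\ref{geo of hig}(4). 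I would then check $\Phi$ on the open set $U$ of Lemma~\ref{an open set of H}, whose complement has codimension $\ge 2$ and on which $\delta\le 1$: over $\{\delta=0\}$ this is the smooth case just treated, while over $\{\delta=1\}$ the cover is nodal and Theorem~\ref{main theorem of app B} applies. There $\mhig(\cam)$ is the pushout gluing two copies of $\mjtgcamn$ inside the normalization $\mathbf{P}^{1}\times^{\mathbb{G}_{m}}\mjtgcam$, a $\mathbf{P}^{1}$-bundle over $\mjtgcamn$ (Corollary~\ref{description of projective bundle}, Lemma~\ref{normalization}); the normalization sequence computes $H^{1}(O_{\mhig(\cam)})$ as an extension of $H^{1}(O_{\mjtgcamn})$ by the length-one gluing term, matching the extension of $\textrm{Lie}(\mjtgcamn)$ by $\textrm{Lie}(\mathbb{G}_{m})=k$ that presents $\textrm{Lie}(\mjtgcam)$ through the $\mathbb{G}_{m}$-torsor structure of Lemma~\ref{structure of jtors}. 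Tracing $\mathcal{P}_{\check{G}}$ shows $\Phi$ induces this matching, so $\Phi$ is an isomorphism on $U$.

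\emph{The main obstacle} is to promote this to an isomorphism over all of $H_{int}$, i.e. to control $R^{1}\pi_{*}(O_{\hig})$ over the deep strata $\delta\ge 2$. Concretely I expect to show $R^{1}\pi_{*}(O_{\hig})$ is locally free of rank $\dim H$ — equivalently that $h^{1}(O_{\mhig(\cam)})$ does not jump — using flatness of $\mhig\to H_{int}$ together with relative Grothendieck duality for the Gorenstein morphism $\pi$ (the relative dualizing sheaf is pulled back from $H_{int}$, as in Lemma~\ref{more geometry}(3)) and the standing assumption that $H_{int,\delta}$ has codimension $\ge\delta$. Granting this, both $\textrm{Lie}(\mjtg)$ and $R^{1}\pi_{*}(O_{\hig})$ are reflexive on the normal base $H_{int}$, so the isomorphism $\Phi|_{U}$ across a codimension-$\ge 2$ complement extends uniquely to the desired global isomorphism $R^{1}\pi_{*}(O_{\hig})\simeq\textrm{Lie}(\mjtg)$.
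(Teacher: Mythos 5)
Your proposal reproduces the paper's global skeleton — gerbe reduction to the coarse space, fiberwise verification over the open set $U$ of Lemma~\ref{an open set of H} using the pushout of Theorem~\ref{main theorem of app B} and the extension $0\rightarrow H^{0}(O_{\mjtgcamn})\rightarrow H^{1}(O_{\mhig(\cam)})\rightarrow H^{1}(O_{\mjtgcamn})\rightarrow 0$, then extension across a codimension-two complement — but your comparison map is genuinely different, and that is where the gap sits. The paper does not use the Poincar\'e bundle here at all: it fixes an ample line bundle $\mathcal{M}$ on $\hig$ and sends $\xi\in\textrm{Lie}(\mjtg)$ to the class of $\xi^{*}(\mathcal{M})\otimes\mathcal{M}^{-1}$. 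The payoff comes at the crux, injectivity on the one-dimensional kernel of $\textrm{Lie}(\mjtgcam)\rightarrow\textrm{Lie}(\mjtgcamn)$ over the nodal locus: the pullback of an ample $\mathcal{M}$ to $\mathbf{P}^{1}\times^{\mathbb{G}_{m}}\mjtgcam$ is $O(n)\otimes f^{*}(\mathcal{M}')$ with $n>0$, so the gluing discrepancy of $\xi^{*}(\mathcal{M})\otimes\mathcal{M}^{-1}$ is the unit $\xi^{n}\neq 1$ and injectivity is immediate. For your $\Phi$ built from $\mathcal{P}_{\check{G}}$, the analogous restriction to the exceptional $\mathbf{P}^{1}$ has degree \emph{zero} (the Poincar\'e bundle is canonically trivial on $\mjtgcam^{0}\times\mathbf{P}^{1}$; this is the $k=0$ statement in the paper), so positivity gives nothing, and nontriviality of the gluing must instead be extracted by comparing two trivializations of $q^{*}(\mathcal{P}^{n}_{Nm(\check{\alpha}_{*}(O(\widetilde{x}_{1})))})$ — exactly the delicate computation occupying Lemma~\ref{a key lemma on etaleness} and the unnumbered lemma following it, which rests on Lemma~\ref{pullback from normalization}, Corollary~\ref{description of the fiber}, Corollary~\ref{the gluing map} and Lemma~\ref{structure of jtors}. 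Your sentence ``tracing $\mathcal{P}_{\check{G}}$ shows $\Phi$ induces this matching'' conceals this, the hardest step in the whole circle of ideas; note moreover that the paper's write-up of that tracing itself cites Corollary~\ref{desciption of H1} (to know $\pic$ of the coarse space is smooth of the correct rank), so in your ordering you would have to re-prove the gluing nontriviality from scratch to avoid circularity. This is possible — the relevant portion of that proof does not use the corollary — but it is a missing idea in your proposal, not a routine verification.

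Two secondary problems. First, your final step claims $R^{1}\pi_{*}(O_{\hig})$ is locally free of rank $\dim\hig(\cam)=\dim H$: the asserted equality of rank with $\dim H$ is false under the paper's standing hypothesis $\deg L=l>2g$ (it holds only for $L=\omega_{X}$; in general $\dim H-\dim\hig(\cam)=r(l+2-2g)>0$, with $r$ the rank), and, more seriously, local freeness over the strata with $\delta\geq 2$ is precisely what is not known — ``flatness plus Grothendieck duality plus the $\delta$-codimension bound'' is a wish, not an argument, since those tools control the Poincar\'e pushforward via Lemma~\ref{codim estimate} but give no bound on jumps of $h^{1}(O)$ of the fibers. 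The paper sidesteps this entirely: its comparison map is defined over all of $H_{int}$, is an isomorphism over $U$ (Grauert), and one concludes by pushing forward along $j:U\hookrightarrow H_{int}$ and arguing as in Theorem $6.5$ of \cite{Hitchin}, never needing constancy of $h^{1}$ on deep strata; you should adopt some such extension mechanism rather than assume local freeness. Second, your justification of reducedness of all fibers via Theorem~\ref{main theorem of app B} is wrong for general integral $\cam$ (that theorem only treats the nodal case); reducedness instead follows from part $(1)$ of Proposition~\ref{geo of hitchin fibration} (complete intersection, hence Cohen-Macaulay) together with density of the smooth open substack $\hig^{reg}(\cam)$ from Proposition~\ref{geo of hig}.
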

\begin{proof}
Let us first construct the morphism $\textrm{Lie}(\mjtg)\rightarrow R^{1}\pi_{*}(O_{\hig})$. Pick a line bundle $\mathcal{M}$ on $\hig$ which is ample when restricted to the semistable locus. Pick a local section $\xi\in\textrm{Lie}(\mjtg)$. $\xi$ induces an infinitesimal automorphism of $\hig$ via the action of $\jtg$ on $\hig$. Consider the line bundle $\xi^{*}(\mathcal{M})\otimes\mathcal{M}^{-1}$ on $\hig\times D$ where $D$ denotes the ring of dual numbers. This line bundle is canonically trivial when restricted to $\hig$, hence it defines an element in $R^{1}\pi_{*}(O_{\hig})$. Next we will prove this induces an isomorphism between them. Consider the open set $U$ of $H_{int}$ defined in Lemma~\ref{an open set of H} in Appendix $C$. Denote the restriction of $\pi$ to $U$ by $\pi_{U}$. We will first prove that the morphism we constructed above induces an isomorphism $R^{1}\pi_{U*}O_{\hig\mid_{U}}\simeq\textrm{Lie}(\mjtg)\mid_{U}$ as coherent sheaves on $U$. To do this we fix a cameral cover $\cam$ and look at the cohomology $H^{1}(O_{\hig(\cam)})$. By Lemma~\ref{an open set of H}, we either have $\hig(\cam)=\hig^{reg}(\cam)$ or $\cam$ satisfies the assumptions given at the beginning of this section. If $\hig(\cam)=\hig^{reg}(\cam)$, then $\hig(\cam)\simeq\jtgcam$ and that $\mjtgcam$ is an abelian variety. Since $\mathcal{M}$ also restricts to an ample line bundle on $\mjtgcam$, the claim is well-known, see \cite{Abelian variety} or \cite{AF}. Now let us assume $\cam$ satisfies the assumptions given at the beginning of this section. Using Theorem~\ref{main theorem of app B} we can describe $H^{1}(O_{\mhig(\cam)})$ as follows. First we have an exact sequence (To simplify the notations we will write $\mjtg$ and $\mjtgn$ instead of $\mjtgcam$ and $\mjtgcamn$ in the rest of the proof):
$$0\rightarrow O_{\mhig(\cam)}\rightarrow O_{\mathbf{P}^{1}\times^{\mathbb{G}_{m}}\mjtg}\oplus O_{\mjtgn}\rightarrow O_{\mjtgn}\oplus O_{\mjtgn}\rightarrow 0 .$$
Since $\mathbf{P}^{1}\times^{\mathbb{G}_{m}}\mjtg$ is a $\mathbf{P}^{1}$ bundle over $\mjtgn$, we have $H^{i}(O_{\mathbf{P}^{1}\times^{\mathbb{G}_{m}}\mjtg})\simeq H^{i}(O_{\mjtgn})$. Since $\mjtgn$ is an abelian variety, translation by an element induces the trivial isomorphism on $H^{i}$. Using Corollary~\ref{the gluing map} we see that the induced morphism 
$$H^{1}(O_{\mathbf{P}^{1}\times^{\mathbb{G}_{m}}\mjtg})\rightarrow H^{1}(O_{\mjtgn})\oplus H^{1}(O_{\mjtgn})$$
can be identified with the diagonal embedding: 
$$H^{1}(O_{\mjtgn})\xrightarrow{\vartriangle} H^{1}(O_{\mjtgn})\oplus H^{1}(O_{\mjtgn}) .$$ 
The same is true for 
$$H^{1}(O_{\mjtgn})\rightarrow H^{1}(O_{\mjtgn})\oplus H^{1}(O_{\mjtgn}) .$$
Hence we see that $H^{1}(O_{\mhig(\cam)})$ fits into an exact sequence:
\begin{equation}\label{exact sequence}
0\rightarrow H^{0}(O_{\mjtgn})\rightarrow H^{1}(O_{\mhig(\cam)})\rightarrow H^{1}(O_{\mjtgn})\rightarrow 0 .
\end{equation}
Since $G$ is simply connected, $\mjtgn$ is connected. From this one concludes that $\dimension H^{1}(O_{\hig(\cam)})=\dimension H^{1}(O_{\mjtgn})+1$. Since we also have $\dimension\textrm{Lie}(\mjtg)=\dimension\textrm{Lie}(\mjtgn)+1$, we see that $H^{1}(O_{\hig(\cam)})$ and $\textrm{Lie}(\mjtg)$ has the same dimension. To prove they are isomorphic we need to show the morphism $\textrm{Lie}(\mjtg)\rightarrow H^{1}(O_{\hig(\cam)})$ is injective. Let $\mathcal{M}$ be the ample line bundle on $\hig(\cam)$ we pick at the beginning. Then the restriction of $\mathcal{M}$ to $\mjtgn$ via the embedding $\mjtgn\rightarrow\hig(\cam)$ in Theorem~\ref{main theorem of app B} is also an ample line bundle on the abelian variety $\mjtgn$. Hence if $\xi\in\textrm{Lie}(\mjtg)$ and that $\xi^{*}(\mathcal{M})\otimes\mathcal{M}^{-1}$ is trivial, then its restriction to $\mjtgn$ is also trivial. Because $\mathcal{M}$ is ample and $\mjtgn$ is an abelian variety, we see that the image of $\xi$ in $\textrm{Lie}(\mjtgn)$ is trivial. It is not hard to verify that the following diagram commutes:
$$\xymatrix{
\textrm{Lie}(\mjtg) \ar[r] \ar[d] & \textrm{Lie}(\mjtgn) \ar[d] \\
H^{1}(O_{\hig(\cam)}) \ar[r] & H^{1}(O_{\mjtgn}) .
}
$$
Hence by the exact sequence ~\ref{exact sequence}, we see that if $\xi$ lies in the kernel of $\textrm{Lie}(\mjtg)\rightarrow H^{1}(O_{\hig(\cam)})$, then $\xi$ is in kernel of $\textrm{Lie}(\mjtg)\rightarrow \textrm{Lie}(\mjtgn)$, which can be identifies with an element in the Lie algebra of $\mathbb{G}_{m}$ by Lemma~\ref{structure of jtors}. To show $\xi$ is zero, let us look at the pullback of $\mathcal{M}$ to $\mathbf{P}^{1}\times^{\mathbb{G}_{m}}\mjtg$. Let $O(1)$ be the universal line bundle on the projective bundle $\mathbf{P}^{1}\times^{\mathbb{G}_{m}}\mjtg$ over $\mjtgn$, see Corollary~\ref{description of projective bundle}. The pullback of $\mathcal{M}$ to $\mathbf{P}^{1}\times^{\mathbb{G}_{m}}\mjtg$ is isomorphic to $O(n)\otimes f^{*}(\mathcal{M}')$ where $\mathcal{M}'$ is an ample line bundle on $\mjtgn$, $f$ is the morphism $\mathbf{P}^{1}\times^{\mathbb{G}_{m}}\mjtg\rightarrow\mjtgn$ and $n>0$. Now let us notice that by Theorem~\ref{main theorem of app B}, a line bundle on $\hig(\cam)$ is the same a line bundle on $\mathbf{P}^{1}\times^{\mathbb{G}_{m}}\mjtg$ together with a gluing map when restricted to $\mjtgn\amalg\mjtgn$.
We have two sections:
\begin{gather}
0\times^{\mathbb{G}_{m}}\mjtg\simeq\mjtgn \xrightarrow{i_{0}}\mathbf{P}^{1}\times^{\mathbb{G}_{m}}\mjtg \notag\\
\infty\times^{\mathbb{G}_{m}}\mjtg\simeq\mjtgn\xrightarrow{i_{\infty}}\mathbf{P}^{1}\times^{\mathbb{G}_{m}}\mjtg . \notag
\end{gather}
Using Corollary~\ref{description of projective bundle} one may assume that 
\begin{gather}
i_{0}^{*}(O(n)\otimes f^{*}(\mathcal{M}'))\simeq \mathcal{M}' \notag\\
i_{1}^{*}(O(n)\otimes f^{*}(\mathcal{M}'))\simeq \mathcal{Q}^{\otimes n}\otimes\mathcal{M}' . \notag
\end{gather}
Since we have shown that $\xi$ belongs to the lie algebra of $\mathbb{G}_{m}$, one may view it as an element in the unit of the ring of dual numbers $k[\epsilon]/\epsilon^{2}$. From the expression of the restriction of $O(n)\otimes f^{*}(\mathcal{M}')$ to $i_{0}(\mjtgn)$ and $i_{1}(\mjtgn)$, we conclude that the gluing maps for $\xi^{*}(O(n)\otimes f^{*}(\mathcal{M}'))$ and $O(n)\otimes f^{*}(\mathcal{M}')$ differs by $\xi^{n}$ where we view $\xi$ as a unit in $k[\epsilon]/\epsilon^{2}$. This shows that $\xi^{*}(\mathcal{M})\otimes\mathcal{M}^{-1}$ gives a nontrivial element in $H^{1}(O_{\hig(\cam)})$ as long as $\xi$ is nonzero. 

The analysis above shows that if $\cam$ is a cameral cover corresponds to a point in $U\subseteq H_{int}$, then the morphism $\textrm{Lie}(\mjtgcam)\rightarrow H^{1}(O_{\hig(\cam)})$ is an isomorphism. Since $U$ is smooth, Grauert's theorem on cohomology and base change shows that $R^{1}\pi_{U *}O_{\hig\mid_{U}}$ is locally free and isomorphic to $\textrm{Lie}(\mjtg)\mid_{U}$. Let $j$ be the open embedding $U\xrightarrow{j}H_{int}$. Then one gets a morphism $R^{1}\pi_{*}O_{\hig}\rightarrow j_{*}(\textrm{Lie}(\mjtg)\mid_{U})$. Since the complement of $U$ has codimension greater than or equals to two, we conclude that $j_{*}(\textrm{Lie}(\mjtg)\mid_{U})\simeq \textrm{Lie}(\mjtg)$. Now arguing as in Theorem $6.5$ of \cite{Hitchin} we get the result.

\end{proof}

\section{An open locus of the Hitchin base}
In this section we determine an open subset $U$ of the Hitchin base $H$ whose complement has codimension greater than or equals to two, see Lemma~\ref{an open set of H}. Lemma~\ref{regularity criterion} is due to Professor Dima Arinkin and the author would like to thank him for giving the permission to publish it. 
\begin{lemma}\label{regularity criterion}
Let $G$ be a reductive group and $r$ be the semisimple rank of $G$. Let $x=x_{0}+tx_{1}+\cdots$ be an element in $\mathfrak{g}(k[[t]])$ with $x_{0}$ nilpotent. Let $\disc$ be the discriminant function on the Chevalley base $\mathfrak{c}$ and denote its pullback to $\mathfrak{g}$ via the Chevalley map $\mathfrak{g}\rightarrow\mathfrak{c}$ also by $\disc$. Then we have:
\begin{enumerate1}
\item $\disc(x)\in t^{r}k[[t]]$. 
\item If the order of vanishing of $\disc(x)$ is exactly equal to $r$, then $x_{0}$ is regular. 
\item Let $\spec(\widehat{O}_{x})\xrightarrow{x} \mathfrak{g}$ be a morphism. Let $\chi(x_{0})$ be the image of the closed point of $\spec(\widehat{O}_{x})$ under the morphism $\mathfrak{g}\xrightarrow{\chi}\mathfrak{c}$. If the image of the tangent space of $\spec(\widehat{O}_{x})$ in $T_{\zeta}\mathfrak{c}$ under the morphism $\spec(\widehat{O}_{x})\xrightarrow{x} \mathfrak{g}\xrightarrow{\chi}\mathfrak{c}$ does not lies inside the tangent cone  of $\disc$ in $T_{\chi(x_{0})}\mathfrak{c}$, then $x$ factors through $\mathfrak{g}_{reg}$. 
\end{enumerate1}
\end{lemma}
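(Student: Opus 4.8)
The plan is to establish first the single structural fact about $\disc$ that drives both (1) and (2), namely that its lowest‑degree part at the origin of $\mathfrak{c}$ is a nonzero multiple of $p_r^{\,r}$. Since $\disc=\prod_{\alpha\in R}\alpha$ is pulled back from the semisimple Chevalley base, I may assume $G$ is semisimple; fix fundamental invariants $p_1,\dots,p_r$ of degrees $d_1\le\cdots\le d_r$, where $d_r=h_{\mathrm{Cox}}$ is the Coxeter number and is the \emph{unique} largest degree. Writing $\disc$ as a weighted‑homogeneous polynomial in the $p_i$ of weighted degree $|R|$, and using the classical identities $|R|=r\,h_{\mathrm{Cox}}$ and $d_i\le h_{\mathrm{Cox}}$, every monomial $\prod_i p_i^{m_i}$ in $\disc$ has ordinary degree $\sum_i m_i\ge |R|/h_{\mathrm{Cox}}=r$, with equality forcing the monomial to be $p_r^{\,r}$. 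That its coefficient $c$ is nonzero follows from Kostant's theory of Coxeter elements: a regular eigenvector $v$ of a Coxeter element satisfies $p_i(v)=0$ for $i<r$ and $\disc(v)\ne0$, so $\disc(v)=c\,p_r(v)^r\ne0$. Thus the initial form of $\disc$ at the origin is $c\,p_r^{\,r}$ with $c\ne0$. Since $x_0$ is nilpotent, $p_i(x_0)=0$, so the $t$‑adic valuation satisfies $v(p_i(x(t)))\ge1$; hence $v(\disc(x))\ge r\,v(p_r(x))\ge r$, proving (1), and $v(\disc(x))=r$ if and only if $v(p_r(x(t)))=1$.

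For (2) it then remains to show that, for a nilpotent $x_0$, the condition $v(p_r(x(t)))=1$ — equivalently $dp_r|_{x_0}\ne0$ — forces $x_0$ to be regular. I would argue this via a Jacobson--Morozov triple $(x_0,H,y)$ with associated cocharacter $\lambda$ and grading $\mathfrak{g}=\bigoplus_j\mathfrak{g}_j$ by $\mathrm{ad}(H)$‑eigenvalues. The twisted action $\mu(s)=s^{-2}\,\mathrm{Ad}(\lambda(s))$ fixes $x_0$ and acts on $\mathfrak{g}_j\subseteq T_{x_0}\mathfrak{g}$ with weight $j-2$, while $p_r\circ\mu(s)=s^{-2d_r}p_r$; equivariance of $dp_r|_{x_0}$ therefore shows it is supported on the single graded piece $\mathfrak{g}_{2-2h_{\mathrm{Cox}}}$. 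This extreme piece is nonzero exactly when the spectrum of $\mathrm{ad}(H)$ attains $\pm(2h_{\mathrm{Cox}}-2)$, which happens if and only if $x_0$ is the principal (i.e.\ regular) nilpotent. Hence $dp_r|_{x_0}\ne0$ forces $x_0$ regular, completing (2).

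For (3) I would reduce to (2) by passing to a Levi. Write the Jordan decomposition $x_0=s_0+n_0$ and set $L=Z_G(s_0)$, a Levi with $\mathfrak{l}=\mathrm{Lie}(L)$, so that $n_0$ is nilpotent in $\mathfrak{l}$ and $s_0$ is central in $\mathfrak{l}$. Lemma~\ref{reduce to levi} gives the Cartesian square identifying $\mathfrak{g}/G$ with $\mathfrak{l}/L$ over $\mathfrak{c}^{\circ}_{L}$, and since $\alpha(s_0)\ne0$ for every root $\alpha\notin R_L$ one has $\disc_G=\disc_L\cdot(\text{unit})$ near $\zeta=\chi(x_0)$; in particular the map $\mathfrak{c}_L\to\mathfrak{c}_G$ is \'etale there and carries the tangent cone of $\disc_L$ at $\chi_L(x_0)$ to that of $\disc_G$ at $\zeta$. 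By the first paragraph applied to $L$ this tangent cone is the hyperplane $\{p_{r_L}^{(L)}=0\}$, so the hypothesis that the image tangent vector of $\spec(\widehat{O}_{x})$ avoids it says exactly that $v\bigl(p_{r_L}^{(L)}(x)\bigr)=1$. Transporting $x$ to an $\mathfrak{l}$‑valued arc via the Cartesian square (the central part $s_0$ does not affect $\disc_L$) and applying (2) to $L$ shows $n_0$ is regular in $\mathfrak{l}$; as $x_0$ is $G$‑regular iff its nilpotent part is $Z_G(s_0)$‑regular, $x_0$ is regular. Finally $\mathfrak{g}_{reg}$ is open, and its preimage in $\spec(\widehat{O}_{x})$ is an open set containing the closed point, hence all of $\spec(\widehat{O}_{x})$, so $x$ factors through $\mathfrak{g}_{reg}$.

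I expect the main obstacle to be the graded argument in (2): making precise that the extreme eigenvalue $\pm(2h_{\mathrm{Cox}}-2)$ of $\mathrm{ad}(H)$ is attained only for the principal nilpotent (equivalently, that an irreducible $\mathfrak{sl}_2$‑string of dimension $2h_{\mathrm{Cox}}-1$ sits inside the adjoint representation only under the principal $\mathfrak{sl}_2$), together with the weight bookkeeping pinning $dp_r|_{x_0}$ to $\mathfrak{g}_{2-2h_{\mathrm{Cox}}}$. The remaining ingredients — weighted homogeneity of $\disc$, the identity $|R|=r\,h_{\mathrm{Cox}}$, uniqueness of the top degree, nonvanishing of the coefficient of $p_r^{\,r}$ via Coxeter elements, and the \'etale comparison of discriminants across a Levi — are standard and should be routine to assemble.
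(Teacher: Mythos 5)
Your argument is essentially correct, but for parts (1) and (2) it takes a genuinely different route from the paper. The paper's proof is pure linear algebra: $\disc(x)$ is the coefficient of $T^{r}$ in $\de(T-\textrm{ad}(x))$, hence a sum of $(n-r)\times(n-r)$ principal minors of $\textrm{ad}(x)$, and one checks via the Jordan normal form that for a nilpotent matrix $A$ with $\dimension\ker A\geq r$ every such minor of $A+tB$ vanishes to order at least $\dimension\ker A$; part (1) follows since $\dimension\ker\textrm{ad}(x_{0})\geq r$ for nilpotent $x_{0}$, and equality of the vanishing order with $r$ forces $\dimension\ker\textrm{ad}(x_{0})=r$, i.e.\ regularity, giving (2). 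You instead use invariant theory: weighted homogeneity of $\disc$ in the fundamental invariants, $|R|=rh$, identification of the initial form at $0\in\mathfrak{c}$ with $c\,p_{r}^{r}$ via Coxeter elements, and then a Jacobson--Morozov grading argument concentrating $dp_{r}|_{x_{0}}$ in $\mathfrak{g}_{2-2h}$, which is nonzero only for the principal nilpotent (all labels of the weighted Dynkin diagram equal to $2$). Both are sound; the paper's minor computation is more elementary and insensitive to the decomposition into simple factors, while yours gives finer information --- it identifies the tangent cone of $\disc$ at the origin explicitly and shows precisely which first-order deformations of a nilpotent detect regularity. For part (3) the two proofs coincide: both reduce to the Levi $L=Z_{G}(s_{0})$ via Lemma~\ref{reduce to levi}, use that $\disc_{G}/\disc_{L}$ is a unit on $\mathfrak{c}^{\circ}_{L}$ so the \'etale map matches tangent cones, lift the arc to $\mathfrak{l}(k[[t]])$ using completeness, split off the central part, and apply (2) to the semisimple part (your closing openness argument for why the whole arc lands in $\mathfrak{g}_{reg}$ is a point the paper leaves implicit).

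One repair is needed: reducing to $G$ semisimple is not enough for your degree count. If $G$ is semisimple but not simple, the largest degree $d_{r}$ need not be unique and the bound $\sum_{i}m_{i}\geq |R|/d_{r}$ can fall below $r$ (for $G=\textrm{SL}_{2}\times\textrm{SL}_{3}$ one has $|R|=8$, $d_{\max}=3$, and $8/3<3=r$), so the initial form is not $c\,p_{r}^{r}$. You must factor $\disc=\prod_{j}\disc_{j}$ over the simple factors, note that nilpotency and regularity decompose factorwise, and run your argument on each factor; the initial form is then $\prod_{j}c_{j}\bigl(p^{(j)}_{r_{j}}\bigr)^{r_{j}}$ and the tangent cone a \emph{union} of hyperplanes rather than the single hyperplane you assert --- this also corrects the sentence in your treatment of (3) describing the tangent cone when $[L,L]$ is not simple. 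The logic survives intact, since the valuation of $\disc$ along the arc is the sum of the factorwise valuations and avoiding a union of hyperplanes means avoiding each, so (2) applies factor by factor; but as written the uniqueness-of-top-degree step is false outside the simple case.
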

\begin{proof}
We may assume $G$ is semisimple. Let us recall that if $a\in\mathfrak{g}$, then we have that the $\disc(a)$ is equal to the coefficient of $T^{r}$ of the polynomial $\de(T-ad(a))$. We apply this to $a=x$. If $n=\dimension\mathfrak{g}$, then it is well-known that the coefficient of $T^{r}$ is equal to the sum of $(n-r)\times(n-r)$ principle minors of the matrix corresponds to the linear transform $ad(x)$ on $\mathfrak{g}$. Now observe that if $A$ is a $n\times n$ nilpotent matrix such that the dimension of the kernel of $A$ is greater than or equals to $r$. Then for any $n\times n$ matrix $B=B_{0}+tB_{1}+\cdots$, all $(n-r)\times (n-r)$ principle minors of $A+tB$ are power series whose initial terms have degree greater than or equals to the dimension of the kernel of $A$. This is easy to check by reducing to the case when $A$ has Jordan normal form. We apply this to the case when $A=ad(x_{0})$ and $B=ad(x_{1}+tx_{2}+\cdots)$. This proves part $(1)$. Moreover, using the same argument, it is not hard to see that if the assumption of part $(2)$ holds, then the dimension of the kernel of $x_{0}$ must equal to $r$, hence $x_{0}$ is regular. 

For part $(3)$, write $x$ as $x=x_{0}+tx_{1}+\cdots$. Let us first consider the case when $x_{0}$ is nilpotent. Then the claim a reformulation of part $(2)$, using the fact that the multiplicity of the divisor $\disc$ at $0\in\mathfrak{c}$ is equal to $r$, see Section $3.18$ of \cite{Coxeter groups}. The general case can be reduced to the nilpotent case as follows. Let us consider the Jordan decomposition $x_{0}=t+n$ where $t$ is semisimple and $n$ is nilpotent. Let $L=Z_{G}(t)$. Then we see that $\chi(x_{0})$ is in the image of $\mathfrak{c}^{\circ}_{L}$ (Here we will denote the Chevalley base of $G$ and $L$ by $\mathfrak{c}_{G}$ and $\mathfrak{c}_{L}$ in order to distinguish between them. Similarly for the discriminant functions $\disc_{G}$ and $\disc_{L}$), see our notations in Lemma~\ref{reduce to levi}. Since $\widehat{O}_{x}$ is complete, use Lemma~\ref{reduce to levi} we can assume we have a lift $x$ to an element in $y\in\mathfrak{l}(\widehat{O}_{x})$. We claim that $y$ satisfies the same assumption as $x$. Indeed, it is not hard to check that the quotient $\disc_{G}/\disc_{L}$ is an invertible function on $\mathfrak{c}^{\circ}_{L}$. Since $\mathfrak{c}^{\circ}_{L}\rightarrow\mathfrak{c}_{G}$ is etale, the tangent cone of $\disc_{G}$ at $\chi_{G}(x_{0})\in\mathfrak{c}_{G}$ can be identified with the tangent cone of $\disc_{L}$ at $\chi_{L}(y_{0})\in\mathfrak{c}_{L}$. Now write $y=y_{[L,L]}+y_{Z(L)}$ where $y_{[L,L]}$ lies in the lie algebra of $[L,L]$ and $y_{Z(L)}$ lies in the lie algebra of the center of $L$. Using the discussions above one can reduce everything to the semisimple group $[L,L]$. Also notice that the initial term of $y_{[L,L]}$ is nilpotent, so we are done.

\end{proof}

Let us consider the following subset $U$ of $H_{int}$, which consists of points such that the corresponding discriminant divisor on $X$ is multiplicity free except at one point $x\in X$, where it has order less than or equals to $2$. 
\begin{lemma}\label{an open set of H}
\begin{enumerate1}
\item $U$ is an open subset of $H$ whose complement has codimension greater than or equals to two.
\item If $\cam$ is a cameral cover corresponds to a point in $U$, then either we have $\hig(\cam)=\hig^{reg}(\cam)$ or $\cam$ satisfies the assumptions on cameral covers we give at the beginning of Appendix $B$. 
\end{enumerate1}
\end{lemma}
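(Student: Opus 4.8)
The plan is to treat the two parts separately: part (1) is a careful dimension count on the space of discriminant divisors, while part (2) is a local analysis at the unique bad point governed by Lemma~\ref{regularity criterion}. For part (1), I would first record that $\sigma\mapsto\disc(\sigma)$ defines a morphism from $H$ to $H^{0}(X,\mathcal{L})$, where $\mathcal{L}$ is the line bundle on $X$ cut out by the twisted discriminant; since $l>2g$, $\deg\mathcal{L}$ is large. The defining condition of $U$ — that the zero divisor of $\disc(\sigma)$ be reduced away from one point and of order $\leq 2$ at that point — is manifestly open, so intersecting with the open locus $H_{int}$ shows $U$ is open in $H$. The complement of $U$ in $H$ is the union of (i) the locus where $\disc(\sigma)$ has a point of multiplicity $\geq 3$, (ii) the locus where it has two distinct points of multiplicity $\geq 2$, and (iii) the non-integral locus $H\smallsetminus H_{int}$.

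For (i) and (ii) the key input is that, because $l>2g$, for each $x\in X$ the map sending $\sigma$ to the $k$-jet of $\disc(\sigma)$ at $x$ is as surjective as possible for small $k$: prescribing vanishing of $\disc(\sigma)$ to order $m$ at $x$ imposes $m$ independent conditions (the differential of $\disc$ is nonzero along regular directions, and the high positivity of $L$ lets one adjust $\sigma$ to arbitrary order at $x$). Granting this, I would run the standard incidence-variety argument: $\{(\sigma,x):\mathrm{ord}_{x}\disc(\sigma)\geq 3\}\subset H\times X$ has dimension $\leq\dimension H-3+1$, so its image has codimension $\geq 2$; likewise $\{(\sigma,x_{1},x_{2}):\mathrm{ord}_{x_{i}}\geq 2,\ x_{1}\neq x_{2}\}\subset H\times(X^{2}\smallsetminus\Delta)$ has dimension $\leq\dimension H-4+2$, so its image has codimension $\geq 2$. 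For (iii) I would note that a reduced $\disc(\sigma)$ forces $\cam$ reduced, and irreducibility fails only where the reflection monodromy degenerates, which for $L$ of large degree is again of codimension $\geq 2$. I expect the verification of the jet-independence to be the main technical obstacle in this part.

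For part (2), fix $\sigma\in U$ with cover $\cam$. At every zero $y$ of $\disc(\sigma)$ I would compare the tangent direction of $\sigma$ with the tangent cone of $\disc$ at $\sigma(y)\in\mathfrak{c}$. If at every such $y$ this direction does \emph{not} lie in the tangent cone — automatic wherever $\disc(\sigma)$ has order $1$, and the generic behaviour at an order-$2$ point lying over a singular point of the discriminant divisor — then part (3) of Lemma~\ref{regularity criterion} shows every Higgs field over the corresponding formal disk factors through $\mathfrak{g}_{reg}$, whence $\hig(\cam)=\hig^{reg}(\cam)$, the first alternative. The only remaining case is that the tangent direction lies in the tangent cone at the unique order-$2$ point $x$; since a tangency forces order $\geq 2$, the bound $\mathrm{ord}_{x}\disc(\sigma)\leq 2$ then forces $\sigma(x)$ to be a \emph{smooth} point of the discriminant divisor (a multiplicity-one point, i.e. a minimal Levi $L$ of semisimple rank one) with intersection multiplicity exactly two.

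In that last case I would invoke Lemma~\ref{reduce to levi} and Corollary~\ref{local tool} to reduce the cover near $x$ to a cameral cover for $L\simeq\mathrm{SL}_{2}\times T_{0}$ or $\mathrm{GL}_{2}\times T_{0}$; the order-$2$ tangency then identifies $\cam$ over $\spec(\widehat{O}_{x})$ with $\spec(\widehat{O}_{x}[T]/(T^{2}-t^{2}))$, which is nodal. This is precisely the nodal-at-one-point situation, so I only need to check the three standing assumptions of Appendix $B$: $\cam$ has a single node over $x$ and is smooth elsewhere (since $\disc(\sigma)$ has order $\leq 1$ away from $x$, forcing transversal simple ramification); the ramification divisors satisfy $D_{\alpha}\cap D_{\beta}=\varnothing$ for $\alpha\neq\pm\beta$ (a common point would make two root pairs vanish and hence $\disc(\sigma)$ have order $\geq 2$ there, which is impossible away from $x$ and is excluded at $x$ since $\sigma(x)$ is a smooth point of the discriminant where only one root pair degenerates); and $\cam$ is integral (reduced, with connected and hence irreducible normalization). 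This places $\sigma$ in the second alternative and completes the proof.
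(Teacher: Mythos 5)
Your part (2) is correct and essentially a mild variant of the paper's argument: where the paper classifies the Levi $L=Z_{G}(t)$ at the bad point by semisimple rank using parts (1)--(2) of Lemma~\ref{regularity criterion} (order of vanishing $\geq r$, equality forcing regularity), so that rank $2$ gives $\hig(\cam)=\hig^{reg}(\cam)$ and rank $1$ gives the nodal situation, you run the same dichotomy through part (3) (tangent direction versus tangent cone). The two routes agree, and your verifications of the Appendix $B$ assumptions — nodality from the order-two tangency at a smooth point of the discriminant via Lemma~\ref{reduce to levi} and Corollary~\ref{local tool}, and $D_{\alpha}\cap D_{\beta}=\varnothing$ because the multiplicity of $\disc$ at $\sigma(y)$ counts the vanishing root pairs — are sound. (Integrality is anyway built in, since $U$ is defined inside $H_{int}$; the paper does not need, and does not give, your monodromy argument for item (iii).)

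The genuine gap is in part (1), exactly at the step you flag as the "main technical obstacle": the claim that vanishing of $\disc(\sigma)$ to order $m$ at $x$ imposes $m$ independent conditions is false whenever $\sigma(x)$ lies in the singular locus of the discriminant hypersurface in $\mathfrak{c}$. There the differential of $\disc$ vanishes, and one gets $\mathrm{ord}_{x}\disc(\sigma)\geq \mathrm{mult}_{\sigma(x)}(\disc)$ for free, with no jet conditions whatsoever: for instance, if $\sigma(x)$ is the image of $t$ with $Z_{G}(t)$ of semisimple rank $3$, every section through that point already has order $\geq 3$ at $x$, independently of all derivatives of $\sigma$. So the codimension-$3$ bound for your incidence variety cannot come from jet independence; what is needed is that the multiplicity-$\geq m$ stratum of $\disc_{G}$ in $\mathfrak{c}_{G}$ has codimension $\geq m$ — this is the paper's Lemma~\ref{codimension multiplicity}, proved by a $\mathbb{G}_{m}$-equivariance argument (multiplicity at $0$ equals the rank, by Section $3.18$ of \cite{Coxeter groups}) combined with the Levi reduction, $\disc_{G}/\disc_{L}$ being invertible on $\mathfrak{c}^{\circ}_{L}$ as in Lemma~\ref{reduce to levi}. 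The paper's actual proof of part (1) then stratifies by $\mathrm{mult}_{\sigma(x)}(\disc)\in\{1,2,\geq 3\}$: on the $\geq 3$ stratum the codimension comes entirely from Lemma~\ref{codimension multiplicity} via smoothness of $H\rightarrow\mathfrak{c}^{L}_{x}$; on the multiplicity-$2$ stratum it is $2$ (position) plus $1$ (tangent direction forced into the tangent cone, using surjectivity of $H\rightarrow\mathfrak{c}^{L}_{x}/\mathfrak{m}^{2}_{x}\mathfrak{c}^{L}_{x}$, Lemma $4.7.2$ of \cite{Fundamental lemma}); only on the smooth stratum does your jet count apply as stated, there with $3$-jets. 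The two-point claim requires the analogous three-case analysis. Since this stratified multiplicity estimate is the entire content of part (1), your proposal as written does not close.
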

\begin{proof}

For part $(1)$, let us recall that the morphism $X\times H\rightarrow \mathfrak{c}\times^{\mathbb{G}_{m}}{L}$ is smooth, this follows from the expression $\mathfrak{c}\times^{\mathbb{G}_{m}}{L}\simeq L^{e_{1}}\oplus\cdots\oplus L^{e_{r}}$ where $e_{i}$ are the degrees of the $W$ invariant fundamental polynomials on $\mathfrak{t}$, see Subsection $4.13$ of \cite{Fundamental lemma}. By definition, the complement of $U$ corresponds to points in $H$ such that either there exists a point $x\in X$ such that the discriminant divisor has multiplicity greater than or equals to three at $x$, or there exists two points $x,y\in X$ such that the discriminant divisor has multiplicities greater than or equals to two at $x$ and $y$. So part $(1)$ follows from the following claims:
\begin{enumerate1}
\item The set of pairs consisting of $(x,h)\in X\times H$ such that the discriminant divisor has multiplicity greater than or equals to three at $x$ has codimension greater than or equals to $3$ in $X\times H$.
\item The set of triples $(x,y,h)\in X^{2}\times H$ such that the discriminant divisor has multiplicities greater than or equals to two at $x$ and $y$ has codimension greater than or equals to $4$ in $X^{2}\times H$. 
\end{enumerate1}
Let us look at the first claim. To simplify the notation, we will denote the bundle $\mathfrak{c}\times^{\mathbb{G}_{m}}{L}$ by $\mathfrak{c}^{L}$ in the rest of the proof. We will prove that if we fix $x$, then the set of $h$ such that the discriminant divisor has multiplicity greater than or equals to three at $x$ has codimension greater than or equals to $3$ in $H$. Let us look at the smooth morphism $H\rightarrow \mathfrak{c}^{L}_{x}$ obtained by restricting $X\times H\rightarrow \mathfrak{c}^{L}$ to $x$. If $(x,h)\in X\times H$ is such that $x$ lies in the locus of $\mathfrak{c}^{L}_{x}$ where $\disc_{G}$ has multiplicity greater than or equals to $3$, then by Lemma~\ref{codimension multiplicity} conclude that all such $h$ form a subset of codimension greater than or equals to $3$. Now assume $x$ lies in the locus of $\mathfrak{c}^{L}_{x}$ where the multiplicity of $\disc_{G}$ is equal to two. Since the discriminant divisor has multiplicity greater than or equals to three at $x$, it implies that if we look at the map $\spec(\widehat{O}_{x})\rightarrow\mathfrak{c}$ induced by $h$ (Plus a trivialization of the line bundle $L$ on $\spec(\widehat{O}_{x})$), then the image of the tangent space of $\spec(\widehat{O}_{x})$ lies inside the tangent cone of the discriminant function. Notice that the morphism $H\rightarrow \mathfrak{c}^{L}/m^{2}_{x}\mathfrak{c}^{L}$ is a surjective morphism of vector spaces (See Lemma $4.7.2$ of \cite{Fundamental lemma}). Combine this with Lemma~\ref{codimension multiplicity}, we see that the condition that $x$ lies inside the locally closed subset of $\mathfrak{c}^{L}_{x}$ where $\disc_{G}$ has multiplicity two plus the condition that the image of the tangent space of $\spec(\widehat{O}_{x})$ lies inside the tangent cone of  $\disc_{G}$ gives a codimension three subset in $H$. Finally let us assume that $x$ lies in the locus of $\mathfrak{c}^{L}_{x}$ where $\disc_{G}$ has multiplicity one. In this case $x$ actually lies in the open set $\mathfrak{c}^{L,sm}_{x}$ where the morphism $\mathfrak{c}^{L,sm}_{x}\xrightarrow{\disc} \mathbb{A}^{1}$ is smooth. So if we identify $\mathfrak{c}^{L}_{x}/\mathfrak{m}^{3}_{x}\mathfrak{c}^{L}_{x}$ with $\mathfrak{c}(O_{x}/\mathfrak{m}^{3}_{x})$ and consider the smooth surjective morphism 
$H\rightarrow \mathfrak{c}^{L}_{x}/\mathfrak{m}^{3}_{x}\mathfrak{c}^{L}_{x}\simeq \mathfrak{c}(O_{x}/\mathfrak{m}^{3}_{x})$,
then $h$ lies inside the open set $\mathfrak{c}^{sm}(O_{x}/\mathfrak{m}^{3}_{x})$. Since
$$\mathfrak{c}^{sm}(O_{x}/\mathfrak{m}^{3}_{x})\rightarrow \mathbb{A}^{1}(O/\mathfrak{m}^{3}_{x})\simeq O/\mathfrak{m}^{3}_{x}$$
is a smooth morphism, we conclude that the condition that $x$ lies in the locus of $\mathfrak{c}^{L}_{x}$ where $\disc_{G}$ has multiplicity one and that the discriminant divisor has order greater than or equals to three at $x$ cuts out a locally closed subset of $H$ with codimension greater than or equals to three. Combining these together, we have established the first claim. The proof for the second claim is similar. Namely, assume first that the image of both $x$ and $y$ lies in the locus of $\mathfrak{c}^{L}_{x}$ and $\mathfrak{c}^{L}_{y}$ where $\disc_{G}$ has multiplicity equals to one. In this case, the induced morphisms $\spec(\widehat{O}_{x})\rightarrow\mathfrak{c}$ and $\spec(\widehat{O}_{y})\rightarrow\mathfrak{c}$ has the property that the image of the tangent spaces of $\spec(\widehat{O}_{x})$ and $\spec(\widehat{O}_{y})$ lies inside the tangent cone of $\disc_{G}$. Since the morphism $H\rightarrow\mathfrak{c}^{L}_{x}/\mathfrak{m}^{2}_{x}\mathfrak{c}^{L}_{x}\oplus \mathfrak{c}^{L}_{y}/\mathfrak{m}^{2}_{y}\mathfrak{c}^{L}_{y}$ is smooth and surjective, this cuts out a codimension $4$ locally closed subset of $H$. Now assume that the image of $x$ lies in the locus of $\mathfrak{c}^{L}_{x}$ while the image of $y$ lies in the closed subset where $\disc_{G}$ has multiplicity greater than or equals to two. Using Lemma~\ref{codimension multiplicity} and the argument above, we see that this also cuts out a locally closed subset of codimension greater than or equals to $4$. The last case is when both $x$ and $y$ lies in the locus of $\mathfrak{c}^{L}_{x}$ and $\mathfrak{c}^{L}_{y}$ where the multiplicity of $\disc_{G}$ is greater than or equals to two. Arguing as above we get a codimension $4$ closed subset in $H$. This finishes the proof for claim $(2)$. 

Now let us look at part $(2)$. Under our assumptions, the discriminant divisor is either multiplicity free or there exists a unique point $x\in X$ where it is not multiplicity free and the multiplicity at $x$ is equal to two. In the case when it is multiplicity free everywhere, it is well-known that the cameral cover is smooth and we have $\hig(\cam)=\hig^{reg}(\cam)$, see Proposition $4.7.7$ of \cite{Fundamental lemma}. When it has multiplicity two at $x\in X$, $\cam$ is smooth away from $x$. Over the formal disk $\spec(\widehat{O}_{x})$, $\cam$ is determined by a morphism $\spec(\widehat{O}_{x})\rightarrow\mathfrak{c}_{G}$. Choose an element $t\in\mathfrak{t}$ such that the image of $t$ in $\mathfrak{c}_{G}$ is equal to the image of the closed point $x$ in $\mathfrak{c}_{G}$. Let $L=Z_{G}(t)$. Then arguing as in the proof of part $(3)$ of Lemma~\ref{regularity criterion}, we see that the image of $x$ in $\mathfrak{c}_{G}$ lies in the image of $\mathfrak{c}^{\circ}_{L}\rightarrow\mathfrak{c}_{G}$. Hence $\spec(\widehat{O}_{x})\rightarrow\mathfrak{c}_{G}$ can be lifted to $\spec(\widehat{O}_{x})\rightarrow\mathfrak{c}_{L}$ and that the multiplicity of the pullback of $\disc_{L}$ to $\spec(\widehat{O}_{x})$ has multiplicity two. Moreover, if we identify $\mathfrak{c}_{L}\simeq\mathfrak{c}_{[L,L]}\times\mathfrak{z}_{L}$, then the induced morphism $\spec(\widehat{O}_{x})\rightarrow\mathfrak{c}_{[L,L]}$ has the property that the image of $x$ is the origin. By part $(1)$ of Lemma~\ref{regularity criterion} we conclude that $L$ has semisimple rank $1$ or $2$. If it has semisimple rank two, then by part $(2)$ of Lemma~\ref{regularity criterion} we conclude that whenever we have a lift of the morphism $\spec(\widehat{O}_{x})\rightarrow\mathfrak{c}_{L}$ to $\spec(\widehat{O}_{x})\xrightarrow{\varphi}\mathfrak{l}$, then $\varphi$ is a regular element. Since $\cam$ is smooth away from $x$, combining this with Lemma~\ref{reduce to levi} we see that every Higgs bundle with cameral cover $\cam$ is regular at $x$, hence we have $\hig(\cam)=\hig^{reg}(\cam)$. If $L$ has semisimple rank one, then $\cam$ satisfies the conditions at the beginning of Appendix B. This finishes the proof.

\end{proof}

\begin{lemma}\label{codimension multiplicity}
Let $G$ be a reductive algebraic group and $\mathfrak{g}$ be its lie algebra. Let $\mathfrak{c}_{G}$ be the Chevalley base. Then the closed subset of $\mathfrak{c}_{G}$ where the discriminant function $\disc_{G}$ has multiplicity greater than or equals to $m$ has codimension greater than or equals to $m$. Here we recall that for a smooth variety $Y$, the multiplicity of an effective divisor $D$ at a point $y\in Y$ is defined to be the unique integer $k$ such that if $\mathfrak{m}_{y}$ is the maximal ideal of the local ring at $y$, then the local equation of $D$ belongs $\mathfrak{m}^{k}_{y}\setminus\mathfrak{m}^{k+1}_{y}$
\end{lemma}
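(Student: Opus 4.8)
The plan is to translate the analytic condition ``$\disc_G$ has multiplicity $\ge m$ at $c$'' into a purely group-theoretic condition on the point $c\in\mathfrak{c}_G$, and then estimate the codimension of the resulting stratum by a linear-algebra computation upstairs on $\mathfrak{t}$. First I would reduce to the semisimple case: since $\mathfrak{g}=\mathfrak{z}(\mathfrak{g})\oplus[\mathfrak{g},\mathfrak{g}]$ and every root vanishes on the center, one has $\mathfrak{c}_G\simeq\mathfrak{z}(\mathfrak{g})\times\mathfrak{c}_{G_{der}}$ and $\disc_G$ is the pullback of $\disc_{G_{der}}$ along the projection. Multiplicities and codimensions are unchanged by the product with the affine space $\mathfrak{z}(\mathfrak{g})$, so I may assume $G$ is semisimple of rank $r$.

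The key claim is that for $c\in\mathfrak{c}$ corresponding to a semisimple class with representative $t\in\mathfrak{t}$, the multiplicity of $\disc_G$ at $c$ equals the semisimple rank $r(c)$ of the Levi $L=Z_G(t)$. To prove this I would invoke Lemma~\ref{reduce to levi}: the map $\mathfrak{c}^{\circ}_{L}\to\mathfrak{c}_G$ is \'etale onto its image and carries $\chi_L(t)$ to $c$, while on $\mathfrak{c}^{\circ}_{L}$ the ratio $\disc_G/\disc_L$ is an invertible function (as already recorded in the proof of Lemma~\ref{regularity criterion}). Since an \'etale morphism induces an isomorphism of completed local rings at corresponding closed points, the multiplicity of $\disc_G$ at $c$ equals that of $\disc_L$ at $\chi_L(t)$. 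Now $\disc_L$ depends only on the factor $\mathfrak{c}_{[L,L]}$ of $\mathfrak{c}_L\simeq\mathfrak{z}_L\times\mathfrak{c}_{[L,L]}$, and because $t$ is central in $\mathfrak{l}$ its image in $\mathfrak{c}_{[L,L]}$ is the origin; thus the multiplicity in question is the multiplicity of $\disc_{[L,L]}$ at $0$, which is the semisimple rank of $L$ by the fact cited in part $(1)$ of Lemma~\ref{regularity criterion} (Section $3.18$ of \cite{Coxeter groups}).

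Granting the claim, the locus $\{\text{mult}\ge m\}$ equals $\{c:r(c)\ge m\}$, and its preimage in $\mathfrak{t}$ under the finite surjective map $\mathfrak{t}\to\mathfrak{c}$ is the $W$-invariant set $Z_m=\{s\in\mathfrak{t}:\dimension\operatorname{span}\{\alpha\in R:\alpha(s)=0\}\ge m\}$, since $r(c)$ is computed from the root system of $Z_G(s)$. If $\dimension\operatorname{span}\{\alpha:\alpha(s)=0\}\ge m$ then there are $m$ linearly independent roots vanishing at $s$, so
$$Z_m=\bigcup_{\{\alpha_1,\dots,\alpha_m\}}\ \bigcap_{i=1}^{m}\ker\alpha_i,$$
the union being taken over all $m$-element linearly independent subsets of $R$. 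This is a finite union of linear subspaces of codimension exactly $m$, whence $\codim_{\mathfrak{t}}Z_m\ge m$. Since $\mathfrak{t}\to\mathfrak{c}$ is finite it preserves dimension, so the image $\{r(c)\ge m\}$ has codimension $\ge m$ in $\mathfrak{c}$, which is the assertion.

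The main obstacle will be the second step, the exact computation of the multiplicity of $\disc$ at $c$; everything hinges on the Levi reduction of Lemma~\ref{reduce to levi}, on the invariance of the multiplicity of a divisor under the \'etale map $\mathfrak{c}^{\circ}_{L}\to\mathfrak{c}_G$, and on the input that $\disc$ has multiplicity equal to the rank at the origin of a semisimple Chevalley base. Once these are in place the codimension bound is a routine count on $\mathfrak{t}$, and I would only need to note that one in fact requires no more than the inequality $\text{mult}_c(\disc)\le r(c)$, so that even a one-sided version of the claim would suffice.
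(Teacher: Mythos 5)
Your proof is correct, and its skeleton is the same as the paper's: reduce to the Levi $L=Z_{G}(t)$ via the \'etale map $\mathfrak{c}^{\circ}_{L}\rightarrow\mathfrak{c}_{G}$ together with the invertibility of $\disc_{G}/\disc_{L}$ on $\mathfrak{c}^{\circ}_{L}$ (Lemma~\ref{reduce to levi}), feed in the fact from Section $3.18$ of \cite{Coxeter groups} that the multiplicity of the discriminant at the origin of a semisimple Chevalley base equals the rank, and finish with a codimension count on root hyperplanes in $\mathfrak{t}$ transported through the finite map $\mathfrak{t}\rightarrow\mathfrak{c}_{G}$. The one genuine divergence is the middle step. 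The paper proves only the one-sided bound --- if $L$ has semisimple rank $<m$, then $\disc_{L}$ has multiplicity $<m$ at \emph{every} point of $\mathfrak{c}_{L}$ --- by a $\mathbb{G}_{m}$-homogeneity argument: the locus of multiplicity $\geq m$ is closed and invariant under the contracting $\mathbb{G}_{m}$-action, so if nonempty it must contain $0$, where the multiplicity equals the rank. You instead compute the multiplicity exactly at the point in question: writing $\mathfrak{c}_{L}\simeq\mathfrak{z}_{L}\times\mathfrak{c}_{[L,L]}$ and observing that $t$, being central in $\mathfrak{l}$, maps to the origin of the semisimple factor, so the multiplicity is literally the multiplicity of $\disc_{[L,L]}$ at $0$, yielding the equality $\textrm{mult}_{c}(\disc_{G})=$ semisimple rank of $Z_{G}(t)$ on the nose. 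Your version is sharper (an equality rather than an inequality) and avoids the $\mathbb{G}_{m}$-contraction step at no extra cost; as you correctly note, only the inequality $\textrm{mult}\leq\textrm{ss-rank}$ is actually used, and that is precisely what the paper's argument produces. You also make explicit the final count --- the locus $Z_{m}$ is a finite union of intersections $\bigcap_{i=1}^{m}\ker\alpha_{i}$ over linearly independent $m$-tuples of roots, each of codimension exactly $m$ --- which the paper asserts in a single sentence.
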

\begin{proof}
We will first prove that if $G$ has semisimple rank less than $m$, then the multiplicity of the discriminant function at any point in $\mathfrak{c}_{G}$ is always less than $m$. It is enough to prove this when $G$ is semisimple. Let us consider:
$$\mathfrak{t}\rightarrow\mathfrak{c}_{G}\xrightarrow{\disc_{G}}\mathbb{A}^{1} .$$
The natural $\mathbb{G}_{m}$ action on $\mathfrak{t}$ induces $\mathbb{G}_{m}$ actions on $\mathfrak{c}_{G}$ and $\mathbb{A}^{1}$ such that $\disc_{G}$ is $\mathbb{G}_{m}$ equivariant. So multiplicity of $\disc_{G}$ stays constant on each $\mathbb{G}_{m}$ orbit. Consider the closed subset where $\disc$ has multiplicity greater than or equals to $m$. Since it is invariant under $\mathbb{G}_{m}$ action, if it is nonempty, it must contains $0\in\mathfrak{c}_{G}$. On the other hand, it is well-known that the multiplicity of $\disc_{G}$ at $0\in\mathfrak{c}_{G}$ is equal to the rank of $G$ (See Section $3.18$ of \cite{Coxeter groups}), which is less than $m$ by our assumption. 

Now we shall consider the general case. We may assume the semisimple rank of $G$ is greater than or equals to $m$ by our discussions above. Suppose the multiplicity of $\disc$ is greater than or equals to $m$ at $x\in\mathfrak{c}_{G}$ and let $t$ be an element in $\mathfrak{t}$ that maps to $x$. Consider the levi subgroup $L=Z_{G}(t)$ and the morphisms:
$$\mathfrak{t}\rightarrow\mathfrak{c}_{L}\rightarrow\mathfrak{c}_{G} .$$
Denote the image of $t$ in $\mathfrak{c}_{L}$ by $x'$. Since $L=Z_{G}(t)$, we see that $x'\in\mathfrak{c}^{\circ}_{L}$ (See the notation in Lemma~\ref{reduce to levi}). Moreover, it is not hard to see that $\disc_{G}/\disc_{L}$ is an invertible function on $\mathfrak{c}^{\circ}_{L}$. Hence the multiplicity of $\disc_{G}$ at $x$ is equal to the multiplicity of $\disc_{L}$ at $x'$. Since we take $x$ to be a point where $\disc_{G}$ has multiplicity greater than or equals to $m$, we see that the multiplicity of $\disc_{L}$ at $x'$ is also greater than or equals to $m$. Our discussions above implies that $L=Z_{G}(t)$ must have semisimple rank greater than or equals to $m$. All such elements $t$ form a closed subset with codimension greater than or equals to $m$. This finishes the proof. 

\end{proof}

\end{document}